\title{Stability and Convergence of the Sasaki-Ricci Flow}
\newcommand {\pl}[1]{\ensuremath{\frac{\partial}{\partial#1}}}
\newcommand{\dbar}{\ensuremath{\bar{\partial}}}
\newcommand{\dop}[1]{\ensuremath{\partial_{#1}}}
\newcommand{\dbop}[1]{\ensuremath{\partial_{\bar{#1}}}}
\newcommand{\RmT}[4]{\ensuremath{Rm^{T}_{\bar{#1}#2}{}^{#3}{}_{#4}}}
\newcommand{\Rm}[4]{\ensuremath{Rm_{{#1}{#2}}{}^{#3}{}_{#4}}}
\newcommand{\E}{\ensuremath{\mathcal{E}}}
\author{Tristan C. Collins}
\address{Department of Mathematics, Columbia University, New York, NY 10027}
\email{tcollins@math.columbia.edu}
\begin{document}
\theoremstyle{plain}
\newtheorem{Lemma}{Lemma}[section]
\newtheorem{prop}{Proposition}[section]
\newtheorem{Theorem}{Theorem}[section]
\newtheorem{corollary}{Corollary}[section]
\newtheorem{definition}{Definition}[section]
\newtheorem{example}{Example}
\theoremstyle{remark}
\newtheorem*{remark}{Remark}
\newtheorem{claim}{Claim}
\maketitle

\begin{abstract}
We introduce a holomorphic sheaf $\E$ on a Sasaki manifold $S$ and study two new notions of stability for $\E$ along the Sasaki-Ricci flow related to the `jumping up' of the number of global holomorphic sections of $\E$ at infinity.  First, we show that if the Mabuchi K-energy is bounded below, the transverse Riemann tensor is bounded in $C^{0}$ along the flow, and the  $C^{\infty}$ closure of the Sasaki structure on $S$ under the diffeomorphism group does not contain a Sasaki structure with strictly more global holomorphic sections of $\E$, then the Sasaki-Ricci flow converges exponentially fast to a Sasaki-Einstein metric.  Secondly, we show that if the Futaki invariant vanishes, and the lowest positive eigenvalue of the $\dbar$ Laplacian on global sections of $\E$ is bounded away from zero uniformly along the flow, then the Sasaki-Ricci flow converges exponentially fast to a Sasaki-Einstein metric.
\end{abstract}

\section{Introduction}
\begin{versionb}
There has recently been a great deal of interest in Sasaki geometry, particularly with regards to producing examples of Sasaki-Einstein, and transversely K\"ahler-Einstein metrics.  This interest has been fueled primarily by advances in the AdS/CFT correspondence in theoretical physics.  When the basic first Chern class is non-positive, the existence theory for canonical Sasaki metrics is well developed and generalizes the results in the K\"ahler setting.  However, when the basic first Chern class is positive there are known obstructions to existence; see for example the work of Futaki, Ono, and Wang \cite{Futaki}, and Gauntlett, Martelli, Sparks and Yau \cite{GMarSparYau}.  It is reasonable to expect that a suitably generalized version of the now famous conjecture of Yau \cite{Yau} should hold in the Sasaki setting;  that is, existence of Sasaki-Einstein metrics should be equivalent to some geometric invariant theory notion of stability.  We refer the reader to the survey article \cite{PSstab} and the references therein for an introduction to this very active area of research in the K\"ahler setting.  

A great deal of ingenuity on the part of many different authors has produced a large number of quasi-regular Sasaki-Einstein manifolds; we refer the reader to the survey article \cite{Sparks} and the references therein.  Until recently there were no examples of Sasaki-Einstein metrics on irregular Sasaki manifolds;  in fact, such manifolds were conjectured to not exist by Cheeger and Tian \cite{CheeTian}.  The first examples of irregular Sasaki-Einstein manifolds were produced by Gauntlett, Martelli, Sparks and Waldram \cite{GMarSparW, GMarSparW1}, and further developed by Martelli and Sparks \cite{MarSpar, MarSpar1}.  New toric irregular Sasaki-Einstein metrics have been produced, for example on the circle bundle of a power of the anti-canonical bundle over the two point blow up of $\mathbb{C}P^{2}$ \cite{Futaki}.  These discoveries necessitate a more robust approach to the  existence theory for Sasaki-Einstein structures.  In response, a number of approaches have been developed by several different authors.  For example, Martelli, Sparks and Yau \cite{MarSparYau}, and Boyer, Galicki and Simanca \cite{BoyGalSim} developed an approach in the spirit of Calabi, by examining functionals related to the volume and scalar curvature.  A flow approach was developed by Smoczyk, Wang and Zhang \cite{SmoWaZa}, where they introduced the Sasaki-Ricci flow, and generalized the results of Cao \cite{Cao} to Sasaki manifolds.  

In this paper, we further develop the theory of stability for Sasaki manifolds.  Our point of view shall be the flow approach.  The type of result we are interested in was first obtained in the K\"ahler case by Phong and Sturm \cite{PS}; they showed that a form of stability for the complex structure implies the convergence of the K\"ahler-Ricci flow, in the presence of uniform bounds on the Riemann tensor. Phong and Sturm considered the following forms of stability on a K\"ahler manifold $(X,J,\omega)$ with positive first Chern class;
\begin{enumerate}
\item[(A)] The Mabuchi energy is bounded from below
\item[(B)] Let $J$ be the complex structure of $X$, viewed as a tensor.  Then the $C^{\infty}$ closure of the orbit of J under the diffeomorphism group of $X$ does not contain any complex structure $J_{\infty}$ with the property that the space of holomorphic vectorfields with respect to $J_{\infty}$ has dimension strictly higher than the dimension of the space of holomorphic vector fields with respect to $J$
\end{enumerate}

Condition (B) is a geometric manifestation of stability in the sense that when it does not hold, the moduli space of complex structures cannot be Hausdorff in the topology of $C^{\infty}$ convergence.  Assuming a uniform bound on the Riemann tensor along the K\"ahler-Ricci flow is, of course, very restrictive.  In \cite{PSSW}, Phong, Song, Sturm and Weinkove managed to remove the assumption on the boundedness of the Riemann tensor while imposing a more general stability condition, which they referred to as condition (S). !!!!!!!!!!!
\end{versionb}
\begin{versionb}
 (cf. Theorem 1.2 below).  Combining the results of \cite{PS, PSSW}, we have;
\begin{Theorem}[\cite{PS} Theorem 1, \cite{PSSW} Theorem 1]
Let $(X,J)$ be a compact complex manifold of dimension $n$.  Let $\dot{g}_{\bar{k}j} = -R_{\bar{k}j}+\kappa g_{\bar{k}j}$ be the normalized K\"ahler-Ricci flow, with initial metric $h_{\bar{k}j}(0)$, and $\kappa g_{\bar{k}j}(0)$ a K\"ahler metric in the first Chern class of X.  Here, $\kappa n$ denotes the total scalar curvature.  Assume that the Riemann curvature tensor is uniformly bounded along the flow.
\begin{enumerate}
\item[{\it (i)}] If condition (A) holds, then we have
\begin{enumerate}
\item[(a)] for any $s\geq 0$
\begin{equation*}
\lim_{t\rightarrow \infty}\|R_{\bar{k}j}(t) - \kappa g_{\bar{k}j}(t)\|_{(s)} =0
\end{equation*}
where $\| \cdot \|_{(s)}$ denotes the Sobolev norm of order $s$ with respect to the metric $g_{\bar{k}j}(t)$.
\item[(b)]Moreover,
\begin{equation*}
 \int_{0}^{\infty}  \|R(t)-\kappa n\|^{p}_{C^{0}}dt <\infty \quad \text{  for any  } p >2.
 \end{equation*}
\item[(c)]In particular, 
\begin{equation*}
 \|R(t)-\kappa n\|_{C^{0}} \rightarrow 0 \quad \text{ as } t \rightarrow \infty.
 \end{equation*}
\end{enumerate}
\item[{\it (ii)}] If both conditions (A) and (B) hold, then the K\"ahler-Ricci flow converges exponentially fast in $C^{\infty}$ to a K\"ahler-Einstein metric.
\end{enumerate}
\end{Theorem}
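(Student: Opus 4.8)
The strategy, following Phong--Sturm and Phong--Song--Sturm--Weinkove, is to deduce part (i) from the monotonicity of the Mabuchi K-energy and then to upgrade this to the exponential convergence of part (ii) by using condition (B) to produce a uniform spectral gap. Throughout, let $f_t$ denote the Ricci potential, normalized by $\mathrm{Ric}(\omega_t)-\kappa\omega_t=\sqrt{-1}\,\partial\bar\partial f_t$ and $\int_X f_t\,\omega_t^n=0$; along the flow it obeys $\partial_t f_t=\Delta_t f_t+\kappa f_t+c(t)$, and $R_{\bar{k}j}(t)-\kappa g_{\bar{k}j}(t)=\partial_j\partial_{\bar k}f_t$, so that (a) and (c) will follow from showing that $f_t\to 0$ in all Sobolev norms, respectively in $C^0$.

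For part (i), I would first record that the K-energy decreases along the flow with $\frac{d}{dt}\nu(\omega_t)=-\int_X|\partial f_t|^2\,\omega_t^n$, so condition (A) gives $\int_0^\infty\!\int_X|\partial f_t|^2\,\omega_t^n\,dt<\infty$. The assumed uniform bound on the Riemann tensor, together with Shi-type derivative estimates and Perelman's a priori bounds for the K\"ahler--Ricci flow (on $\|f_t\|_{C^0}$, $\|\nabla f_t\|_{C^0}$, the diameter, and $\kappa$-noncollapsing), yields uniform bounds on all covariant derivatives of the curvature, hence uniformly bounded geometry with uniform Sobolev and Poincar\'e constants and uniform $C^k$ bounds on $f_t$ for every $k$. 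These bounds make $\frac{d}{dt}\int_X|\partial f_t|^2\,\omega_t^n$ bounded, so the time-integrability forces $\|\partial f_t\|_{L^2}\to 0$, then $\|f_t\|_{L^2}\to 0$ by the uniform Poincar\'e inequality, and then $\|f_t\|_{H^{s+2}(\omega_t)}\to 0$ for every $s$ by interpolation against the uniform higher-order bounds; this is (a), and (c) is the case $s=0$ with Sobolev embedding. For (b) I would use the Gagliardo--Nirenberg interpolation $\|\Delta_t f_t\|_{C^0}\le C\|f_t\|_{L^2}^{1-a}\|f_t\|_{H^N(\omega_t)}^{a}$ with $a$ as small as desired (taking $N$ large), whence $\|R(t)-\kappa n\|_{C^0}^p\lesssim\|\partial f_t\|_{L^2}^{p(1-a)}\lesssim\|\partial f_t\|_{L^2}^2$ once $a\le 1-2/p$ --- which is possible exactly because $p>2$ --- and then integrating in $t$.

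For part (ii), part (i) shows $f_t\to 0$ in $C^\infty$, so every subsequential limit $(X_\infty,J_\infty,\omega_\infty)$ in the $C^\infty$ Cheeger--Gromov sense, which exists by the bounded geometry, is K\"ahler--Einstein, and $J_\infty$ lies in the $C^\infty$-closure of the diffeomorphism orbit of $J$. Condition (B), together with the upper semicontinuity of $h^0(\cdot,T^{1,0})$, then forces $h^0(X_\infty,T^{1,0}_{J_\infty})=h^0(X,T^{1,0}_J)$, so the holomorphy-potential space does not jump. The decay is governed by the identity
\[
\frac{d}{dt}\int_X|\partial f_t|^2\,\omega_t^n=-2\int_X(\Delta_t f_t)^2\,\omega_t^n+2\kappa\int_X|\partial f_t|^2\,\omega_t^n+o\!\left(\int_X|\partial f_t|^2\,\omega_t^n\right),
\]
whose error is controlled by $\|\partial_j\partial_{\bar k}f_t\|_{C^0}+\|\Delta_t f_t\|_{C^0}\to 0$ from part (i). If $X$ has no holomorphic vector fields, then by Lichnerowicz--Matsushima $\mu_1(-\Delta_{\omega_\infty})>\kappa$ for every limit, so a compactness argument over the precompact family of limits gives $\mu_1(-\Delta_{\omega_t})\ge\kappa+\delta$ uniformly; since $\int_X(\Delta_t f_t)^2\,\omega_t^n\ge\mu_1(-\Delta_{\omega_t})\int_X|\partial f_t|^2\,\omega_t^n$, the identity yields $\int_X|\partial f_t|^2\,\omega_t^n\le Ce^{-\delta t}$. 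When holomorphic vector fields are present, one splits $f_t$ into its projection onto holomorphy potentials and a remainder; condition (B) guarantees, by a similar compactness--contradiction argument against the no-jumping of $h^0$, a uniform gap $\mu_2(-\Delta_{\omega_t})-\kappa\ge\delta>0$ above the Matsushima eigenvalue, so the remainder decays like $e^{-\delta t}$ after correcting $\omega_t$ by a time-dependent automorphism $\sigma_t\in\mathrm{Aut}_0(X)$ killing the holomorphy-potential component (here one uses that the Futaki invariant vanishes, which follows from condition (A)); the generators of $\sigma_t$ are then exponentially small, so $\sigma_t$, and hence $\omega_t$, converges. Finally, exponential $L^2$-decay of $\partial f_t$ bootstraps via the derivative estimates and the bounded geometry to exponential decay of $\|\partial_t\omega_t\|_{C^k}$ for every $k$, so $\omega_t$ is exponentially Cauchy in $C^\infty$ and converges to a K\"ahler--Einstein metric.

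I expect the main obstacle to be the passage, in part (ii), from condition (B) to a uniform spectral gap, and the additional bookkeeping when holomorphic vector fields are present. This requires combining the Cheeger--Gromov compactness supplied by the curvature bound with the spectral fact that on a Fano K\"ahler--Einstein limit the multiplicity of the Matsushima eigenvalue $\kappa$ equals $h^0(T^{1,0})$ --- so a degenerating eigenvalue would force $h^0$ to jump, contradicting (B) --- and, in the non-rigid case, a careful analysis of the flow modulo $\mathrm{Aut}_0(X)$ showing that the drift along the automorphism group is itself exponentially small. By contrast, part (i) and the concluding bootstrap are comparatively routine once the bounded-geometry estimates are in hand.
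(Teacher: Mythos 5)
This theorem is quoted background from \cite{PS} and \cite{PSSW}; the paper never proves it, but \S\S6--8 prove its Sasaki analogue by essentially the argument you outline, so the comparison is still meaningful. Your part {\it (i)} is correct and matches that template: K-energy monotonicity gives $\int_0^\infty\|\partial f_t\|_{L^2}^2dt<\infty$, the curvature bound plus Shi/Perelman gives uniform geometry, and one upgrades $\|\partial f_t\|_{L^2}\to 0$ to all Sobolev norms (the paper does this by differential inequalities for $Y_{r,s}=\int|\nabla^s\bar\nabla^r u|^2$ rather than your interpolation, but both work). Your Gagliardo--Nirenberg shortcut for (b) is legitimate \emph{under the stated hypothesis} of bounded curvature; just be aware that in \cite{PSSW} statement (b) is proved without any curvature bound, via the smoothing-lemma iteration, so your argument proves a weaker result than theirs even though it suffices for the statement as quoted here.

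The soft spot is part {\it (ii)} in the presence of holomorphic vector fields. Your phrase ``a uniform gap $\mu_2(-\Delta_{\omega_t})-\kappa\ge\delta$ above the Matsushima eigenvalue'' is not well-defined along the flow: the metrics $\omega_t$ are not K\"ahler--Einstein, so $\kappa$ need not be an eigenvalue of $-\Delta_{\omega_t}$ at all and there is no canonical ``Matsushima eigenspace'' to project away. Moreover the automorphism gauge-fixing you invoke to kill the holomorphy-potential component is both delicate (the flow equation changes in the new gauge, and you must separately prove the gauge converges) and unnecessary. The clean route --- the one in \cite{PS} and mirrored in \S7 of this paper --- is to apply the Bochner--Kodaira identity to the vector field $V=\nabla^{1,0}f_t$, yielding $\dot Y\le -2\|\dbar V\|^2+o(Y)$, and then to observe that $\|\pi_tV\|_{L^2}^2=\mathrm{fut}(\pi_tV)=0$, since for holomorphic $W$ one has $\langle W,V\rangle_{L^2}=\int_X W(f_t)\,\omega_t^n=\mathrm{fut}(W)$ and condition (A) forces the Futaki invariant to vanish. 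Hence $V$ is automatically orthogonal to $H^0(T^{1,0})$ and $\|\dbar V\|^2\ge\lambda_tY$, where $\lambda_t$ is the first positive eigenvalue of the $\dbar$-Laplacian on $T^{1,0}$ vector fields --- an operator whose kernel is exactly the metric-independent space that condition (B) controls. Your compactness-plus-no-jumping argument then applies verbatim to $\lambda_t$ (via eigenvalue perturbation under $C^\infty$ Cheeger--Gromov convergence, as in Theorem~\ref{stability compactness}) and gives $\inf_t\lambda_t>0$ with no gauge correction. With that substitution your proof closes; as written, the non-rigid case of part {\it (ii)} has a genuine gap.
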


\begin{Theorem}[\cite{PSSW} Theorem 2]
Fix $\omega_{0} \in \pi c_{1}(X)$.  Let $g_{\bar{k}j}(t)$ be the solution of the K\"ahler-Ricci flow with initial value $(g_{0})_{\bar{k}j} \in \pi c_{1}(X)$.  Let $\lambda_{t}$ be the lowest strictly positive eigenvalue of the Laplacian $\dbar^{\dagger}\dop{} = -g^{j\bar{k}}\nabla_{j}\nabla_{\bar{k}}$ acting on smooth $T^{1,0}$ vectorfields.
\begin{enumerate}
\item[{\it (i)}] If the condition (A) is satisfied and condition
\begin{enumerate}
\item[(S)] $\inf_{t\in[0,\infty)} \lambda_{t} >0$
\end{enumerate}
holds, then the metrics $g_{\bar{k}j}(t)$ converge exponentially fast in $C^{\infty}$ to a K\"ahler-Einstein metric.
\item[{\it (ii)}]
Conversely, if the metrics $g_{\bar{k}j}(t)$ converge in $C^{\infty}$ to a K\"ahler-Einstein metric, then conditions (A) and (S) are satisfied.
\item[{\it (iii)}]In particular, if the metrics $g_{\bar{k}j}(t)$ converge in $C^{\infty}$ to a K\"ahler-Einstein metric, then they converge exponentially fast in $C^{\infty}$ to this metric.
\end{enumerate}
\end{Theorem}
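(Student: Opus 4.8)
The plan is to reduce exponential $C^{\infty}$ convergence to an exponential-decay estimate for $\|\nabla u(t)\|_{L^{2}}$ (equivalently for $\|R_{\bar kj}-g_{\bar kj}\|_{L^{2}}$), and to extract that decay from a differential inequality whose coercivity is supplied by condition (S). Write the normalized K\"ahler-Ricci flow as $\partial_{t}g_{\bar kj}=g_{\bar kj}-R_{\bar kj}=\nabla_{j}\nabla_{\bar k}u$, where $u=u(t)$ is the Ricci potential normalized by $\int_{X}e^{-u}\omega^{n}=\int_{X}\omega^{n}$; then $u$ satisfies a heat-type equation $\partial_{t}u=\Delta u+u-\underline u$ (with $\Delta=g^{j\bar k}\nabla_{j}\nabla_{\bar k}$ and $\underline u$ a space-constant), and $g(t)$ converges onto a K\"ahler-Einstein metric exactly when $u$ tends to a constant. \emph{Step 1 (a priori estimates).} First I would invoke Perelman's estimates along the K\"ahler-Ricci flow on a Fano manifold, which require nothing from (A) or (S): uniform-in-$t$ bounds on the diameter and Sobolev constant of $g(t)$, on the scalar curvature, and on $\|u\|_{C^{0}}+\|\nabla u\|_{C^{0}}+\|\Delta u\|_{C^{0}}$. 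Combined with the monotonicity of Perelman's entropy, these also give a uniform bound on $\|R_{\bar kj}-g_{\bar kj}\|_{L^{2}}$ together with $\int_{0}^{\infty}\|R_{\bar kj}-g_{\bar kj}\|_{L^{2}}^{2}\,dt<\infty$, hence $\|R_{\bar kj}-g_{\bar kj}\|_{L^{2}}\to0$. \emph{Step 2 (consequences of (A)).} Along the flow the Mabuchi energy satisfies $\tfrac{d}{dt}\nu(g(t))=-\int_{X}|\nabla u|^{2}_{g(t)}\,\omega_{g(t)}^{n}$, so (A) forces $\int_{0}^{\infty}\|\nabla u(t)\|_{L^{2}}^{2}\,dt<\infty$ and, with Step 1, $\|\nabla u(t)\|_{L^{2}}\to0$. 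Condition (A) also forces the Futaki invariant to vanish, and since for every holomorphic vector field $W$ the pairing $\langle\nabla^{1,0}u(t),W\rangle_{L^{2}}$ equals, up to a universal constant, the (metric-independent) Futaki invariant of $W$, this means $\nabla^{1,0}u(t)$ is $L^{2}$-orthogonal to $\ker(\bar\partial^{\dagger}\bar\partial)=H^{0}(X,T^{1,0}X)$ for every $t$.

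\emph{Step 3 (the decay estimate; the heart of the matter).} Let $L=\bar\partial^{\dagger}\bar\partial=-g^{j\bar k}\nabla_{j}\nabla_{\bar k}$ be the operator of condition (S), whose kernel is $H^{0}(X,T^{1,0}X)$. Differentiating $\|\nabla u\|_{L^{2}}^{2}$ along the flow and integrating by parts, using $\partial_{t}u=\Delta u+u-\underline u$, gives
\[
\frac{d}{dt}\|\nabla u\|_{L^{2}}^{2}\;=\;2\|\nabla u\|_{L^{2}}^{2}-2\|\Delta u\|_{L^{2}}^{2}+\mathrm{R},
\]
where, after one further integration by parts, the remainder $\mathrm{R}$ is a sum of terms each carrying a factor $\|R_{\bar kj}-g_{\bar kj}\|_{L^{2}}$ against the bounded quantities of Step 1. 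On the other hand, the Bochner-Kodaira identity for $\nabla^{1,0}u$ gives $\langle L\nabla^{1,0}u,\nabla^{1,0}u\rangle_{L^{2}}=\|\Delta u\|_{L^{2}}^{2}-\|\nabla u\|_{L^{2}}^{2}+\mathrm{R}'$, with $\mathrm{R}'$ of the same type, and combining this with the orthogonality of Step 2 and condition (S), namely $\langle L\nabla^{1,0}u,\nabla^{1,0}u\rangle_{L^{2}}\ge\lambda_{t}\|\nabla^{1,0}u\|_{L^{2}}^{2}=\lambda_{t}\|\nabla u\|_{L^{2}}^{2}$, gives $\|\Delta u\|_{L^{2}}^{2}\ge(1+\lambda_{t})\|\nabla u\|_{L^{2}}^{2}+\mathrm{R}''$. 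Feeding this back produces a differential inequality
\[
\frac{d}{dt}\Phi(t)\;\le\;-\bigl(2\lambda_{t}-\varepsilon(t)\bigr)\,\Phi(t)
\]
for a suitable energy $\Phi(t)$ built from $\nabla u(t)$ (which, in order to control the remainders correctly, must also incorporate the higher Sobolev norms of $R_{\bar kj}-g_{\bar kj}$), where $\varepsilon(t)\to0$ by Steps 1 and 2. Condition (S) then gives $\tfrac{d}{dt}\Phi\le-\lambda_{0}\Phi$ with $\lambda_{0}=\inf_{t}\lambda_{t}>0$ for all large $t$, so $\Phi(t)$, and hence $\|\nabla u(t)\|_{L^{2}}$, decays exponentially.

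\emph{Step 4 (bootstrap and convergence).} Carried out simultaneously at every Sobolev level, which the spectral gap of $L$ together with the Perelman bounds and interpolation make possible, the preceding estimate gives exponential decay of $\|R_{\bar kj}-g_{\bar kj}\|_{C^{k}}=\|\partial_{t}g(t)\|_{C^{k}}$ for every $k$; integrating in $t$ then shows $g(t)$ is Cauchy in $C^{\infty}$, so it converges exponentially fast to a metric $g_{\infty}$ with $R_{\bar kj}(\infty)=g_{\bar kj}(\infty)$, i.e.\ K\"ahler-Einstein, which proves (i). \emph{Step 5 (the converse).} If $g(t)\to g_{\infty}$ in $C^{\infty}$ with $g_{\infty}$ K\"ahler-Einstein, then (A) holds because the existence of a K\"ahler-Einstein metric implies that the Mabuchi energy is bounded below, and (S) holds because $\dim\ker L_{g(t)}=h^{0}(X,T^{1,0}X)$ is a holomorphic invariant, hence constant in $t$; therefore $\lambda_{t}$ depends continuously on $g(t)$, is positive for every finite $t$, and converges to the first positive eigenvalue of $L_{g_{\infty}}$, which is strictly positive, so $\inf_{t}\lambda_{t}>0$. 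Finally (iii) is immediate: $C^{\infty}$ convergence implies (A) and (S), which by part (i) force exponential convergence.

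\emph{The main obstacle} is Step 3, and specifically the control of the remainders $\mathrm{R}$, $\mathrm{R}'$, $\mathrm{R}''$, $\varepsilon(t)$. Since Theorem 1.2, unlike Theorem 1.1, does not assume a bound on the full Riemann tensor, the Ricci curvature is not controlled in $C^{0}$, so these remainders must be absorbed using only Perelman's weaker estimates, the $L^{2}$-smallness of Steps 1 and 2, and interpolation; the delicate point is that the remainders involve the second Hessian $\|R_{\bar kj}-g_{\bar kj}\|_{L^{2}}$, which dominates the leading coercive quantity $\|\nabla u\|_{L^{2}}$ rather than being dominated by it, so the estimate does not close in $L^{2}$ alone and must be propagated through all Sobolev norms of $R_{\bar kj}-g_{\bar kj}$ simultaneously, keeping the high-frequency content of $u$ under control. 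A secondary but essential point is the identification, in Step 2, of the $\ker L$-component of $\nabla^{1,0}u$ with the Futaki invariant, which is exactly what allows condition (A) to remove the obstruction coming from holomorphic vector fields and thereby makes condition (S) applicable to $\nabla^{1,0}u$ itself.
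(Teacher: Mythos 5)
Your outline reproduces the standard skeleton (Perelman's estimates, monotonicity of the Mabuchi energy, identification of the $\ker L$--component of $\nabla^{1,0}u$ with the Futaki invariant, a differential inequality for $Y(t)=\|\nabla u\|_{L^{2}}^{2}$ whose coercive term comes from the spectral gap), and parts (ii) and (iii) are handled correctly. But there is a genuine gap exactly where you locate ``the heart of the matter.'' The error terms in Step 3 are, after integration by parts, of the form $\int_{X}\nabla^{j}u\,\nabla^{\bar k}u\,(R_{\bar kj}-g_{\bar kj})$ and $\int_{X}|\nabla u|^{2}(R-n)$, and in the absence of any curvature hypothesis these cannot be written as $\varepsilon(t)\,Y(t)$ with $\varepsilon(t)\to 0$: Cauchy--Schwarz only gives a bound by $Y(t)^{1/2}\bigl(\int_{X}|R-n|^{2}\bigr)^{1/2}$, and while Steps 1--2 give $\|R-n\|_{L^{2}}\to 0$ they give no rate, so the inequality does not close. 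Your proposed fix --- running the estimate ``simultaneously at every Sobolev level'' --- does not work either, because without a curvature bound there is no uniform control of the higher Sobolev norms of $R_{\bar kj}-g_{\bar kj}$ to propagate; this is precisely the difficulty that distinguishes this theorem from the curvature-bounded one.

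The missing ingredient is the Bando-type \emph{smoothing lemma} (Lemma~\ref{smoothing lemma} here): if $\|u(t_{0})\|_{C^{0}}\leq\epsilon$ then $\|\nabla u(t_{0}+2)\|_{C^{0}}+\|R(t_{0}+2)-n\|_{C^{0}}\leq K\epsilon$, combined with the non-collapsing interpolation $\|u-a\|_{C^{0}}^{\,n+1}\leq C\,\|\nabla u\|_{L^{2}}\|\nabla u\|_{C^{0}}^{\,n}$ (Lemma~\ref{PSSW lemma 3}). Iterating these converts the error term into a product $\prod_{j}Y(t-a_{j})^{\delta_{j}/2}$ of the \emph{same} $L^{2}$ quantity at finitely many earlier times, with exponents summing to $1$, yielding a delay-differential inequality $\dot Y(t)\leq-\lambda Y(t)+\tfrac{\lambda}{2}Y(t)^{1/2}\prod_{j}Y(t-a_{j})^{\delta_{j}/2}$ from which exponential decay follows by an elementary ODE argument; no higher Sobolev norms of the curvature ever enter. (For what it is worth, the present paper proves its Sasaki analogue, Theorem~\ref{main theorem 2}, by a variant of this route due to Zhang: one works with the weighted functional $W(t)=\mathrm{Vol}(S)^{-1}\int(u-a)^{2}e^{-u}d\mu$ and the operator $L=-\Delta+\nabla u\cdot\nabla$, whose first eigenvalue above $1$ is bounded below via condition (T); but the passage from $L^{2}$ decay to $C^{0}$ decay still hinges on the same smoothing lemma, which is the step your proposal lacks.)
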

\end{versionb}

Sasaki geometry is a generalization of K\"ahler geometry with applications to the AdS/CFT correspondence in theoretical phyiscs.  It is an important problem to determine when Sasaki-Einstein metrics exist.  When the basic first Chern class is non-positive, the existence theory is well developed and generalizes the results of Aubin and Yau \cite{Aubin, Yau2}.  However, when the basic first Chern class is positive, there are known obstructions to the existence of Sasaki-Einstein metrics; see, for example, the work of Futaki, Ono, and Wang \cite{Futaki}, and Gauntlett, Martelli, Sparks and Yau \cite{GMarSparYau}.  It is expected that a suitably modified version of the famous conjecture of Yau \cite{Yau} should hold in the Sasaki setting; that is, existence of Sasaki-Einstein metrics with positive basic first Chern class should be equivalent to some geometric invariant theory notion of stability.  Recently, a flow approach to the Sasaki-Einstein problem was developed by Smoczyk, Wang and Zhang in \cite{SmoWaZa}, where they introduced the Sasaki-Ricci flow which generalizes the K\"ahler-Ricci flow, and extended the results of Cao \cite{Cao}.  There is now a large body of work relating various notions of algebraic stability to convergence of the K\"ahler-Ricci flow; see for example \cite{PSstab, Gabor, Valent} and the references therein. It is desirable to determine whether analogues of these results hold in the Sasaki case.  A particular form of stability which arises in the K\"ahler setting concerns the degeneration of eigenvalues of various Laplacians along the flow.  Phong and Sturm \cite{PS}, and Phong, Song, Sturm and Weinkove \cite{PSSW} proved convergence of the K\"ahler-Ricci flow assuming a bound below for the Mabuchi functional and stability conditions for the lowest positive eigenvalue of the $\dbar$ Laplacian on $T^{1,0}$ vector fields (cf. condition(B) in \cite {PS}, and condition (S) in \cite{PSSW}).  In \cite{Zhang}, Zhang proved convergence of the flow under a non-degeneracy condition for the `second' eigenvalue of a modified Laplacian on smooth functions.  

In the Sasaki case, it is natural to ask whether forms of stability analogous to those studied in \cite{PS, PSSW, Zhang} are available, and whether they imply the convergence of the Sasaki-Ricci flow to a transverse K\"ahler-Einstein metric when the Futaki invariant vanishes or the Mabuchi functional is bounded below.  We aim to address these questions presently.  We would like to point out a few simple observations which hint at the difficulties ahead.  Assume for simplicity that the Sasaki structure is regular, so that the Sasaki manifold $S$ is diffeomorphic to a K\"ahler manifold $S/U(1)$ with a principle $U(1)$ bundle.  In this case, the stability conditions we seek are necessarily the pull back of the K\"ahler stability conditions under the quotient map $\pi: S\rightarrow S/U(1)$.  The first observation is that pulling back sections of the tangent bundle by $\pi$ does not yield a module over the ring of smooth functions.  Thus, if we seek to generalize condition (B) of \cite{PS} or condition (S) of \cite{PSSW} we must work in the realm of locally free sheaves of modules over the ring of \emph{basic} functions.  More generally, when the Sasaki structure is irregular, so that the leaf space of the Reeb foliation does not have the structure of a K\"ahler orbifold, how do we identify the space of ``holomorphic vector fields"?  A general approach to this problem is to extend K\"ahler notions of stability to K\"ahler orbifolds and then formulate some related notion of stability which behaves well under approximation by quasi-regular Sasaki structures.  We prefer the point of view which avoids these approximation techniques.  In \cite{BoyGalSim, Futaki, NS}, the Lie algebra of holomorphic Hamiltonian vector fields was identified as a central object of study in the existence of Sasaki-Einstein and extremal Sasaki metrics.  Is it possible to view these vector fields as the kernel of a $\dbar$ operator on the global sections of some sheaf $\E$?  More importantly, if such an $\E$ exists, can we relate the convergence of the Sasaki-Ricci flow to the eigenvalues of the $\dbar$ Laplacian on the global sections of $\E$?  In this paper we answer these questions in the affirmative.  We identify a sheaf $\E$, called the sheaf of transverse foliate vector fields, which has a well defined $\dbar$ operator, with the property that the global holomorphic sections of $\E$ correspond precisely to the Hamiltonian holomorphic vector fields.  We consider the following notions of stability;
\begin{enumerate}
\item[(M)] The Mabuchi energy is bounded below.
\item[(F)] The Futaki invariant vanishes.
\item[(C)] 
Let $(\xi, \eta, \Phi)$ be a Sasaki structure on S. Then the $C^{\infty}$ closure of the orbit of the triple $(\xi, \eta, \Phi)$ under the diffeomorphism group of $S$ does not contain any Sasaki structure $(\xi_{\infty}, \eta_{\infty}, \Phi_{\infty})$ with the property that the dimension of the space of global holomorphic sections of the sheaf of transverse foliate vector fields with respect to  $(\xi_{\infty}, \eta_{\infty}, \Phi_{\infty})$ has dimension strictly higher than the dimension of the space of global holomorphic sections of the sheaf of transverse foliate vector fields with respect to $(\xi, \eta, \Phi)$.  
\end{enumerate}
Condition (C) generalizes condition (B) of \cite{PS}.  In light of Proposition~\ref{vanishing fut prop} below, condition (F) is at least \emph{a priori} weaker than condition (M).  We refer the reader to \S5 for details on the sheaf $\E$, and its holomorphic structure.  Our first theorem extends Theorem 1 in \cite{PS}.
\begin{Theorem}\label{main theorem}
Let $(S, \xi, \eta, \Phi, g_{0})$ be a compact Sasaki manifold with $c_{1}^{B}(S) >0$.  Assume that $g(t)$ is a solution of the Sasaki-Ricci flow with $g(0) = g_{0}$, and $(2n+2) g_{0}$ is in the basic first Chern class of $(S,\xi,\eta,\Phi, g_{0})$.  Assume that the transverse Riemann curvature is bounded along the flow.
\begin{enumerate}
\item[\it{(i)}] If condition (M) holds, then we have for any $s\geq 0$
\begin{equation*}
\lim_{t\rightarrow \infty} \|R^{T}(t) -(2n+2) g^{T}(t)\|_{(s)} =0
\end{equation*}
where $\| \cdot \|_{(s)}$ denotes the Sobolev norm of order s with respect to the metric $g(t)$.
\item[{\it(ii)}] If both conditions (M) and (C) hold, then the Sasaki-Ricci flow converges exponentially fast in $C^{\infty}$ to a Sasaki-Einstein metric.
\end{enumerate}
\end{Theorem}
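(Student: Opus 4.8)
The plan is to transplant the argument of Phong and Sturm \cite{PS}, together with the eigenvalue analysis of Phong, Song, Sturm and Weinkove \cite{PSSW}, to the transverse K\"ahler geometry of the Reeb foliation. Recall that the Sasaki--Ricci flow keeps $\xi$ and the transverse holomorphic structure fixed and moves the transverse K\"ahler form within the basic class $c_{1}^{B}(S)$; write $\omega^{T}(t)=\omega^{T}_{0}+\sqrt{-1}\,\partial_{B}\dbar_{B}\varphi(t)$ and let $u(t)$ be the (basic, normalized) transverse Ricci potential, so that $R^{T}-(2n+2)g^{T}=\sqrt{-1}\,\partial_{B}\dbar_{B}u$, $\dot\varphi=-u$ modulo a function of $t$, and $u$ satisfies the heat-type equation $\dot u=\Delta^{T}u+(2n+2)u$ modulo a function of $t$. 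Since the transverse holomorphic structure does not move, the space $H^{0}(\E)$ of global holomorphic sections of the sheaf of transverse foliate vector fields — which by \S5 is exactly the space of Hamiltonian holomorphic vector fields — is independent of $t$. The whole problem is then to control the decay of $u$ along the flow.

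Part (i) is the soft half, and runs parallel to Theorem 1 (i) of \cite{PS}. Along the flow the basic Mabuchi functional $\nu$ is non-increasing with $\tfrac{d}{dt}\nu=-c\,\|\dbar_{B}u\|^{2}_{L^{2}(g(t))}$ for a constant $c>0$, so (M) forces $\int_{0}^{\infty}\|\dbar_{B}u\|^{2}_{L^{2}}\,dt<\infty$. The assumed $C^{0}$ bound on the transverse curvature tensor $Rm^{T}$ bounds, via the structure equations of a Sasaki metric, the full Riemann tensor of $g(t)$; together with the Sasaki analogues of Perelman's diameter and non-collapsing estimates, Shi-type bounds, and the parabolic estimates of \cite{SmoWaZa}, this makes the geometry of $(S,g(t))$ uniform in $t$. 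Using the heat-type equation for $u$ one then estimates $\big|\tfrac{d}{dt}\|\dbar_{B}u\|^{2}_{L^{2}}\big|\le C$, whence $\|\dbar_{B}u\|_{L^{2}}\to 0$, and transverse parabolic/elliptic bootstrapping upgrades this to $\|u\|_{(s)}\to 0$ for every $s\ge 0$, that is, $\|R^{T}(t)-(2n+2)g^{T}(t)\|_{(s)}\to 0$.

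For part (ii), let $\lambda(t)>0$ be the lowest positive eigenvalue of $\dbar^{\dagger}\dbar$ on global sections of $\E$ with respect to $g(t)$. The first step is to show that (M) and (C) force $\delta:=\inf_{t\ge 0}\lambda(t)>0$. If not, pick $t_{j}\to\infty$ with $\lambda(t_{j})\to 0$ and unit-norm eigensections $\sigma_{j}\perp H^{0}(\E)$, $\|\dbar\sigma_{j}\|^{2}=\lambda(t_{j})$. The curvature bound gives $(S,g(t_{j}))$ uniformly bounded geometry, so after a subsequence there are diffeomorphisms $\Psi_{j}$ of $S$ with $\Psi_{j}^{*}(\xi,\eta,\Phi,g(t_{j}))\to(\xi_{\infty},\eta_{\infty},\Phi_{\infty},g_{\infty})$ in $C^{\infty}$; the limit is a Sasaki structure, it is Sasaki--Einstein by part (i), and it lies in the $C^{\infty}$-closure of the orbit of $(\xi,\eta,\Phi)$ under $\mathrm{Diff}(S)$. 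Since $\dbar^{\dagger}\dbar$ is elliptic on sections of $\E$, uniform elliptic estimates yield, after a further subsequence, $\Psi_{j}^{*}\sigma_{j}\to\sigma_{\infty}$ in $C^{\infty}$ with $\|\sigma_{\infty}\|_{L^{2}}=1$ and $\dbar_{\infty}\sigma_{\infty}=0$, while the subspaces $\Psi_{j}^{*}H^{0}(\E)$ converge to an $h^{0}(\E)$-dimensional space $V_{\infty}\subset H^{0}(\E_{\infty})$ of holomorphic sections; as $\sigma_{\infty}\perp V_{\infty}$ and $\sigma_{\infty}\ne 0$ we get $\dim H^{0}(\E_{\infty})\ge h^{0}(\E)+1$, contradicting (C). Hence $\delta>0$. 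This compactness step is where I expect the real work: one must set up Cheeger--Gromov/Hamilton-type convergence in the Sasaki category so that the structure triples $(\xi,\eta,\Phi)$, not just the metrics, converge (with the limit genuinely Sasaki and genuinely in the $C^{\infty}$-closure of the orbit), develop enough transverse Hodge theory for the foliation-defined sheaf $\E$ to make sense of the limiting $\dbar$ and of the convergences $\Psi_{j}^{*}\sigma_{j}\to\sigma_{\infty}$ and $\Psi_{j}^{*}H^{0}(\E)\to V_{\infty}$, and — most delicately — extract the uniform curvature, diameter and non-collapsing input needed for the compactness from the single hypothesis that $Rm^{T}$ is bounded in $C^{0}$.

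Granted the gap $\lambda(t)\ge\delta$, the rest follows the linearized argument of \cite{PS,PSSW}. By Proposition~\ref{vanishing fut prop}, condition (M) implies the Futaki invariant vanishes, and since $H^{0}(\E)$ is exactly the space of Hamiltonian holomorphic vector fields this is equivalent to the transverse $(1,0)$-gradient $\nabla^{1,0}_{T}u$ being $L^{2}$-orthogonal to $H^{0}(\E)=\ker\dbar$; the spectral gap then gives $\|\dbar\,\nabla^{1,0}_{T}u\|^{2}_{L^{2}}\ge\delta\,\|\nabla^{1,0}_{T}u\|^{2}_{L^{2}}$. A transverse Bochner--Kodaira computation along the flow, using part (i) to absorb the curvature error terms, produces a differential inequality $\tfrac{d}{dt}\|\dbar\,\nabla^{1,0}_{T}u\|^{2}_{L^{2}}\le -(2\delta-\epsilon(t))\,\|\dbar\,\nabla^{1,0}_{T}u\|^{2}_{L^{2}}$ with $\epsilon(t)\to 0$, so $\|\nabla^{1,0}_{T}u\|_{L^{2}}\le Ce^{-\delta t/2}$ for large $t$, and parabolic bootstrapping (using the uniform geometry) makes this exponential decay hold for $u$ in every $C^{k}$. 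Hence $\varphi(t)$ is $C^{\infty}$-Cauchy with exponentially small tails and converges exponentially fast in $C^{\infty}$ to some $\varphi_{\infty}$; since $R^{T}-(2n+2)g^{T}\to 0$, the limiting transverse metric $\omega^{T}_{0}+\sqrt{-1}\,\partial_{B}\dbar_{B}\varphi_{\infty}$ is transverse K\"ahler--Einstein, i.e. $g_{\infty}$ is Sasaki--Einstein.
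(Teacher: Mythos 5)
Your proposal is correct and follows essentially the same route as the paper: part (i) via the Mabuchi-energy identity and the evolution of $Y(t)=\|\nabla u\|_{L^2}^2$ with the curvature bound feeding the inductive Sobolev estimates, and part (ii) via Futaki vanishing, the spectral gap for $\square_{\E}$ on sections of $\E^{1,0}$ obtained by a Hamilton-type Sasaki compactness/contradiction argument against condition (C), and then exponential decay plus Hamilton's uniform-equivalence lemma. The one bookkeeping slip is that the Bochner--Kodaira computation yields the differential inequality for $Y$ itself, namely $\dot{Y}\leq -(2\delta-\epsilon(t))Y$ after inserting the gap $\|\dbar_{\E}\nabla^{1,0}u\|^{2}\geq\delta\,Y$ into the paper's evolution equation, not an inequality with $\|\dbar_{\E}\nabla^{1,0}u\|^{2}$ on both sides; the conclusion you draw is nonetheless the right one.
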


We remove the condition on the boundedness of $Rm^{T}$ by introducing the stability condition (T), which generalizes condition (S) of \cite{PSSW}.  The following theorem extends the results of \cite{PSSW} , and \cite{Zhang}.

\begin{Theorem}\label{main theorem 2}
Let $(S, \xi, \eta, \Phi, g_{0})$ be a compact Sasaki manifold with $c_{1}^{B}(S) >0$.  Assume that $g(t)$ is a solution of the Sasaki-Ricci flow with $g(0) = g_{0}$, and $(2n+2) g_{0} \in c_{1}^{B}(S)$.   Let $\lambda_{t}$ be the lowest strictly positive eigenvalue of the Laplacian $\square_{\E} := -(g^{T})^{j\bar{k}}\nabla^{T}_{j}\nabla^{T}_{\bar{k}}$ acting on smooth global sections of $\E^{1,0}$.
\begin{enumerate}
\item[{\it (i)}] If condition (F) and condition\\
\begin{enumerate}
\item[(T)] $\inf_{t\in[0,\infty)} \lambda_{t} >0$\\
\end{enumerate}
hold, then the metrics $g(t)$ converge exponentially fast in $C^{\infty}$ to a Sasaki-Einstein metric.
\item[{\it (ii)}]
Conversely, if the metrics $g(t)$ converge in $C^{\infty}$ to a Sasaki-Einstein metric, then conditions (F) and (T) hold.
\item[{\it (iii)}]In particular, if the metrics $g(t)$ converge in $C^{\infty}$ to a Sasaki-Einstein metric, then they converge exponentially fast in $C^{\infty}$ to this metric.
\end{enumerate}
\end{Theorem}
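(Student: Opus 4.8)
The plan is to transplant the argument of Phong--Song--Sturm--Weinkove \cite{PSSW} and Zhang \cite{Zhang} to the transverse K\"ahler geometry of the Sasaki--Ricci flow, with the sheaf $\E$ of transverse foliate vector fields playing the role of the tangent bundle $T^{1,0}$ of the K\"ahler case. By \cite{SmoWaZa} the flow exists for all $t\ge 0$ and stays in the fixed transverse K\"ahler class $\frac{1}{2n+2}c_1^B(S)$, so the transverse holomorphic structure, the Reeb field, and hence $\E$ together with its operator $\bar\partial_\E$, are independent of $t$. Running the flow through a time--dependent basic potential $\varphi_t$, I would work with the normalized transverse Ricci potential $u_t$, defined by $R^T_{\bar k j}(t)-(2n+2)g^T_{\bar k j}(t)=\partial_j\partial_{\bar k}u_t$ and $\int_S u_t\,dV_{g(t)}=0$, which solves a forced heat equation $\partial_t u_t=\Delta^T_t u_t+u_t-c(t)$. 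The transverse analogue of Perelman's $\mathcal{W}$--entropy and the estimates it yields --- developed earlier in the paper, following \cite{SmoWaZa} --- provide bounds, uniform in $t$, for the transverse scalar curvature, the transverse diameter, the transverse Poincar\'e and Sobolev constants, and for $\|u_t\|_{C^0}+\|\nabla^T u_t\|_{C^0}+\|\Delta^T_t u_t\|_{C^0}$; these stand in for any \emph{a priori} bound on $Rm^T$. From \S5 I would use: $V_t:=(g^T_t)^{j\bar k}(\partial_{\bar k}u_t)\,\partial_j$ is a smooth global section of $\E^{1,0}$; $\bar\partial_\E V_t=0$ if and only if $V_t$ is Hamiltonian holomorphic, i.e.\ $V_t\in H^0(\E^{1,0})=\ker\square_\E$; and the Futaki invariant is given by $\mathrm{Fut}(W)=\langle W,V_t\rangle_{L^2(g(t))}$ for every $W\in H^0(\E^{1,0})$, a pairing independent of $t$. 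Hence condition (F) says precisely that $V_t\perp H^0(\E^{1,0})$ for every $t$.

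Set $Y(t):=\|\bar\partial_\E V_t\|^2_{L^2(g(t))}$, the squared norm of the $(2,0)$--part of the transverse Hessian of $u_t$, which vanishes exactly when $V_t$ is holomorphic. I would argue in four steps. \emph{Step 1 (integrability).} The transverse $\mathcal{W}$--entropy is monotone along the flow, with derivative controlling $\|R^T_{\bar k j}+\nabla^T_j\nabla^T_{\bar k}u_t-(2n+2)g^T_{\bar k j}\|^2_{L^2}$, so $\int_0^\infty\big(Y(t)+\|\Delta^T_t u_t\|^2_{L^2}\big)\,dt<\infty$; since $\frac{d}{dt}\nu(g(t))=-Y(t)\le 0$ for the Mabuchi energy $\nu$, the Mabuchi energy is bounded below along the flow, so that the hypothesis (M) of Theorem~\ref{main theorem} is replaced here by the weaker (F) (cf.\ Proposition~\ref{vanishing fut prop}). \emph{Step 2 (the main differential inequality).} Differentiating $Y$ and using the evolution equations for $g^T$ and $u_t$, the transverse Bochner--Weitzenb\"ock identities, the commutation rules for $\nabla^T$, and integration by parts against basic forms, I expect an inequality of the form
\[
\frac{d}{dt}Y(t)\;\le\;-2\,\|\bar\partial_\E^\dagger\bar\partial_\E V_t\|^2_{L^2}\;+\;C\big(1+\|\nabla^T u_t\|_{C^0}+\|\Delta^T_t u_t\|_{C^0}\big)\big(Y(t)+\|V_t\|^2_{L^2}\big),
\]
the essential point being that the transverse curvature terms produced by the Weitzenb\"ock identity cancel against those produced by the evolution $\partial_t g^T=-(R^T-(2n+2)g^T)$, leaving \emph{no net dependence on $Rm^T$} --- this cancellation is the mechanism of \cite{PSSW} and is what removes the hypothesis of Theorem~\ref{main theorem}. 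Together with Step~1 and a Gr\"onwall argument this forces $Y(t)\to 0$. \emph{Step 3 (exponential decay via (T)).} For any section $V\perp\ker\square_\E$ one has, by the spectral decomposition of $\square_\E$, both $\|\bar\partial_\E V\|^2\ge\lambda_t\|V\|^2$ and $\|\bar\partial_\E^\dagger\bar\partial_\E V\|^2\ge\lambda_t\|\bar\partial_\E V\|^2$; since $V_t\perp\ker\square_\E$ by (F), inserting $\lambda_0:=\inf_t\lambda_t>0$ and using $Y(t)\to 0$ (together with a companion control of the transverse Sobolev norms of $u_t$, handled in tandem as in \cite{PSSW,Zhang}) to absorb the error, the inequality above upgrades, for $t$ large, to $\frac{d}{dt}Y\le-\lambda_0 Y$, whence $Y(t)\le Ce^{-\lambda_0 t}$. \emph{Step 4 (decay of $u_t$).} Since $V_t\perp\ker\square_\E$, the spectral gap gives $\|\nabla^T u_t\|^2_{L^2}=\|V_t\|^2\le\lambda_0^{-1}Y(t)$, and then the uniform transverse Poincar\'e inequality yields exponential decay of $\|u_t\|_{L^2}$.

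Given exponential $L^2$ decay of $u_t$, the uniform transverse Sobolev inequality together with parabolic bootstrapping for $\partial_t-\Delta^T_t$ off the uniformly controlled background geometry upgrades this to exponential decay of every transverse $C^k$ norm of $u_t=-\dot\varphi_t+(\text{const})$; hence $\varphi_t$, and therefore $g(t)$, is Cauchy in every $C^k$ and converges exponentially in $C^\infty$ to a metric with $R^T=(2n+2)g^T$, i.e.\ a Sasaki--Einstein metric. This proves (i). For (ii): if $g(t)\to g_\infty$ in $C^\infty$ with $g_\infty$ Sasaki--Einstein, then the transverse Futaki invariant of $(S,\xi,\eta,\Phi)$ vanishes, since it depends only on the fixed transverse holomorphic data and is obstructed by the existence of a Sasaki--Einstein metric (cf.\ \cite{Futaki,BoyGalSim}); this is (F). Moreover, the transverse holomorphic structure being fixed, $\dim\ker\square_{\E,g(t)}=\dim H^0(\E^{1,0})$ is constant in $t$, so $t\mapsto\lambda_t$ is continuous and strictly positive on $[0,\infty)$ with $\lambda_t\to\lambda_\infty>0$; hence $\inf_t\lambda_t>0$, which is (T). Statement (iii) is then immediate: $C^\infty$ convergence implies (F) and (T) by (ii), and (F) and (T) imply exponential convergence by (i).

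The step I expect to be the main obstacle is Step~2 together with the coupled Sobolev estimates of Step~3: carrying out the Weitzenb\"ock--versus--evolution cancellation, and the ensuing bootstrap, in the transverse --- possibly irregular --- foliated setting. This requires developing Hodge theory, Bochner--Weitzenb\"ock formulas and Moser iteration for the $\bar\partial_\E$--complex on basic $\E$--valued forms, verifying that the transverse divergence theorem and every K\"ahler identity invoked in \cite{PSSW} has a valid basic counterpart, and tracking carefully the terms arising from the non-integrable part of the Sasaki structure (which enter through the transverse torsion of $(\xi,\eta,\Phi)$). By contrast, the identification of $\mathrm{Fut}$ with the $L^2$--pairing against $H^0(\E^{1,0})$, and the constancy of $\dim H^0(\E^{1,0})$ along the flow --- the facts that allow condition (F) to replace the stronger condition (M) --- are comparatively soft, but still need to be established with care.
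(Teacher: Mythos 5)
Your parts (ii) and (iii) are essentially the paper's argument (constancy of $\dim H^{0}(\E^{1,0})$ along the flow plus eigenvalue perturbation gives (T); the existence of a Sasaki--Einstein metric kills the Futaki invariant), but part (i) -- the heart of the theorem -- has a genuine gap at your Step 1, and it propagates through Steps 2--4. Neither the finiteness of $\int_{0}^{\infty}Y(t)\,dt$ nor the lower bound for the Mabuchi energy follows from entropy monotonicity or from condition (F). The monotone transverse $\mathcal{W}$-entropy controls the \emph{minimizer} of the entropy functional, not the transverse Ricci potential $u$, so it does not yield $\int_{0}^{\infty}\|\overline{\nabla}\,\overline{\nabla}u\|_{L^{2}}^{2}\,dt<\infty$; and since $\frac{d}{dt}\nu_{\eta_{0}}=-\|\nabla u\|_{L^{2}}^{2}$ (not $-\|\overline{\nabla}\,\overline{\nabla}u\|_{L^{2}}^{2}$), the time-integrability of the dissipation is \emph{equivalent} to the Mabuchi energy being bounded below, i.e.\ to condition (M). That is exactly the hypothesis Theorem~\ref{main theorem 2} is designed to avoid: granting your Step 1 amounts to assuming (M), and you would then be re-proving a transverse version of \cite{PSSW} Theorem 2(i) (which assumes (A) and (S)) rather than the stated result, which assumes only (F) and (T). A secondary but real problem is the asserted cancellation in Step 2: no such cancellation occurs. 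The evolution of $\|\nabla u\|_{L^{2}}^{2}$ retains the curvature term $\int_{S}\nabla^{j}u\,\nabla^{\bar{k}}u\,(R^{T}_{\bar{k}j}-g^{T}_{\bar{k}j})\,d\mu$ (equation~(\ref{Y diff equation 2})), and in \cite{PSSW} this is handled by Cauchy--Schwarz together with an iterated smoothing lemma producing a delay-differential inequality -- not by a Weitzenb\"ock-versus-evolution cancellation.

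The paper's actual route, following \cite{Zhang}, replaces the Mabuchi energy by the weighted average $a(t)=Vol(S)^{-1}\int_{S}ue^{-u}d\mu$ and the quantity $W(t)=Vol(S)^{-1}\int_{S}(u-a)^{2}e^{-u}d\mu$. The needed integrability comes for free: $Z(t):=\dot{a}=Vol(S)^{-1}\int_{S}(|\nabla u|^{2}-(u-a)^{2})e^{-u}d\mu$ satisfies $\int_{0}^{\infty}Z\,dt=\lim a(t)-a(0)<\infty$ because $u$ is uniformly bounded by the Perelman-type estimates of Theorem~\ref{Pman thm} -- no energy functional is invoked. Condition (T) is then converted, via the weighted elliptic operator $L=-(g^{T})^{j\bar{k}}\nabla_{j}\nabla_{\bar{k}}+(g^{T})^{j\bar{k}}\nabla_{j}u\nabla_{\bar{k}}$ and the identification of its eigenvalue-$1$ eigenfunctions with potentials of elements of $H^{0}(\E^{1,0})$, into a gap $\nu\geq 1+\delta$ for the second eigenvalue of $L$; together with (F) (which, after comparing the $d\mu$- and $d\rho$-orthogonal projections onto $H^{0}(\E^{1,0})$, controls the harmonic part of $\nabla u$) this gives $Z\geq\delta'W$, hence exponential decay of $W$. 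The smoothing lemma (Lemma~\ref{smoothing lemma}) and Lemma~\ref{PSSW lemma 3} then upgrade $L^{2}$ decay of $u$ to $C^{0}$ decay of $\|R^{T}-(2n+2)n\|$, and Lemma~\ref{PSSW lemma 6} converts the time-integrability of that quantity into uniform bounds on the potential $\phi$, uniform equivalence of the metrics, and finally exponential $C^{\infty}$ convergence. None of these devices -- the average $a(t)$, the operator $L$ and its spectral gap, or the passage from (T) to the gap for $L$ -- appears in your proposal, and without them condition (F) cannot do the work you are asking of it.
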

\begin{versionb}
Combining the techniques in the proofs of Theorems~\ref{main theorem}, and~\ref{main theorem 2} we obtain
\begin{Theorem}\label{main theorem 3}
Let $(S, \xi, \eta, \Phi, g_{0})$ be a compact Sasaki manifold with $c_{1}^{B}(S) >0$.  Assume that $g(t)$ is a solution of the Sasaki-Ricci flow with $g(0) = g_{0}$, and $(2n+2) g_{0}$ is in the basic first Chern class of $(S,\xi,\eta,\Phi)$.  If condition (A) holds, then we have
\begin{enumerate}
\item[\it{(i)}]
 \begin{equation*}
 \int_{0}^{\infty}  \|R^{T}(t)-(2n+2) n\|^{p}_{C^{0}}dt <\infty \quad \text{  for any  } p >2.
 \end{equation*}
\item[{\it (ii)}] In particular, 
\begin{equation*}
 \|R^{T}(t)-(2n+2) n\|_{C^{0}} \rightarrow 0 \quad \text{ as } t \rightarrow \infty.
 \end{equation*}
\end{enumerate}
\end{Theorem}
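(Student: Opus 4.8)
The plan is to prove this as the transverse-K\"ahler analogue of parts (b)--(c) of \cite{PS,PSSW}, by combining the Mabuchi-monotonicity input from the proof of Theorem~\ref{main theorem} with the no-curvature-bound machinery developed for Theorem~\ref{main theorem 2}; in effect the hypothesis that the Mabuchi energy is bounded below will play the role filled by condition (T) in Theorem~\ref{main theorem 2}. First I would introduce the normalized transverse Ricci potential $u_{t}$, the basic function with $\rho^{T}_{t}-(2n+2)\omega^{T}_{t}=\sqrt{-1}\,\partial_{B}\dbar_{B}u_{t}$ and $\int_{S}e^{-u_{t}}\,dV_{t}=\int_{S}dV_{t}$, and note that tracing this identity gives $R^{T}_{t}-(2n+2)n=\Delta^{T}u_{t}$, so the two assertions become $\int_{0}^{\infty}\|\Delta^{T}u_{t}\|_{C^{0}}^{p}\,dt<\infty$ for $p>2$ and $\|\Delta^{T}u_{t}\|_{C^{0}}\to 0$. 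Along the Sasaki--Ricci flow $u_{t}$ obeys a forced heat equation $\partial_{t}u=\Delta^{T}u+(2n+2)u-a(t)$, and the Mabuchi energy is monotone with $\tfrac{d}{dt}\mathcal{M}(g_{t})=-\int_{S}|\nabla^{T}u_{t}|^{2}\,dV_{t}$; condition (A) therefore yields $\int_{0}^{\infty}\big(\int_{S}|\nabla^{T}u_{t}|^{2}\,dV_{t}\big)\,dt\le\mathcal{M}(g_{0})-\inf\mathcal{M}<\infty$.

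Next I would invoke the transverse Perelman-type estimates established en route to Theorem~\ref{main theorem 2}: uniform-in-$t$ bounds on $\|u_{t}\|_{C^{0}}$, $\|\nabla^{T}u_{t}\|_{C^{0}}$, $\|\Delta^{T}u_{t}\|_{C^{0}}$, on the transverse diameter and $\kappa$-noncollapsing constant, and on the constants in the transverse Sobolev and Poincar\'e inequalities, none of which requires a bound on $Rm^{T}$. Setting $Y(t)=\int_{S}|\nabla^{T}u_{t}|^{2}\,dV_{t}$, a computation with the forced heat equation, integration by parts against basic functions, and the transverse Bochner identity $\|\dbar_{B}(\nabla^{T})^{1,0}u\|_{L^{2}}^{2}=\|\Delta^{T}u\|_{L^{2}}^{2}-\int_{S}R^{T}_{\bar{k}j}\nabla^{j}u\,\nabla^{\bar{k}}u\,dV$ should give the differential inequality $\tfrac{d}{dt}Y\le-\|\dbar_{B}(\nabla^{T})^{1,0}u_{t}\|_{L^{2}}^{2}+CY$, with $C$ depending only on the Perelman constants (the cubic error $\int_{S}u_{j\bar{k}}u^{j}u^{\bar{k}}$ being absorbed via the $C^{0}$ bounds on $u$, $\nabla^{T}u$, $\Delta^{T}u$). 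Since $Y\in L^{1}([0,\infty))$ this forces $Y(t)\to 0$, and, reinstating the good term and integrating, $\int_{0}^{\infty}\|\dbar_{B}(\nabla^{T})^{1,0}u_{t}\|_{L^{2}}^{2}\,dt<\infty$. A parallel inequality $\tfrac{d}{dt}\|\Delta^{T}u_{t}\|_{L^{2}}^{2}\le C\big(\|\Delta^{T}u_{t}\|_{L^{2}}^{2}+Y\big)$, obtained from the heat equation $\partial_{t}v=\Delta^{T}v+(2n+2)v+|\partial_{B}\dbar_{B}u|^{2}$ satisfied by $v:=\Delta^{T}u=R^{T}-(2n+2)n$, then gives $\|\Delta^{T}u_{t}\|_{L^{2}}\to0$ and $\int_{0}^{\infty}\|\Delta^{T}u_{t}\|_{L^{2}}^{2}\,dt<\infty$, and via the Bochner identity also $\|\partial_{B}\dbar_{B}u_{t}\|_{L^{2}}\to0$ with $\int_{0}^{\infty}\|\partial_{B}\dbar_{B}u_{t}\|_{L^{2}}^{2}\,dt<\infty$.

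The last step is to upgrade these $L^{2}$-in-space statements to the asserted $C^{0}$-in-space, $L^{p}$-in-time statements, and here I expect the real difficulty: this is precisely the point at which \cite{PSSW} removed the Riemann-tensor hypothesis in the K\"ahler case. The idea is to apply the transverse parabolic local maximum / mean value estimates along the flow to $v=\Delta^{T}u$ (whose heat equation has a nonnegative inhomogeneity), with all constants furnished by the transverse Sobolev inequality and $\kappa$-noncollapsing rather than by any bound on $Rm^{T}$, concluding that $\|v(t)\|_{C^{0}}$ is dominated by a quantity that tends to $0$ along with the $L^{2}$-in-space norms of $v$ and $\partial_{B}\dbar_{B}u$ averaged over the preceding unit time interval; the exponent $p>2$ then appears by interpolating the uniform bound $\|v\|_{C^{0}}\le C_{0}$ against the $L^{2}$-in-time decay, since $\int_{0}^{\infty}\|v\|_{C^{0}}^{p}\,dt\le C_{0}^{p-2}\int_{0}^{\infty}\|v\|_{C^{0}}^{2}\,dt<\infty$. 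The hard part, exactly as in \cite{PSSW}, will be to run the parabolic estimates without a curvature bound: one must see that $\partial_{t}-\Delta^{T}$ is uniformly parabolic transverse to the Reeb foliation with constants controlled only by the Perelman-type estimates, and --- the crux --- one must control the inhomogeneity $|\partial_{B}\dbar_{B}u|^{2}$ in a subcritical Lebesgue space, which amounts to a uniform transverse second-order estimate for $u$ along the flow; the remaining ingredients (the Mabuchi identity, the $Y$-inequality, the Bochner bookkeeping) are routine transverse transcriptions of their K\"ahler counterparts.
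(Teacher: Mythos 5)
Your first two paragraphs are sound and coincide with what the paper does: condition (A) gives $\int_{0}^{\infty}Y(t)\,dt<\infty$ for $Y(t)=\|\nabla u\|_{L^{2}}^{2}$, the Perelman-type estimates of Theorem~\ref{Pman thm} supply the uniform $C^{0}$ control of $u$, $\nabla u$, $R^{T}$ and the diameter, and the evolution equation for $Y$ upgrades subsequential decay to $Y(t)\to 0$. The gap is in your final step, and it is not a deferred technicality but a route that cannot close. A parabolic mean-value (Moser) estimate for $v=R^{T}-(2n+2)n$ requires controlling the inhomogeneity $|\partial_{B}\dbar_{B}u|^{2}$ in a subcritical Lebesgue space; since $\partial_{B}\dbar_{B}u=-(Ric^{T}-(2n+2)g^{T})$, that is a uniform bound on the transverse Ricci curvature, i.e.\ precisely the curvature hypothesis this theorem is designed to dispense with, and it is not furnished by any of the Perelman-type estimates (only the scalar curvature is bounded). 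You correctly identify this as the crux but supply no substitute. Moreover, your closing interpolation $\int_{0}^{\infty}\|v\|_{C^{0}}^{p}\,dt\le C_{0}^{p-2}\int_{0}^{\infty}\|v\|_{C^{0}}^{2}\,dt$ presupposes the case $p=2$, which is strictly stronger than the statement being proved; the restriction to $p>2$ is forced by the method, so an argument that would deliver $p=2$ should itself be read as evidence that a step has failed.

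The paper's actual mechanism, already assembled in \S9, replaces parabolic regularity by the Bando--Phong--Song--Sturm--Weinkove smoothing argument: Lemma~\ref{smoothing lemma} (if $\|u(t_{0})\|_{C^{0}}\le\epsilon$ then $\|\nabla u(t_{0}+2)\|_{C^{0}}+\|R^{T}(t_{0}+2)-(2n+2)n\|_{C^{0}}\le K\epsilon$) combined with the interpolation inequality of Lemma~\ref{PSSW lemma 3}, $\|u-a\|_{C^{0}}^{n+1}\le C\,\|\nabla u\|_{L^{2}}\,\|\nabla u\|_{C^{0}}^{n}$. Iterating the pair backwards in time converts each $C^{0}$ gradient factor at time $t$ into the product of an $L^{2}$ gradient factor and a $C^{0}$ gradient factor at an earlier time whose exponent has shrunk by the factor $n/(n+1)$; after finitely many steps one obtains
\begin{equation*}
|R^{T}(t)-(2n+2)n|\;\le\;C\prod_{j=1}^{N}\|\nabla u(t-a_{j})\|_{L^{2}}^{\tilde{\delta}_{j}},\qquad \sum_{j}\tilde{\delta}_{j}=2/p,
\end{equation*}
for any prescribed $p>2$, the residual $C^{0}$ factors being absorbed by the uniform bound of Theorem~\ref{Pman thm}. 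H\"older's inequality in $t$ together with $\int_{0}^{\infty}Y\,dt<\infty$ then gives part {\it(i)}, and part {\it(ii)} follows from the smoothing lemma once $\|u\|_{C^{0}}\to 0$, which is itself a consequence of $Y(t)\to0$ and Lemma~\ref{PSSW lemma 3}. The impossibility of pushing $\sum_{j}\tilde{\delta}_{j}$ all the way to $1$ in finitely many iterations is exactly what excludes $p=2$.
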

\end{versionb}

The condition ``$(2n+2) g_{0}$ is in the basic first Chern class of $(S,\xi,\eta,\Phi, g_{0})$" in the above Theorems is not restrictive in light of the so-called ``$\mathcal{D}$- homothetic transformations" introduced by Tanno \cite{Tanno};  see \S3.  The outline of this paper is as follows;  in \S2 we provide an introduction to Sasaki geometry.  In \S3 we discuss perturbations of Sasaki structures and the Sasaki-Ricci flow.  We present an argument for a specific choice of the initial value of the transverse K\"ahler potential and point out a some consequences of this normalization.  We also present previous results on the Sasaki-Ricci flow which we need.  In \S4 we extend the well known estimates of Hamilton \cite{RHam2} and Shi \cite{Shi} for the Ricci flow to the Sasaki-Ricci flow on compact Sasaki manifolds.  In particular, we prove

\begin{Theorem}\label{BBS Thm}
Let $(S,g_{0})$ be a compact Sasaki manifold of dimension $2n+1$, and suppose that $g(t)$ is a solution of the normalized Sasaki-Ricci flow, with $g(0)=g_{0}$.  Then, for each $\alpha >0$, and every $m \in \mathbb{N}$, there exists a constant $C_{m}$ depending only on $m, n$ and $\max \{\alpha, 1\}$ such that if $K$  satisfies
\begin{equation*}
|Rm^{T}(x,t)|_{g^{T}(x,t)} \leq K  \text{ for every } x \in S, \text{ and } t\in [0, \frac{\alpha}{K}]
\end{equation*}
then, the bound
\begin{equation*}
\max \left\{|\nabla ^{m} Rm(x,t)|_{g(t)}, \text{ }|\nabla^{m}Rm^{T}(x,t)|_{g^{T}(x,t)} \right\}\leq \frac{C_{m}\max\{K^{1/2},K\}}{t^{m/2}}
\end{equation*}
holds for every $x \in S$ and $ t \in (0, \frac{\alpha}{K}]$.
\end{Theorem}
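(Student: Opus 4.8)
The plan is to reduce the theorem to a transverse analogue of Shi's local derivative estimates together with a structural comparison between the full and transverse curvatures. Recall that along the Sasaki-Ricci flow only the transverse K\"ahler metric $g^{T}$ moves, by the (normalized) transverse K\"ahler-Ricci flow, while the Reeb field $\xi$, the contact form $\eta$, and the transverse holomorphic structure are fixed; on local leaf charts the flow is literally the K\"ahler-Ricci flow. In particular $Rm^{T}$, the transverse connection $\nabla^{T}$, and the iterated transverse derivatives $(\nabla^{T})^{m}Rm^{T}$ are all \emph{basic} tensors. I would first establish the transverse estimate
\begin{equation*}
\bigl|(\nabla^{T})^{m}Rm^{T}(x,t)\bigr|_{g^{T}(x,t)}\le \frac{C_{m}\,K}{t^{m/2}},\qquad t\in(0,\alpha/K],
\end{equation*}
and then bound $|\nabla^{m}Rm|_{g(t)}$ (and $|\nabla^{m}Rm^{T}|_{g^{T}(t)}$) in terms of the $\bigl|(\nabla^{T})^{j}Rm^{T}\bigr|_{g^{T}}$, $j\le m$, plus universally controlled contributions from the contact structure. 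The passage to the slightly weaker bound $C_{m}\max\{K^{1/2},K\}\,t^{-m/2}$ stated in the theorem costs only these zeroth order contact terms and powers of $\alpha$ used to absorb lower order pieces on $(0,\alpha/K]$, which is where the dependence of $C_{m}$ on $\max\{\alpha,1\}$ enters.

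For the transverse estimate I would run the Hamilton--Shi argument in the foliated category. Differentiating the flow equation one finds that $\nabla^{T}$ varies by a first transverse derivative of $Rm^{T}$, and, proceeding exactly as in the K\"ahler-Ricci flow, one obtains Hamilton-type evolution equations
\begin{equation*}
\partial_{t}Rm^{T}=\Delta_{B}Rm^{T}+Rm^{T}\ast Rm^{T}+c\,Rm^{T},
\end{equation*}
and, inductively,
\begin{equation*}
\partial_{t}(\nabla^{T})^{m}Rm^{T}=\Delta_{B}(\nabla^{T})^{m}Rm^{T}+\sum_{i+j=m}(\nabla^{T})^{i}Rm^{T}\ast(\nabla^{T})^{j}Rm^{T}+(\text{lower order}),
\end{equation*}
where $\Delta_{B}$ is the basic Laplacian of $g^{T}$ and $c$ is the normalization constant. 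Every tensor here is basic, so the functions $\bigl|(\nabla^{T})^{m}Rm^{T}\bigr|_{g^{T}}^{2}$ are basic; since the Reeb foliation of a Sasaki manifold has minimal (indeed geodesic) leaves, $\Delta_{B}$ coincides with the Riemannian Laplacian of $g$ on basic functions, so the parabolic maximum principle on the compact manifold $S$ applies verbatim. Feeding these equations into Shi's interpolation scheme, testing against combinations such as $t^{m}\bigl|(\nabla^{T})^{m}Rm^{T}\bigr|^{2}+\beta\,t^{m-1}\bigl|(\nabla^{T})^{m-1}Rm^{T}\bigr|^{2}+\dots$ and using the hypothesis $|Rm^{T}|_{g^{T}}\le K$ on $[0,\alpha/K]$ to absorb the dangerous terms, closes the induction on $m$ and yields the displayed transverse bound.

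To recover $\nabla^{m}Rm$, I would use the standard Sasaki identities writing $Rm$ as $Rm^{T}$ plus a universal quadratic expression in $\eta,\Phi,\xi,g$, together with the fact that $\nabla\xi$, $\nabla\eta$ and $\nabla\Phi$ are algebraic in $\eta,\xi,\Phi,g$. Differentiating with the full connection $\nabla$ and commuting past the foliated splitting gives, schematically, $\nabla^{m}Rm=(\nabla^{T})^{m}Rm^{T}+\sum_{j\le m}(\nabla^{T})^{j}Rm^{T}\ast(\text{contact tensors})+(\text{contact tensors})$, where every covariant derivative of a contact tensor is again algebraic in the contact tensors and hence bounded by a constant depending only on $n$. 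Combined with the transverse estimate this gives
\begin{equation*}
|\nabla^{m}Rm|_{g(t)}\le C_{m}\Bigl(1+\sum_{j=0}^{m}\bigl|(\nabla^{T})^{j}Rm^{T}\bigr|_{g^{T}}\Bigr)\le\frac{C_{m}\max\{K^{1/2},K\}}{t^{m/2}},\qquad t\in(0,\alpha/K],
\end{equation*}
and the same bound for $|\nabla^{m}Rm^{T}|_{g^{T}}$ follows in the same way.

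The main obstacle is the careful derivation and bookkeeping of the transverse evolution equations: one must verify that differentiating the Sasaki-Ricci flow and commuting $\partial_{t}$ past $\nabla^{T}$ produces only basic error terms of the expected Hamilton type, with no uncontrolled contributions in the Reeb direction, and that the diffusion operator is genuinely the basic Laplacian --- this last point being precisely what makes the compact maximum principle available despite the absence of a global leaf space in the irregular case. Granting these transverse evolution equations, Shi's iteration and the transverse-to-full comparison are routine adaptations of the K\"ahler-Ricci flow arguments of Hamilton \cite{RHam2} and Shi \cite{Shi}.
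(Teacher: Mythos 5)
Your proposal is correct in outline and rests on the same engine as the paper's proof --- Hamilton--Shi evolution equations for the transverse curvature, the test function $t^{m}|\cdot|^{2}+\beta t^{m-1}|\cdot|^{2}+\cdots$, and the maximum principle on the compact total space $S$ --- but the technical implementation is genuinely different. The paper differentiates $Rm^{T}$ with the \emph{full} Levi-Civita connection $\nabla$ throughout, so the quantities $|\nabla^{m}Rm^{T}|$ in the statement are estimated directly; the price is the commutator identity of Lemma~\ref{commutator lemma} for $[\nabla,\Delta]$ on mixed sections of $TS^{*\otimes p}\otimes Q^{*\otimes q}$, in which the relations~(\ref{curvature relation}), (\ref{curvature relation 2}), (\ref{curvature relation 3}) trade every occurrence of the full Riemann tensor for $Rm^{T}$ plus dimensional constants. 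You instead stay entirely transverse, estimating $(\nabla^{T})^{m}Rm^{T}$, which makes the PDE part literally the local K\"ahler--Ricci flow computation; the price is the conversion $\nabla^{m}Rm=(\nabla^{T})^{m}Rm^{T}+\cdots$ at the end, which consumes exactly the same structure identities ($\nabla\xi=\Phi$, $\nabla\Phi$ algebraic, the curvature relations). Your justification of the maximum principle --- that $|(\nabla^{T})^{m}Rm^{T}|^{2}_{g^{T}}$ is basic and $\Delta_{B}$ agrees with $\Delta_{g}$ on basic functions because the Reeb leaves are geodesics --- is precisely the point the paper uses implicitly, and making it explicit is a genuine improvement. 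Two caveats. First, a minor misstatement: along the Sasaki--Ricci flow only the Reeb field, the complex structure on the cone and the transverse holomorphic structure are fixed; the contact form evolves by $\eta_{t}=\eta_{0}+2d_{B}^{c}\phi_{t}$ (and with it $D$, $\Phi$ and $g$). This is harmless, since the Sasaki identities hold at each time and bound the contact tensors and all their covariant derivatives by dimensional constants in the evolving metric. Second, your final absorption of the $O(1)$ contact contributions into $C_{m}\max\{K^{1/2},K\}t^{-m/2}$ only closes when $K$ is bounded below: at $t=\alpha/K$ the right-hand side is of order $K^{1/2}(K/\alpha)^{m/2}$, which tends to zero with $K$, while $\nabla^{m}Rm$ retains nonzero contact terms (a transversely flat example such as a Heisenberg nilmanifold makes this concrete). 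The paper's own one-line passage from $Rm^{T}$ to $Rm$ has the same defect, so this is a caveat about the statement for the full curvature rather than a gap specific to your route, but it deserves a remark.
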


In \S5  we take up stability on Sasaki manifolds.  We begin by discussing the Futaki invariant and the Mabuchi energy, extending some well known results from the K\"ahler theory to the Sasaki setting.  We then construct the sheaf $\E$ and discuss its properties.  In \S6 we prove Theorem~\ref{main theorem} part {\it(i)}, and in \S7 we reduce the proof of Theorem~\ref{main theorem} part {\it(ii)} to obtaining a positive lower bound for the smallest positive eigenvalue of the $\dbar$ Laplacian on the global sections of $\E^{1,0}$.  In \S8 we complete the proof of Theorem~\ref{main theorem} by discussing some compactness results for Sasaki manifolds, and showing that condition (C) implies a positive lower bound for the smallest positive eigenvalue of the $\dbar$ Laplacian.  In \S9 we prove Theorem~\ref{main theorem 2}. 
\\

{\bf Acknowledgements:}
I would like to thank my advisor Professor D.H. Phong for his guidance and encouragement, as well as for suggesting this problem.  I would also like to thank Professor Valentino Tosatti for many helpful conversations. 

\section{Sasaki Manifolds and Transverse K\"ahler Geometry}
In this section we give a brief introduction to Sasaki geometry.  For a more thorough introduction we refer the reader to \cite{BoyGal1, TristC, Sparks}.  A Sasakian manifold of dimension $2n+1$ is a Riemannian manifold $(S^{2n+1},g)$ with the property that its metric cone $(C(S) = \mathbb{R}_{>0}\times S, \overline{g} = dr^2 + r^{2}g)$ is K\"ahler.  A great deal of the geometry of Sasaki manifolds is induced by the \emph{Euler} vector field $r\dop{r}$.  One can easily show that $r\dop{r}$ is real holomorphic, that is, $\mathcal{L}_{r\dop{r}}J=0$.  A particularly important role in Sasaki geometry is played by the Reeb vector field, which is naturally induced from the vector field $r\dop{r}$.  
\begin{definition}
The Reeb vector field is $\xi = J(r\partial_{r})$, where $J$ denotes the integrable complex structure on $C(S)$.  
\end{definition}
Again, the Reeb field $\xi$ satisfies $\mathcal{L}_{\xi}J=0$, and so $\xi$ is real holomorphic.  The restriction of $\xi$ to the slice $\{r=1\}$  is a unit length Killing vector field, and its orbits thus define a one-dimensional foliation of S by geodesics called the Reeb foliation.  
\begin{versionb}
There is a dual one-form $\eta$ defined by $\eta = id^{c}\log r = i (\overline{\partial} -\partial)\log r$, which has the properties
\begin{equation*}
\eta(\xi)=1, \quad i_{\xi}d\eta=0.
\end{equation*}
Moreover, we have that for all vector fields X
\begin{equation*}
\eta(X) = \frac{1}{r^{2}}\overline{g}(\xi,X).
\end{equation*}
The K\"ahler form on $C(S)$ is given by $\omega = \frac{1}{2}d(r^{2}\eta)$.  The 1-form $\eta$ restricts to a 1-form $\eta |_{S}$ on $S \subset C(S)$.  Using the fact the $\mathcal{L}_{r\partial_{r}}\eta =0$ one shows that in fact $\eta = p^{*}(\eta |_{S})$ where $p$ is the canonical projection of $C(S)$ to $S$.  By abuse of notation, we do not distinguish between $\eta$ and its restriction to $S$.  Since the K\"{a}hler 2-form $\omega$ is non-vanishing, it follows that the top degree form $\eta \wedge (d\eta)^{n}$ is non-vanishing on $S$, and hence $(S,\eta)$ is a contact manifold.
\end{versionb}
Let $L_{\xi}$ be the line bundle spanned by the non-vanishing vector field $\xi$.  The contact subbundle $D\subset TS$ is defined as $D= \ker \eta$ where $\eta(X) := g(\xi, X)$.  We have the exact sequence
\begin{equation}\label{exact seq}
0\rightarrow L_{\xi} \rightarrow TS \xrightarrow{p} Q \rightarrow 0.
\end{equation}
The Sasakian metric $g$ gives an orthogonal splitting of this sequence $\sigma : Q \rightarrow D$ so that we identify $Q \cong D$, and $TS = D\oplus L_{\xi}$. Define a section $\Phi \in End(TS)$ via the equation $\Phi(X) = \nabla_{X} \xi$.  One can then check that $\Phi |_{D} = J |_{D}$ and $\Phi |_{L_{\xi}} = 0$, and that
\begin{equation*}
\Phi^{2} = -1 + \eta \otimes \xi \quad \text{, and   } \quad g(\Phi(X), \Phi(Y)) = g(X,Y)-\eta(X)\eta(Y)
\end{equation*}
for any vector fields $X$ and $Y$ on S.  In particular, $g |_{D}$ is a Hermitian metric on D. 
\begin{versionb}
 Since
\begin{equation}\label{metric}
g(X,Y) = \frac{1}{2} d\eta (X, \Phi (Y)) + \eta(X)\eta(Y),
\end{equation}
 we see that $\frac{1}{2} d\eta |_{D}$ is the fundamental 2-form associated to $g |_{D}$.  
 \end{versionb}
 The triple $(D, \Phi |_{D}, d\eta)$ gives $S$ a transverse K\"{a}hler structure. FOR MORE SEE REF?!
\begin{versionb}
\subsection{Transverse K\"{a}hler structures and the Reeb foliation}

The Reeb foliation inherits a transverse holomorphic structure and K\"{a}hler metric in the following (explicit) way.  The leaf space of the foliation induced by the Reeb field can clearly be identified with the leaf space of the \emph{holomorphic} vector field $\xi -iJ(\xi)$ on the cone $C(S)$.  Here, $J$ denotes the integrable complex structure on the cone.  By using holomorphic, foliated coordinates on $C(S)$, we may introduce a foliation chart  $\{U_{\alpha}\}$ on $S$, where each $U_{\alpha}$ is of the form $U_{\alpha} = I \times V_{\alpha}$ with $I\subset \mathbb{R}$ an open interval, and $V_{\alpha} \subset \mathbb{C}^{n}$.  We can find coordinates $(x,z^{1}, \dots, z^{n})$ on $U_{\alpha}$, where $\xi=\partial_{x}$, and $z^{1},\dots,z^{n}$ are complex coordinates of $V_{\alpha}$.  We denote by $\pi_{\alpha}$, the map defined by
\begin{equation}\label{pi def}
\pi_{\alpha} : U_{\alpha} \rightarrow V_{\alpha}\subset \mathbb{C}^{n}
\end{equation}
The fact that the cone is complex implies that the transition functions between the $V_{\alpha}$ are holomorphic.  More precisely, if $(y, w^{1}, \dots, w^{n})$ are similarly defined  coordinates on $U_{\beta}$ with $U_{\alpha} \cap U_{\beta} \neq \emptyset$, then
\begin{equation*}
\frac{\partial z^{i}}{\partial \overline{w}^{j}} = 0 \text{ ,  } \quad \frac{\partial z^{i}}{\partial y} = 0,
\end{equation*}
and so $\pi_{\alpha} \circ \pi_{\beta}^{-1}$ is a biholomorphism on its domain.  Recall that the subbundle $D$ is equipped with the almost complex structure $J |_{D}$, so that on $D\otimes \mathbb{C}$ we may define the $\pm i$ eigenspaces of $J |_{D}$ as the $(1,0)$ and $(0,1)$ vectors respectively.  Then, in the above foliation chart, $(D \otimes \mathbb{C})^{(1,0)}$ is spanned by $\partial_{z^{i}} -\eta(\partial_{z^{i}})\xi$.
Since $\xi$ is a killing vector field it follows that $g|_{D}$ gives a well-defined Hermitian metric $g_{\alpha}^{T}$ on the patch $V_{\alpha}$.  Moreover, ~(\ref{metric}) implies that
\begin{equation*}
d\eta(\partial_{z^{i}} -\eta(\partial_{z^{i}})\xi, \partial_{\overline{z}^{j}} -\eta(\partial_{\overline{z}^{j}})\xi)=d\eta(\partial_{z^{i}},\partial_{\overline{z}^{j}}).
\end{equation*}
Thus, the fundamental 2-form $\omega_{\alpha}^{T}$ for the Hermitian metric $g_{\alpha}^{T}$ in the patch $V_{\alpha}$ is obtained by restricting $\frac{1}{2}d\eta$ to a fibre $\{x=constant\}$.  It follows that $\omega_{\alpha}^{T}$ is closed, and the transverse metric $g_{\alpha}^{T}$ is K\"{a}hler.  In the following subsection we will introduce coordinates which make plain the transverse K\"ahler structure we are now describing.  We delay these developments so that we may first introduce the notions of transverse geometry which will be important for us.  

Sasaki manifolds fall in to three categories based on the orbits of the Reeb field.  If the orbits of the Reeb field are all closed, then the $\xi$ generates a locally free, isometric $U(1)$ action on $(S,g)$.  If the $U(1)$ action is free, then $(S,g)$ is said to be \emph{regular} and the quotient manifold $S/U(1)$ is K\"ahler.  If the action is not free, then $(S,g)$ is said to be \emph{quasi-regular}, and the quotient manifold is a K\"ahler orbifold.  If the orbits of $\xi$ are not all closed, then the Sasakian manifold $(S,g)$ is said to be irregular. 
\end{versionb}

The identification $Q \cong D$ endows the quotient bundle $Q$ with a transverse metric $g^{T}$.  There is a unique, torsion-free connection on $Q$,  which is compatible with the metric $g^{T}$.  This connection is defined by
\begin{displaymath}
\nabla^{T}_{X}V = \left\{ \begin{array}{lr}
	\left(\nabla_{X}\sigma(V)\right)^{p},& \text{if } X \text{ is a section of D}\\
	\left[\xi, \sigma(V)\right]^{p}, &\text{ if } X=\xi
\end{array}
\right.
\end{displaymath}
where $\nabla$ is the Levi-Civita connection on $(S,g)$, $\sigma$ is the splitting map induced by $g$, and $p: TS \rightarrow Q$ is the projection.  This connection is called the \emph{transverse Levi-Civita connection} as it is compatible with $g^{T}$, and torsion free with respect to the bracket induced on $Q$.
\begin{versionb}
as it satisfies
\begin{equation*}
\nabla_{X}^{T}Y - \nabla^{T}_{Y}X - [X,Y]^{p}=0,
\end{equation*}
\begin{equation*}
Xg^{T}(V,W) = g^{T}(\nabla_{X}^{T}V,W)+ g^{T}(V, \nabla^{T}_{X}W).
\end{equation*}
In this way it is easy to see that the transverse Levi-Civita connection is the pullback of the Levi-Civita connection on the local Riemannian quotient.  
\end{versionb}
We will denote by $Rm^{T}$ the curvature operator defined by the connection $\nabla^{T}$, $Ric^{T}$, and $R^{T}$ will denote the transverse Ricci curvature of scalar curvature respectively.
The geometry of the manifold $S$ is largely controlled by its transverse geometry.  For local sections $X,Y,Z,W$ of $D$, the curvature of $(S,g)$ is related to the curvature of $(Q,g^{T})$ by 
\begin{equation}\label{curvature relation}
\begin{aligned}
Rm(X,Y,Z,W) = &Rm^{T}(X,Y,Z,W)+ g(\Phi(X),Z)g(\Phi(Y),W)\\
&-g(\Phi(X),W)g(\Phi(Y),Z) +2g(\Phi(X),Y)g(\nabla_{\xi}X, W).
\end{aligned}
\end{equation}
The geometry orthogonal to the distribution $D$ is uniform in the sense that for any vector fields $X,Y \in TS$ there holds
\begin{equation}\label{curvature relation 2}
Rm(X,Y)\xi = \eta(Y)X-\eta(X)Y,
\end{equation}
\begin{equation}\label{curvature relation 3}
Rm(X,\xi)Y = \eta(Y)X-g(X,Y)\xi.
\end{equation}
We refer the reader to \cite{BoyGal1, Futaki} for more on these standard formulae.

\begin{definition}
For $x \in S$, let $orb_{\xi}x$ denote the orbit of $x$ under the action generated by the Reed field.  We define the transverse distance function $d^{T}: S\times S \rightarrow \mathbb{R}$ by
\begin{equation*}
d^{T}(x,y) = \inf_{\{p\in orb_{\xi}x, q \in orb_{\xi}y\}} dist(p,q)
\end{equation*}
\end{definition}

The transverse distance function played a crucial role in \cite{TristC}.  With these preparations, we see it fitting to make a brief digression on coordinates.  One can choose local coordinates $(x, z^{1},\dots,z^{n})$ on a small neighbourhood $U$ of $p$ such that
\begin{enumerate}
\item[\textbullet] $\xi = \frac{\partial}{\partial x}$
\item[\textbullet] $ \eta = dx + \sqrt{-1}\sum_{j=1}^{n}h_{j}dz^{j} - \sqrt{-1}\sum_{j=1}^{n}h_{\bar{j}}d\bar{z}^{j}$
\item[\textbullet] $ \Phi = \sqrt{-1} \left\{ \sum_{j=1}^{n}\left( \frac{\partial}{\partial z^{j}} -\sqrt{-1}h_{j}\frac{\partial}{\partial x}\right)\otimes dz^{j} -  \sum_{j=1}^{n}\left( \frac{\partial}{\partial \bar{z}^{j}} +\sqrt{-1}h_{\bar{j}}\frac{\partial}{\partial x}\right)\otimes d\bar{z}^{j}\right\} $
\item[\textbullet] $g= \eta \otimes \eta + 2\sum_{j,l=1}^{n}h_{j\bar{l}}dz^{j}d\bar{z}^{l}$
\item[\textbullet] $D\otimes \mathbb{C}$ is spanned by
\begin{equation*}\label{preferred basis}
X_{j}:= \pl{z^{j}}-\sqrt{-1}h_{j}\pl{x} \quad X_{\bar{j}}:= \pl{\bar{z}^{j}}+\sqrt{-1}h_{\bar{j}}\pl{x}
\end{equation*}
\end{enumerate}
where $h: U\rightarrow \mathbb{R}$ is a local, basic function (ie.  $\frac{\partial}{\partial x} h=0$), and we have used $h_{j} = \frac{\partial }{\partial z^{j}}h$, and $h_{j\bar{l}} =  \frac{\partial^{2} }{\partial z^{j}\partial \bar{z}^{l}}h$.  See, for example, \cite{GKN}.  We can additionally assume that in these coordinates $h_{j}(p)$=0.  
\begin{versionb}
We define the transverse Christoffel symbols by
\begin{equation*}
\nabla^{T}_{X_{A}}X_{B} = \tilde{\Gamma}_{AB}^{C}X_{C}
\end{equation*}
\end{versionb}
An important observation is that the transverse Christoffel symbols with mixed barred and unbarred indices are identically zero.  Moreover, the pure barred and unbarred Christoffel symbols are given by the familiar formula from K\"ahler geometry
\begin{equation}\label{local christ symbols}
\tilde{\Gamma}_{ij}^{k} = (g^{T})^{k\bar{p}}\partial_{i}g^{T}_{\bar{p} j}.
\end{equation}
\begin{versionb}
Using these equations and the familiar computations from K\"ahler geometry we see that the transverse Riemann tensor satsifies
\begin{equation*}
Rm^{T}_{kl} := -[\nabla^{T}_{X_{k}}, \nabla^{T}_{X_{l}}] = 0 \quad \text{ and }\quad Rm_{\bar{k}\bar{l}}^{T}:=-[\nabla^{T}_{X_{\bar{k}}}, \nabla^{T}_{X_{\bar{l}}}]=0
\end{equation*}
\end{versionb}
Throughout this paper we will refer to these coordinates as \emph{preferred local coordinates}.  

\begin{definition}
A $p$-form $\alpha$ on $(S,\xi, \eta, \Phi, g)$ is called basic if $\iota_{\xi}\alpha=0$, and $\mathcal{L}_{\xi}\alpha =0$.  In particular, a function $f$ is said to be basic if $\mathcal{L}_{\xi}f =0$.  The ring of smooth basic functions will be denoted by $C^{\infty}_{B}$; it is clearly a sub-ring of $C^{\infty}$.
\end{definition}
\begin{versionb}
Let $\Lambda_{B}^{p}$ be the sheaf of basic $p$-forms, and $\Omega^{p}_{B} = \Gamma(S, \Lambda_{B}^{p})$ the global sections.  It is clear that the de Rham differential $d$ preserves basics forms, and hence restricts to a well defined operator $d_{B}:\Lambda^{p}_{B}\rightarrow \Lambda^{p+1}_{B}$.  We get a complex
\begin{equation*}
0\rightarrow C^{\infty}_{B}(S)\rightarrow \Omega^{1}_{B}\xrightarrow{d_{B}}\cdots \xrightarrow{d_{B}} \Omega^{2n}_{B} \xrightarrow{d_{B}} 0
\end{equation*}
whose cohomology groups, denoted by $H^{p}_{B}(S)$, are the basic de Rham cohomology groups.  
\end{versionb}
The transverse complex structure $\Phi$ allows us to decompose $\Lambda^{r}_{B}\otimes \mathbb{C} = \mathop{\oplus}_{p+q=r} \Lambda^{p,q}_{B}$.  We can then decompose $d_{B} = \partial_{B} + \overline{\partial}_{B}$, where $\partial_{B}: \Lambda^{p,q}_{B}\rightarrow \Lambda^{p+1,q}_{B}$ and $\overline{\partial}_{B}:\Lambda^{p,q}_{B}\rightarrow \Lambda^{p,q+1}_{B}$.  Any form $\alpha \in \Lambda^{p,q}$ which satisfies $\iota_{\xi}\alpha =0$ can naturally be regarded as an element of $\Lambda^{p,q}Q^{*}$.  The extra condition that $\mathcal{L}_{\xi} \alpha=0$ ensures that $d\alpha \in \Lambda^{p+q+1}Q^{*}$.  In particular, we can regard $\dbar_{B}$ on basic forms as a map $\dbar_{B}:\Lambda^{p,q}Q^{*} \rightarrow \Lambda^{p,q+1}Q^{*}$, and similarly for $\dop{}$.  The transverse metric induces $L^{2}$ inner products on each of these spaces and so, as in the K\"ahler case, we may define the $\dop{B}$ and $\dbar_{B}$ Laplacians on the bundles $\Lambda^{p,q}_{B}$.  For example, the $\dbar_{B}$ Laplacian is given by $ \square_{B} = (g^{T})^{k\bar{j}}\nabla^{T}_{k}\nabla^{T}_{\bar{j}}$.  We now say a brief word about volume forms and integration by parts on Sasaki manifolds.  As noted before, the form $(\frac{1}{2}d\eta)^{n} \wedge \eta$ is a non-vanishing $(2n+1)$-form, and hence defines a volume form on $S$.  One can check that it agrees with the standard Riemannian volume form by looking in preferred local coordinates.  Furthermore, in preferred local coordinates we have
\begin{equation*}
 d\mu := (\frac{1}{2}d\eta)^{n}\wedge \eta  = (\sqrt{-1})^{n}n!\det(g^{T}_{\bar{k}j})dz^{1}\wedge d\bar{z}^{1}\wedge \dots \wedge dz^{n}\wedge d\bar{z}^{n} \wedge dx.
\end{equation*}
Combining this with the local formula~(\ref{local christ symbols}) for the Christoffel symbols of $Q$, we note that the standard integration by parts formulae from K\"ahler geometry hold for the bundle $Q$, its duals, tensor powers and wedge products.  For example, we have;

\begin{prop}
Let $\phi \in \Lambda^{p,q-1}_{B}$, $\psi \in \Lambda^{p,q}_{B}$ be two basic forms, the we have
\begin{equation*}
\int_{S}(g^{T})^{i\bar{j}}\nabla^{T}_{\bar{j}}\phi_{\bar{B}A} \overline{\psi_{\bar{i}\bar{D}C}}(g^{T})^{\bar{C}A}(g^{T})^{\bar{D}B} d\mu = -\int_{S}\phi_{\bar{B}A} \overline{(g^{T})^{j\bar{i}}\nabla^{T}_{j}\psi_{\bar{i}\bar{D}C}}(g^{T})^{\bar{C}A}(g^{T})^{\bar{D}B} d\mu 
\end{equation*}
\end{prop}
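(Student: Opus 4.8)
The plan is to reduce this to a purely local computation in preferred local coordinates, mimicking the standard integration-by-parts argument from Kähler geometry. The key inputs are: (1) the explicit formula for the volume form $d\mu = (\sqrt{-1})^{n} n! \det(g^{T}_{\bar{k}j}) dz^1 \wedge d\bar z^1 \wedge \cdots \wedge dz^n \wedge d\bar z^n \wedge dx$; (2) the fact that the mixed (barred/unbarred) transverse Christoffel symbols vanish, so that $\nabla^T$ on $Q$ and its tensor powers looks exactly like the Chern connection of a Kähler metric in the holomorphic directions; and (3) the local formula $\tilde\Gamma^k_{ij} = (g^T)^{k\bar p}\partial_i g^T_{\bar p j}$, which implies the usual identity $\partial_i \log\det(g^T_{\bar k j}) = \tilde\Gamma^k_{ik}$.

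First I would work inside a single preferred coordinate chart $U = I \times V_\alpha$. Since the integrand is basic (the forms $\phi,\psi$ are basic, the metric contractions preserve this), the integrand is independent of the $x$-variable, so integrating over $U$ splits as an integral over $V_\alpha \subset \mathbb{C}^n$ against $(\sqrt{-1})^n n!\det(g^T)\, dz\wedge d\bar z\cdots$ times the length of $I$. On $V_\alpha$ the situation is literally the Kähler integration-by-parts computation: write out $(g^T)^{i\bar j}\nabla^T_{\bar j}\phi_{\bar B A}$, move the $\partial_{\bar z^j}$ derivative onto the conjugated factor $\overline{\psi_{\bar i \bar D C}}$ and onto $\det(g^T)$, and use that the $\det(g^T)$ term combines with the Christoffel symbols coming from $\nabla^T_{\bar j}$ acting on the various $Q$-indices to produce exactly $(g^T)^{j\bar i}\nabla^T_j$ hitting $\psi$ (after relabeling and conjugating). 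The vanishing of the mixed Christoffel symbols is what guarantees no extra terms involving $\nabla^T_\xi$ or barred-barred/unbarred-unbarred curvature appear. The boundary term vanishes after a partition of unity because $S$ is compact without boundary (equivalently, one observes that the whole expression is $\int_S d(\text{something basic})\wedge(\frac12 d\eta)^{n-1}\wedge\eta$ type, or just patches the local computation together with a partition of unity subordinate to foliation charts).

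The one genuine subtlety — and the step I would be most careful about — is justifying that the purely Kähler-type manipulation on each $V_\alpha$ patches together globally, i.e. that there is no obstruction from the foliated structure. This is handled precisely by the remark in the excerpt that ``the standard integration by parts formulae from Kähler geometry hold for the bundle $Q$, its duals, tensor powers and wedge products,'' which in turn rests on the local formula for the Christoffel symbols and the product formula for $d\mu$. Concretely, one picks a partition of unity $\{\rho_\alpha\}$ subordinate to a cover by preferred charts, writes $\int_S = \sum_\alpha \int_S \rho_\alpha(\cdots)$, and on each piece performs the local computation; the derivatives of $\rho_\alpha$ cancel in the sum $\sum_\alpha \rho_\alpha \equiv 1$ since $d_B(\sum\rho_\alpha)=0$, so one recovers the clean formula with no boundary contribution. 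I do not expect any real obstacle beyond bookkeeping of indices; the content is entirely in the already-established transverse Kähler dictionary.
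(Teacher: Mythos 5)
Your proposal is correct and follows essentially the same route as the paper, which omits the proof entirely, saying only that it ``follows from the standard computation in the K\"ahler setting'' and pointing to \cite{WHe}; your sketch carries out precisely that standard local computation in preferred coordinates (volume form $\det(g^{T})\,dz\wedge d\bar z\wedge dx$, vanishing mixed Christoffel symbols, $\partial_{i}\log\det(g^{T})=\tilde{\Gamma}^{k}_{ik}$) plus patching. The one step to be careful with is the partition-of-unity cancellation in the irregular case, where basic cutoffs subordinate to foliation charts need not exist: with ordinary cutoffs the terms $V^{\bar j}\partial_{\bar z^{j}}\rho_{\alpha}$ split as $\sigma(V)(\rho_{\alpha})-\sqrt{-1}V^{\bar j}h_{\bar j}\,\xi(\rho_{\alpha})$, the first part telescoping because $\sigma(V)$ is globally defined and the second integrating to zero along each leaf interval since the remaining factor is basic --- or one can bypass cutoffs altogether by observing that the transverse divergence of a basic section of $Q$ equals the Riemannian divergence of its horizontal lift and invoking the ordinary divergence theorem on the closed manifold $S$.
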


The proof of this proposition follows from the standard computation in the K\"ahler setting, and so we omit the details.  The interested reader can find the computation in \cite{WHe}.
\begin{versionb}
Consider the form $\rho^{T}= Ric^{T}(\Phi \cdot, \cdot)$, which is called the \emph{transverse Ricci form}.  In analogy with the K\"ahler case we have
\begin{equation*}
\rho^{T} = -\partial_{B} \overline{\partial}_{B} \log \det (g^{T}),
\end{equation*}
and hence $\rho^{T}$ defines a basic cohomology class, $[\rho^{T}]_{B}$, which is called the \emph{basic first Chern class}, and is independent of the transverse metric.  
\end{versionb}
A transverse metric $g^{T}$ is called a \emph{transverse Einstein metric} if it satisfies $Ric^{T}=\kappa g^{T}$ for some constant $\kappa$.  In particular, we observe that it is necessary that the basic first Chern class be signed.  The following proposition describes a well-known obstruction to the existence of a transverse Einstein metric.  See, for example, \cite{Futaki}.
\begin{prop}
The basic first Chern class is represented by $\kappa d\eta$ for some constant $\kappa$ if and only if $c_{1}(D)=0$, where $D = \ker \eta$ is the contact subbundle.
\end{prop}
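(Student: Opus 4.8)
The plan is to study the natural forgetful map $\iota\colon H^2_B(S)\to H^2_{dR}(S)$ sending the class of a closed basic form to its de Rham class; this is well defined since a closed basic form is closed and $d$ of a basic form is exact. The whole proposition comes from identifying the images under $\iota$ of the two relevant classes. First, $\iota\big([d\eta]_B\big)=0$, because $\eta$ is a globally defined $1$-form on $S$, so $d\eta$ is exact as an ordinary form. Second, $\rho^T$ is, up to a fixed positive constant, the Chern--Ricci form of the transverse Levi--Civita connection, which is a genuine connection on the complex bundle $D\cong Q$ over $S$; hence $\iota\big(c_1^B(S)\big)=[\rho^T]_{dR}$ equals that same positive multiple of $c_1(D)$ in $H^2_{dR}(S;\mathbb{R})$. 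With these two facts the implication ``$c_1^B(S)=\kappa[d\eta]_B\ \Rightarrow\ c_1(D)=0$'' is immediate: apply $\iota$ to both sides.

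For the converse I would argue directly with contractions along $\xi$. Assume $c_1(D)=0$; by the second fact above $[\rho^T]_{dR}=0$, so $\rho^T=d\gamma$ for some smooth $1$-form $\gamma$ on $S$. The first step is to replace $\gamma$ by a Reeb-invariant primitive. Since $S$ is compact, $\mathrm{Isom}(S,g)$ is a compact Lie group, and the closure $T$ of the Reeb flow $\{\phi^\xi_t\}$ inside it is a compact torus; every $\psi\in T$ commutes with $\phi^\xi_t$, hence satisfies $\psi_*\xi=\xi$, and therefore preserves $\eta$, $\Phi$, the transverse K\"ahler metric, and in particular $\rho^T$. Averaging $\gamma$ over $T$ with respect to Haar measure then produces a primitive of $\rho^T$, still denoted $\gamma$, with $\mathcal{L}_\xi\gamma=0$. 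Now put $f:=\iota_\xi\gamma$; then $\mathcal{L}_\xi f=\iota_\xi\mathcal{L}_\xi\gamma=0$, so $f$ is basic, and $\gamma_B:=\gamma-f\eta$ has $\iota_\xi\gamma_B=0$ and $\mathcal{L}_\xi\gamma_B=0$, i.e.\ $\gamma_B$ is basic (and so is $d\gamma_B$). Expanding $d\gamma=d(\gamma_B+f\eta)$ gives
\begin{equation*}
\rho^T=d\gamma_B+df\wedge\eta+f\,d\eta ,
\end{equation*}
hence $df\wedge\eta=\rho^T-f\,d\eta-d\gamma_B$. The right-hand side is basic, so it is killed by $\iota_\xi$, whereas $\iota_\xi(df\wedge\eta)=(\xi f)\eta-df=-df$. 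Therefore $df=0$, so $f\equiv\kappa$ is a constant, and $\rho^T=\kappa\,d\eta+d\gamma_B$ shows $c_1^B(S)=\kappa[d\eta]_B$.

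I expect the only genuinely non-formal step to be the averaging: it relies on the compactness of $\overline{\{\phi^\xi_t\}}$ inside $\mathrm{Isom}(S,g)$ and on the fact that this torus preserves the full Sasaki structure, which is what upgrades an arbitrary primitive of $\rho^T$ to a $\xi$-invariant one. Everything else is bookkeeping with the standard Cartan-type identities for $\iota_\xi$, $\mathcal{L}_\xi$, and $d$. As a cross-check, the statement also drops out of the Gysin-type exact sequence in basic cohomology, $\cdots\to H^0_B(S)\xrightarrow{\,\wedge[d\eta]_B\,}H^2_B(S)\xrightarrow{\ \iota\ }H^2_{dR}(S)\to\cdots$: exactness at $H^2_B(S)$ together with $H^0_B(S)=\mathbb{R}$ identifies $\ker\iota$ with $\mathbb{R}\,[d\eta]_B$, and since $\iota\big(c_1^B(S)\big)$ is a nonzero multiple of $c_1(D)$, the vanishing of $c_1(D)$ is equivalent to $c_1^B(S)\in\mathbb{R}\,[d\eta]_B$.
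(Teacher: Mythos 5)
Your proof is correct. Note that the paper itself does not prove this proposition; it is stated as a known fact with a pointer to the Futaki--Ono--Wang paper, so there is no in-text argument to compare against. What you have written is essentially a self-contained version of the standard proof. The forward direction (apply the forgetful map $H^2_B(S)\to H^2_{dR}(S)$, use that $d\eta$ is exact and that $\rho^T$ is, via Chern--Weil for the transverse Levi--Civita connection on $Q\cong D$, a de Rham representative of a positive multiple of $c_1(D)$) is exactly the usual one. For the converse, the reference proofs typically quote the long exact sequence in basic cohomology $H^0_B(S)\xrightarrow{\wedge[d\eta]_B}H^2_B(S)\to H^2_{dR}(S)$, which is your closing ``cross-check''; your direct argument --- averaging a primitive $\gamma$ of $\rho^T$ over the torus $\overline{\{\phi^\xi_t\}}\subset\mathrm{Isom}(S,g)$ to make it $\xi$-invariant, splitting off $f=\iota_\xi\gamma$, and using $\iota_\xi$ on $df\wedge\eta=\rho^T-f\,d\eta-d\gamma_B$ to force $f$ constant --- amounts to proving exactness of that sequence at $H^2_B$ in the one instance needed, and has the virtue of not requiring the Gysin machinery as a black box. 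All the individual steps check out: the closure of the Reeb flow is a compact torus of isometries fixing $\xi$, hence preserving $\eta$, $\Phi$, $g^T$ and $\rho^T$, so the averaged $\gamma$ is still a primitive; and $[\mathcal{L}_\xi,\iota_\xi]=\iota_{[\xi,\xi]}=0$ gives that $f$ and $\gamma_B$ are basic. The only caveat worth recording is that the argument proves the equivalence with the vanishing of the \emph{real} first Chern class of $D$ (equivalently, $c_1(D)$ torsion), which is the intended reading of the statement.
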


If $g^{T}$ is a transverse Einstein metric with Einstein constant $\kappa$, then one can check that the Sasaki metric $g$ is $\eta$-\emph{Einstein}, satisfying $$Ric_{g} = (\kappa-2) g + (2n+2-\kappa)\eta \otimes \eta.$$ The metric $g$ is \emph{Sasaki-Einstein} if $Ric_{g} = \lambda g$.  In particular, a Sasaki manifold admits a Sasaki-Einstein metric if and only if it admits a transverse Einstein metric with Einstein constant equal 2n+2.  In fact, we shall see that if $\kappa >-2$, then any transverse Einstein metric can be deformed to a Sasaki-Einstein metric via the so called $\mathcal{D}$-homothetic transformations. 

\section{Perturbations of Sasaki Structures and the Sasaki-Ricci Flow}
A Sasaki manifold has a large number of defining structures.  It is natural to ask what happens when one perturbs these structures, individually or together.  In this section we describe two types of deformations of Sasaki structures, the second of which motivates the Sasaki-Ricci flow. We begin by describing the $\mathcal{D}$-homothetic deformations introduced by Tanno \cite{Tanno}.  For $a>0$, the rescaling
\begin{equation*}
g'=ag+(a^{2}-a)\eta\otimes\eta, \text{   } \eta'=a\eta, \text{   }, \xi' = a^{-1}\xi, \text{   } \Phi'=\Phi
\end{equation*}
gives a Sasaki structure $(\xi',\eta', \Phi', g')$ with the same holomorphic structure on the cone, but with radial variable $r'=r^{a}$.  One can check that if $g^{T}$ is a transverse Einstein metric with Einstein constant $c>0$, then the  $\mathcal{D}$-homothetic deformation with $a=c/2n$ gives a Sasaki-Einstein metric.  Throughout this paper, we shall assume that $c_{1}^{B}(S)>0$, and  $c_{1}(D)=0$.  By making a $\mathcal{D}$-homothetic deformation we may always assume that $(2n+2)[\frac{1}{2} d\eta]_{B} = c_{1}^{B}(S)$.  A second class of transformations is described in the following proposition, which is proved in \cite{Futaki, Sparks}.

\begin{prop}\label{perturb prop}
Fix a Sasaki manifold $\mathcal{S} = (S,g,\eta, \xi, \Phi)$.  Then any other Sasaki structure on S with the same Reeb vector field $\xi$, the same complex structure on the cone $C(S) = \mathbb{R} \times S$, and the same transversely complex structure on the Reeb foliation is related to the original structure via the deformed contact form $\eta'=\eta + 2d_{B}^{c}\phi$, where $\phi$ is a smooth basic function that is sufficiently small.
\end{prop}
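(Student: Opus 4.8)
The plan is to run the computation on the K\"ahler cone $C(S)=\mathbb{R}_{>0}\times S$, where the three hypotheses become transparent. Write $\omega=\tfrac12 d(r^{2}\eta)$ for the cone K\"ahler form of the original structure, $J$ for the (fixed) integrable complex structure, and $\chi:=r\partial_{r}=-J\xi$ for the Euler vector field. If $(\xi,\eta',\Phi',g')$ is another Sasaki structure with the same Reeb field $\xi$, the same $J$ on $C(S)$, and the same transverse holomorphic structure, then its Euler field is $\chi'=-J\xi'=-J\xi=\chi$: the two homotheties coincide. Since $\chi$ annihilates $\log r'-\log r$ (both $\log r$ and $\log r'$ have $\chi$-derivative $1$), the function $\varphi:=\log r'-\log r$ is constant along $\chi$ and therefore descends to a smooth function on $S$; equivalently $r'=e^{\varphi}r$. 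Recalling the standard identity $\eta=d^{c}\log r$ and using that $J$ has not changed, we obtain
\[
\eta'=d^{c}\log r'=d^{c}\log r+d^{c}\varphi=\eta+d^{c}\varphi .
\]

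First I would show $\varphi$ is basic, i.e. $\mathcal{L}_{\xi}\varphi=0$; this is forced by the requirement that $\xi$ be the Reeb field of $\eta'$, i.e. $\iota_{\xi}d\eta'=0$, equivalently $\mathcal{L}_{\xi}\eta'=0$. Since $J$ is unchanged we still have $\mathcal{L}_{\xi}J=0$, hence $\mathcal{L}_{\xi}\eta'=\mathcal{L}_{\xi}d^{c}\varphi$ (using $\mathcal{L}_{\xi}\eta=0$ for the original structure) $=d^{c}(\mathcal{L}_{\xi}\varphi)=d^{c}(\xi\varphi)$. Thus $d^{c}(\xi\varphi)=0$, so $\xi\varphi$ is constant on $S$, and integrating $\xi\varphi=\mathcal{L}_{\xi}\varphi$ against the Reeb-invariant volume form $d\mu$ (so that $\int_{S}\mathcal{L}_{\xi}\varphi\,d\mu=0$) forces this constant to vanish. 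Therefore $\varphi\in C^{\infty}_{B}$, $d^{c}\varphi=d^{c}_{B}\varphi$, and --- absorbing the normalization constant of $d^{c}_{B}$ into the function --- $\eta'=\eta+2d^{c}_{B}\phi$ with $\phi$ a smooth basic function (a fixed multiple of $\varphi$).

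It remains to explain the qualifier \emph{sufficiently small}. Reversing the construction, a basic function $\phi$ yields a Sasaki structure with the prescribed Reeb field and transverse holomorphic structure via $\eta\mapsto\eta+2d^{c}_{B}\phi$ precisely when the new transverse K\"ahler form $\tfrac12 d(\eta+2d^{c}_{B}\phi)=\tfrac12 d\eta+\sqrt{-1}\,\partial_{B}\bar{\partial}_{B}\phi$ is positive (this is exactly the nondegeneracy of $\eta+2d^{c}_{B}\phi$ as a contact form; $\Phi',g'$ are then reconstructed from it as in \S2). Positivity of a Hermitian form is an open condition and holds at $\phi=0$, so it persists for every $\phi$ small in, say, $C^{2}_{B}$; in particular the $\phi$ produced above from a genuine competing structure automatically lies in this open neighbourhood of $0$.

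I expect the only real subtlety to be the insistence on \emph{all} three hypotheses. It is the coincidence of $J$ and $\xi$ --- hence of the Euler fields --- that pins the radial function, and with it $\eta$, down to a single basic function; dropping it, one could only run the softer argument ``$d\eta'-d\eta$ is a $d_{B}$-exact basic $(1,1)$-form, so by the transverse $\partial_{B}\bar{\partial}_{B}$-lemma $d\eta'-d\eta=2d_{B}d^{c}_{B}\phi$'', which leaves $\eta'=\eta+2d^{c}_{B}\phi+\gamma$ with $\gamma$ a closed basic $1$-form not obviously zero. On the computational side the only bookkeeping is matching the standard cone identities ($\eta=d^{c}\log r$ and its transformation under $r\mapsto e^{\varphi}r$) to the paper's sign and normalization conventions.
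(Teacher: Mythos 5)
Your argument is correct. The paper does not actually prove this proposition --- it defers to the references [Futaki, Sparks] --- and your cone computation (identifying the Euler fields via $\xi'=\xi$ and the shared $J$, writing $r'=e^{\varphi}r$, extracting basicness of $\varphi$ from $\mathcal{L}_{\xi}\eta'=0$ plus compactness of $S$, and reading the ``sufficiently small'' clause as the open positivity condition on the deformed transverse K\"ahler form) is precisely the standard argument given in those references, up to the sign and normalization conventions you already flag.
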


Deformations of this type shall be referred to as deformations of type II, following \cite{BoyGalMat}.  This motivates the definition of the space of Sasaki potentials;

\begin{definition}
Fix a Sasaki structure $(\xi, \eta, \Phi)$.  We define the space of Sasaki potentials to be
\begin{equation*}
\mathcal{H}_{\eta} = \{\phi \in C^{\infty}_{B} : \tilde{\eta} = \eta +2d_{B}^{c}\phi \text{ is a contact 1-form}\}.
\end{equation*}
If $\eta$ and $\eta'$ are related by some $\phi \in \mathcal{H}_{\eta}$, then we will say that $\eta'$ is in the K\"ahler class of $\eta$.
\end{definition}
Notice that under type II deformations the transverse metric is deformed by $\tilde{g}^{T} = g^{T} + d_{B}d_{B}^{c}\phi$.  In \cite{SmoWaZa} the flow
\begin{equation}\label{SRF}
\frac{\partial g^T}{\partial t} = -Ric_{g(t)}^{T} + \kappa g^{T}(t),
\end{equation}
was studied.  We will refer to this flow as the \emph{normalized} Sasaki Ricci flow, as the volume is fixed under the flow.  When the initial metric satisfies $\kappa g^{T} \in c_{1}^{B}(S)$, it was shown that this flow can be reduced to a transversely parabolic complex Monge-Amp\`ere equation on the Sasaki potential, given by
\begin{equation}\label{PCMA}
\pl{t}\phi = \log \det(g_{\bar{k}l}^{T}+\partial_{l}\partial_{\bar{k}}\phi)- \log \det(g_{\bar{k}l}^{T})+\kappa \phi -F,
\end{equation}
where the function $F$ is defined by $Ric^{T} = \kappa d\eta_{o} + d_{B}d_{B}^{c}F$, according the transverse $\dop{}\dbar{}$-lemma of \cite{ElKac}. It was proved in \cite{SmoWaZa} that~(\ref{SRF}) is well-posed. By Proposition~\ref{perturb prop}, the solution to equation~(\ref{PCMA}) defines a one parameter family of Sasaki structures with the same transverse complex structure, the same Reeb field and the same complex structure on the cone.  Moreover, it was shown that the solution $\phi$ exists for all time, remains basic, and converges exponentially if $c^{1}_{B} \leq 0$. 
The transverse $\dop{}\dbar$-lemma of \cite{ElKac} implies there is a basic function $u:S\rightarrow \mathbb{R}$ so that
\begin{equation*}
\dop{j}\dbop{k}\dot{\phi} = \dot{g}^{T}_{\bar{k}j} = -R^{T}_{\bar{k}j} + g^{T}_{\bar{k}j} = \dop{j}\dbop{k}u,
\end{equation*}
and so $\phi$ evolves by $\dot{\phi}(t) = u(t) + c(t)$.  We can use the function $c(t)$ to adjust the initial value $\phi(0)$.  As in the K\"ahler case, the specific choice for the initial value of $\phi$ plays an important role in translating estimates for the transverse K\"ahler potential to estimates for the transverse metric $g^{T}(t)$, \cite{PSS}.  Below, we make the case that a similar choice for the initial value of $\phi$ as in \cite{PSS} is preferred in the Sasaki-Ricci flow.  We first compute that
\begin{equation*}
\dot{u} = \square_{B}u+u +a(t)
\end{equation*}
for some basic function $a(t)$ depending only on $t$, which we fix by $\int e^{-u}d\mu = Vol(S)$.  It will simplify matters to define a probability measure on $S$ by $d\rho :=Vol(S)^{-1} e^{-u} d\mu$.  One can easily compute that
\begin{equation}\label{a def}
a(t) = \int_{S}u d\rho = \frac{1}{Vol(S)}\int_{S} ue^{-u}d\mu.
\end{equation}

In \cite{TristC} (see also \cite{WHe} for another approach) transverse $\mathcal{W}$ and $\mu$ functionals were introduced, and shown to be monotone along the flow, thereby opening Perelman's methods to the Sasaki setting.  The author applied these functionals to obtain a non-collapsing theorem for the Sasaki-Ricci flow, and to extend Perelman's uniform estimates for the K\"ahler-Ricci flow to the Sasaki setting.  The precise results are;

\begin{prop}[\cite{TristC} Proposition 7.1]\label{non-collapse 1}
Let $g^{T}(s)$ be a solution of the normalized Sasaki-Ricci flow, and let $\rho>0$.  There exists a constant $c>0$, depending only on $g(0)$, and $\rho$ such that for every $p \in S$, and $t\geq 0$
\begin{equation*}
\int_{\{y : d^{T}(p,y)<r\}}d\mu > cr^{2n}
\end{equation*}
\end{prop}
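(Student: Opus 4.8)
The plan is to run the transverse version of Perelman's no-local-collapsing argument, using the transverse $\mathcal{W}$- and $\mu$-functionals introduced in \cite{TristC}. For a basic function $f$ and $\tau>0$ these are
\[
\mathcal{W}^{T}(g^{T},f,\tau)=\int_{S}\bigl[\tau\bigl(R^{T}+|\nabla^{T}f|^{2}\bigr)+f-2n\bigr](4\pi\tau)^{-n}e^{-f}\,d\mu ,
\]
and $\mu^{T}(g^{T},\tau)=\inf\bigl\{\mathcal{W}^{T}(g^{T},f,\tau):f\in C^{\infty}_{B},\ (4\pi\tau)^{-n}\int_{S}e^{-f}\,d\mu=1\bigr\}$. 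First I would invoke the monotonicity established in \cite{TristC}: $t\mapsto\mu^{T}(g^{T}(t),\tau)$ is nondecreasing for each fixed $\tau$ (Perelman's monotonicity, transferred to the normalized flow~(\ref{SRF}) using the scale invariance $\mu^{T}(cg^{T},c\tau)=\mu^{T}(g^{T},\tau)$). Since $\tau\mapsto\mu^{T}(g^{T}(0),\tau)$ is continuous and finite on the compact interval $(0,\rho^{2}]$, this yields a uniform lower bound $\mu^{T}(g^{T}(t),\tau)\ge -C_{0}$ valid for all $t\ge 0$ and all $0<\tau\le\rho^{2}$, with $C_{0}=C_{0}(g(0),\rho)$.

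Next I would run Perelman's test-function argument directly on $S$, using \emph{basic} competitors. Assume the assertion fails, so that for some $t\ge 0$, $p\in S$ and $0<r\le\rho$ the quantity $\int_{\{y:d^{T}(p,y)<r\}}d\mu$ is as small as we wish relative to $r^{2n}$. Since $d^{T}(p,\cdot)$ is Lipschitz and constant along Reeb orbits, the function $u=c\,\varphi\!\left(d^{T}(p,\cdot)/r\right)$ is basic — here $\varphi$ is a fixed cutoff with $\varphi\equiv 1$ on $[0,\tfrac12]$ and $\operatorname{supp}\varphi\subset[0,1]$, and $c>0$ is normalized so that $(4\pi r^{2})^{-n}\int_{S}u^{2}\,d\mu=1$ — hence $e^{-f}:=u^{2}$ is an admissible competitor for $\mu^{T}(g^{T}(t),r^{2})$ (Lipschitz competitors being admissible by density). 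Estimating $\mathcal{W}^{T}(g^{T}(t),f,r^{2})$ term by term as in Perelman: after the standard reduction to the case $\int_{\{d^{T}(p,\cdot)<r/2\}}d\mu\ge\tfrac12\int_{\{d^{T}(p,\cdot)<r\}}d\mu$ (otherwise iterate $r\mapsto r/2$), the gradient term is $O(1)$, the $\tau R^{T}$ term is bounded on $\operatorname{supp}u$ — this is where, exactly as in Perelman's theorem, a local bound on the transverse curvature over the tube $\{d^{T}(p,\cdot)<r\}$ enters — and the remaining term equals $\log\bigl(\int_{\{d^{T}(p,\cdot)<r\}}d\mu/r^{2n}\bigr)$ up to an additive constant depending only on $\varphi$ and $n$. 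Thus $\mu^{T}(g^{T}(t),r^{2})$ is at most $\log\bigl(\int_{\{d^{T}(p,\cdot)<r\}}d\mu/r^{2n}\bigr)$ plus a controlled constant, which by our assumption can be forced below $-C_{0}$, contradicting the first step. This produces $\int_{\{y:d^{T}(p,y)<r\}}d\mu\ge c\,r^{2n}$ with $c$ depending only on $C_{0}$ and the local curvature constant, hence only on $g(0)$ and $\rho$.

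The step I expect to be the main obstacle is carrying Perelman's volume-doubling induction and the localized $\mathcal{W}$-estimate through the foliated setting cleanly. The facts that make this work are that the Reeb foliation is \emph{fixed} along the Sasaki-Ricci flow, so at each time the tube $\{d^{T}(p,\cdot)<r\}$ is a union of Reeb orbits and the tubes are nested in $r$; that $d\mu$ is the Riemannian volume form and in preferred local coordinates factors as a transverse K\"ahler volume wedged with $dx$, so all the relevant integrations by parts and Fubini-type manipulations reduce to the K\"ahler case (cf. \S2); and that $d^{T}(p,\cdot)$ is basic away from $orb_{\xi}(p)$, so every cutoff built from it is basic and thus a legitimate competitor in $\mu^{T}$. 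In the irregular case one performs the local analysis on the holonomy cover of a foliation chart, as in the discussion of transverse geometry above and in \cite{TristC}; this changes none of the estimates. Combining the uniform lower bound on $\mu^{T}$ from the first step with the contradiction argument of the second completes the proof.
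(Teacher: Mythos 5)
This proposition is not proved in the present paper at all: it is imported verbatim from \cite{TristC} (Proposition 7.1), and your reconstruction -- a uniform lower bound for the transverse entropy $\mu^{T}$ followed by Perelman's test-function argument run on basic cutoffs of $d^{T}(p,\cdot)$ -- is precisely the route taken in that reference, so in approach you are aligned with the source.

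Two points in your write-up need tightening. First, for the \emph{normalized} flow the map $t\mapsto\mu^{T}(g^{T}(t),\tau)$ is not monotone for fixed $\tau$; Perelman's monotonicity holds along a backward time parameter $\tau(t)$, and what one actually extracts (and what \cite{TristC} establishes) is the uniform lower bound $\mu^{T}(g^{T}(t),\tau)\geq -C_{0}(g(0),\rho)$ for all $t\geq 0$ and $\tau\in(0,\rho^{2}]$. Since that bound is all you subsequently use, this is a misstatement rather than a gap. Second, and more substantively: the proposition carries no curvature hypothesis, so the ``local bound on the transverse curvature over the tube'' that you invoke to control the $\tau R^{T}$ term cannot be taken as an assumption, and as written your constant $c$ depends on an unjustified local curvature constant. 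The repair is immediate but must be made explicit: only the transverse \emph{scalar} curvature enters $\mathcal{W}^{T}$, and Theorem~\ref{Pman thm} (also from \cite{TristC}) gives $|R^{T}(g(t))|\leq C(g(0))$ uniformly along the flow, whence $\tau R^{T}\leq C\rho^{2}$ on all of $S$ for $\tau\leq\rho^{2}$; with this the dependence of $c$ on $g(0)$ and $\rho$ alone is legitimate. Granting the standard termination of the $r\mapsto r/2$ iteration (at each fixed time the metric is smooth, so the tube volume ratio does not degenerate as $r\to 0$), the remainder of your argument -- basicness of $d^{T}(p,\cdot)$, the $1$-Lipschitz bound, and the term-by-term estimate of $\mathcal{W}^{T}$ yielding $\mu^{T}(g^{T}(t),r^{2})\leq\log\bigl(\int_{\{d^{T}(p,\cdot)<r\}}d\mu\,/\,r^{2n}\bigr)+C$ -- is correct and matches the transverse adaptation of Perelman's proof.
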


\begin{Theorem}[\cite{TristC} Theorem 1.3]\label{Pman thm}
Let $g(t)$ be a solution of the normalized Sasaki-Ricci flow on a compact Sasaki manifold $(S,\xi)$ of real dimension $2n+1$, and transverse complex dimension n, with $c^{1}_{B}(S)>0$.  Let $u \in C^{\infty}_{B}(S)$ be the transverse Ricci potential.  Then there exists a uniform constant $C$, depending only on the initial metric $g(0)$ so that
\begin{equation}
|R^{T}(g(t))| + |u|_{C^{1}} + diam^{T}(S,g(t)) < C
\end{equation}
where $diam^{T}(S,g(t)) = \sup_{x,y\in S} d^{T}(x,y)$.
\end{Theorem}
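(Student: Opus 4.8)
The plan is to adapt to the transverse setting Perelman's a priori estimates for the normalized K\"ahler--Ricci flow, in the form recorded by Sesum--Tian; the two essential inputs are the monotonicity of the transverse $\mathcal{W}$- and $\mu$-functionals along the flow and the transverse volume non-collapsing of Proposition~\ref{non-collapse 1}. Write $u = u(t)$ for the transverse Ricci potential, normalized by $\int_{S} e^{-u}\, d\mu = \mathrm{Vol}(S)$ as in \S3, so that $u$ evolves by $\dot u = \square_{B} u + u + a(t)$. Recall that $\dop{j}\dbop{k} u = -R^{T}_{\bar{k}j} + g^{T}_{\bar{k}j}$, whose $g^{T}$-trace is $R^{T} = n - \square_{B} u$; hence $|R^{T}| \le C$ is equivalent to $|\square_{B} u| \le C$, and in particular $\int_{S} R^{T}\, d\mu = n\,\mathrm{Vol}(S)$ is fixed along the flow. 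The program is then: (i) bound $\|u\|_{C^{0}}$; (ii) bound $\|\nabla^{T} u\|_{C^{0}}$ and $\|\square_{B} u\|_{C^{0}}$, hence $|R^{T}|$ and $|u|_{C^{1}}$; (iii) bound $\mathrm{diam}^{T}(S, g(t))$. The Shi-type estimates of Theorem~\ref{BBS Thm} are of no help here, since they presuppose a $C^{0}$ bound on $Rm^{T}$, which is far stronger than anything we shall have available.

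For (i): the functional $\mu^{T}(g^{T}(t),\tau)$ is monotone non-decreasing along the flow, and is bounded above — for instance by testing $\mathcal{W}^{T}$ against the constant function, using that $\int_{S} R^{T}\, d\mu$ and $\mathrm{Vol}(S)$ are fixed — so it has a uniform two-sided bound. The resulting lower bound on $\mu^{T}$ is equivalent to a uniform transverse logarithmic Sobolev inequality, and together with Proposition~\ref{non-collapse 1} and the fixed total volume it upgrades to a uniform transverse Sobolev inequality. Evaluating $\mathcal{W}^{T}(g^{T}(t),\cdot,\tau)$ on a suitable additive normalization of $u$ yields $\int_{S} |\nabla^{T} u|^{2} e^{-u}\, d\mu \le C$ and $\int_{S} u^{2} e^{-u}\, d\mu \le C$; Moser iteration against the weight $e^{-u}$, applied to $\square_{B} u = n - R^{T}$ exactly as in Sesum--Tian, then gives $\|u\|_{C^{0}} \le C$. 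Every computation here transcribes from the K\"ahler case because the mixed transverse Christoffel symbols vanish, (\ref{local christ symbols}).

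For (ii): $R^{T}$ is bounded below by the maximum principle applied to its evolution equation along the Sasaki--Ricci flow. Once $\|u\|_{C^{0}}$ is known, the maximum principle applied to the evolution of an appropriate Perelman-type quantity combining $R^{T}$ and $|\nabla^{T} u|^{2}$ — whose reaction terms carry the correct sign — produces simultaneous upper bounds for $R^{T}$ and $|\nabla^{T} u|^{2}$. Together with the lower bound this gives $|R^{T}| \le C$, hence $|\square_{B} u| \le C$ and, with (i), $|u|_{C^{1}} \le C$. Again these are just the transverse Bochner and commutation identities of \S2--\S3 for the connection $\nabla^{T}$, used verbatim.

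The main obstacle is step (iii), the diameter bound, which is delicate in exactly the way it is in the K\"ahler case: only the transverse scalar curvature, not the full transverse Ricci tensor, is controlled, so Bishop--Gromov comparison is unavailable. The plan is to argue by contradiction in the manner of Perelman and Sesum--Tian: if $\mathrm{diam}^{T}(S, g(t_{i})) \to \infty$, join near-diametral points by a transverse minimizing geodesic and play the identity $R^{T} = n - \square_{B} u$ together with the $C^{0}$ bound on $u$, through a second-variation and index-form estimate along the geodesic, against the volume lower bounds for transverse metric balls furnished by Proposition~\ref{non-collapse 1}, contradicting the fact that $\mathrm{Vol}(S, g(t)) = \mathrm{Vol}(S, g(0))$ is constant. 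In the Sasaki setting one must carry the foliated structure through the whole argument — working with $d^{T}$, horizontal geodesics, and the transverse distance balls of Proposition~\ref{non-collapse 1} in place of their Riemannian analogues — and make the volume comparison work with only a scalar curvature bound; this is where the bulk of the technical work lies.
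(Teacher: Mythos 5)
This theorem is not proved in the paper at all: it is quoted verbatim from \cite{TristC} (Theorem 1.3 there), whose argument is Perelman's uniform estimate for the normalized K\"ahler--Ricci flow, in the Sesum--Tian formulation, transplanted to the transverse setting via the transverse $\mathcal{W}$- and $\mu$-functionals and the non-collapsing of Proposition~\ref{non-collapse 1}. Your overall architecture matches that argument, but your step (i) contains a genuine gap, and the gap propagates because you have inverted the logical order of the proof.

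Concretely: monotonicity of $\mu^{T}$ gives a uniform \emph{lower} bound for $\mathcal{W}^{T}(g^{T}(t),f,\tau)$ over all admissible $f$; evaluating $\mathcal{W}^{T}$ at the particular function $u$ therefore produces no \emph{upper} bound on $\int_{S}|\nabla^{T}u|^{2}e^{-u}d\mu$ or $\int_{S}u^{2}e^{-u}d\mu$ --- the Ricci potential is not the minimizer, and $\mathcal{W}^{T}(g^{T}(t),u,\tau)$ is not monotone. Even granting those weighted $L^{2}$ bounds, Moser iteration on $\square_{B}u=n-R^{T}$ is circular: the right-hand side is exactly the quantity the theorem is trying to bound, and you have no uniform $L^{p}$ ($p>n$) control of $R^{T}$ at that stage; moreover the weight $e^{-u}$ cannot be bounded below without an a priori upper bound on $u$, which is again part of the conclusion. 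In the actual proof the $C^{0}$ bound on $u$ comes \emph{last}, not first: one proves (a) $R^{T}\geq -C$ by the maximum principle; (b) the \emph{relative} estimates $|\nabla^{T}u|^{2}\leq C(B-u)$ and $-\square_{B}u\leq C(B-u)$ by applying the maximum principle to $|\nabla^{T}u|^{2}/(B-u)$ and $(-\square_{B}u)/(B-u)$ --- these quantities are designed precisely so that no $C^{0}$ bound on $u$ is needed as input; (c) the transverse diameter bound by Perelman's annulus decomposition (not a second-variation/index-form argument): if $\mathrm{diam}^{T}\to\infty$, the relative gradient estimate forces $-u(x)\lesssim (1+d^{T}(x_{0},x))^{2}$ and $R^{T}(x)\lesssim (1+d^{T}(x_{0},x))^{2}$ from a point where $u\leq 0$, and combining this with $\int_{S}R^{T}d\mu=n\,Vol(S)$ and Proposition~\ref{non-collapse 1} shows large transverse annuli have geometrically decaying volume, contradicting the fixed total volume; (d) only then do $\|u\|_{C^{0}}$ and $\|\nabla^{T} u\|_{C^{0}}$ follow, from the relative estimates, the diameter bound, and the normalization $\int_{S}e^{-u}d\mu=Vol(S)$ (which forces $\inf u\leq 0$). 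If you want to repair your write-up, replace step (i) by steps (a)--(b) above and move the $C^{0}$ bound to the end; your step (iii) is then essentially correct in spirit, provided you run the annulus argument with $d^{T}$ and the transverse balls of Proposition~\ref{non-collapse 1}, and recover the honest diameter from the transverse one as in Lemma~\ref{diameter bound lemma}.
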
 

It was pointed out in \cite{WHe} that the transverse diameter bound in Theorem~\ref{Pman thm} implies a bound for the diameter.  We include the argument for completeness.

\begin{Lemma}\label{diameter bound lemma}
There exists a constant $C>0$, such that $diam(S,g(T))<C$ for all $t$ along the Sasaki-Ricci flow.
\end{Lemma}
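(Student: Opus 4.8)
The plan is to estimate the Riemannian diameter by the transverse diameter (which is uniformly bounded by Theorem~\ref{Pman thm}) plus a ``vertical'' contribution coming from the Reeb orbits. The key structural observation is that the Sasaki‑Ricci flow is a family of type II deformations, so the Reeb field $\xi$ is the \emph{same} for every $g(t)$; moreover $\eta(t)(\xi)=1$ and $\mathcal{L}_{\xi}$ annihilates both $\eta(t)$ and $g^{T}(t)$, so $\xi$ is a unit‑length Killing field for $g(t)$ for all $t$, and its flow $\{\psi_{s}\}_{s\in\mathbb{R}}$ is a one‑parameter group of isometries of $g(t)$ which does not depend on $t$. In particular the curve $s\mapsto\psi_{s}(x)$ has $g(t)$‑speed $|\xi|_{g(t)}\equiv 1$, so $dist_{g(t)}(x,\psi_{s}(x))\le|s|$ for every $x$, $s$, and $t$, and $s\mapsto dist_{g(t)}(x,\psi_{s}(x))$ is subadditive in $s$.

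First I would record the reduction. Given $x,y\in S$, by the definition of $d^{T}$ and the fact that $orb_{\xi}x=\{\psi_{s}(x):s\in\mathbb{R}\}$, there are $a,b\in\mathbb{R}$ with $dist_{g(t)}(\psi_{a}(x),\psi_{b}(y))\le d^{T}(x,y)+1\le diam^{T}(S,g(t))+1$, and by Theorem~\ref{Pman thm} the right‑hand side is bounded by a constant $C_{0}$ independent of $t$. The triangle inequality then gives
\begin{equation*}
dist_{g(t)}(x,y)\le dist_{g(t)}(x,\psi_{a}(x))+C_{0}+dist_{g(t)}(y,\psi_{b}(y)),
\end{equation*}
so it suffices to bound $\sup_{x,s}dist_{g(t)}(x,\psi_{s}(x))$ uniformly in $t$; equivalently, to bound uniformly in $t$ the $g(t)$‑diameter of the orbit closures of the Reeb flow.

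In the regular and quasi‑regular cases this last step is immediate. The Reeb orbit through $x$ is a closed circle whose $g(t)$‑length equals $\int_{0}^{T(x)}|\xi|_{g(t)}\,ds=T(x)$, the period, which is independent of $t$; since $\xi$ generates a locally free isometric $U(1)$‑action on the compact manifold $S$, $\sup_{x}T(x)=:L<\infty$, and hence $dist_{g(t)}(x,\psi_{s}(x))\le L/2$ for all $x,s,t$, which completes the argument. For the general (irregular) case one replaces the orbit by its closure $\mathcal{O}_{x}=\overline{orb_{\xi}x}$, a compact torus on which the closure $T^{k}\subset\mathrm{Diff}(S)$ of $\{\psi_{s}\}$ acts transitively; since $\mathrm{Isom}(S,g(t))$ is closed in $\mathrm{Diff}(S)$ and contains every $\psi_{s}$, it contains $T^{k}$, so $g(t)\big|_{\mathcal{O}_{x}}$ is a flat, translation‑invariant metric for which $\xi$ still has unit length. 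One then seeks to bound $diam(\mathcal{O}_{x},g(t))$ using this unit‑length constraint together with the non‑collapsing estimate Proposition~\ref{non-collapse 1} (to prevent the transverse directions tangent to $\mathcal{O}_{x}$ from degenerating) and the fact that the total volume is fixed along the normalized flow.

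The main obstacle is precisely this last point: making the vertical bound uniform in $t$ when the Sasaki structure is irregular, since there $\xi$ generates only a dense line inside the torus $\mathcal{O}_{x}$ and the lemma's hypotheses include no curvature bound, so the transverse part of $g(t)\big|_{\mathcal{O}_{x}}$ is not controlled by the unit‑length normalization of $\xi$ alone. The subadditivity of $s\mapsto dist_{g(t)}(x,\psi_{s}(x))$ reduces matters to controlling the return times of the Reeb flow to a fixed‑size neighborhood of $x$, whose lengths depend only on the ($t$‑independent) dynamics of $\psi_{s}$ on $\mathcal{O}_{x}$; combining this recurrence with non‑collapsing and the bounded transverse diameter should close the gap. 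An alternative route is to approximate the irregular Reeb field by quasi‑regular ones inside the Reeb cone, apply the quasi‑regular bound, and pass to the limit, but this requires comparing the flow across the approximation and so is less clean than the argument indicated above.
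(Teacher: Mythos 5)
Your setup is right as far as it goes --- $\xi$ is fixed along the flow, has unit $g(t)$-length, and is Killing for every $g(t)$, and the regular/quasi-regular case is handled exactly as in the paper --- but your reduction sends you after the wrong quantity, and the irregular case, which is the only nontrivial one, is left open. You reduce the lemma to bounding $\sup_{x,s}dist_{g(t)}(x,\psi_{s}(x))$, i.e.\ the $g(t)$-diameter of the closure of an \emph{arbitrary} Reeb orbit. In the irregular case that closure is a torus of dimension at least $2$ on which the unit-length normalization of $\xi$ controls only one direction, and, as you yourself concede, neither Proposition~\ref{non-collapse 1} nor the fixed total volume controls the remaining directions in the absence of a curvature bound (which this lemma does not assume). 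The ``recurrence'' and ``approximation by quasi-regular Reeb fields'' sketches at the end are not arguments: the first still requires comparing $g(t)$-distances on a scale that is not uniform in $t$, and the second changes the Reeb field and hence the flow itself. So the proof does not close.

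The missing ingredient is Rukimbira's theorem \cite{Ruk, Ruk1}: on a compact K-contact (in particular Sasaki) manifold the Reeb field always has at least $n+1$ \emph{closed} orbits, even when the structure is irregular. Fix one such point $p$; the length $A$ of $orb_{\xi}p$ is independent of $t$ because $|\xi|_{g(t)}\equiv 1$. Since $\xi$ is Killing for $g(t)$, the double infimum defining $d^{T}$ collapses to $d^{T}(x,p)=\inf_{q\in orb_{\xi}p}dist_{g(t)}(x,q)$, so any $x$ can be joined to the orbit of $p$ by a path of $g(t)$-length at most $diam^{T}(S,g(t))$, and hence
\begin{equation*}
dist_{g(t)}(x,y)\;\le\; d^{T}(x,p)+A+d^{T}(y,p)\;\le\; 2\,diam^{T}(S,g(t))+A,
\end{equation*}
which is uniformly bounded by Theorem~\ref{Pman thm}. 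The point you miss is that one never needs to control the vertical displacement along the (possibly dense) orbits of $x$ and $y$ themselves; one routes every pair of points through a single fixed closed orbit, whose length is manifestly $t$-independent.
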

\begin{proof}
For every $t\geq 0$, we have $g(t)(\xi,\xi) =1$.  In particular, if $p \in S$ has $orb_{\xi}p$ closed, then the length of the curve defined by $orb_{\xi}p$ is independent of $t$.  Thus, the result is obvious in the regular and quasi-regular cases.  When the Sasaki structure is irregular, the results of Rukimbira \cite{Ruk, Ruk1} imply that there exists at least $n+1$ closed orbits of the Reeb field.  Let $p\in S$ be a point with closed orbit, and let $orb_{\xi}p$ have length $A$.  We have
\begin{equation*} 
d(x,y) \leq d^{T}(x,p) +A + d^{T}(y,p) \leq 2diam^{T}(S,g(t)) + A.
\end{equation*} 
which completes the proof.
\end{proof}

We now discuss the choice for the initial value of the transverse K\"ahler potential.  Suppose that a given flow $\phi$ satisfies $\phi(0)=c_{0}$, then one can easily check that $\tilde{\phi} := \phi + (\tilde{c}_{0} -c_{0})e^{\kappa t}$ satisfies the same flow with initial condition $\tilde{c_{0}}$.  This underlines the importance of choosing the initial value properly; any two solutions with different initial value differ by terms diverging exponentially in time.  We introduce the quantity
\begin{equation}\label{initial condition}
c_{0} = \int_{0}^{\infty} e^{-t}\|\nabla \dot{\phi} \|_{L^{2}}^{2} dt + \frac{1}{Vol(S)}\int_{S}u(0) d\mu_{0}.
\end{equation}
One easily checks that this does not depend on $\phi(0)$.  The first indication that this is the correct choice for $\phi(0)$ is that the bound in Theorem~\ref{Pman thm} implies
\begin{equation}\label{uniform bound for phi}
\sup_{t \geq 0} \|\dot{\phi}\|_{C^{0}} \leq C.
\end{equation}
To see this, observe that $\dop{B}\dbar_{B} (u-\dot{\phi}) =0$, and so by the uniform bound for $u$ it suffices to  bound, $\alpha(t) := Vol(S)^{-1} \int_{S} \dot{\phi} d\mu_{t}$.  This is easily done by computing the evolution of $\alpha$; see the computations in \cite{PSS}.  The upshot of this is contained in the following;

\begin{prop}\label{uniform Yau}
Let $(S, \xi, \eta_{0}, \Phi, g_{0})$ be a compact Sasaki manifold with $\kappa [\frac{1}{2}d\eta_{0}]_{B} =c_{1}^{B}(S)$ for any constant $\kappa$.  Consider the Sasaki-Ricci flow defined by~(\ref{PCMA}).  The we have the a priori estimates
\begin{equation*}
\sup_{t\geq 0} \|\phi\|_{C^{0}} \leq A_{0} <\infty \iff \sup_{t\geq 0} \| \phi \|_{C^{k}} \leq A_{k} <\infty \quad \forall k \in \mathbb{N}
\end{equation*}
\end{prop}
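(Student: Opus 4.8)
The plan is to follow the strategy of Phong-Sesum-Sturm in the K\"ahler setting, working transversally. The nontrivial direction is to show that a uniform $C^0$ bound on the Sasaki potential $\phi$ along the flow bootstraps to uniform $C^k$ bounds for every $k$; the reverse implication is trivial. The argument proceeds by first extracting a uniform ellipticity estimate for the complex Monge-Amp\`ere operator, then applying parabolic Schauder/Krylov-Safonov-type regularity theory to the transversely parabolic equation~(\ref{PCMA}).

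First I would establish a $C^2$ estimate, i.e. uniform two-sided bounds on the transverse metric $g^T(t)$. Since $(2n+2) g_0 \in c_1^B(S)$ we have $\sup_t \|\dot\phi\|_{C^0} \leq C$ from~(\ref{uniform bound for phi}), which together with the hypothesis $\sup_t\|\phi\|_{C^0} \leq A_0$ bounds the right-hand side of~(\ref{PCMA}) and hence $\log\det(g^T_{\bar k l} + \partial_l\partial_{\bar k}\phi) - \log\det(g^T_{\bar k l})$ in $C^0$. This gives a uniform bound on the transverse Monge-Amp\`ere volume ratio. Then one runs the standard Yau/Aubin second-order estimate: applying $\Delta^T_\phi := (\tilde g^T)^{j\bar k}\partial_j\partial_{\bar k}$ (or the transverse heat operator $\partial_t - \Delta^T_\phi$) to the quantity $\operatorname{tr}_{g^T}\tilde g^T$ in preferred local coordinates, and using that the transverse curvature $Rm^T$ of the fixed background $g^T_0$ is smooth and hence bounded, one obtains a differential inequality controlling $\operatorname{tr}_{g^T}\tilde g^T$ by $e^{C(\phi - \inf\phi)}$ up to lower-order terms. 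Because all the relevant computations are local and, in preferred local coordinates~(\ref{local christ symbols}), the transverse Christoffel symbols and curvature obey exactly the K\"ahler formulae with the $x$-direction decoupled, the K\"ahler parabolic $C^2$ estimate of \cite{PSS} transfers essentially verbatim; integration by parts is legitimate by the Proposition on integration by parts for basic forms stated above. This yields $C^{-1} g^T_0 \leq \tilde g^T(t) \leq C g^T_0$ uniformly in $t$.

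Once the metrics are uniformly equivalent, equation~(\ref{PCMA}) becomes uniformly transversely parabolic with bounded coefficients, so the complex Calabi/third-order estimate $\sup_t \|\nabla^T g^T(t)\|_{C^0} \leq C$ follows from the transverse analogue of the Calabi computation (again local, hence identical to the K\"ahler case), or alternatively from a parabolic Evans-Krylov argument in foliated charts giving a uniform $C^{2,\alpha}_B$ bound on $\phi$. From there, since the leading operator has uniformly elliptic, now H\"older-continuous coefficients, parabolic Schauder estimates applied iteratively in the foliation charts $U_\alpha = I\times V_\alpha$ (with $\xi = \partial_x$ and the holomorphic transition maps between the $V_\alpha$) promote $C^{2,\alpha}_B$ to $C^{k,\alpha}_B$ for all $k$, giving $\sup_t\|\phi\|_{C^k} \leq A_k$; basicity is preserved throughout because the flow preserves it. The main obstacle is the $C^2$ estimate: one must check carefully that the presence of the Reeb direction and the terms $2 g(\Phi(X),Y)g(\nabla_\xi X,W)$ in the curvature relation~(\ref{curvature relation}) do not spoil the maximum-principle computation. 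This is handled by working exclusively with the transverse connection $\nabla^T$ and transverse curvature $Rm^T$ in preferred local coordinates, where the mixed Christoffel symbols vanish and the obstruction simply does not arise; the transverse Laplacian $\square_B = (g^T)^{k\bar j}\nabla^T_k\nabla^T_{\bar j}$ behaves exactly as the K\"ahler Laplacian on basic functions.
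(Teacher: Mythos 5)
Your proposal is correct and follows essentially the same route the paper takes: the paper simply cites the transverse parabolic version of Yau's estimates (El Kacimi-Alaoui, Cao, Phong--Sesum--Sturm, and Smoczyk--Wang--Zhang) together with the uniform bound~(\ref{uniform bound for phi}) for $\dot{\phi}$, which is exactly the chain of second-order, third-order/Evans--Krylov, and Schauder bootstrap estimates you spell out in preferred foliated coordinates. Your additional remark that the Reeb direction decouples because the mixed transverse Christoffel symbols vanish is precisely the observation that makes the K\"ahler computations transfer, so nothing is missing.
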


The estimates in Proposition~\ref{uniform Yau} are the transverse parabolic version of Yau's famous estimates \cite{Yau2}, or equivalently the parabolic version of El Kacimi-Alaoui's generalization of Yau's estimates \cite{ElKac}.  For the K\"ahler-Ricci flow, these estimates are well known, and can be found, for example, in \cite{Cao, PSS}.  For the Sasaki-Ricci flow, the estimates can be found in \cite{SmoWaZa}.  In light of the uniform bound for $\dot{\phi}$, it is straight forward to check that Proposition~\ref{uniform Yau} holds.

\section{Bernstein-Bando-Hamilton-Shi Estimates for the Transverse Curvature}

In this section we prove Theorem~\ref{BBS Thm}, which follows essentially from the arguments in the Riemannian case, and the curvature identities~(\ref{curvature relation}), (\ref{curvature relation 2}), and (\ref{curvature relation 3}).  As the techniques used to prove this theorem in the Riemannian case are purely local, one hopes that this result will carry over to the Sasaki case.  However, we must proceed with some care.  For example, the standard commutation formulae for the covariant derivative and the Laplacian do not apply to `tensors' on the bundle $Q$.  In fact, the commutation relation which does hold involves not only the transverse Riemann tensor, but also the full Riemann curvature; here  the curvature identities~(\ref{curvature relation}), (\ref{curvature relation 2}), and (\ref{curvature relation 3}) are crucial.  In order to avoid being swamped by indices we introduce the following notation; if $A$ and $B$ are two sections of $TS^{*\otimes p} \otimes Q^{*\otimes q}$,  we denote by $A*B$ any quantity obtained from $A\otimes B$ by summation over paired indices, contraction with $g, g^{-1}, g^{T}$, or $(g^{T})^{-1}$, and multiplication by constants depending only on $n, p$ and $q$.

\begin{Lemma}\label{commutator lemma}
Suppose $A \in TS^{*\otimes p} \otimes Q^{*\otimes q}$. Then there is a constant $C$, depending only on $p,q$, and $n$, such that the following commutation relation holds;
\begin{equation*}
[\nabla, \Delta]A = Rm^{T}*\nabla A + A*\nabla Rm^{T} +  C\nabla A.
\end{equation*}
\end{Lemma}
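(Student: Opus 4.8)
The strategy is to transplant the classical Riemannian computation, using the curvature identities $(\ref{curvature relation})$, $(\ref{curvature relation 2})$, $(\ref{curvature relation 3})$ to trade the full Riemann tensor, which inevitably appears, for the transverse one. First I would fix a connection on the mixed bundle $\mathcal{F}:=TS^{*\otimes p}\otimes Q^{*\otimes q}$ that acts as the Levi--Civita connection $\nabla$ on the $TS^{*}$ factors and as the transverse connection $\nabla^{T}$ on the $Q^{*}$ factors, so that its curvature operator splits, acting on the $TS^{*}$ slots by the full $Rm$ and on the $Q^{*}$ slots by $Rm^{T}$. A preliminary but important point is that the curvature of $\nabla^{T}$, viewed as a connection on the bundle $Q\to S$, carries no $\xi$-leg: in preferred local coordinates the mixed transverse Christoffel symbols vanish, the remaining ones are basic by $(\ref{local christ symbols})$, and $[\xi,X_{j}]=0$, so $\iota_{\xi}Rm^{T}=0$. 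Hence the $Q^{*}$ slots contribute \emph{only} $Rm^{T}$ --- no extra universal terms --- while the full tensor $Rm$ enters through the $TS^{*}$ slots and the Laplacian.

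Next I would run the standard manipulation. Writing $\Delta=g^{ab}\nabla_{a}\nabla_{b}$ and inserting and removing a factor $\nabla_{b}\nabla_{c}$, I would split $\nabla_{c}\Delta A-\Delta\nabla_{c}A$ into $g^{ab}[\nabla_{c},\nabla_{a}]\nabla_{b}A$ and $g^{ab}\nabla_{a}\!\left([\nabla_{c},\nabla_{b}]A\right)$; the first is $\mathcal{R}*\nabla A$, and in the second, distributing $\nabla_{a}$ produces $\mathcal{R}*\nabla A$ together with $(\nabla\mathcal{R})*A$, whose $g^{ab}$-trace is, by the contracted second Bianchi identity, a covariant derivative of the Ricci-type trace of $\mathcal{R}$. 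Formally this yields a sum of terms of the shapes $Rm*\nabla A$, $Rm^{T}*\nabla A$, $(\nabla Rm)*A$ and $(\nabla Rm^{T})*A$, with appropriate traces.

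It then remains to convert the full-curvature terms. Here I would substitute $(\ref{curvature relation})$, $(\ref{curvature relation 2})$, $(\ref{curvature relation 3})$: these write $Rm=Rm^{T}+\mathcal{U}$ on $D$ and give the $\xi$-direction components of $Rm$ explicitly, with $\mathcal{U}$ and those components being algebraic in $g,\eta,\Phi$; tracing gives the corresponding decomposition of the full Ricci tensor and of its derivative. Because $\nabla g=0$, because $\nabla\eta$ is the pointwise-bounded lowering of $\Phi$, and because $\nabla\Phi$ is --- by differentiating $\Phi X=\nabla_{X}\xi$ and using $(\ref{curvature relation 3})$ --- itself algebraic in $g,\eta,\Phi$, each such structural tensor and each of its covariant derivatives has pointwise norm bounded by a constant depending only on $n$; the same identities also let one replace $\nabla^{T}$ by $\nabla$ on $Q^{*}$-valued tensors up to a bounded structural correction. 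After these substitutions the $Rm^{T}$- and $\nabla Rm^{T}$-pieces assemble into $Rm^{T}*\nabla A$ and $A*\nabla Rm^{T}$, and one must finally check that every remaining term --- a contraction of $A$ or of $\nabla A$ against a product of the bounded structural tensors $g,\eta,\Phi$ and their derivatives --- reorganizes, using the antisymmetry of $\Phi$, $\eta(\xi)=1$, $\eta|_{D}=0$ and the identities $(\ref{curvature relation 2})$, $(\ref{curvature relation 3})$, into a universal constant multiple of $\nabla A$. I expect this last reorganization to be the main obstacle: it is where the precise form of the Sasaki curvature identities and the fact that $Rm^{T}$ is basic are essential, and it is the step most sensitive to the exact conventions hidden in the $*$-notation; everything preceding it is the local K\"ahler commutation computation seen through the preferred coordinates.
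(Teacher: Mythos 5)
Your strategy coincides with the paper's (which only sketches the argument): commute $\nabla$ past $\Delta$ to produce $Rm^{T}*\nabla A + Rm*\nabla A + A*\nabla Rm + A*\nabla Rm^{T}$, then use~(\ref{curvature relation}), (\ref{curvature relation 2}), (\ref{curvature relation 3}) to trade $Rm$ for $Rm^{T}$ plus an algebraic tensor $\mathcal{U}$ built from $g$, $\eta$, $\Phi$, all of whose covariant derivatives are pointwise bounded by dimensional constants since $\nabla g=0$, $\nabla_{X}\xi=\Phi(X)$, and $(\nabla_{X}\Phi)Y$ is algebraic in $g$, $\eta$, $\xi$. Your two supplementary observations --- the mixed connection on $TS^{*\otimes p}\otimes Q^{*\otimes q}$, and the fact that $\iota_{\xi}Rm^{T}=0$ so the $Q^{*}$ slots contribute only $Rm^{T}$ with no further universal terms --- are correct and make the argument more precise than the paper's version.

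The one step that does not work as you state it is the final reorganization, and you correctly identify it as the delicate point. After writing $Rm=Rm^{T}+\mathcal{U}$, the term $A*\nabla Rm$ contributes $A*\nabla\mathcal{U}$, which is a contraction of $A$ itself (carrying no derivative) against a bounded structural tensor; no amount of reshuffling with $\Phi$, $\eta(\xi)=1$ and the curvature identities turns a zeroth-order term in $A$ into a constant multiple of $\nabla A$. The honest conclusion of the computation is therefore $[\nabla,\Delta]A=Rm^{T}*\nabla A+A*\nabla Rm^{T}+C_{1}\nabla A+C_{2}A$ with $C_{1},C_{2}$ depending only on $n,p,q$. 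This defect is inherited from the lemma as stated in the paper rather than introduced by you, and it is harmless where the lemma is applied: in the proof of Theorem~\ref{BBS Thm} one takes $A=Rm^{T}$ and pairs against $\nabla Rm^{T}$, so the extra term yields $Rm^{T}*\nabla Rm^{T}$, which is controlled by $|Rm^{T}|^{2}+|\nabla Rm^{T}|^{2}$ and absorbed into the existing error terms $Rm^{T}*(\nabla Rm^{T})^{*2}+(\nabla Rm^{T})^{*2}$ of~(\ref{grad RmT evolution}). You should either add the $C_{2}A$ term to the statement or exhibit an explicit cancellation of the $A*\nabla\mathcal{U}$ contributions, which does not appear to occur.
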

\begin{proof}
The argument is elementary, and so we only provide a sketch.  The key point is that commuting covariant derivatives yields an expression involving both the full Riemann tensor, and the transverse Riemann tensor.  In particular,
\begin{equation*}
[\nabla, \Delta]A = Rm^{T} * \nabla A + Rm*\nabla A + A * \nabla Rm + A*\nabla Rm^{T}.
\end{equation*}
We now use the curvature relations~(\ref{curvature relation}), (\ref{curvature relation 2}), and (\ref{curvature relation 3}) to replace all the terms involving the full curvature tensor with terms involving only the metric and $Rm^{T}$.  Collecting terms, and observing that any constants which appear depend only on $n$, $p$ and $q$, the lemma is proved.
\end{proof}

\begin{proof}[Proof of Theorem~\ref{BBS Thm}]
We begin by computing
\begin{versionb}
\begin{equation*}
\RmT {j}{i}{l}{k} = -[ \nabla^{T}_{\bar{j}}, \nabla^{T}_{i}]^{l}{}_{k} = -\dbop{j}\left((g^{T})^{l\bar{p}}\dop{i}g^{T}_{\bar{p}k}\right)
\end{equation*}
We suppress the symbol $T$ in what follows, and compute 
\begin{equation*}
\begin{aligned}
\pl{t} \RmT {j}{i}{l}{k} &= -\dbop{j}\left(g^{l\bar{s}}R_{\bar{s}m}g^{m\bar{p}}\dop{i}g_{\bar{p}k} - g^{l\bar{p}}\dop{i}\left(R_{\bar{p}k}\right)\right)\\
&= \dbop{j}\left(g^{l\bar{s}} \left(\dop{i}R_{\bar{s}k} - \tilde{\Gamma}_{ik}^{m}R_{\bar{s}m}\right)\right)\\
&= \nabla_{\bar{j}}\nabla_{i}R^{l}{}_{k}
\end{aligned}
\end{equation*}
We now compute
\begin{equation*}
\begin{aligned}
\Delta \Rm{\bar{j}}{i}{l}{k} &= g^{p\bar{q}}\nabla_{p}\nabla_{\bar{q}}\Rm{\bar{j}}{i}{l}{k} \\
&= g^{p\bar{q}}\nabla_{p}\nabla_{\bar{j}}\Rm{\bar{q}}{i}{l}{k}\\
&=g^{p\bar{q}}\nabla_{\bar{j}}\nabla_{p}\Rm{\bar{q}}{i}{l}{k} + Rm*Rm\\
&=\nabla_{\bar{j}}\nabla_{i}R^{l}{}_{k} + Rm*Rm,
\end{aligned}
\end{equation*}
where in the second and the fourth lines we have used the K\"ahler property for the transverse metric to interchange indices.  We thus obtain
\end{versionb}
\begin{equation}\label{RmT evolution}
\pl{t}Rm^{T} -\Delta Rm^{T} = Rm^{T}*Rm^{T}.
\end{equation}
We can now compute the evolution equation for $|\nabla Rm^{T}|^{2}$.  Before proceeding, we point out that the quantity  $|\nabla Rm^{T}|^{2}$ involves both $g^{T}$ and $g$, as the covariant derivative $\nabla$ takes arguments in $TS$.  However, by looking in preferred coordinates we clearly have $
\nabla_{\pl{x}}Rm^{T} = 0,$ and so we see that the norm $|\nabla Rm^{T}|^{2}$ agrees with the norm when we replace $g$ by $g^{T}$, regarded as a bilinear form on $TS$.  With this in mind, we obtain
\begin{equation}\label{grad RmT evolution}
\pl{t}|\nabla Rm^{T}|^{2}= \Delta|\nabla Rm^{T}|^{2} -2 |\nabla^{2} Rm^{T}|^{2} + Rm^{T}*(\nabla Rm^{T})^{*2}+ (\nabla Rm^{T})^{*2}.
\begin{versionb}
\begin{aligned}
\pl{t}|\nabla Rm^{T}|^{2} &= 2\left\langle \nabla\left(\pl{t}Rm^{T}\right), \nabla Rm^{t}\right\rangle\\ &+ \nabla Ric^{T}*Rm^{T}*\nabla Rm^{T} + Rc^{T}*\nabla Rm^{T}*\nabla Rm^{T}\\
&=\Delta|\nabla Rm^{T}|^{2} -2 |\nabla^{2} Rm^{T}|^{2} + Rm^{T}*(\nabla Rm^{T})^{*2}+ (\nabla Rm^{T})^{*2}.
\end{aligned}
\end{versionb}
\end{equation}
The last line follows by Lemma~\ref{commutator lemma}.  We now consider the quantity $F = t|\nabla Rm^{T}|^{2} + \beta |Rm^{T}|^{2}$.  Using equations~(\ref{RmT evolution}) and~(\ref{grad RmT evolution}), we compute that
\begin{equation*}
\begin{aligned}
\pl{t}F &\leq |\nabla Rm^{T}|^{2} + t\pl{t}|\nabla Rm^{T}|^{2} + \beta (Rm^{T})^{*3} + \beta \Delta |Rm^{T}|-2\beta|\nabla Rm^{T}|^{2}\\
&\leq \Delta F + \left(1+c_{1}t|Rm^{T}| +c_{2}t-2\beta\right)|\nabla Rm^{T}|^{2} +c_{3}|Rm^{T}|^{3}.
\end{aligned}
\end{equation*}
By assumption, $|Rm^{T}| \leq K$ if $t \in[0,\alpha/K]$.  Set $2\beta = 1+ c_{1}\alpha + c_{2}\frac{\alpha}{K}$, then
\begin{equation*}
\pl{t}F -\Delta F \leq c_{3}K^{3}.
\end{equation*}
Applying the maximum principle yields $F(x,t) \leq \beta K^{2} + c_{3}\beta K^{3}t$.  Using the definition of $\beta$ we obtain that there is a constant $C_{4}$ depending only on $n$, and  $\max\{\alpha, 1\}$, such that for every $t \in (0,\alpha/K]$ we have
\begin{equation*}
|\nabla Rm^{T}| \leq \sqrt{\frac{F}{t}} \leq C_{4}t^{-1/2}\max\{K^{1/2}, K\}.
\end{equation*}
This proves the theorem in the case $m=1$.  The general case of $m>1$ follows by making similar adaptations to the K\"ahler, or Riemannian case.  See \cite{BenChow} for details in these cases.  The curvature equations~(\ref{curvature relation}), (\ref{curvature relation 2}), and (\ref{curvature relation 3}) show that our bounds for $Rm^{T}$ extend to bounds for the full Riemann tensor.
\begin{versionb}
 We now show that our bounds for $Rm^{T}$ extend to bounds for the full Riemann tensor.  Fix a point $p\in S$, and preferred local coordinates in an open neighbourhood of $U \ni p$, centered at $p$.  Using the curvature equations~(\ref{curvature relation}), (\ref{curvature relation 2}), and (\ref{curvature relation 3}), one can easily check that for the basis of the tangent space in~(\ref{preferred basis}) we have
\begin{equation}\label{local curv relations}
\begin{aligned}
&Rm_{\bar{k}j\bar{l}p} = Rm^{T}_{\bar{k}j\bar{l}p} - g^{T}_{\bar{k}p}g^{T}_{\bar{l}j}\\
& Rm = 0 \text{ for any other set of indices }
\end{aligned}
\end{equation}
It is immediate that the bound for $Rm^{T}$ at $p$ implies a bound for $Rm$ at $p$.  Moreover, since equation~(\ref{local curv relations}) holds in a neighbourhood of $p$, and the transverse metric is covariantly constant, we see that the $C^{k}$ bounds for $Rm^{T}$ imply $C^{k}$ bounds for the full Riemann tensor.  The proof is complete.
\end{versionb}
\end{proof}

\begin{corollary}\label{uniform curvature bound}
If $Rm^{T}$ is uniformly bounded in $C^{0}$ along the normalized Sasaki-Ricci flow, then for any time $A>0$, there is a constant $C_{A,k}$ depending only on $|Rm^{T}|_{C^{0}}$, $k$ and $A$, so that $|Rm^{T}|_{C^{k}} \leq C_{A,k}$ for all $t \geq A$.
\end{corollary}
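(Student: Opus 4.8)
The plan is to deduce the corollary from Theorem~\ref{BBS Thm} by a time-translation (``restarting'') argument, using the fact that the normalized Sasaki-Ricci flow~(\ref{SRF}) is autonomous, so that any time-translate of a solution is again a solution of the same flow. Fix $A>0$ and $k\in\mathbb{N}$, and set $K:=\sup_{t\geq 0}|Rm^{T}(t)|_{C^{0}}$, which is finite by hypothesis; if $K=0$ there is nothing to prove, so assume $K>0$. Put $\tau:=\min\{A,1/K\}>0$. Given any $t\geq A$, consider the family $\tilde{g}(s):=g(s+t-\tau)$ for $s\in[0,\tau]$; this is well defined since $t-\tau\geq A-\tau\geq 0$, and since the flow is autonomous $\tilde{g}$ solves the normalized Sasaki-Ricci flow on $[0,\tau]$ with $|Rm^{T}_{\tilde{g}}(x,s)|_{g^{T}(x,s)}\leq K$ for all $x\in S$ and $s\in[0,\tau]$.

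Next I would apply Theorem~\ref{BBS Thm} to $\tilde{g}$ with the choice $\alpha:=K\tau=\min\{KA,1\}\leq 1$, so that $\alpha/K=\tau$ and $\max\{\alpha,1\}=1$. For each $m$ with $1\leq m\leq k$ this gives a constant $C_{m}$ depending only on $m$ and $n$ such that $|\nabla^{m}Rm^{T}_{\tilde{g}}(x,\tau)|_{g^{T}(x,\tau)}\leq C_{m}\max\{K^{1/2},K\}\,\tau^{-m/2}$. Since $\tilde{g}(\tau)=g(t)$, this reads $|\nabla^{m}Rm^{T}(x,t)|\leq C_{m}\max\{K^{1/2},K\}\,\tau^{-m/2}$, with the right side depending only on $m$, $n$, $K$ and $A$ (through $\tau$). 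Combining this with the hypothesis for $m=0$ and summing over $0\leq m\leq k$ yields a constant $C_{A,k}$, depending only on $|Rm^{T}|_{C^{0}}$, $k$ and $A$, with $|Rm^{T}(t)|_{C^{k}}\leq C_{A,k}$ for all $t\geq A$.

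There is no serious obstacle here; the only points needing mild care are (i) that the flow is autonomous, so that the shifted family $\tilde{g}$ solves the same equation, and (ii) the choice of $\tau$, hence of $\alpha$, made so that the interval $[0,\alpha/K]$ in Theorem~\ref{BBS Thm} has fixed length $\leq 1$: this forces $\max\{\alpha,1\}=1$ and thereby removes any dependence of the constants on $K$ beyond the explicit factor $\max\{K^{1/2},K\}$ and the fixed powers of $\tau$. Finally, since Theorem~\ref{BBS Thm} simultaneously bounds $|\nabla^{m}Rm|$ in terms of the transverse curvature, the identical argument produces uniform $C^{k}$ bounds on the full Riemann tensor for $t\geq A$ as well.
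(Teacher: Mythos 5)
Your argument is correct and is exactly the standard restarting argument that the paper leaves implicit (the corollary is stated without proof as an immediate consequence of Theorem~\ref{BBS Thm}): translate time so that the estimate of Theorem~\ref{BBS Thm} is applied on an interval of fixed length $\tau=\min\{A,1/K\}$ ending at the given time $t\geq A$, with $\alpha=K\tau\leq 1$ so the constants depend only on $m$, $n$, $K$ and $A$. The autonomy of the flow and the careful choice of $\alpha$ are precisely the points that need checking, and you have handled both.
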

\begin{versionb}
\begin{corollary}\label{flow compactness}
If $(S,\xi,\eta, \Phi, g_{0})$ is a Sasaki manifold, and $g(t)$ is a solution of the normalized Sasaki-Ricci flow with $|Rm^{T}|<C$ uniformly along the flow, then there exists a subsequence $t_{k} \rightarrow \infty$, and diffeomorphisms $F_{t_{k}}$ such that $(S, F_{t_{k}}^{*}g(t_{k}))$ converge in $C^{\infty}$ to a Sasaki manifold $(S, g_{\infty})$.
\end{corollary}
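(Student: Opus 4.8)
The plan is to apply Hamilton's Cheeger--Gromov compactness theorem, carrying the Reeb field along as an auxiliary tensor, and then to check that the resulting Riemannian limit is in fact Sasakian by using a pointwise characterization of Sasaki metrics that is stable under $C^{\infty}$ convergence of the pair (metric, Reeb field). The transverse holomorphic and contact data will be recovered \emph{a posteriori} from the limiting metric and Reeb field rather than transported along the flow directly.

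First I would collect the geometric bounds. By hypothesis $|Rm^{T}| < C$ uniformly along the flow, so Theorem~\ref{BBS Thm} (applied on unit time intervals, i.e.\ with $\alpha = 1$) and Corollary~\ref{uniform curvature bound} give, for every $A > 0$ and every $m \in \mathbb{N}$, a uniform bound on $|\nabla^{m} Rm|_{g(t)}$ and $|\nabla^{m} Rm^{T}|_{g^{T}(t)}$ for $t \geq A$; in particular the full Riemann tensor and all its covariant derivatives are uniformly controlled. Next I would record a uniform lower bound for the injectivity radius: the normalized Sasaki-Ricci flow preserves the total volume, Lemma~\ref{diameter bound lemma} gives a uniform diameter bound, and the full Riemann tensor is uniformly bounded, so Cheeger's injectivity radius estimate yields $\operatorname{inj}(S, g(t)) \geq \iota_{0} > 0$ for all $t$ (one could alternatively invoke Proposition~\ref{non-collapse 1} together with the curvature bound).

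Second I would control the Reeb field. Along the flow $\xi$ is a fixed vector field with $g(t)(\xi, \xi) = 1$, and from $\Phi(X) = \nabla_{X}\xi$ together with the Sasaki structure equations one has $\nabla \xi = \Phi$ and $(\nabla_{X}\Phi)(Y) = g(X,Y)\xi - \eta(Y)X$, so $\nabla^{2}\xi = \nabla\Phi$ is algebraic in $g$, $\eta = \xi^{\flat}$ and $\xi$; differentiating again (using $\nabla g = 0$, $\nabla\eta = \Phi^{\flat}$, $\nabla\xi = \Phi$) one sees inductively that every $\nabla^{m}\xi$ is a universal algebraic expression in $g$, $g^{-1}$, $\xi$, hence bounded in $C^{0}$ by a constant depending only on $m$ and $n$. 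Thus $(g(t), \xi)$ is a family consisting of a metric together with a tensor all of whose $g(t)$-covariant derivatives are uniformly bounded, with uniformly bounded injectivity radius. Hamilton's compactness theorem for such sequences then produces a subsequence $t_{k} \to \infty$, a complete pointed limit $(M_{\infty}, g_{\infty}, p_{\infty})$ equipped with a vector field $\xi_{\infty}$, and diffeomorphisms onto exhausting open sets along which $g(t_{k})$ and $\xi$ converge in $C^{\infty}_{loc}$. The uniform diameter bound forces $M_{\infty}$ to be compact, so for $k$ large these are global diffeomorphisms; composing with a fixed diffeomorphism $S \cong M_{\infty}$ gives diffeomorphisms $F_{t_{k}} : S \to S$ with $F_{t_{k}}^{*}g(t_{k}) \to g_{\infty}$ and $F_{t_{k}}^{*}\xi \to \xi_{\infty}$ in $C^{\infty}(S)$ (identifying $g_{\infty}, \xi_{\infty}$ with their pullbacks to $S$).

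Finally I would verify that $(S, g_{\infty})$ is Sasakian. Since the pullback of a Sasaki structure by a diffeomorphism is again a Sasaki structure, $F_{t_{k}}^{*}\xi$ is the Reeb field of $(F_{t_{k}}^{*}\xi, F_{t_{k}}^{*}\eta(t_{k}), F_{t_{k}}^{*}\Phi(t_{k}), F_{t_{k}}^{*}g(t_{k}))$; passing to the limit, $\xi_{\infty}$ is a unit-length Killing field for $g_{\infty}$ (the Killing equation and $|\xi|^{2} = 1$ are closed under $C^{1}$ convergence), and with $\eta_{\infty} = \xi_{\infty}^{\flat}$, $\Phi_{\infty}(X) = \nabla^{g_{\infty}}_{X}\xi_{\infty}$ the identities $\Phi_{\infty}^{2} = -\operatorname{id} + \eta_{\infty}\otimes\xi_{\infty}$, the compatibility $g_{\infty}(\Phi_{\infty}X,\Phi_{\infty}Y) = g_{\infty}(X,Y) - \eta_{\infty}(X)\eta_{\infty}(Y)$, and the Sasaki identity $(\nabla^{g_{\infty}}_{X}\Phi_{\infty})(Y) = g_{\infty}(X,Y)\xi_{\infty} - \eta_{\infty}(Y)X$ all pass to the limit, being pointwise tensorial identities stable under $C^{2}$ convergence of $(g(t_{k}), \xi)$. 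By the standard characterization of Sasaki metrics (see e.g.\ \cite{BoyGal1}) this says exactly that $(S, \xi_{\infty}, \eta_{\infty}, \Phi_{\infty}, g_{\infty})$ is a Sasaki structure, completing the proof. The main obstacle is the bookkeeping in the last two steps: one must apply the compactness theorem to the pair $(g,\xi)$ rather than to $g$ alone (which would only yield a Riemannian limit), and one must phrase "Sasakian" through a characterization that is manifestly closed under smooth convergence of that pair, so that the transverse complex and contact structures need not be controlled along the flow except through $g(t)$ and $\xi$.
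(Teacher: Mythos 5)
Your argument is correct, and its skeleton coincides with the paper's: uniform bounds on $Rm^{T}$ plus Theorem~\ref{BBS Thm} give $C^{\infty}$ control of the full curvature, the fixed volume, the diameter bound of Lemma~\ref{diameter bound lemma} and Cheeger's estimate give a uniform lower bound on the injectivity radius, and Hamilton's compactness theorem \cite{RHam} produces the convergent subsequence. Where you genuinely diverge from the paper is in verifying that the limit is Sasaki. The paper's fuller version of this step (Theorem~\ref{Ham compact} in \S 8) lifts the diffeomorphisms to the cone $C(S)$, shows following \cite{PS} that the pulled-back complex structures $J(t_{k})$ converge on a truncated cone to an integrable limit, and then extends to all of $C(S)$ using $\mathcal{L}_{r\partial_{r}}J=0$. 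You instead stay on $S$, carry the Reeb field through the compactness theorem as an auxiliary tensor with uniformly bounded covariant derivatives (correct, since $\nabla\xi=\Phi$ and $(\nabla_{X}\Phi)(Y)=g(X,Y)\xi-\eta(Y)X$ make every $\nabla^{m}\xi$ algebraic in $g$ and $\xi$), and then invoke the intrinsic characterization of Sasaki metrics by a unit Killing field satisfying that same tensorial identity, which is manifestly closed under $C^{2}$ convergence of the pair $(g,\xi)$. Your route is more elementary and self-contained: it avoids the non-compact cone and the integrability argument entirely, and it makes transparent exactly which closed conditions are being passed to the limit. What the paper's route buys in exchange is the additional conclusion that the complex structures on the cone, and hence the transverse holomorphic structures, converge along the subsequence; this extra information is not needed for the corollary as stated, but it is precisely what is used later (in the proof of Theorem~\ref{stability compactness}) to compare the spaces $H^{0}(\E^{1,0})$ of the approximating and limiting structures and to bring condition (C) into play. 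So your proof fully establishes the corollary, but if you intend to feed the limit into the stability argument you should also record the convergence $F_{t_{k}}^{*}\Phi(t_{k})=\nabla^{F_{t_{k}}^{*}g(t_{k})}(F_{t_{k}}^{*}\xi)\to\Phi_{\infty}$, which your setup in fact already yields.
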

\begin{proof}
Corollary~\ref{uniform curvature bound} and the diameter bound in Lemma~\ref{diameter bound lemma}, combined with the volume preservation along the flow imply by \cite{Cheeger} that the injectivity radius is uniformly bounded below.  Hence, $S$ has uniformly bounded geometry in the $C^{\infty}$ sense.  By the compactness result of Hamilton \cite{RHam}, there is a subsequence $\{t_{k}\}$ and diffeomorphisms $F_{t_{k}}$ such that $(S,F^{*}_{t_{k}}g(t_{k}))$ converges in $C^{\infty}$.  It is easy to check that this limit is Sasaki.
 \end{proof}
\begin{remark}
For the dubious reader, we have included the proof of the Sasaki version of Hamilton's compactness theorem in \S8.
\end{remark}
\end{versionb}
\section{The $\dbar$-operator on Foliate Vector Fields and Stability on Sasaki Manifolds}

For the remainder of the paper we will be concerned with employing various types of stability to prove the convergence of the Sasaki-Ricci flow.  We begin by presenting two notions of stability on Sasaki manifolds which generalize notions on K\"ahler manifolds.  Later, we shall introduce the sheaf of transverse foliate vector fields, which seems central to the problem of stability on Sasaki manifolds.  We first define the Futaki invariant of a Sasaki manifold.  In the Sasaki case, the Lie algebra on which the Futaki invariant acts is the space of Hamiltonian, holomorphic vector fields.

\begin{definition}[\cite{Futaki} Definition 4.5]\label{ham hol def}
Let $U_{\alpha} = I \times V_{\alpha}$ be a foliated coordinate patch, with $I \subset \mathbb{R}$ an open interval, and $V_{\alpha} \subset \mathbb{C}^{n}$.  Let $\pi_{\alpha}:U_{\alpha}\rightarrow V_{\alpha}$ be the projection.  A complex vector field $X$ on a Sasaki manifold is called a  Hamiltonian holomorphic vector field if 
\begin{enumerate}
\item[{\it(i)}] $d\pi_{\alpha}$ is a holomorphic vector field on $V_{\alpha}$
\item[{\it (ii)}] the complex valued function $u_{X} := \sqrt{-1}\eta(X)$ satisfies
\begin{equation*}
\dbop{B}u_{X} = -\frac{\sqrt{-1}}{2}\iota_{X}d\eta.
\end{equation*}
\end{enumerate}
Such a function $u_{X}$ is called a Hamiltonian function.
\end{definition}
\begin{remark}
If $X$ is a Hamiltonian holomorphic vector field, then in preferred local coordinates
\begin{equation}\label{local ham hol}
X = \eta(X)\pl{x} + \sum_{i=1}^{n} X^{i}\pl{z^{i}} - \eta\left( \sum_{i=1}^{n}  X^{i}\pl{z^{i}}\right) \pl{x},
\end{equation}
where the $X^{i}$ are local, holomorphic basic functions.
\end{remark}

The Futaki invariant was originally defined for Sasaki manifolds by Boyer, Galicki and Simanca in \cite{BoyGalSim}, where it was considered to be a character on a quotient of the Lie algebra of ``transversally holomorphic" vector fields (see \cite{BoyGalSim} Definition 4.5).  In \cite{Futaki} Futaki, Ono and Wang recast the Futaki invariant as a character of the Lie algebra on Hamiltonian holomorphic vector fields as defined above.  For our purposes, we are only concerned with the case when the distribution $D$ has $c_{1}(D) =0$.

\begin{Theorem}[\cite{BoyGalSim} Proposition 5.1, \cite{Futaki} Theorem 4.9]
Let $(S,g)$ be a Sasaki manifold with $c_{1}^{B}(S)>0$, and $c_{1}(D) =0$.  Assume $ (2n+2)g^{T} \in c_{1}^{B}(S)$ and let $u$ be the transverse Ricci potential for the metric $g$, and let $X$ be a holomorphic, Hamiltonian vector field.  We define the Futaki invariant $fut_{S}(X)$ by the equation
\begin{equation*}
fut_{S}(X) := \int_{S} Xu d\mu.
\end{equation*}
Then $fut_{S}(X)$ is independent of the choice of Sasaki metric in $(2n+2)^{-1}c_{1}^{B}(S)$.
\end{Theorem}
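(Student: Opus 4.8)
The plan is to adapt the classical argument of Futaki (and of Calabi for the extremal case) showing that the obstruction integral is independent of the representative of the anticanonical class, carrying out every computation transversely on the Reeb foliation. First I would fix two transverse Kähler metrics $g^{T}$ and $\tilde g^{T}$ with $(2n+2)g^{T},(2n+2)\tilde g^{T}\in c_1^B(S)$, related by a type II deformation $\tilde\eta=\eta+2d_B^c\phi$ for a smooth basic $\phi$, so that $\tilde g^{T}_{\bar k j}=g^{T}_{\bar k j}+\partial_j\partial_{\bar k}\phi$; by the transverse $\partial_B\bar\partial_B$-lemma of El Kacimi-Alaoui it suffices to treat this case and then connect arbitrary representatives by a path $\phi_s$, $s\in[0,1]$, and show $\tfrac{d}{ds}\,fut_S(X)=0$ along it. Let $u$ and $\tilde u$ denote the transverse Ricci potentials, normalized so that $Ric^{T}-(2n+2)g^{T}=\tfrac12\,d_Bd_B^c u$ (and similarly with tildes); the normalization of $u$ is pinned down (up to the harmless additive constants that do not affect $Xu$ since $Xu=0$ for constants only if $u$ is constant — here one uses that $\iota_\xi X$-type terms integrate away, see below).

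The key computation is the variation of $\log\det(g^{T}_{\bar k j})$: along $\phi_s$ one has $\partial_s\log\det(\tilde g^{T})=(\tilde g^{T})^{j\bar k}\partial_j\partial_{\bar k}\dot\phi_s=\square_B\dot\phi_s$ (with the sign convention for $\square_B$ fixed in the excerpt), hence $\partial_s\tilde u_s=-\square_B\dot\phi_s-(2n+2)\dot\phi_s$ up to a basic constant. Then
\begin{equation*}
\frac{d}{ds}fut_S(X)=\frac{d}{ds}\int_S (X\tilde u_s)\,d\tilde\mu_s=\int_S \big(X\partial_s\tilde u_s\big)\,d\tilde\mu_s+\int_S (X\tilde u_s)\,\partial_s(d\tilde\mu_s).
\end{equation*}
Using $\partial_s(d\tilde\mu_s)=(\square_B\dot\phi_s)\,d\tilde\mu_s$ and the formula above, this becomes $\int_S X\!\big(-\square_B\dot\phi_s-(2n+2)\dot\phi_s+\tilde u_s\square_B\dot\phi_s\big)\,d\tilde\mu_s$ (treating $X$ as a first-order operator and being careful that $X$ need not be basic, but by the Remark's local formula~(\ref{local ham hol}) its action on basic functions is through $\sum_i X^i\partial_{z^i}$, the $\partial_x$-components contributing nothing to $\mathcal L_\xi$-invariant integrands). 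Now I would invoke the transverse integration by parts of the Proposition in \S2 — valid for $Q$ and its tensor powers because the mixed Christoffel symbols vanish and~(\ref{local christ symbols}) is the Kähler formula — together with the fact that $X^i=\partial_{z^i}$ of a local holomorphic function, i.e. $\bar\partial_B X=0$ in the appropriate sense, so that $X$ commutes with $\square_B$ acting on basic functions modulo curvature terms that are themselves total transverse divergences. The upshot, exactly as in the Kähler case, is that every term is a transverse divergence and the integral over the compact $S$ against $d\mu$ vanishes; hence $\tfrac{d}{ds}fut_S(X)=0$.

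The main obstacle — and the place where the Sasaki setting genuinely differs from the Kähler one — is bookkeeping the fact that Hamiltonian holomorphic vector fields are \emph{not} basic: $X$ has a nontrivial $\partial_x$-component $\eta(X)\partial_x$, and $\eta(X)=-\sqrt{-1}\,u_X$ is governed by condition (ii) of Definition~\ref{ham hol def}, $\bar\partial_B u_X=-\tfrac{\sqrt{-1}}{2}\iota_X d\eta$. I expect the cleanest route is to check that $X u$, $X\tilde u$, and all the intermediate integrands are in fact $\mathcal L_\xi$-invariant (using $\mathcal L_\xi u=0$, $\iota_\xi d\eta=0$, and holomorphy of $X$ along the leaves), so that the whole computation descends to the transverse Kähler complex $(\Lambda^{p,q}Q^*,\partial_B,\bar\partial_B)$ where the classical Futaki argument runs verbatim; the covariant derivatives $\nabla^T$ ignore the $\partial_x$-directions (as used repeatedly in \S4), so no new curvature terms from~(\ref{curvature relation 2}),~(\ref{curvature relation 3}) intrude. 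Once that reduction is in place, independence of $fut_S(X)$ follows from the transverse $\partial_B\bar\partial_B$-lemma and the transverse divergence theorem exactly as in \cite{Futaki}; I would simply cite those sources for the routine Kähler-side identities rather than reproduce them.
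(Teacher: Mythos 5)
This statement is quoted in the paper from \cite{BoyGalSim} and \cite{Futaki} without proof, and your proposal correctly reconstructs the standard argument of those references: connect two representatives by a path of type II deformations $\eta_{s}=\eta+2d_{B}^{c}\phi_{s}$, differentiate $\int_{S}Xu\,d\mu$ in $s$, and kill the derivative by transverse integration by parts, using that $Xu$ only sees the basic components $X^{i}$ of the Hamiltonian lift (so the $\partial_{x}$-part of $X$ is irrelevant), that $\overline{\partial}_{B}X^{i}=0$, and that the wedge with $\eta_{s}$ versus $\eta$ in $d\mu_{s}=(\tfrac12 d\eta_{s})^{n}\wedge\eta_{s}$ contributes nothing since basic $(2n+1)$-forms vanish. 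The one imprecision is the phrase ``every term is a transverse divergence'': in the actual computation the curvature term produced by commuting $\nabla^{T}_{j}$ past $\square_{B}$ (via $R^{T}_{j\bar{k}}=(2n+2)g^{T}_{j\bar{k}}-\nabla^{T}_{j}\nabla^{T}_{\bar{k}}u$ in the present normalization) cancels against the $-(2n+2)\int_{S}X\dot{\phi}_{s}\,d\mu_{s}$ term and the term coming from $\partial_{s}(d\mu_{s})$ after one further integration by parts, rather than each term vanishing separately; this is exactly the cancellation carried out in \cite{Futaki}, so the argument is sound.
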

\begin{remark}
It is clear that the vanishing of the Futaki invariant is necessary for the existence of a Sasaki-Einstein metric.
\end{remark}

Later in this section we will provide an alternative characterization of the Futaki invariant as a character on a certain subspace of the global sections of the soon-to-be-defined sheaf $\E$, and show that this characterization is equivalent to the above.  We now introduce the Mabuchi energy on a Sasaki manifold in the special case that $c_{1}(D)=0$;

\begin{Theorem}[\cite{Futaki} Theorem 4.12]
Let $(S,g)$ be a Sasaki manifold, with $c_{1}^{B}(S) = \kappa[d\eta_{0}]_{B}$.  Let $\eta'$ be in the K\"ahler class of $\eta_{0}$.  Let $\phi_{t}$, $t\in[a,b]$ be a path in $\mathcal{H}_{\eta_{0}}$  connecting $\eta, \eta'$.  Then the Mabuchi K-energy,
\begin{equation*}
\nu_{\eta_{0}}(\eta') =- \int_{a}^{b}\int_{S}\dot{\phi}_{t}(R^{T}-\bar{R}^{T}) d\mu_{t}dt,
\end{equation*}
is independent of the path $\phi_{t}$. 
\end{Theorem}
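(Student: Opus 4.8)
The plan is to recognize $\nu_{\eta_{0}}$ as the integral of the one-form $\alpha$ on $\mathcal{H}_{\eta_{0}}$ which at a potential $\phi$ assigns to a tangent vector $\psi\in C^{\infty}_{B}$ the value $\alpha_{\phi}(\psi) = -\int_{S}\psi\,(R^{T}_{\phi} - \bar{R}^{T})\,d\mu_{\phi}$, where $\bar{R}^{T}$ is the average of the transverse scalar curvature, and then to show that $\alpha$ is closed. Since $\mathcal{H}_{\eta_{0}}$ is the subset of the vector space $C^{\infty}_{B}$ cut out by the convex positivity condition $g^{T}+\sqrt{-1}\partial_{B}\overline{\partial}_{B}\phi>0$, it is convex, hence contractible, and path-independence of $\nu_{\eta_{0}}(\eta')$ follows immediately once $d\alpha=0$ is established.

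Because $\mathcal{H}_{\eta_{0}}$ sits inside an affine space, I may represent tangent directions by constant vector fields $\psi_{1},\psi_{2}\in C^{\infty}_{B}$, which commute; thus $d\alpha=0$ is equivalent to the symmetry in $(\psi_{1},\psi_{2})$ of the bilinear form $(\psi_{1},\psi_{2})\mapsto D_{\psi_{1}}\big(\alpha_{\phi}(\psi_{2})\big)$. Computing this variation rests on the transverse analogues of the standard K\"ahler first-variation formulas, which are legitimate here because, as recalled in \S2, the transverse Christoffel symbols have the K\"ahler form $(g^{T})^{k\bar{p}}\partial_{i}g^{T}_{\bar{p}j}$, the mixed ones vanish, and the divergence theorem holds for tensors built from $Q$. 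Concretely I would use: $D_{\psi}(d\mu) = (\square_{B}\psi)\,d\mu$; the transverse scalar curvature variation $D_{\psi}R^{T} = -\square_{B}\square_{B}\psi - (g^{T})^{i\bar{l}}(g^{T})^{k\bar{j}}R^{T}_{\bar{j}i}\,\partial_{k}\partial_{\bar{l}}\psi$; the transverse contracted second Bianchi identity $(g^{T})^{i\bar{j}}\nabla^{T}_{i}R^{T}_{\bar{j}k} = \nabla^{T}_{k}R^{T}$; and the fact that $\bar{R}^{T}$ is constant on $\mathcal{H}_{\eta_{0}}$, so $D_{\psi}\bar{R}^{T}=0$ (both $\int_{S}d\mu$ and $\int_{S}R^{T}\,d\mu$ depend only on $[\frac{1}{2}d\eta_{0}]_{B}$ and on $c_{1}^{B}(S)$, by a short Stokes argument using that a nonzero top-degree form on $S$ is never basic and hence $(\omega^{T})^{n}\wedge\beta=0$ for any basic one-form $\beta$).

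Substituting into $D_{\psi_{1}}\alpha_{\phi}(\psi_{2}) = -\int_{S}\psi_{2}(D_{\psi_{1}}R^{T})\,d\mu - \int_{S}\psi_{2}(R^{T}-\bar{R}^{T})(\square_{B}\psi_{1})\,d\mu$ and integrating by parts twice, the boundary terms vanish by compactness, the terms carrying a single derivative of $R^{T}$ cancel against one another after one application of the transverse Bianchi identity, and one is left with a sum of the expressions $\int_{S}(\square_{B}\psi_{1})(\square_{B}\psi_{2})\,d\mu$, $\int_{S}(R^{T})^{i\bar{j}}\,\partial_{i}\psi_{1}\,\partial_{\bar{j}}\psi_{2}\,d\mu$ and $\int_{S}(R^{T}-\bar{R}^{T})(g^{T})^{i\bar{j}}\,\partial_{i}\psi_{1}\,\partial_{\bar{j}}\psi_{2}\,d\mu$, each of which is real and is carried to its complex conjugate under $\psi_{1}\leftrightarrow\psi_{2}$ because the $\psi_{i}$ are real and $g^{T}$, $Ric^{T}$ are Hermitian. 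Hence $D_{\psi_{1}}\alpha_{\phi}(\psi_{2}) = D_{\psi_{2}}\alpha_{\phi}(\psi_{1})$, so $d\alpha=0$ and the theorem follows.

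An equivalent, perhaps more transparent, packaging avoids one-forms altogether: given two paths in $\mathcal{H}_{\eta_{0}}$ with common endpoints, connect them by a two-parameter family $\phi^{(s)}_{t}$ with $s$-independent endpoints (possible by convexity) and verify $\frac{d}{ds}\nu_{\eta_{0}}(\eta^{(s)})=0$ directly; this is the same calculation, organized as a single integration by parts in the $(t,s)$-plane. The only genuine work — and the one place requiring slight care — is the bookkeeping in these integrations by parts, together with checking that no curvature term of the full, non-transverse metric enters (it cannot, since $\nu_{\eta_{0}}$ is assembled entirely from transverse quantities) and that the basic constraint is preserved throughout (automatic, since $\mathcal{H}_{\eta_{0}}\subset C^{\infty}_{B}$). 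Beyond that nothing is subtle: the argument is the Mabuchi / Futaki--Ono--Wang computation transplanted verbatim to the transverse K\"ahler setting.
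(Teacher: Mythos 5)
The paper does not actually prove this statement---it is quoted directly from Futaki--Ono--Wang (\cite{Futaki}, Theorem 4.12)---but your argument is exactly the standard one given there and in its K\"ahler antecedents: realize $\nu_{\eta_{0}}$ as the integral of a one-form on the convex (hence contractible) set $\mathcal{H}_{\eta_{0}}$ and verify closedness using the transverse analogues of the variation formulas for $d\mu$ and $R^{T}$, the transverse contracted Bianchi identity, and the cohomological invariance of $\bar{R}^{T}$. Your sketch is correct, and the transplantation from the K\"ahler case is legitimate for precisely the reasons you cite (the K\"ahler form of the transverse Christoffel symbols and the transverse integration-by-parts formula recalled in \S 2), so there is nothing further to add.
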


In the K\"ahler theory, the boundedness below of the Mabuchi K-energy is crucial to the existence of canonical metrics.  It is known that a bound below for the K-energy is not sufficient to guarantee the existence of a K\"ahler-Einstein metric; a counterexample is given by Tian's unstable deformation of the Mukai-Umemura threefold \cite{Chen, Don, Don1, Tian1}.  However, Bando and Mabuchi proved that any manifold that admits a K\"ahler-Einstein metric  $\omega$ necessarily has the K-energy bounded below on the K\"ahler class of $\omega$ \cite{Bando, BM}.  Their result extends to Sasaki manifolds.

\begin{prop}\label{Mab prop}
Suppose $(S,\xi, \eta, \Phi, g)$ is a Sasaki manifold with  $c_{1}(D)=0$, and $c_{1}^{B}(S) = (2n+2)[\frac{1}{2}d\eta]_{B}$.  Assume that $S$ admits a Sasaki-Einstein metric in the K\"ahler class of $g$.  Then the Mabuchi K-energy is bounded below on $[\frac{1}{2}d\eta]_{B}$.
\end{prop}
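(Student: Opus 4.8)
The plan is to mimic the Bando--Mabuchi argument \cite{Bando, BM} in the transverse K\"ahler setting, using the continuity path that connects a given transverse K\"ahler metric to the Sasaki--Einstein metric and showing the Mabuchi energy is monotone along it. Let $\omega_{SE} = \tfrac12 d\eta_{SE}$ be the transverse K\"ahler form of a Sasaki--Einstein metric in $[\tfrac12 d\eta]_B$, normalized so that $Ric^T_{SE} = (2n+2) g^T_{SE}$. For $t \in [0,1]$ consider the transverse Monge--Amp\`ere continuity path
\begin{equation*}
\log \frac{\det(g^T_{SE,\bar k j} + \partial_j\partial_{\bar k}\psi_t)}{\det(g^T_{SE,\bar k j})} = -t(2n+2)\psi_t + (1-t) F_t,
\end{equation*}
where $F_t$ is a smooth basic function chosen so that $t=0$ corresponds to the given metric $g^T$ (one takes $\omega_{SE} + d_B d_B^c \psi_0 = g^T$ up to the transverse $\partial\bar\partial$-lemma of \cite{ElKac}) and, at $t=1$, uniqueness of Sasaki--Einstein metrics in a fixed K\"ahler class (which in turn rests on the Bando--Mabuchi uniqueness argument, again transferable to the transverse setting) forces $\psi_1$ to be (a constant plus) the potential of $\omega_{SE}$ itself. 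Openness of the set of $t$ for which the path exists is the transverse implicit function theorem, using that $-\Delta_B - t(2n+2)$ is invertible for $t<1$ because $\lambda_1 > (2n+2)$ fails only at $t=1$; closedness is the El Kacimi--Alaoui/Yau-type estimates recorded in Proposition~\ref{uniform Yau} (in its elliptic rather than parabolic form), so the path is defined on all of $[0,1]$.

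The next step is to differentiate the Mabuchi energy $\nu_{\eta_0}$ along this path and show $\tfrac{d}{dt}\nu_{\eta_0}(\eta_t) \le 0$, so that $\nu_{\eta_0}(\eta_0) \ge \nu_{\eta_0}(\eta_{SE})$, and then to show that $\nu_{\eta_0}(\eta_{SE})$ bounds $\nu_{\eta_0}$ from below on the whole K\"ahler class (the K-energy attains its minimum at a critical point, which a Sasaki--Einstein metric is). Using the defining formula $\nu_{\eta_0}(\eta_t) = -\int_0^t \int_S \dot\psi_s (R^T_s - \bar R^T_s)\, d\mu_s\, ds$ together with the path equation, one computes $\tfrac{d}{dt}\nu_{\eta_0}(\eta_t)$ and integrates by parts over $S$ — here the transverse integration-by-parts formulae noted after equation~(\ref{local christ symbols}), and the fact that $d\mu = (\tfrac12 d\eta)^n\wedge\eta$ is the Riemannian volume form, let every K\"ahler-case manipulation go through verbatim. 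Differentiating the path equation in $t$ gives $\Delta_{\psi_t}\dot\psi_t = -(2n+2)(\psi_t + t\dot\psi_t) + \text{(terms from }F_t)$, and the standard Bando--Mabuchi computation then expresses $\tfrac{d}{dt}\nu_{\eta_0}$ as a manifestly nonpositive quantity — essentially $-\int_S |\bar\partial_B \dot\psi_t|^2$-type terms weighted by the gap between the relevant eigenvalue and $2n+2$, which is the content of the Bando--Mabuchi monotonicity.

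The main obstacle is not any single estimate but the verification that the two inputs from \cite{Bando, BM} — the monotonicity of the K-energy along the continuity path and the attainment of its infimum at the Einstein metric — survive the passage to transverse K\"ahler geometry. Concretely, one must check: (i) the transverse $\partial\bar\partial$-lemma and Hodge theory on basic forms \cite{ElKac} give exactly the cohomological freedom used in the K\"ahler proof; (ii) all integrations by parts are legitimate for basic functions against $d\mu$, which is where the structure of preferred local coordinates and the formula~(\ref{local christ symbols}) are used; and (iii) the spectral input — positivity of $\lambda_1 - (2n+2)$ away from $t=1$ on basic functions — holds, which follows from the transverse Bochner/Lichnerowicz inequality and the fact that equality (eigenvalue exactly $2n+2$) detects a Hamiltonian holomorphic vector field, i.e.\ the obstruction is governed by the same linear operator whose kernel we have already identified with holomorphic sections of $\E$. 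Once these three points are in place, the proof is a direct transcription of \cite{BM}, and I would present it as such, citing the K\"ahler argument and only spelling out the places where basic-form replacements are needed.
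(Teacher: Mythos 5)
Your overall route --- transferring the Bando--Mabuchi continuity-method argument to the transverse K\"ahler setting --- is exactly the one the paper takes; the paper's proof simply delegates the work to Nitta--Sekiya \cite{NS}, who carried out the transverse version of \cite{BM}, together with Bando's extension \cite{Bando}. However, your sketch has a genuine gap at the decisive step. You assert that closedness of the continuity path up to and including $t=1$ follows from ``El Kacimi--Alaoui/Yau-type estimates'' as in Proposition~\ref{uniform Yau}. That cannot be right: Proposition~\ref{uniform Yau} converts a $C^{0}$ bound on the potential into $C^{k}$ bounds, but the $C^{0}$ bound is precisely what degenerates as $t\to 1$ on the Aubin-type path, and if closedness at $t=1$ were automatic one would obtain a transverse K\"ahler--Einstein metric on every Sasaki manifold with positive basic first Chern class, contradicting the known obstructions. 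When the automorphism group $\mathcal{G}$ of the transverse holomorphic structure is nontrivial, the forward path genuinely can fail to converge as $t\to 1$ (the potentials drift along the $\mathcal{G}$-orbit of the Einstein metric). The whole content of \cite{BM}, and of its Sasaki analogue in \cite{NS}, is the bifurcation device that circumvents this: one restricts the Aubin functional $I-J$ to the $\mathcal{G}$-orbit of the Einstein metric, perturbs the background metric so that this functional has a nondegenerate minimum there, obtains \emph{backwards} openness at $t=1$ from that minimum, runs the path down to $t=0$, and uses uniqueness of the $t=0$ (transverse Calabi) solution to compare with the given metric; the monotonicity $\frac{d}{dt}\nu=-(1-t)(2n+2)\frac{d}{dt}(I-J)\le 0$ along the path then yields the bound. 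Relatedly, uniqueness of the Sasaki--Einstein metric holds only modulo $\mathcal{G}$, so your claim that $\psi_{1}$ must be the potential of $\omega_{SE}$ itself is also not available.

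A second omission: the argument just described bounds the K-energy below only on the cone of metrics with positive transverse Ricci curvature (these are exactly the metrics arising as transverse Calabi solutions at the $t=0$ endpoint). To get the bound on all of $[\frac{1}{2}d\eta]_{B}$, as the proposition asserts, one needs Bando's smoothing argument \cite{Bando}, which deforms an arbitrary metric in the class to one with positive transverse Ricci curvature while controlling the K-energy. The paper invokes this step explicitly, and your write-up should account for it as well.
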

\begin{proof}
The proof is a consequence of the work of Nitta and Sekiya \cite{NS}.   The estimates in \cite{NS} imply that the result of Bando and Mabuchi in \cite{BM} holds on Sasaki manifolds.  That is, the Mabuchi K-energy is bounded below on the set of Sasaki metrics with positive transverse Ricci curvature.  It is straight forward to check that the extension of the results of \cite{BM}, due to Bando in \cite{Bando} carries over verbatim to the Sasaki setting.
\end{proof}

\begin{versiona}
The proof of Proposition~\ref{Mab prop} follows essentially from the work of Nitta and Sekiya \cite{NS}, and the papers of Bando, Mabuchi \cite{BM}, and Bando \cite{Bando}, and will be given in the appendix.
\end{versiona}

If $g^{T}(t)$ is evolving by the Sasaki-Ricci flow~(\ref{SRF}), then 
\begin{equation}\label{flow mab}
\nu_{\eta_{0}}(\eta(a))= \int_{0}^{a}\int_{S}u(R^{T}-\kappa n) d\mu_{t} dt=\int_{0}^{a} \int_{S}u(-\square_{B} u) d\mu_{t} dt=\int_{0}^{a} \int_{S} (g^{T})^{j\bar{k}}\dop{j}u\overline{\dop{k}u}d\mu_{t}dt.
\end{equation}
The last term in the above expression is precisely the integral of the $L^{2}$ norm of $\dop{B}u$ regarded as a section of $\Lambda^{1,0}Q^{*}$.  

A significant difficulty in extending the results of \cite{PS} is identifying the appropriate operator to study and  determining the domain on which to study it.  As noted in the introduction, we are guided primarily by the model case of a regular Sasaki manifold.

\begin{definition}
On an open subset $U\subset S$, let $\Xi(U)$ be the Lie algebra of smooth vector fields on $U$ and let $\mathcal{N}_{\xi}(U)$ be the normalizer of the Reeb field in $\Xi (U)$,
\begin{equation*}
\mathcal{N}_{\xi}(U) = \{ X \in \Xi(U) : [X,\xi] \in L_{\xi}\}.
\end{equation*}
We define a sheaf $\mathcal{E}$ on $S$ by
\begin{equation*}
\mathcal{E}(U) := N_{\xi}(U)/L_{\xi}.
\end{equation*}
The sheaf $\E$ will be referred to as the sheaf of transverse foliate vector fields.
\end{definition}

When there is some chance of confusion, for $V\in TS$ we denote by $[V]$ the equivalence class of $V$ in $Q$.  Recalling the exact sequence~(\ref{exact seq}), the inclusion $\mathcal{N}_{\xi}\subset TS$ induces an inclusion of sheaves $\mathcal{E} \subset Q$.  The sheaf $\mathcal{E}$ is easily seen to be a locally free sheaf of $C^{\infty}_{B}$-modules.  When the Reeb field is regular or quasi-regular, the sheaf $\mathcal{E}$ descends through the quotient to the sheaf of smooth sections of the tangent bundle of the K\"ahler manifold or orbifold.  
$\mathcal{E}$ inherits a great deal of structure from the vector bundle $Q$; the metric $g^{T}$ restricts to a metric on $\mathcal{E}$, and it is easy to check that the transverse complex structure $\Phi$ on $Q$ restricts to an endomorphism of $\mathcal{E}$, and hence splits $\mathcal{E}$ as $\mathcal{E} = \mathcal{E}^{1,0} \oplus \mathcal{E}^{0,1}$.  The key observation is that $\mathcal{E}$ has a well defined $\bar{\partial}$ operator.  Define a map $\mathfrak{e} : Q \rightarrow Q$ by $\mathfrak{e}(V) := \nabla^{T}_{\xi} V$. By the definition of $\nabla^{T}$, it is clear that $\mathcal{E}$ is precisely the kernel of the map $\mathfrak{e}$.  In particular, on $\mathcal{E}$ the covariant derivative $\nabla^{T}$ descends to a well defined map $d_{\mathcal{E}} : \mathcal{E} \rightarrow \mathcal{E} \otimes Q^{*}$.  This map is defined for $[V] \in \mathcal{E}$ and $[X] \in Q$ by
\begin{equation*}
d_{\mathcal{E}}[V] ([X]) = \nabla^{T}_{X} [V],
\end{equation*}
and this is clearly independent of the representative of the equivalence class $[X]$.  Moreover, the transverse complex structure $\Phi$ yields a splitting $\mathcal{E} \otimes Q^{*} = \E \otimes (Q^{*})^{1,0} \oplus \E \otimes (Q^{*})^{0,1}$.  The map $d_{\E}$ then splits as $\partial_{\E} + \bar{\partial_{\E}}$.  Hence, we have a well defined operator $\bar{\partial}_{\E} : \E \rightarrow \E \otimes (Q^{*})^{(0,1)}$.  We  can extend the operator $\dbar_{\E}$ to a differential operator.  Dualizing the exact sequence~(\ref{exact seq}) we have an operator, also denoted by $\dbar_{\E}$ satisfying
\begin{equation*}
\dbar_{\E} : \E \rightarrow \E \otimes p^{\dagger}(Q^{*}) \hookrightarrow \E\otimes TS^{*}.
\end{equation*}
\begin{versionb}
\begin{equation*}
0\rightarrow Q^{*} \xrightarrow{p^{\dagger}} TS^{*}\rightarrow L_{\xi}^{*}\rightarrow 0
\end{equation*}
We thus have an operator, also denoted by $\dbar_{\E}$
\begin{equation*}
\dbar_{\E} : \E \rightarrow \E \otimes p^{\dagger}(Q^{*}) \hookrightarrow \E\otimes TS^{*}.
\end{equation*}
\end{versionb}
The metric $g^{T}$ induces $L^{2}$ inner products on $\Gamma(X, \E^{1,0})$ and $\Gamma\left(X, \E^{1,0}\otimes p^{\dagger}(Q^{*})\right)$.  We can then define the formal adjoint of the $\dbar_{\E}$-operator, denoted $\dbar_{\E}^{\dagger}$, on smooth sections by the usual formula.
\begin{versionb}
\begin{equation*}
\langle \dbar_{\E}V, W \rangle_{\E^{1,0}\otimes p^{\dagger}(Q^{*})} = \langle V, \dbar_{\E}^{\dagger}W\rangle_{\E^{1,0}}
\end{equation*}
\end{versionb}
One easily computes using integration by parts that $\bar{\partial}_{\E}^{\dagger} [W] = -(g^{T})^{j\bar{k}} \nabla^{T}_{j}W^{p}_{\bar{k}}$, and hence we can define the Laplacian of $\E$ by
\begin{equation*}
\square_{\E} [V] = -(g^{T})^{j\bar{k}} \nabla^{T}_{j} \nabla^{T}_{\bar{k}}V^{p}.
\end{equation*}
We comment that at first glance this operator does not appear to be elliptic.  However, by recalling the definition of the bundle $\E$, we see that
\begin{equation*}
\square_{\E} [V] = -(\xi^{*}\otimes\xi^{*})\nabla^{T}_{\xi}\nabla^{T}_{\xi}V^{p}  -(g^{T})^{j \bar{k}} \nabla^{T}_{j}\nabla^{T}_{\bar{k}} V^{p}, 
\end{equation*}
which is clearly elliptic.  Thus we can apply the usual elliptic theory.  In a similar fashion we may also define the $\dop{\E}$ Laplacian $\overline{\square}_{E}$.  A standard integration by parts computation proves;

\begin{prop}\label{BK formula}
For every $[V] \in \Gamma(S, \E^{1,0})$, we have the Bochner-Kodaira formula for the Laplacians $\square_{E}$ and $\overline{\square}_{E}$
\begin{equation*}
\| \dop{\E}[V]\|^{2}_{L^{2}} =\| \dbar_{\E} [V] \|^{2}_{L^{2}} + \int_{S}R^{T}_{\bar{k}j}V^{j}\overline{V^{k}} d\mu,
\end{equation*}
\end{prop}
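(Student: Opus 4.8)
The plan is to prove this exactly as one proves the classical Bochner--Kodaira (Weitzenb\"ock) identity for $T^{1,0}$-valued functions on a K\"ahler manifold, the only new feature being that the computation now takes place with $\E^{1,0}$-valued ``tensors'' on the bundle $Q$ rather than with honest tensors on a complex manifold; the foliated integration-by-parts machinery already set up above makes this transition painless. First I would fix a point $p\in S$ and work in preferred local coordinates $(x,z^{1},\dots,z^{n})$ around it. Since $[V]\in\Gamma(S,\E^{1,0})$ lies in the kernel of $\mathfrak e=\nabla^{T}_{\xi}$, the components $V^{p}$ are basic; combining this with~(\ref{local christ symbols}) and the vanishing of the mixed transverse Christoffel symbols, the operator $d_{\E}=\nabla^{T}$ applied to $[V]$ splits cleanly, so that $\dop{\E}[V]$ has components $\nabla^{T}_{j}V^{p}$, $\dbar_{\E}[V]$ has components $\nabla^{T}_{\bar k}V^{p}$, and
\[
\|\dop{\E}[V]\|_{L^{2}}^{2}=\int_{S}(g^{T})^{j\bar k}\,g^{T}_{p\bar q}\,(\nabla^{T}_{j}V^{p})\,\overline{(\nabla^{T}_{k}V^{q})}\,d\mu .
\]

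Next I would integrate by parts twice. Using $\overline{\nabla^{T}_{k}V^{q}}=\nabla^{T}_{\bar k}\overline{V^{q}}$, the fact that $g^{T}$, $(g^{T})^{-1}$ and $d\mu$ are all $\nabla^{T}$-parallel, and the integration-by-parts formula for tensor powers of $Q$ recorded above (applicable because every quantity in sight is basic, so the relevant divergence is that of a basic one-form), I move $\nabla^{T}_{\bar k}$ across to obtain $\|\dop{\E}[V]\|_{L^{2}}^{2}=-\int_{S}(g^{T})^{j\bar k}g^{T}_{p\bar q}(\nabla^{T}_{\bar k}\nabla^{T}_{j}V^{p})\overline{V^{q}}\,d\mu$, and then I commute the covariant derivatives. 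As in Lemma~\ref{commutator lemma}, interchanging $\nabla^{T}_{\bar k}$ and $\nabla^{T}_{j}$ on a $Q$-valued object a priori brings in a term proportional to $\nabla^{T}_{\xi}$ coming from the bracket $[X_{j},X_{\bar k}]$ of the preferred frame vectors; but on $\E$ this term vanishes identically because $\nabla^{T}_{\xi}V=0$, leaving the clean identity $\nabla^{T}_{\bar k}\nabla^{T}_{j}V^{p}=\nabla^{T}_{j}\nabla^{T}_{\bar k}V^{p}-\RmT{k}{j}{p}{q}V^{q}$. Substituting, the curvature contribution contracts via $(g^{T})^{j\bar k}\RmT{k}{j}{p}{r}=R^{T\,p}{}_{r}$ and $g^{T}_{p\bar q}R^{T\,p}{}_{r}=R^{T}_{\bar q r}$ to the transverse Ricci term $\int_{S}R^{T}_{\bar k j}V^{j}\overline{V^{k}}\,d\mu$, while a second integration by parts moving $\nabla^{T}_{j}$ onto $\overline{V^{q}}$ turns the remaining piece into $\int_{S}(g^{T})^{j\bar k}g^{T}_{p\bar q}(\nabla^{T}_{\bar k}V^{p})\overline{(\nabla^{T}_{\bar j}V^{q})}\,d\mu=\|\dbar_{\E}[V]\|_{L^{2}}^{2}$. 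Collecting terms yields the asserted formula.

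Equivalently, and perhaps more transparently, one may simply record $\|\dop{\E}[V]\|^{2}_{L^{2}}=\langle\overline{\square}_{\E}[V],[V]\rangle$ and $\|\dbar_{\E}[V]\|^{2}_{L^{2}}=\langle\square_{\E}[V],[V]\rangle$, so that the proposition is equivalent to the pointwise Weitzenb\"ock relation $\overline{\square}_{\E}[V]-\square_{\E}[V]=R^{T\,p}{}_{q}V^{q}$ on $\E^{1,0}$, which is again exactly the commutator computation above. I do not anticipate any genuine difficulty: the two points requiring care are (i) noting that on $\E^{1,0}$ the operator $\square_{\E}$, which looks non-elliptic as written, may be replaced by the honest elliptic operator $-(g^{T})^{j\bar k}\nabla^{T}_{j}\nabla^{T}_{\bar k}$ precisely because $\nabla^{T}_{\xi}V=0$, so that commuting derivatives produces only the transverse curvature and none of the full-Riemann-tensor terms that appear in Lemma~\ref{commutator lemma}; and (ii) keeping the curvature sign convention for $Rm^{T}$ and the placement of conjugations consistent throughout. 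Both are bookkeeping rather than conceptual obstacles, and the remaining computations are identical to the K\"ahler case treated in~\cite{WHe}.
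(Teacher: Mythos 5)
Your proposal is correct and follows essentially the same route as the paper: the identity is obtained by writing out $\|\dop{\E}[V]\|_{L^{2}}^{2}$ in preferred coordinates, integrating by parts twice, and commuting the transverse covariant derivatives to produce the $R^{T}_{\bar{k}j}V^{j}\overline{V^{k}}$ term. Your extra observation that the commutator contributes only the transverse curvature (the $\nabla^{T}_{[X_{j},X_{\bar{k}}]}$ piece dying because $\nabla^{T}_{\xi}V=0$ on sections of $\E$) is a worthwhile clarification of a point the paper leaves implicit, but it does not change the argument.
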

\begin{versionb}
\begin{proof}
We integrate by parts twice to compute
\begin{equation*}
\begin{aligned}
\langle \dop{\E}V, \dop{\E} V\rangle& = \int_{S}(g^{T})^{j\bar{k}} \nabla^{T}_{j}V^{l}g^{T}_{\bar{p} l} \overline{\nabla^{T}_{k}V^{p}} d\mu\\ &= - \int_{S} g^{T}_{\bar{p}l} V^{l}\dop{j}\left(\overline{(g^{T})^{k\bar{j}}\nabla^{T}_{k}V^{p}}d\mu \right)\\ &= \int_{S} \nabla^{T}_{\bar{k}}V^{l}g^{j\bar{k}}(g^{T})_{\bar{p} l} \overline{\nabla^{T}_{\bar{j}}V^{p}} d\mu + \int_{S}R^{T}_{\bar{k}j}V^{j}\overline{V^{k}}d\mu.
\end{aligned}
\end{equation*}
\end{proof}

We now identify the kernel of $\square_{\E}$ restricted to global sections of $\E^{1,0}$.
\end{versionb}
\begin{versionb}
\begin{definition}
We define the space $H^{0}(\E^{1,0})$, which we refer to as the space global holomorphic sections of the sheaf of transverse foliate vector fields, by
\begin{equation*}
H^{0}(\E^{1,0}):=Ker \bar{\partial}_{\E} \big|_{\E^{(1,0)}}.
\end{equation*}
\end{definition}
\end{versionb}

\begin{prop}\label{kernel prop}
We define the space $H^{0}(\E^{1,0})$, which we refer to as the space global holomorphic sections of the sheaf of transverse foliate vector fields, by
\begin{equation*}
H^{0}(\E^{1,0}):=Ker \bar{\partial}_{\E} \big|_{\E^{(1,0)}}.
\end{equation*}
\begin{enumerate}
\item[\textbullet] $H^{0}(\E^{1,0})$ has the structure of a finite dimensional Lie algebra over $\mathbb{C}$.
\item[\textbullet] $H^{0}(\E^{1,0})$ is isomorphic as a Lie algebra to the Lie algebra of holomorphic, Hamiltonian vectorfields on $S$.  
\item[\textbullet] The space  $H^{0}(\E^{1,0})$ depends only on the complex structure $J$ on the cone, and the Reeb field $\xi$, and the transverse holomorphic structure.  In particular, $\dim H^{0}(\E^{1,0})$ is invariant along the Sasaki Ricci flow.
\end{enumerate}
\end{prop}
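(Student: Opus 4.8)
The plan is to prove the three bullet points in turn, working throughout with the description of $H^{0}(\E^{1,0})$ in preferred local coordinates. \emph{Lie algebra structure.} The normalizer $\mathcal{N}_{\xi}(S)$ is a Lie subalgebra of the vector fields on $S$ (if $[X,\xi],[Y,\xi]\in L_{\xi}$, the Jacobi identity gives $[[X,Y],\xi]\in L_{\xi}$), and $L_{\xi}(S)$ is an ideal in it, so $\Gamma(S,\E)=\mathcal{N}_{\xi}(S)/L_{\xi}(S)$ carries a bracket, computed from any lifts to $\mathcal{N}_{\xi}(S)$. In preferred coordinates a section of $\E^{1,0}$ is represented by $\sum_{j}V^{j}X_{j}$ with the $V^{j}$ \emph{basic}, and $[V]\in\ker\dbar_{\E}$ precisely when the $V^{j}$ are \emph{holomorphic}; since $X_{i}(W^{j})=\dop{i}W^{j}$ for basic $W^{j}$ and $[X_{i},X_{j}]=0$ (the mixed second partials of the smooth potential $h$ commute), one finds $[[V],[W]]=\sum_{k}\big(\sum_{i}V^{i}\dop{i}W^{k}-W^{i}\dop{i}V^{k}\big)X_{k}$, whose components are again holomorphic basic functions. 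Hence $H^{0}(\E^{1,0})$ is closed under the bracket — the transverse avatar of the fact that holomorphic vector fields form a Lie algebra. For finiteness, on compact $S$ one has $\square_{\E}=\dbar_{\E}^{\dagger}\dbar_{\E}$ on $\Gamma(S,\E^{1,0})$, so $\dbar_{\E}[V]=0$ iff $\square_{\E}[V]=0$; as $\square_{\E}$ was shown above to be elliptic, $H^{0}(\E^{1,0})$ is finite dimensional.

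\emph{Identification with Hamiltonian holomorphic vector fields.} By the Remark after Definition~\ref{ham hol def}, in preferred coordinates such a field is $X=\eta(X)\xi+\sum_{i}X^{i}X_{i}$ with the $X^{i}$ holomorphic basic, so $[X]\in H^{0}(\E^{1,0})$, and the bracket computation above shows $X\mapsto[X]$ is a homomorphism of Lie algebras. For surjectivity, given $[V]\in H^{0}(\E^{1,0})$ I would look for a Reeb correction $u\,\xi$ so that $X=u\,\xi+\sum_{i}V^{i}X_{i}$ is Hamiltonian holomorphic; because $\iota_{\xi}d\eta=0$ and $\eta(X_{i})=0$, condition (ii) of Definition~\ref{ham hol def} collapses to the single basic equation $\dbar_{B}u=-\frac{1}{2}\iota_{V}d\eta$, and a short local computation (using holomorphicity of the $V^{i}$) shows $\iota_{V}d\eta$ is a $\dbar_{B}$-closed basic $(0,1)$-form that is locally $\dbar_{B}$-exact, hence represents a class in $H^{0,1}_{B}(S)$. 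This class vanishes by the transverse Kodaira vanishing theorem, valid since $c_{1}^{B}(S)>0$, so $u$ exists and is unique up to an additive constant. Thus $X\mapsto[X]$ is onto with kernel the central line $\mathbb{C}\xi$ (central since $[\xi,X]=0$ for every Hamiltonian holomorphic $X$), which yields the asserted identification of Lie algebras, up to this one-dimensional Reeb factor. This surjectivity step, which is exactly where positivity of $c_{1}^{B}(S)$ enters, is the main obstacle.

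\emph{Invariance.} The sheaves $\mathcal{N}_{\xi}$ and $L_{\xi}$ depend only on $\xi$; the splitting $\E=\E^{1,0}\oplus\E^{0,1}$ uses only the transverse complex structure on $Q\cong TS/L_{\xi}$, not the contact form $\eta$; and $\dbar_{\E}$ is metric independent, since in preferred coordinates $\dbar_{\E}[V]$ has components $\dbop{k}V^{j}$ with no Christoffel symbols entering, because the transverse Christoffel symbols with mixed indices vanish. Therefore $H^{0}(\E^{1,0})=\ker\dbar_{\E}|_{\E^{1,0}}$ depends only on the complex structure $J$ on the cone, the Reeb field $\xi$, and the transverse holomorphic structure. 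Along the Sasaki-Ricci flow all of these are preserved — the flow stays within a fixed transverse K\"ahler class and fixes $\xi$ and the transverse holomorphic structure, hence $J$ on the cone — so $H^{0}(\E^{1,0})$, and in particular its dimension, is constant along the flow.
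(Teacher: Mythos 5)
Your proposal is correct and follows the same overall architecture as the paper's proof (Jacobi identity plus the local formula for $\dbar_{\E}$ for the Lie algebra structure, ellipticity of $\square_{\E}$ for finite dimensionality, and invariance of $\ker\dbar_{\E}$ under type II deformations), but it diverges at the one step that genuinely requires an idea, and there your treatment is the more careful one. To lift $[V]\in H^{0}(\E^{1,0})$ to a Hamiltonian holomorphic vector field, the paper simply exhibits the Hamiltonian function as $u_{V_{\tilde c}}=-\sum_{i}h_{i}V^{i}$ in preferred coordinates, where $h$ is the \emph{local} transverse K\"ahler potential; since the local $(1,0)$-form $\sqrt{-1}\,h_{i}\,dz^{i}$ shifts by $\partial_{B}f$ under a change of preferred chart $y=x+f$, these local Hamiltonians need not patch, and the discrepancy is exactly the class of the global $\dbar_{B}$-closed basic form $-\frac{\sqrt{-1}}{2}\iota_{\sigma([V])}d\eta$ in $H^{0,1}_{B}(S)$. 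You isolate this obstruction explicitly and kill it with the transverse Bochner--Kodaira vanishing $H^{0,1}_{B}(S)=0$, which is where $c_{1}^{B}(S)>0$ genuinely enters; the paper's formula is the correct local solution, with the globalization left implicit. Your further remark that the construction identifies the Hamiltonian holomorphic fields with $H^{0}(\E^{1,0})$ only up to the central line $\mathbb{C}\xi$ (constant Hamiltonians) is also accurate: both arguments really produce a surjection with kernel $\mathbb{C}\xi$ together with a splitting, and the literal ``isomorphism'' in the statement should be read with that one-dimensional factor understood. Everything else --- the bracket computation using $[X_{i},X_{j}]=0$, the metric independence of $\dbar_{\E}$ in preferred coordinates, and the constancy of the data $(\xi,\Phi,J)$ along the flow --- matches the paper's argument.
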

\begin{remark}
The Lie algebra $H^{0}(\E^{1,0})$ has appeared in the literature before, under a number of different guises.  In \cite{NishTond} it was proved that if the transverse scalar curvature of the Sasaki metric $g$ is constant, then $H^{0}(\E^{1,0})$ is reductive.  $H^{0}(\E^{1,0})$ also played an important role in the work of Boyer, Galicki and Simanca on extremal Sasaki metrics \cite{BoyGalSim}. In \cite{NS} it was shown that if $S$ admits a Sasaki-Einstein metric $g_{SE}$, $\mathcal{G}$ is the identity component of the automorphism group of the Sasaki structure, and $\mathcal{O}$ is the orbit of $g_{SE}$ under the action of $\mathcal{G}$, then the tangent space to $\mathcal{O}$ at the point $g_{SE}$ is isomorphic to $H^{0}(\E^{1,0})$.
\end{remark}

We delay the proof of Proposition~\ref{kernel prop} for a moment in order that we may discuss the local structure of $\E$.  We feel this local picture is essential to understanding the structure we are describing abstractly and so we shall be very explicit in this part of the development.  Let $U \subset S$ be an open subset of $S$ on which we have a preferred coordinate system, and suppose that $[V] \in \Gamma(U, \E)$ is a section of $\E$ over $U$.  Over $U$ we can write $V = V^{i}[\dop{z^{i}}] + V^{\bar{i}}[\dop{\bar{z}^{i}}]$.  Let $V$ be any lift of $[V]$ to $\mathcal{N}_{\xi}(U)$.  In preferred local coordinates
\begin{equation*}
V = f\pl{x} +\sum_{i=1}^{n} V^{i}\frac{\partial}{\partial z^{i}} + \sum_{i=1}^{n}  V^{\bar{i}} \frac{\partial}{\partial \bar{z}^{i}}.
\end{equation*}
As $[V] \in \E$, we necessarily have $V^{i}, V^{\bar{i}}$ basic functions.  If we restrict to $[V] \in \E^{1,0}$, then $V^{\bar{i}} =0$.  A basis of $Q^{0,1}$ over $U$ is given by $\{ [\partial_{\bar{z}^{j}} ] \}$ where $1 \leq j \leq n$.    By definition, we compute
\begin{versionb}
\begin{equation*}
\bar{\partial}_{\E}[V] = \nabla^{T}_{\partial_{\bar{z}^{j}}} [V] \otimes  [\partial_{\bar{z}^{j}} ]^{*}=\sum_{i=1}^{n} \partial_{\bar{j}}V^{i} \left[ \frac{\partial}{\partial z^{i}}\right]\otimes [\partial_{\bar{z}^{j}} ]^{*}.
\end{equation*}
Now it is easy to check that $p^{\dagger}([\partial_{\bar{z}^{j}}]^{*}) = d\bar{z}^{j}$, and so
\end{versionb}
\begin{equation}\label{dbar on E}
\dbar_{\E}[V] =\sum_{i=1}^{n} \partial_{\bar{j}}V^{i} \left[\pl{z^{i}}\right] \otimes d\bar{z}^{j}.
\end{equation} 

\begin{proof}[Proof of Proposition~\ref{kernel prop}]
Since $S$ is compact, the kernel of the self-adjoint elliptic operator $\square_{\E}$ is finite dimensional, and hence $H^{0}(\E^{1,0})$ is a finite dimensional vector space over $\mathbb{C}$.  The Jacobi identity shows that the Lie bracket on $\Xi(S)$ descends to a Lie bracket on $\Gamma(U,\E)$.  Moreover, the local formulae show that the bracket preserves the decomposition $\E = \E^{1,0}\oplus \E^{0,1}$.  Computing over an open set shows that the Lie bracket preserves the space $H^{0}(\E^{1,0})$, and establishes the first statement.  We turn our attention to the second and third statements.  Suppose $[V] \in H^{0}(\E^{1,0})$, then equation~(\ref{dbar on E}) shows that condition {\it(i)} of Definition~\ref{ham hol def} is satisfied for any lift $V_{c}:= \sigma(V) +c\xi$ of $[V]$.  It remains to show condition {\it(ii)} holds for some choice of $c$.  Define $\tilde{c} := -\xi^{*}(\sigma([V]))$.
\begin{versionb}
\begin{equation*}
\tilde{c} := \eta \left(\sum_{i=1}^{n} V^{i} \pl{z^{i}}\right) = -\xi^{*}(\sigma([V])).
\end{equation*}
\end{versionb}
Using the formula for $\eta$ in preferred local coordinates yields $
u_{V_{\tilde{c}}} = -\sum_{i=1}^{n} h_{i}V^{i}$.  As $\dbar_{\E}[V] =0$, equation~(\ref{dbar on E}) shows that $V_{\tilde{c}}$ satisfies property {\it (ii)} of Definition~\ref{ham hol def}.
\begin{versionb}
\begin{equation*}
\dbar_{B}u_{V_{\tilde{c}}} = -\sum_{i=1}^{n} V^{i}\dbop{j}h_{i} d\bar{z}^{j}= -\frac{\sqrt{-1}}{2}\iota_{V_{\tilde{c}}}d\eta.
\end{equation*}
\end{versionb}
Conversely, suppose $V$ is a Hamiltonian, holomorphic vector field.  The expression~(\ref{local ham hol}) for $V$, along with the remarks following Definition~\ref{ham hol def} show that $V$ is in the normalizer of $\xi$ and that $[V] \in  H^{0}(\E^{1,0})$.  Computing locally we see that  the Lie bracket commutes with these identifications. Finally, by Proposition~\ref{perturb prop} any Sasaki structure on $S$ with the same complex structure $J$ on the cone, Reeb field $\xi$ and transverse holomorphic structure is related to the original Sasaki structure by deformations of type II.  One can then easily check via the above computations that $H^{0}(\E^{1,0})$ is unchanged under these deformations.
\end{proof}

\begin{corollary}\label{fut cor}
The Futaki invariant defines a character on the Lie algebra $H^{0}(E^{1,0})$.
\end{corollary}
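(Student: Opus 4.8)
The plan is to transport the Futaki invariant from its definition on Hamiltonian holomorphic vector fields (where it is already known to be a well-defined character, by the Boyer--Galicki--Simanca / Futaki--Ono--Wang theorem quoted above) across the Lie algebra isomorphism $H^0(\E^{1,0}) \cong \{\text{Hamiltonian holomorphic vector fields}\}$ furnished by Proposition~\ref{kernel prop}. First I would recall from the proof of Proposition~\ref{kernel prop} the explicit form of the isomorphism: given $[V] \in H^0(\E^{1,0})$, the canonical lift is $V_{\tilde c} = \sigma([V]) + \tilde c\,\xi$ with $\tilde c = -\xi^{*}(\sigma([V]))$, and this $V_{\tilde c}$ is a Hamiltonian holomorphic vector field. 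Define $fut_S([V]) := fut_S(V_{\tilde c}) = \int_S V_{\tilde c}\, u\, d\mu$, where $u$ is the transverse Ricci potential.

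The two things to check are (i) that this is well-defined — independent of the choice of Sasaki metric in $(2n+2)^{-1}c_1^B(S)$, and of any auxiliary choices — and (ii) that it is a Lie algebra character, i.e. $\mathbb{C}$-linear and vanishing on brackets $[[V],[W]]$. For (i), metric-independence is immediate from the quoted theorem of \cite{BoyGalSim, Futaki} once we know $V_{\tilde c}$ is Hamiltonian holomorphic, which is exactly the content of Proposition~\ref{kernel prop}; the only subtlety is that the lift $V_{\tilde c}$ depends on $\sigma$, hence on the metric, but two different lifts of the same $[V]$ differ by a section of $L_\xi$, i.e. by $(\text{basic function})\cdot\xi$, and since $\xi u = \mathcal{L}_\xi u = 0$ (as $u$ is basic) the integrand $V u\, d\mu$ is unchanged — so $fut_S([V])$ really depends only on $[V]$. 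For (ii), linearity is clear from the formula, and the vanishing on brackets follows because under the isomorphism of Proposition~\ref{kernel prop} the bracket on $H^0(\E^{1,0})$ corresponds to the Lie bracket of vector fields, and $fut_S$ is already a character on the Hamiltonian holomorphic vector fields by the quoted theorem — so $fut_S([[V],[W]]) = fut_S([V_{\tilde c}, W_{\tilde c}]) = 0$.

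Concretely I would write the corollary's proof in two short steps: \textbf{Step 1}, invoke Proposition~\ref{kernel prop} to identify $H^0(\E^{1,0})$ with the Lie algebra of Hamiltonian holomorphic vector fields, and observe that the integral $\int_S V u\, d\mu$ depends only on the class $[V]$ because $u$ is basic and any two lifts differ by an element of $L_\xi$; \textbf{Step 2}, transport the character property (linearity, vanishing on brackets, metric-independence) from \cite[Theorem 4.9]{Futaki} / \cite[Proposition 5.1]{BoyGalSim} through the isomorphism. The main — and really only — obstacle is the bookkeeping in Step 1: making sure the passage between $[V]$ and its lift $V_{\tilde c}$ is clean, in particular that the Futaki integral is insensitive to the $L_\xi$-ambiguity in the lift; everything else is a formal consequence of results already established. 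This is genuinely a corollary, so the proof should be only a few lines.
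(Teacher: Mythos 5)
Your proposal is correct and follows essentially the same route as the paper, which treats this as an immediate consequence of Proposition~\ref{kernel prop} (the Lie algebra isomorphism between $H^{0}(\E^{1,0})$ and the Hamiltonian holomorphic vector fields) combined with the quoted theorem of \cite{BoyGalSim, Futaki}; indeed the paper's own remark states the corollary is ``essentially a restatement of the original definition.'' Your additional check that $\int_{S} Vu\, d\mu$ is insensitive to the $L_{\xi}$-ambiguity in the lift (since $u$ is basic, so $\xi u = 0$) is a correct and worthwhile piece of bookkeeping that the paper leaves implicit.
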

\begin{remark}  Corollary~\ref{fut cor} is essentially a restatement of the original definition in \cite{BoyGalSim}.  However, the identification of the Lie algebra which appears in \cite{BoyGalSim} as the global holomorphic sections of the sheaf $\E$ provides what we feel to be a particularly attractive definition of the Futaki invariant, which is seen to generalize the K\"ahler setting.
\end{remark}

\begin{prop}\label{vanishing fut prop}
Let $(S,g_{0})$ be a Sasaki manifold with $c_{1}^{B}(S)>0$, and $c_{1}(D)=0$.  Suppose that $(2n+2)[\frac{1}{2}d\eta_{0}]_{B} = c_{1}^{B}(S)$.  If $\nu_{\eta_{0}}(\eta) > -\infty$ for every $\eta$ in the K\"ahler class of $\eta_{0}$, then $fut_{S}(X) \equiv 0$.
\end{prop}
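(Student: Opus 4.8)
The plan is to argue by contradiction: assuming $fut_{S}(X)\neq 0$ for some Hamiltonian holomorphic vector field $X$, I would produce a ray $\{\eta_{s}\}_{s\in\mathbb{R}}$ in the K\"ahler class of $\eta_{0}$ along which $s\mapsto\nu_{\eta_{0}}(\eta_{s})$ is a nonconstant affine function, hence unbounded below, contradicting the hypothesis (which is exactly condition (M) for $\eta_{0}$). This is the transverse analogue of the classical fact in the Fano K\"ahler setting that a lower bound for the Mabuchi energy forces the Futaki invariant to vanish.

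First I would build the ray as an orbit of a holomorphic flow. By $\mathbb{C}$-linearity of $fut_{S}$, pick $\lambda\in\mathbb{C}$, $|\lambda|=1$, with $\mathrm{Re}\big(\lambda\,fut_{S}(X)\big)=|fut_{S}(X)|>0$, set $Y=\lambda X$ and $v=\mathrm{Re}(Y)=\frac12(Y+\overline{Y})$, and let $\sigma_{s}$ be the flow of $v$ (defined for all $s$ since $S$ is compact). The local expression~(\ref{local ham hol}) shows that in preferred coordinates every coefficient of $Y$, hence of $v$, in the frame $\{\partial_{x},\partial_{z^{i}},\partial_{\bar z^{i}}\}$ is basic, so $[v,\xi]=[v,\partial_{x}]=0$ and $\sigma_{s}$ commutes with the Reeb flow; property (i) of Definition~\ref{ham hol def} shows $v$ projects to a real holomorphic vector field on each $V_{\alpha}$, so $\sigma_{s}$ preserves the transverse holomorphic structure and, extended trivially in $r$, the complex structure on $C(S)$. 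Hence $\sigma_{s}^{*}(\xi,\eta_{0},\Phi,g_{0})$ has, for every $s$, the same Reeb field, transverse holomorphic structure and cone complex structure as the original, and $[\frac12 d\sigma_{s}^{*}\eta_{0}]_{B}=[\frac12 d\eta_{0}]_{B}$ since $\sigma_{s}$ is isotopic to the identity. By Proposition~\ref{perturb prop} (equivalently the transverse $\partial\dbar$-lemma of~\cite{ElKac}) there is a smooth path $\phi_{s}\in\mathcal{H}_{\eta_{0}}$ with $\phi_{0}=0$ and $\eta_{s}:=\sigma_{s}^{*}\eta_{0}=\eta_{0}+2d_{B}^{c}\phi_{s}$.

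Next I would show $\nu_{\eta_{0}}(\eta_{s})=cs$ for a constant $c$ and evaluate $c$. By the definition of the Mabuchi energy and its path-independence, $\frac{d}{ds}\nu_{\eta_{0}}(\eta_{s})=-\int_{S}\dot\phi_{s}(R^{T}_{s}-\bar{R}^{T}_{s})\,d\mu_{s}$. Since $\eta_{s}=\sigma_{s}^{*}\eta_{0}$ with $\sigma_{s}$ a structure-preserving diffeomorphism, $R^{T}_{s}-\bar{R}^{T}_{s}$ and $d\mu_{s}$ are $\sigma_{s}$-pullbacks of their $s=0$ values; and differentiating $\eta_{s}=\eta_{0}+2d_{B}^{c}\phi_{s}$ against $\frac{d}{ds}\sigma_{s}^{*}\eta_{0}=\sigma_{s}^{*}(\mathcal{L}_{v}\eta_{0})$, using that $\sigma_{s}^{*}$ commutes with $d_{B}^{c}$, gives $\dot\phi_{s}=\sigma_{s}^{*}\dot\phi_{0}+(\text{const})$, the additive constant being irrelevant since $\int_{S}(R^{T}_{s}-\bar{R}^{T}_{s})\,d\mu_{s}=0$. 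Hence $\frac{d}{ds}\nu_{\eta_{0}}(\eta_{s})\equiv c:=-\int_{S}\dot\phi_{0}(R^{T}-\bar{R}^{T})\,d\mu$ and $\nu_{\eta_{0}}(\eta_{s})=cs$. To compute $c$: Cartan's formula with $\eta_{0}(v)=\mathrm{Re}(\lambda\,\eta_{0}(X))=\mathrm{Re}(-\sqrt{-1}\lambda u_{X})=\mathrm{Im}(\lambda u_{X})$ together with property (ii) of Definition~\ref{ham hol def} (i.e.\ $\iota_{X}d\eta_{0}=2\sqrt{-1}\,\dbar_{B}u_{X}$ and its conjugate) yields $\mathcal{L}_{v}\eta_{0}=d_{B}^{c}\big(\mathrm{Re}(\lambda u_{X})\big)$, so $\dot\phi_{0}=\frac12\mathrm{Re}(\lambda u_{X})$ up to a constant; and since $u$ is basic one has $Xu=X^{i}\partial_{i}u$ with $X^{i}=(g^{T})^{i\bar j}\partial_{\bar j}u_{X}$ (again from property (ii)), so integrating by parts as in the transverse K\"ahler calculus around~(\ref{flow mab}) and using $R^{T}-\bar{R}^{T}=-\square_{B}u$ gives $fut_{S}(X)=\int_{S}Xu\,d\mu=\int_{S}u_{X}(R^{T}-\bar{R}^{T})\,d\mu$. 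Therefore $c=-\frac12\mathrm{Re}\big(\lambda\!\int_{S}u_{X}(R^{T}-\bar{R}^{T})\,d\mu\big)=-\frac12\mathrm{Re}\big(\lambda\,fut_{S}(X)\big)=-\frac12|fut_{S}(X)|<0$, so $\nu_{\eta_{0}}(\eta_{s})\to-\infty$ as $s\to+\infty$, the desired contradiction.

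The hard part will not be a single idea but the bookkeeping in the irregular setting: verifying that the holomorphic flow $\sigma_{s}$ genuinely stays inside the space $\mathcal{H}_{\eta_{0}}$ of type-II deformations (which rests on the structural description of Hamiltonian holomorphic vector fields in~\cite{Futaki} and on the transverse $\partial\dbar$-lemma), and carrying out the two transverse identities $\mathcal{L}_{v}\eta_{0}=d_{B}^{c}\mathrm{Re}(\lambda u_{X})$ and $fut_{S}(X)=\int_{S}u_{X}(R^{T}-\bar{R}^{T})\,d\mu$ while keeping track of signs and the normalization of $d_{B}^{c}$ (if the signs come out reversed one simply replaces $s$ by $-s$). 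Both identities are the literal analogues of the standard computations in the Fano K\"ahler case and contain nothing new.
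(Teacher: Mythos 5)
Your proposal is correct and follows essentially the same route as the paper: argue by contradiction, flow by the real part of a suitably rotated Hamiltonian holomorphic vector field to get a one-parameter family of type II deformations in the K\"ahler class of $\eta_{0}$, and show the Mabuchi energy is affine along this orbit with slope given by (minus a positive multiple of) $\mathrm{Re}\bigl(\lambda\, fut_{S}(X)\bigr)$. The only difference is that the paper defers this last computation to the argument in \cite{TianBook}, whereas you carry it out explicitly in the transverse setting.
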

\begin{proof}
Suppose $\nu_{\eta_{0}}(\eta) >-C>-\infty$, but $fut([X]) \ne 0$ for some $[X] \in H^{0}(\E^{1,0})$.  We can assume that $Re(fut([X])) <0$, by replacing $[X]$ with $[-X]$, or $[iX]$.  Let $X = \sigma_{0}([X])$.  The vector field $X$ is foliate, orthogonal $\xi$, and holomorphic.  In preferred local coordinates we have $X = \sum_{i=1}^{n} X^{i}(\pl{z^{j}}-\sqrt{-1}h_{j}\pl{x})$  for $X^{i}$ basic, holomorphic functions, and $h$ a local, real valued function.  Define the vector field $\tilde{X}$ on the cone $C(S)$, by $\tilde{X}(z,r) = Re(X)(z)$, where $z \in S$, and $r$ is the radial variable on the cone.  The local formula shows $\tilde{X}$ is real holomorphic, and so $\mathcal{L}_{\tilde{X}}J = 0$, where $J$ is the complex structure on the cone.  Let $\rho_{t}$ be the local flow of $Re(X)$ on $S$, and $\tilde{\rho}_{t}$ be the local flow of $\tilde{X}$.  Then $\tilde{\rho}_{t}$ is a biholomorphism, and it is clear that $\tilde{\rho}_{t}(z,r) = (\Phi_{t}(z), r)$.  In particular, $(S, \rho_{t}^{*}g)$ is a Sasaki manifold with the same Reeb field, the same complex structure on the cone and the same transversely holomorphic structure on the Reeb foliation.  By Proposition~\ref{perturb prop}  we have that $\rho_{t}^{*}d\eta =d\eta + \dop{B}\dbop{B}\psi_{t}$.  We can now follow the argument in \cite{TianBook} to obtain the proposition.
\end{proof}

Our primary concern will be sections of $\E$ induced from basic functions.  In order to have our theory sufficiently well adapted for our future applications we discuss this now.  Given a basic function $h$, $\dbar_{B}h$ is a section of $\Lambda^{0,1}_{B}$.  We then define $V^{j} = (g^{T})^{j\bar{k}}\dbop{k}h$.  $V$ defines a section of the quotient bundle $Q$, and the splitting map $\sigma$ satisfies $\sigma([V]) = V$.  Moreover, $V$ lies in the normalizer of $\xi$ in $TS$.  Thus, $[V]$ defines a global section of $\E^{1,0}$ over $S$.
\begin{versionb}
 given in a local preferred chart by
\begin{equation*}
[V] = \sum_{j=1}^{n}(g^{T})^{j\bar{k}}\dbop{k}h \left[\pl{z^{j}}\right].
\end{equation*} 
\end{versionb}
We now compute that
\begin{equation*}
\dop{\E}[V] =  \sum_{l=1}^{n} \nabla^{T}_{l}\left((g^{T})^{j\bar{k}}\dbop{k}h \right) \left[\pl{z^{j}}\right] \otimes dz^{l}=\sum_{l=1}^{n} (g^{T})^{j\bar{k}}\dop{l}\dbop{k}h  \left[\pl{z^{j}}\right] \otimes dz^{l},
\end{equation*}
and so
\begin{equation}\label{norm example}
\| \dop{\E}[V] \|_{L^{2}(\E^{1,0}\otimes TS^{*})} = \int_{S} (g^{T})^{l\bar{p}}(g^{T})^{j\bar{k}}\left(\nabla^{T}_{l}\nabla^{T}_{\bar{k}}h\right)\left(\overline{\nabla^{T}_{p}\nabla^{T}_{\bar{j}}h}\right)d\mu.
\end{equation}
Expressions such as these shall appear repeatedly in what is to follow.  In order to simplify our notation, we will use $\nabla$ and $\overline{\nabla}$ to denote covariant derivative in the unbarred and barred directions.  For example, equation~(\ref{norm example}) can then be written as $\| \dop{\E}[V] \|_{L^{2}} = \int_{S} |\nabla \nabla h|^{2}d\mu$.
\begin{versionb}
\begin{equation*}
\| \dop{\E}[V] \|_{L^{2}(\E^{1,0}\otimes TS^{*})} = \int_{S} |\nabla \nabla h|^{2}d\mu.
\end{equation*}
\end{versionb}

\section{Proof of Theorem~\ref{main theorem} part {\it (i)}}

In this section we use the bound below for the Mabuchi functional to show that the $L^{2}$ norm of  $\dop{B}u$ goes to zero as $t \rightarrow \infty$, where $u$ is the transverse Ricci potential, $R^{T}_{\bar{k}j} - g^{T}_{\bar{k}j} = \dop{j}\dbop{k}u$.  The uniform bounds for the transverse Riemann tensor allow us to employ an inductive argument to obtain the decay to zero of all Sobolev norms of $\dop{B}u$.  The Mabuchi K-energy along the normalized Sasaki-Ricci flow is given by~(\ref{flow mab}).
\begin{versionb}
\begin{equation*}
\nu_{\eta_{0}}(\eta(T)) = -\int_{0}^{T}\int_{S} |\dop{B} u|^{2}d\mu_{t} dt,
\end{equation*}
where $u$ is the transverse Ricci potential.  
\end{versionb}
Thus, if the Mabuchi energy is bounded below on $\mathcal{H}_{\eta_{0}}$ then there exists times $t_{k} \rightarrow \infty$ such that $\|\nabla u\|_{L^{2}}(t_{k}) \rightarrow 0$.
We can obtain convergence for the full sequence by computing the evolution equation for the quantity $Y(t) = \| u \|_{L^{2}}$.  Following the computations in~\cite{PS} we obtain
\begin{versionb}
\begin{equation*}
\pl{t} |\dop{j} u|^{2} =  (g^{T})^{j\bar{k}}\dot{(\dop{j}u)}\overline{\dop{k}u} + (g^{T})^{j\bar{k}}\dop{j}u\overline{\dot{(\dop{k}u)}} + (R^{T})^{j\bar{k}}\dop{j}\overline{\dop{k}u} -|\dop{B}u|^{2},
\end{equation*}
\begin{equation*}
\square_{B} |\dop{j}h|^{2} = (g^{T})^{j\bar{k}}(\square_{B}\dop{j}u) \overline{\dop{k}u} + (g^{T})^{j\bar{k}}\dop{j}u( \overline{\square_{B} \dop{k}u} )+ (R^{T})^{j\bar{k}}\dop{j}u\overline{\dop{k}u} + |\overline{\nabla}\nabla u|^{2} + |\nabla \nabla u|^{2}.
\end{equation*}
It follows easily that
\end{versionb}
\begin{equation}\label{Y diff equation 1}
\dot{Y(t)} = (n+1)Y(t) - \int_{S}|\dop{B}u|^{2}R^{T}d\mu - \int_{S}|\overline{\nabla}\nabla u|^{2}d\mu - \int_{S}|\nabla \nabla u|^{2}d\mu.
\end{equation}
Applying the uniform bound for $R^{T}$ in Theorem~\ref{Pman thm}, the argument from \cite{PS} carries over verbatim to yield;

\begin{Lemma}\label{PSSW lemma 4}
Assume the Mabuchi K-energy is bounded from below on the K\"ahler class of $\eta_{0}$.  
\begin{versionb}
Set
\begin{equation*}
Y(t) = \| \nabla u\|_{L^{2}}^{2}.
\end{equation*}
\end{versionb}
Then $Y(t) \rightarrow 0$ along the K\"ahler-Ricci flow as $t\rightarrow \infty$.
\end{Lemma}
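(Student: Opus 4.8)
The plan is to upgrade the subsequential statement recorded just before the lemma to full convergence, following \cite{PS} line by line: the two inputs are the differential inequality~(\ref{Y diff equation 1}) and the uniform transverse curvature bound of Theorem~\ref{Pman thm}. First I would note that on basic functions $\nabla_{j}u=\dop{j}u$, so $Y(t)=\int_{S}|\dop{B}u|^{2}\,d\mu_{t}$ is exactly the integrand appearing in~(\ref{flow mab}). The identity~(\ref{flow mab}), together with the assumed lower bound on the Mabuchi energy along the flow, then gives
\begin{equation*}
\int_{0}^{\infty}Y(t)\,dt<\infty,
\end{equation*}
and in particular there is a sequence $t_{k}\to\infty$ with $Y(t_{k})\to 0$.

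Next I would extract from~(\ref{Y diff equation 1}) the one-sided estimate $\dot{Y}(t)\le C_{1}Y(t)$. The two Hessian terms $-\int_{S}|\overline{\nabla}\nabla u|^{2}\,d\mu$ and $-\int_{S}|\nabla\nabla u|^{2}\,d\mu$ are non-positive and may simply be discarded, while the uniform bound $|R^{T}|\le C$ along the flow furnished by Theorem~\ref{Pman thm} gives $-\int_{S}|\dop{B}u|^{2}R^{T}\,d\mu\le C\,Y(t)$. Hence $\dot{Y}\le(n+1+C)\,Y=:C_{1}Y$, so $t\mapsto e^{-C_{1}t}Y(t)$ is non-increasing; backwards this reads $Y(s-h)\ge e^{-C_{1}h}Y(s)$ for all $0\le h\le s$.

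It remains to run the standard contradiction argument. If $Y(t)\not\to 0$ there are $\varepsilon>0$ and $s_{j}\to\infty$ with $Y(s_{j})\ge\varepsilon$, and after passing to a subsequence we may assume $s_{j+1}-s_{j}>2$. The backward estimate gives $Y(t)\ge e^{-C_{1}}\varepsilon$ on each of the pairwise disjoint intervals $[s_{j}-1,\,s_{j}]$, so
\begin{equation*}
\int_{0}^{\infty}Y(t)\,dt\ \ge\ \sum_{j}\int_{s_{j}-1}^{s_{j}}Y(t)\,dt\ \ge\ \sum_{j}e^{-C_{1}}\varepsilon\ =\ \infty,
\end{equation*}
contradicting the integrability established above; hence $Y(t)\to 0$. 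The only non-formal ingredient is the uniform bound on $R^{T}$ from Theorem~\ref{Pman thm}: it is precisely what makes the coefficient $C_{1}$ in $\dot{Y}\le C_{1}Y$ independent of $t$, and hence what makes the backward Gronwall step, and with it the whole argument, go through. I do not expect any genuine obstacle here, as the reasoning is a verbatim transcription of the corresponding argument in \cite{PS}.
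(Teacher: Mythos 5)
Your argument is correct and is exactly what the paper intends: it invokes the uniform bound on $R^{T}$ from Theorem~\ref{Pman thm} to get $\dot{Y}\leq C_{1}Y$ from~(\ref{Y diff equation 1}), combines this with the integrability $\int_{0}^{\infty}Y\,dt<\infty$ coming from the Mabuchi bound, and runs the backward-Gronwall contradiction, which is precisely the argument of \cite{PS} that the paper cites as carrying over verbatim. No gaps.
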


\begin{proof}[Proof of Theorem~\ref{main theorem} part {\it (i)}]
In light of Lemma~\ref{PSSW lemma 4}, rearranging~(\ref{Y diff equation 1}) and integrating with respect to $t$ gives,
\begin{equation*}
\int_{0}^{\infty}dt\int_{S} |\overline{\nabla}\nabla u|^{2}d\mu_{t} + \int_{0}^{\infty}\int_{S} |\nabla \nabla u|^{2}d\mu_{t}< \infty.
\end{equation*}
We are in a position to apply our previous argument inductively.  Define $Y_{r,s}(t) = \int_{S}|\nabla^{s}\overline{\nabla}^{r}u|^{2}d\mu_{t}$.  Following the computations in \cite{PS}, and making use of the uniform $C^{\infty}$ bounds on $Rm^{T}$ and $Rm$ guaranteed by Theorem~\ref{BBS Thm}, we compute that
\begin{equation}\label{inductive inequality}
\begin{aligned}
\dot{Y}_{r,s}(t) \leq &C_{1}Y_{r,s}(t) + C_{2}\left(\int_{S}|D^{r+s-p}u|^{2}d\mu_{t}\right)^{1/2}Y_{r,s}^{1/2}(t)\\&-\int_{S}|\nabla^{s+1}\overline{\nabla}^{r}u|^{2}d\mu_{t} - \int_{S} |\overline{\nabla}\nabla^{s}\overline{\nabla}^{r}u|^{2}d\mu_{t},
\end{aligned}
\end{equation}
where summation over $1\leq p \leq r+s-1$ is understood.  We now employ the argument in \cite{PS}, which carries over verbatim.
\end{proof}

\begin{versionb}
There are two particularly attractive lines of argument which we presently discuss.  Unfortunately, both arguments succeed only in pointing out the difficulties in obtaining the existence of transverse K\"ahler-Einstein metrics when the basic first Chern class is postive, even in the presence of uniform curvature bounds.  

First,  the uniform diameter, volume and curvature bounds imply a uniform lower bound on the injectivity radius \cite{Cheeger}, and hence the Sobolev constant.  The conclusion of Theorem~\ref{main theorem} part {\it (i)}, which we have just established, combined with uniform control of the Sobolev constant might lead us to hope that the metrics $g(t)$ are converging to a transversely K\"ahler-Einstein metric.  However, the Sobolev norm in Theorem~\ref{main theorem} part {\it (i)}  is measured with respect to the evolving metrics $g(t)$, which are not known to be uniformly equivalent.  Alternatively, we can apply Corollary~\ref{flow compactness} to obtain the existence of a subsequence $t_{k} \rightarrow \infty$ and diffeomorphisms $F_{t_{k}}$ such that $(S, F_{t_{k}}^{*}g(t_{k}))$ converge in $C^{\infty}$ to a Sasaki metric $(S, g_{\infty})$.  However, we have no control over the diffeomorphisms $F_{t_{k}}$, and so we cannot conclude anything about the convergence of the original sequence $g(t_{k})$.  In the next section we will show that condition (C) gives enough control over the diffeomorphisms to conclude the exponential convergence to a Sasaki-Einstein metric.
\end{versionb}

\section{Convergence in presence of stability}

We begin this section by manipulating the equation~(\ref{Y diff equation 1}) into a more suggestive form. 
\begin{equation}\label{Y diff equation 2}
\dot{Y}(t) = -\int_{S}|\dop{B}u|^{2}(R^{T}-n)d\mu_{t} -\int_{S}\nabla^{j}u\nabla^{\bar{k}}u(R^{T}_{\bar{k}j}-g^{T}_{\bar{k}j})d\mu_{t} -2 \int_{S}|\bar{\nabla}\bar{\nabla}u|^{2}d\mu_{t}.
\end{equation}
This follows by applying the Bochner-Kodaira formula obtained in Proposition~\ref{BK formula} to the section of $\E$ defined by $V^{j} = (g^{T})^{j\bar{k}}\dbop{k}u$.  For large time $t$, the first two terms on the right hand side can easily be bounded by $\epsilon Y$, by Theorem~\ref{main theorem} part {\it (i)}.  In order to obtain the exponential decay of the quantity $Y$, we must bound the last term in equation~(\ref{Y diff equation 2}).
\begin{versionb}
\begin{equation*}
\int_{S} |\bar{\nabla}\bar{\nabla}u|^{2} d\mu_{t} > c\int_{S}|\nabla u|^{2}.
\end{equation*}
\end{versionb}
Let $\lambda_{t}$ be the smallest, strictly positive eigenvalue of the Laplacian $\square_{\E,t}$.  We include the subscript $t$ to enforce that the metric $g(t)$ is evolving.  By the elliptic theory we have
\begin{equation*}
\lambda_{t}\|V-\pi_{t}V\|_{L^{2}(\E^{1,0})} \leq \int_{S}|\dbar_{\E}V|^{2}d\mu_{t},
\end{equation*}
where $\pi_{t}$ is the $L^{2}$ projection onto $H^{0}(\E^{1,0})$ with respect to the metric $g^{T}(t)$.  As in the K\"ahler case, we observe that for the section $V \in \E$ in question, we have by Corollary~\ref{fut cor}
\begin{equation*}
\|\pi_{t}V \|^{2} = fut_{s}(\pi_{t}V).
\end{equation*}
Thus, equation~(\ref{Y diff equation 2}) yields the inequality
\begin{equation}\label{Y diff equation 3}
\begin{aligned}
\dot{Y} \leq &-2\lambda_{t}Y + 2fut_{S}\left(\pi_{t}(g^T)^{j\bar{k}}\dbop{k}u\right)\\ 
&-\int_{S}|\dop{B}u|^{2}(R^{T}-n)d\mu -\int_{S}\nabla^{j}u\nabla^{\bar{k}}u\left(R^{T}_{\bar{k}j}-g^{T}_{\bar{k}j}\right)d\mu
\end{aligned}
\end{equation}
We remark that equation~(\ref{Y diff equation 3}) is completely general, and we view it as the motivating inequality for the developments in this paper.  

\begin{proof}[Proof of Theorem~\ref{main theorem} part {\it(ii)}]
Since the Mabuchi functional is bounded below, Proposition~\ref{vanishing fut prop} implies that the Futaki invariant is zero.  The uniform transverse curvature bound, and the conclusion of Theorem~\ref{main theorem} part {\it(i)} imply that for any $\epsilon >0$, there is a $T_{\epsilon}$ such that, for every $t \in [T_{\epsilon}, \infty)$ we have $\dot{Y}(t) \leq (-\lambda_{t} +\epsilon)Y(t)$.  It suffices to find a positive lower bound for $\lambda_{t}$.  Condition (C) is tailor made for the task.

\begin{Theorem}\label{stability compactness}
Let $(S,\xi, \eta, \Phi, g)$ be a compact Sasaki manifold of dimension $2n+1$.  Assume that the Sasaki structure $(\xi, \eta, \Phi, g)$ satisfies stability condition (C).  Fix $V,D, \delta>0$, and constants $C_{k}$.  Then there exists an integer $N$ and a constant $C(V, D, \delta, C_{k}, n, N)>0$ such that
\begin{equation*}
C\|V\|^{2} \leq \|\dbar_{\E}V\|^{2}, \quad \forall \quad V \perp H^{0}(\E^{1,0})
\end{equation*}
for all Sasaki structures $(S,\xi, \eta, \Phi, g)$   with $Vol_{g}(S)<V$, and $diam_{g}(S)<D$, and whose injectivity radius is bounded below by $\delta$, and the k-th derivative of whose curvature tensors are uniformly bounded by $C_{k}$ for all $k\leq N$.
\end{Theorem}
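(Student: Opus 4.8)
\emph{Proof proposal.} The plan is to argue by contradiction, using a Phong--Sturm style compactness argument \cite{PS}. Suppose the conclusion fails. Then for every positive integer $k$ (playing simultaneously the roles of the forbidden order $N$ and of $1/C$) there is a Sasaki structure $(S,\xi_{k},\eta_{k},\Phi_{k},g_{k})$ satisfying the stated bounds on volume, diameter, injectivity radius and the first $k$ derivatives of the curvature tensors, together with a section $V_{k}\in\Gamma(S,\E^{1,0}_{k})$ with $V_{k}\perp H^{0}(\E^{1,0}_{k})$, $\|V_{k}\|_{L^{2}(g_{k})}=1$, and $\|\dbar_{\E_{k}}V_{k}\|^{2}_{L^{2}(g_{k})}<1/k$. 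Since $V_{k}\perp H^{0}(\E^{1,0}_{k})$ we have $1/k>\|\dbar_{\E_{k}}V_{k}\|^{2}=\langle\square_{\E_{k}}V_{k},V_{k}\rangle\geq\lambda_{k}$, so the smallest positive eigenvalue $\lambda_{k}$ of $\square_{\E_{k}}$ tends to $0$; replacing $V_{k}$ by a unit-norm eigensection realizing $\lambda_{k}$ we may assume $\square_{\E_{k}}V_{k}=\lambda_{k}V_{k}$ with $\lambda_{k}\to 0$.

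Next I would extract a limiting Sasaki structure. By the Sasaki version of Hamilton's compactness theorem (cf. \S4 and \S8), the bounds on $\mathrm{Vol}_{g_{k}}(S)$, $\mathrm{diam}_{g_{k}}(S)$, on the injectivity radius from below, and on all derivatives of the curvature of $g_{k}$ imply that, after passing to a subsequence, there are diffeomorphisms $F_{k}$ of $S$ with $F_{k}^{*}g_{k}\to g_{\infty}$ in $C^{\infty}$. Because $\xi_{k},\eta_{k},\Phi_{k}$ are $\nabla_{g_{k}}$-related to $g_{k}$ and have uniformly bounded derivatives (e.g. $\|\xi_{k}\|_{g_{k}}=1$, $\nabla_{g_{k}}\xi_{k}=\Phi_{k}$, $g_{k}=\eta_{k}\otimes\eta_{k}+\tfrac12 d\eta_{k}(\cdot,\Phi_{k}\cdot)$), one arranges along the same subsequence $F_{k}^{*}(\xi_{k},\eta_{k},\Phi_{k})\to(\xi_{\infty},\eta_{\infty},\Phi_{\infty})$ in $C^{\infty}$, and the limit $(S,\xi_{\infty},\eta_{\infty},\Phi_{\infty},g_{\infty})$ is a Sasaki structure lying in the $C^{\infty}$-closure of the orbit relevant to condition (C).

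Then I would pass the sections to the limit. The operator $\square_{\E_{k}}$ is elliptic once the harmless term $\nabla^{T}_{\xi}\nabla^{T}_{\xi}$ is added (as in \S5), its coefficients are controlled by $g_{k}$ and its derivatives, and $\lambda_{k}$ is bounded; hence elliptic regularity gives uniform $C^{\infty}$ bounds on $V_{k}$, and this is exactly where $N=N(n,\dots)$ must be chosen large enough for the bootstrap to close. After a further subsequence $F_{k}^{*}V_{k}\to V_{\infty}$ in $C^{\infty}$, with $\|V_{\infty}\|_{L^{2}(g_{\infty})}=1$ and $\square_{\E_{\infty}}V_{\infty}=0$; integrating by parts gives $\dbar_{\E_{\infty}}V_{\infty}=0$, so $0\neq V_{\infty}\in H^{0}(\E^{1,0}_{\infty})$. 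Likewise, fixing an $L^{2}(g_{k})$-orthonormal basis $e^{1}_{k},\dots,e^{d}_{k}$ of $H^{0}(\E^{1,0}_{k})$ — along a subsequence $d=\dim H^{0}(\E^{1,0}_{k})$ is constant, and since the $e^{j}_{k}$ lie in the kernel of the elliptic operator $\square_{\E_{k}}$ and have unit $L^{2}$ norm they enjoy uniform $C^{\infty}$ bounds by the same elliptic estimates and the uniform geometry — we may assume $F_{k}^{*}e^{j}_{k}\to e^{j}_{\infty}$ in $C^{\infty}$. The $e^{j}_{\infty}$ form an $L^{2}(g_{\infty})$-orthonormal set in $H^{0}(\E^{1,0}_{\infty})$, and since $\langle F_{k}^{*}V_{k},F_{k}^{*}e^{j}_{k}\rangle_{F_{k}^{*}g_{k}}=\langle V_{k},e^{j}_{k}\rangle_{g_{k}}=0$, passing to the limit gives $V_{\infty}\perp e^{j}_{\infty}$ for all $j$. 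Hence $\{e^{1}_{\infty},\dots,e^{d}_{\infty},V_{\infty}\}$ is orthonormal in $H^{0}(\E^{1,0}_{\infty})$, so $\dim H^{0}(\E^{1,0}_{\infty})\geq d+1$, contradicting condition (C).

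I expect the main obstacle to be the \emph{uniform} $C^{\infty}$ estimates: one must show that, across the whole family of Sasaki structures under consideration, normalized low-eigenvalue eigensections of $\square_{\E}$ (and normalized elements of $H^{0}(\E^{1,0})$) are bounded in $C^{\infty}$ in terms of $V,D,\delta,n$ and finitely many curvature derivatives only, which is what pins down $N$, and that the Sasaki structures — not merely the metrics — converge, so that the sheaves $\E^{1,0}_{k}$, their metrics and their $\dbar$-operators converge and sections can be compared across $k$ via the $F_{k}$. A secondary point is to check that the limit genuinely lies in the closure to which condition (C) applies; this relies on the fact that $\dim H^{0}(\E^{1,0})$ depends only on the transverse holomorphic structure, the Reeb field and the complex structure on the cone (Proposition~\ref{kernel prop}), all of which are preserved along the family in question.
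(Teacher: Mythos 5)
Your argument is correct and follows essentially the same route as the paper: a contradiction argument combining the Sasaki version of Hamilton's compactness theorem with the behaviour of the spectrum of $\square_{\E}$ under $C^{\infty}$ convergence of Sasaki structures, with condition (C) ruling out the jumping up of $\dim H^{0}(\E^{1,0})$ in the limit. The only difference is that you spell out the eigenvalue-continuity step (normalized eigensections with $\lambda_{k}\to 0$ converging to a new holomorphic section orthogonal to the limit of the old kernel), whereas the paper outsources this to the perturbation theory for the Laplacian from the proof of Theorem 3 in \cite{PS} and phrases the contradiction via isospectrality of $\E(t)$ and $\tilde{\E}(t)$.
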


The proof of Theorem~\ref{stability compactness} is taken up in the next section.  It follows that for $t$ sufficiently large, we have $Y(t) \leq Ce^{-ct}$.  With the exponential decay of the $L^{2}$ norm of $|\nabla u|$ established, a straight forward adaptation of the arguments in \cite{PS} yield the exponential decay of the $L^{2}$ norms of $\bar{\nabla}^{r}\nabla^{s}u$ where all norms are computed with respect to the evolving metric $g^{T}(t)$. The Sobolev imbedding theorem with uniform constants then gives the exponential decay of the $C^{k}$ norm of $u$ for any $k$.  Since $\dot{g}^{T}_{\bar{k}j} = \dop{j}\dbop{k}u$, we have $\int_{0}^{\infty}\sup_{S}|\dot{g}^{T}_{\bar{k}j}|_{g^{T}(t)}dt <\infty$.  A lemma of Hamilton (\cite{RHam1} Lemma 14.2) allows us to conclude that the metrics $g^{T}_{\bar{k}j}$ on $Q$ are uniformly equivalent. While Hamilton's proof is for metric tensors on the tangent bundle,  one can easily check that the argument holds for vector bundles.
\begin{versionb}
\begin{Lemma}[\cite{RHam1} Lemma 14.2]\label{uniform eq lemma}
Let $g_{ij}(t)$ be a time dependent metric on a vector bundle $E \rightarrow S$ for $0\leq t<T\leq \infty$. Suppose
\begin{equation*}
\int_{0}^{T} \max_{S}|\dot{g}_{ij}|dt = \int_{0}^{T}\max_{S} \sqrt{g^{ik}g^{jl}\dot{g}_{ij}\dot{g}_{kl}}dt \leq C < \infty.
\end{equation*}
Then the metrics $g_{ij}(t)$ for all different times are uniformly equivalent, in the sense that there exists a constant $C'$ independent of $t$ such that
\begin{equation*}
C'^{-1}g(t) \leq g(0)\leq C'g(t).
\end{equation*}
Moreover, the metrics $g_{ij}$ converge as $t\rightarrow T$ uniformly to a positive-definite metric $g_{ij}(T)$ which is continuous and also equivalent.
\end{Lemma}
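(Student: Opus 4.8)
The plan is to run a fiberwise Gr\"onwall estimate on the ``length squared'' function and then extract convergence from the Cauchy criterion supplied by finiteness of the integral. Fix a point $x \in S$ and a vector $v \in E_{x}$ with $v \neq 0$, and consider the scalar function $f(t) = g_{ij}(t) v^{i} v^{j} = |v|^{2}_{g(t)}$. Differentiating gives $\dot{f}(t) = \dot{g}_{ij}(t) v^{i} v^{j}$. The key pointwise inequality is that for any symmetric bilinear form $h$ on $E_{x}$ one has $|h(v,v)| \leq |h|_{g(t)} |v|^{2}_{g(t)}$, where $|h|_{g(t)} = \sqrt{g^{ik}g^{jl}h_{ij}h_{kl}}$; this follows by diagonalizing $h$ in a $g(t)$-orthonormal frame, whence $h(v,v)$ is a sum of eigenvalues weighted by the squared components of $v$, bounded by the largest eigenvalue modulus times $|v|^{2}_{g(t)}$, which is in turn at most $|h|_{g(t)}|v|^{2}_{g(t)}$. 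Applying this with $h = \dot{g}(t)$ yields $|\dot{f}(t)| \leq \max_{S} |\dot{g}|_{g(t)} \cdot f(t)$.

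This is the point where measuring $|\dot{g}|$ with respect to the \emph{evolving} metric $g(t)$ rather than a fixed one is essential: it makes the logarithmic derivative directly controllable, so that
\[
\left| \frac{d}{dt} \log f(t) \right| \leq \max_{S} |\dot{g}|_{g(t)},
\]
and integrating from $0$ to $t$ together with the hypothesis gives $|\log f(t) - \log f(0)| \leq C$. Exponentiating produces $e^{-C} g_{ij}(0)v^{i}v^{j} \leq g_{ij}(t)v^{i}v^{j} \leq e^{C} g_{ij}(0)v^{i}v^{j}$, and since $x$ and $v$ are arbitrary this is exactly the asserted uniform equivalence with $C' = e^{C}$.

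For the convergence statement I would normalize to the unit sphere bundle of $(E, g(0))$, restricting to $|v|_{g(0)} = 1$ so that $f(0) = 1$ and hence $f(s) \leq e^{C}$ for all $s$. Then for $t_{1} < t_{2} < T$,
\[
|g_{ij}(t_{2})v^{i}v^{j} - g_{ij}(t_{1})v^{i}v^{j}| \leq \int_{t_{1}}^{t_{2}} |\dot{g}_{ij}(s)v^{i}v^{j}| \, ds \leq e^{C} \int_{t_{1}}^{t_{2}} \max_{S} |\dot{g}|_{g(s)} \, ds,
\]
with a bound uniform in $x$ and in $v$. Because $\int_{0}^{T} \max_{S} |\dot{g}| \, ds \leq C < \infty$, the tail integral tends to $0$ as $t_{1}, t_{2} \to T$, so $g_{ij}(t)v^{i}v^{j}$ is uniformly Cauchy and converges to a limit $g_{ij}(T)v^{i}v^{j}$; polarization recovers the limiting tensor $g_{ij}(T)$. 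Being a uniform limit of continuous forms it is continuous, and the uniform equivalence passes to the limit so that $g_{ij}(T)v^{i}v^{j} \geq e^{-C}|v|^{2}_{g(0)} > 0$ for $v \neq 0$, giving positive-definiteness and equivalence.

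The argument is essentially routine Gr\"onwall, so there is no serious analytic obstacle; the only point requiring care is the bookkeeping of uniformity. One must check that the pointwise eigenvalue estimate is applied correctly and, more importantly, that the Cauchy estimate is genuinely uniform over the compact base $S$ and over the fiber directions simultaneously, since this is precisely what upgrades pointwise convergence to the uniform convergence needed for the continuity and positivity of $g_{ij}(T)$. The remark in the excerpt that Hamilton's proof is written for metrics on the tangent bundle is harmless: nowhere does the estimate use anything beyond the fiberwise linear-algebra inequality $|h(v,v)| \leq |h|_{g}|v|^{2}_{g}$, so it applies verbatim to an arbitrary vector bundle $E$.
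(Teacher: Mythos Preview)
Your argument is correct and is essentially the standard proof due to Hamilton. The paper does not give its own proof of this lemma: it simply cites \cite{RHam1} Lemma 14.2 and remarks that, although Hamilton states it for metrics on the tangent bundle, ``one can easily check that the argument holds for vector bundles.'' Your write-up supplies exactly that check, and your closing observation that nothing beyond the fiberwise inequality $|h(v,v)| \leq |h|_{g}|v|^{2}_{g}$ is used is precisely the justification the paper leaves implicit.
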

\begin{remark}
Hamilton's proof is for metric tensors on the tangent bundle.  One can easily check that the argument yields the above, more general result.
\end{remark}
\end{versionb}
The uniform equivalence of the metrics $g^{T}$ imply that for any section $W \in Q$ we have
\begin{equation*}
|g^{T}_{\bar{k}j}(\mathcal{T})W^{j}\overline{W^{k}} - g^{T}_{\bar{k}j}(\mathcal{S})W^{j}\overline{W^{k}}|\leq C|W|^{2}_{t=0}\left(e^{-c\mathcal{T}}-e^{-c\mathcal{S}}\right)
\end{equation*}
\begin{versionb}
\begin{equation*}
\begin{aligned}
&|g^{T}_{\bar{k}j}(\mathcal{T})W^{j}\overline{W^{k}} - g^{T}_{\bar{k}j}(\mathcal{S})W^{j}\overline{W^{k}}|\\
&\leq \int_{\mathcal{S}}^{\mathcal{T}}|\dot{g}^{T}_{\bar{k}j}||W|^{2}_{t}dt\\
&\leq C|W|^{2}_{t=0}\left(e^{-c\mathcal{T}}-e^{-c\mathcal{S}}\right).
\end{aligned}
\end{equation*}
\end{versionb}
As the last term goes to zero exponentially as $\mathcal{S}, \mathcal{T} \rightarrow \infty$, we obtain the exponential convergence of the transverse metrics to some metric $g^{T}_{\bar{k}j}$ which is equivalent to all the metrics $g^{T}_{\bar{k}j}(t)$.  Iteration yields exponential convergence in $C^{\infty}$.  Since $\dop{j}\dbop{k}h$ tends to zero, the limiting metric $g^{T}_{\bar{k}j}(\infty)$ is Sasaki-Einstein.
\end{proof}

\section{Compactness Theorems and the Proof of Theorem~\ref{stability compactness}}
Our main objective in this section is to prove Theorem~\ref{stability compactness}, which will finish the proof of Theorem~\ref{main theorem}.  We begin by stating and proving a Sasaki version of Gromov compactness.  This theorem is well known, and follows easily from  Hamilton's compactness theorem \cite{RHam} but we include the short proof for completeness. 

\begin{Theorem}\label{Ham compact}
Let $(S,g)$ be a compact Sasaki manifold.  Let $g(t)$ be a sequence of Sasaki metrics on $S$, and $J(t)$ a sequence of complex structures on the cone $C(S)$ such that $(C(S),\bar{g}(t):= dr^{2}+r^{2}g(t), J(t))$ is K\"ahler.  Assume that the $g(t)$'s have bounded geometry in the sense that their volumes, diameters, curvatures and covariant derivatives of their curvature tensor are all bounded from above, and their injectivity radii are bounded below.  Then there exists a subsequence $\{t_{j}\}$, and a sequence of diffeomorphisms $F_{t_{j}}:S \rightarrow S$ such that the pulled back metrics $F_{t_{j}}^{*}g(t_{j})$ converge in $C^{\infty}$ to a smooth metric $\tilde{g}(\infty)$.  Moreover, on the cone, the lifted diffeomorphisms defined by $\tilde{F}_{t_{j}} (r,z) := (r, F_{t_{j}}(z))$ have that  the sequence $\tilde{F}_{t_{j}}^{*}J(t_{j}))$ converges in $C^{\infty}$ to an integrable complex structure $\tilde{J}(\infty)$ on $C(S)$.  Furthermore, the metric $\tilde{g}(\infty)$ is Sasaki with respect to the complex structure $\tilde{J}(\infty)$.  In particular, the Sasaki structures $(\xi(t_{j}), \eta(t_{j}), \Phi(t_{j}), g(t_{j}))$ converge in $C^{\infty}$ to a Sasaki structure $(\tilde{\xi}, \tilde{\eta}, \tilde{\Phi}, \tilde{g})$. 
\end{Theorem}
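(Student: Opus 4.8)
The plan is to run Hamilton's compactness theorem on the underlying Riemannian metrics, upgrade the resulting $C^\infty$ convergence to convergence of the full Sasaki structure by an Arzel\`a--Ascoli argument, and then transport everything to the cone. Note first that a diffeomorphism carries a Sasaki structure to a Sasaki structure, so for each $t$ the data $(F_t^*g(t),F_t^*\xi(t),F_t^*\eta(t),F_t^*\Phi(t))$ is an honest Sasaki structure on the fixed manifold $S$. I would then apply Hamilton's compactness theorem \cite{RHam} to the compact Riemannian manifolds $(S,g(t))$: the hypotheses (uniform upper bounds on volume, diameter, curvature and all its covariant derivatives, and a uniform positive lower bound on the injectivity radius) are exactly what that theorem requires, so after passing to a subsequence $\{t_j\}$ it yields diffeomorphisms $F_{t_j}\colon S\to S$ — here one identifies the limit manifold, a priori only diffeomorphic to $S$, with $S$ and absorbs this identification into $F_{t_j}$ — with $F_{t_j}^*g(t_j)\to\tilde g(\infty)$ in $C^\infty(S)$.

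Next I would promote this to convergence of the structure tensors. The tensors $\xi(t),\eta(t),\Phi(t)$ are controlled in $C^\infty$ relative to $g(t)$, uniformly in $t$: $\xi(t)$ is $g(t)$-unit, $\eta(t)=g(t)(\xi(t),\cdot)$, $\Phi(t)=\nabla^{g(t)}\xi(t)$ has $|\Phi(t)|^2_{g(t)}\equiv 2n$, and the Sasakian structure equation $(\nabla_X\Phi)(Y)=g(X,Y)\xi-\eta(Y)X$ expresses $\nabla\Phi$ — and, iterating, every higher covariant derivative $\nabla^k\Phi$, $\nabla^k\xi$, $\nabla^k\eta$ — algebraically in terms of $g(t),\xi(t),\eta(t),\Phi(t)$, with bounds depending only on $n$ and $k$. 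Hence the pulled-back tensors $F_{t_j}^*\xi(t_j)$, $F_{t_j}^*\eta(t_j)$, $F_{t_j}^*\Phi(t_j)$ have all covariant derivatives (with respect to the convergent metrics $F_{t_j}^*g(t_j)$) uniformly bounded, so after a further subsequence they converge in $C^\infty(S)$ to tensors $\tilde\xi,\tilde\eta,\tilde\Phi$. Since the contact metric identities and the normality relation $N_\Phi+d\eta\otimes\xi=0$ (in the appropriate normalization) are closed under $C^\infty$ convergence, $(S,\tilde g(\infty),\tilde\xi,\tilde\eta,\tilde\Phi)$ is again a Sasaki structure, and in particular the structures $(\xi(t_j),\eta(t_j),\Phi(t_j),g(t_j))$ converge in $C^\infty$ after pullback by $F_{t_j}$.

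Finally I would pass to the cone. Lifting by $\tilde F_{t_j}(r,z)=(r,F_{t_j}(z))$ gives $\tilde F_{t_j}^*\bar g(t_j)=dr^2+r^2F_{t_j}^*g(t_j)\to dr^2+r^2\tilde g(\infty)$, the cone metric of the limit. There is a universal algebraic formula writing $J(t)$ in terms of the Euler field and the structure tensors — namely $J(t)X=\Phi(t)X-\eta(t)(X)\,r\partial_r$ for $X\in TS$ and $J(t)(r\partial_r)=\xi(t)$, using $\mathcal L_{r\partial_r}J(t)=0$ — so $\tilde F_{t_j}^*J(t_j)$ converges in $C^\infty$ on compact subsets of $C(S)$ to the tensor $\tilde J(\infty)$ assembled the same way from $\tilde\xi,\tilde\eta,\tilde\Phi$. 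The Nijenhuis tensor is natural under pullback and vanishes for each integrable $J(t_j)$, hence vanishes for $\tilde J(\infty)$; so $\tilde J(\infty)$ is integrable, and by construction $(C(S),dr^2+r^2\tilde g(\infty),\tilde J(\infty))$ is K\"ahler, i.e. $\tilde g(\infty)$ is Sasaki with respect to $\tilde J(\infty)$. This yields every assertion of the theorem.

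The step I expect to require the most care is making the use of Hamilton's theorem honest in this compact setting: it is normally stated for pointed, possibly non-compact manifolds, so one must check that the uniform bounds genuinely produce diffeomorphisms of the fixed $S$ to itself (using that the limit is compact and $S$ is connected, so the Cheeger--Gromov embeddings become global diffeomorphisms), and keep track that the further subsequence extracted for the tensor convergence does not disturb the already-established metric convergence. By comparison, the uniform tensor bounds, the limiting argument for the structure equations, and the cone bookkeeping are all routine.
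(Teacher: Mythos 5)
Your proposal is correct, and its skeleton (Hamilton's compactness for the metrics, then an Arzel\`a--Ascoli argument for the remaining structure, with integrability surviving the limit because the Nijenhuis tensor does) matches the paper's. The difference is in where the second step is carried out. The paper handles the complex structures directly: it invokes the argument of \cite{PS}, Theorem 4, on the truncated cone $(\tfrac12,1)\times S$ (to sidestep the non-compactness of $C(S)$), obtains a limiting integrable $\tilde J(\infty)$ there making the truncated cone K\"ahler, and then propagates $\tilde J(\infty)$ to all of $C(S)$ using $\mathcal L_{r\partial_r}J(t_j)=0$ and $\tilde F_{t_j*}(r\partial_r)=r\partial_r$. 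You instead stay on $S$: you extract limits of $\xi,\eta,\Phi$ by bounding all their covariant derivatives through the Sasakian structure equation $(\nabla_X\Phi)Y=g(X,Y)\xi-\eta(Y)X$ (together with $\nabla\xi=\Phi$ and $\nabla\eta=g(\Phi\cdot,\cdot)$), and only afterwards assemble $J$ on the cone from the algebraic formula $JX=\Phi X-\eta(X)r\partial_r$, $J(r\partial_r)=\xi$. Your route is more self-contained --- it does not outsource the tensor compactness to \cite{PS} and it makes explicit why the limit is again Sasaki (the contact metric identities and normality are closed under $C^\infty$ limits, with nondegeneracy of $d\tilde\eta$ on $\ker\tilde\eta$ coming from the limit metric being genuinely Riemannian) --- at the cost of verifying the iterated structure identities; the paper's route gets the Sasaki property of the limit for free from the K\"ahler condition on the cone but leans on an external theorem and the truncation/extension device. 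One small point worth writing out if you formalize this: the K\"ahler condition for $(dr^2+r^2\tilde g(\infty),\tilde J(\infty))$ requires not just integrability but also Hermitian compatibility and closedness of the fundamental form in the limit; both are immediate since they are pointwise, respectively closed, conditions preserved under $C^\infty$ convergence, but they should be stated rather than folded into ``by construction.''
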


\begin{proof}
The $C^{\infty}$ convergence part of this theorem is just Hamilton's compactness theorem \cite{RHam}.  Thus, we are reduced to showing that the complex structures converge.  This follows essentially from the proof of \cite{PS} Theorem 4, with the wrinkle that the cone $C(S)$ is not compact.  Consider instead the truncated cone $\tilde{C}(S) = ((\frac{1}{2},1)\times S)$. The argument in \cite{PS} shows that the complex structures converge to an integrable complex structure $\tilde{J}(\infty)$ on compact sets making $(\tilde{C}(S), \tilde{J}(\infty), dr^{2} + r^{2}\tilde{g}({\infty}))$ into a K\"ahler manifold.  We extend the complex structure to the whole cone by using the fact that $\mathcal{L}_{r\dop{r}}J(t_{j}) =0$, and that $\tilde{F}_{t_{j}*}r\dop{r} = r\dop{r}$.
\end{proof}

\begin{proof}[Proof of  Theorem~\ref{stability compactness}.]
Let $\lambda_{t}$ be the smallest positive eigenvalue of $\square_{\E}$, defined with respect to the Sasaki structure $\mathfrak{s}(t):= (\xi(t), \eta(t), \Phi(t), g(t))$.  Suppose $\mathfrak{s}(t)$ converges in $C^{\infty}$ to a Sasaki structure $\mathfrak{s}_{\infty} :=(\xi_{\infty}, \eta_{\infty}, \Phi_{\infty}, g_{\infty})$ and the dimension of the space of global holomorphic sections of the sheaf of transverse foliate vector fields is the same for every $N \leq t \leq \infty$.  The perturbation theory for the Laplacian used in the proof of Theorem 3 extends to the global sections of the sheaf $\E$, and we obtain
\begin{equation}\label{limit eigenvalues}
\lim_{t\rightarrow \infty} \lambda_{t} = \lambda_{\infty}.
\end{equation}
We can now prove by contradiction:  assume there exists a sequence of metrics $g(t)$ with $\lambda_{t} \rightarrow 0$.  By passing to a subsequence (not relabeled), we can apply Theorem~\ref{Ham compact} to obtain the existence of diffeomorphisms $F_{t}$ so that the pulled back Sasaki structure 
\begin{equation*}
\tilde{\mathfrak{s}}(t):=(F_{t}^{*}g(t), F_{t}^{*}\xi(t), F_{t}^{*}\eta(t),  F_{t}^{*}\Phi(t))
\end{equation*}
converges in $C^{\infty}$ to a Sasaki structure $\tilde{\mathfrak{s}}_{\infty}=(\tilde{\xi}_{\infty}, \tilde{\eta}_{\infty}, \tilde{\Phi}_{\infty}, \tilde{g}_{\infty})$.  By equation~(\ref{limit eigenvalues}), the lowest positive eigenvalue of $\tilde{\mathfrak{s}}(t)$ converges to a strictly positive limit.  Let $\E(t), \tilde{\E}(t)$ be the sheaves defined by the Sasaki structures $\mathfrak{s}(t), \tilde{\mathfrak{s}}(t)$ respectively.  Observe that the diffeomorphism $F_{t}$ induces an isomorphism of sheaves $\E(t) \cong \tilde{\E}(t)$.  Moreover, $F_{t}$ is an isometry which preserves the transverse holomorphic structure, and hence descends to an isometry of the quotient bundles $Q(t), \tilde{Q}(t)$.  Using the computations in \S5, it is then clear that the sheaves $\E(t), \tilde{\E}(t)$ are isospectral, providing a contradiction.

\end{proof}


\section{The Proof of Theorem~\ref{main theorem 2}}

Note that in the proof of Theorem~\ref{main theorem} we only needed a bound on the smallest positive eigenvalue of $\square_{\E}$ restricted to sections of $\E^{1,0}$ induced by basic functions.  Rather than study the $\dbar_{\E}$ Laplacian on global sections of $\E^{1,0}$ we are thus motivated to study the following operator on $C^{\infty}_{B}$;
\begin{equation*}
L := -(g^{T})^{j\bar{k}}\nabla_{j}\nabla_{\bar{k}} + (g^{T})^{j\bar{k}}\nabla_{j}u\nabla_{\bar{k}}.
\end{equation*}
Our developments will require the basic Sobolev and Lebesgue spaces, for which we refer the reader to \cite{TristC1}.  The operator $L$ appeared in \cite{TristC}, where it was shown to be elliptic and self-adjoint with respect to the $L^{2}$ inner-product on $H^{2}_{B}$ induced by the probability measure $d\rho$ defined in \S3.  Moreover, it was shown that $L$ has a complete spectrum of smooth, basic eigenfunctions $\{ \psi_{j} \}_{j \in \mathbb{N}}$ spanning $L^{2}_{B}$, with eigenvalues $\lambda_{j} \geq 1$.  This yields the Poincar\'e inequality;

\begin{Lemma}\label{poincare}
Let $u$ satisfy the equation $g^{T}_{\bar{k}j}-Ric^{T}_{\bar{k}j} = \partial_{j}\partial_{\bar{k}}u$.  Then the following inequality
\begin{equation*}
\frac{1}{Vol(S)}\int_{S}f^{2}e^{-u}d\mu \leq \frac{1}{Vol(S)}\int_{S}|\nabla f|^{2}e^{-u}d\mu +\left(\frac{1}{Vol(S)}\int_{S}f e^{-u} d\mu\right)^{2}
\end{equation*}
holds for all $f\in C^{\infty}_{B}(S)$.
\end{Lemma}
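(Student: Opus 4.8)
The plan is to recognize the asserted inequality as the Poincar\'e (spectral-gap) inequality for the probability measure $d\rho$ and to deduce it from the spectral decomposition of the operator $L$ recalled just above.

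First I would rewrite the statement measure-theoretically. By the normalization of $u$ fixed in \S3 we have $\int_S e^{-u}d\mu = Vol(S)$, so $d\rho = Vol(S)^{-1}e^{-u}d\mu$ is a probability measure on $S$. Writing $\bar f := \int_S f\,d\rho$ for a real basic function $f$, the identity $\int_S (f-\bar f)^2\,d\rho = \int_S f^2\,d\rho - \bar f^{\,2}$ shows that the claimed inequality is equivalent to
\begin{equation*}
\int_S (f-\bar f)^2\,d\rho \;\leq\; \int_S |\nabla f|^2\,d\rho,
\end{equation*}
that is, to the Poincar\'e inequality $\mathrm{Var}_\rho(f) \leq \int_S |\nabla f|^2\,d\rho$ with constant one.

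Next I would relate the right-hand side to $L$. A routine integration by parts --- legitimate on $S$ thanks to the transverse K\"ahler formalism of \S2 (in particular the divergence identity $\partial_k((g^T)^{k\bar j}\det g^T)=0$ and the integration-by-parts formulae already recorded for the bundle $Q$), together with the normalization $\int_S e^{-u}d\mu = Vol(S)$ used to define $d\rho$ --- gives, for every real $f\in C^\infty_B(S)$,
\begin{equation*}
\int_S (Lf)\,f\,d\rho \;=\; \int_S |\nabla f|^2\,d\rho;
\end{equation*}
the first-order ``drift'' terms produced by $(g^T)^{j\bar k}\nabla_j u\nabla_{\bar k}$ cancel exactly against the terms produced when the weight $e^{-u}$ is differentiated. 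In particular $L$ is non-negative on $L^2(\rho)$ with kernel equal to the constants, consistent with the spectral description quoted from \cite{TristC}.

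Finally I would invoke that spectral theory: there is an $L^2(\rho)$-orthonormal basis of smooth basic eigenfunctions of $L$, the kernel being spanned by the constants and every nonzero eigenvalue satisfying $\lambda_j\geq 1$. Expanding $f = \bar f + \sum_{j\geq 1} c_j\psi_j$ with $L\psi_j = \lambda_j\psi_j$, Parseval gives $\mathrm{Var}_\rho(f) = \sum_{j\geq 1} c_j^2$ and $\int_S (Lf)f\,d\rho = \sum_{j\geq 1}\lambda_j c_j^2 \geq \sum_{j\geq 1} c_j^2 = \mathrm{Var}_\rho(f)$; combined with the previous display this proves the lemma, and the complex case follows by applying the estimate to real and imaginary parts. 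I do not expect a genuine obstacle here: once the spectral gap $\lambda_j\geq 1$ for $L$ is quoted from \cite{TristC}, this is the standard passage from a spectral gap to a Poincar\'e inequality, and the only point requiring care is the weighted integration by parts identifying the Dirichlet form $\int (Lf)f\,d\rho$ with $\int |\nabla f|^2\,d\rho$ --- which is entirely parallel to the K\"ahler case and is covered by the transverse integration-by-parts apparatus of \S2.
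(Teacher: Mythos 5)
Your proof is correct and follows essentially the same route as the paper: the paper simply quotes from \cite{TristC} that $L$ is self-adjoint with respect to $d\rho$ with a complete system of smooth basic eigenfunctions whose nonzero eigenvalues are at least $1$, and asserts that ``this yields the Poincar\'e inequality.'' Your argument supplies exactly the standard details left implicit there, namely the weighted integration by parts identifying the Dirichlet form $\int_S (Lf)f\,d\rho$ with $\int_S|\nabla f|^2\,d\rho$ and the subsequent spectral expansion.
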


Observe that if $\lambda_{t} \geq 1+\delta >1$, then a standard computation for the operator $L$ suggests that the final term in~(\ref{Y diff equation 2}) can be controlled by $-\delta Y(t)$ (cf. equation (29) in \cite{TristC}).  However, as we are not assuming a lower bound for the Mabuchi energy, it is no longer natural to work with $Y(t)$.  Instead, we define
\begin{equation*}
W(t):= \frac{1}{Vol(S)}\int_{S} (u-a)^{2} e^{-u} d\mu, \quad Z(t) := \pl{t}a = \frac{1}{Vol(S)}\int_{S} \left( |\nabla u |^{2} - (u-a)^{2} \right) e^{-u}d\mu,
\end{equation*} 
where $a(t)$ is defined by~(\ref{a def}).  The main result in this section is the following proposition.

\begin{prop}\label{PSSW lemma 5}
Assume that condition (F) holds on $(n+1)^{-1}c_{1}^{B}(X)$, and that condition (T) holds along the Sasaki-Ricci flow with initial value $g_{0}$.  Then there are constants $b, C >0$ independent of $t$ so that $W(t) \leq Ce^{-b t}$, for every $ t\in [0,\infty)$.  Moreover,
\begin{equation*}
\|u\|_{C^{0}} + \| \nabla u \|_{C^{0}} + \|R^{T}-(2n+2) n\|_{C^{0}} \leq Ce^{-\frac{1}{2(n+1)}b t}, \quad t\in [0,\infty).
\end{equation*}
\end{prop}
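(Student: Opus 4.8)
The plan is to adapt the convergence argument of Phong--Song--Sturm--Weinkove \cite{PSSW} (and Zhang \cite{Zhang}) to the transverse setting, replacing their vector fields and drift Laplacian by $\E^{1,0}$, $\square_{\E}$ and the operator $L$, and organizing everything around the scalar quantities $W(t)$ and $Z(t)$. The role of Theorem~\ref{Pman thm} (Perelman's uniform estimates) is essential, since no curvature bound is assumed here.

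First I would record the elementary facts about $a$ and $Z$. Applying the weighted Poincar\'e inequality of Lemma~\ref{poincare} to $f=u$, together with $\frac{1}{Vol(S)}\int_S u\,e^{-u}\,d\mu=a$, gives $Z(t)=\dot a(t)\ge 0$; hence $a$ is non-decreasing, and by the uniform bound $|u|_{C^1}\le C$ of Theorem~\ref{Pman thm} it is bounded, so $a(t)\to a_\infty$ with $\int_0^\infty Z(t)\,dt=a_\infty-a(0)<\infty$ and $|a(t)-a_\infty|\le\int_t^\infty Z(s)\,ds$. That $a_\infty=0$ is deferred: it will follow from the fact that the normalization $\int e^{-u}\,d\mu=Vol(S)$ forces the transverse Ricci potential of a transverse Einstein metric to vanish identically, once we know the metrics converge to such a metric.

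Second I would bring in stability. Consider the section $V^j:=(g^T)^{j\bar k}\nabla^T_{\bar k}u$ of $\E^{1,0}$. By the identity $\|\pi_t V\|_{L^2}^2=fut_S(\pi_t V)$ used in \S7 (a consequence of Corollary~\ref{fut cor}), condition (F) forces $\pi_t V=0$, so the elliptic lower bound for $\square_{\E}$ reads $\lambda_t\|V\|_{L^2}^2\le\|\dbar_{\E}V\|_{L^2}^2$, that is $\lambda_t\int_S|\nabla u|^2\,d\mu\le\int_S|\bar\nabla\bar\nabla u|^2\,d\mu$. Using the identification in \S5 of $\square_{\E}$ on gradient-type sections $(g^T)^{j\bar k}\nabla^T_{\bar k}h$ with the drift operator $L$ on the potential $h$ — together with the weight $e^{-u}$ arising from integration by parts against such sections and a unit spectral shift coming from $R^T_{\bar k j}=g^T_{\bar k j}-\nabla_j\nabla_{\bar k}u$ — condition (T), $\delta:=\inf_t\lambda_t>0$, upgrades Lemma~\ref{poincare} on the relevant $d\rho$-orthogonal complement to the quantitative gap $D(u-a):=\frac{1}{Vol(S)}\int_S|\nabla u|^2e^{-u}\,d\mu\ge(1+\delta)W$, i.e. $Z\ge\delta W$. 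In particular $\int_0^\infty W\,dt\le\delta^{-1}\int_0^\infty Z\,dt<\infty$. Since $|u|_{C^1}\le C$ also bounds $W$, $Z$ and the remainder terms, one checks $|\dot W|\le C$; a non-negative Lipschitz function of finite integral tends to $0$, so $W(t)\to 0$. Then the interpolation inequality for the basic function $u-a$ — valid with uniform constants thanks to the non-collapsing of Proposition~\ref{non-collapse 1}, the diameter bound, and $\|\nabla u\|_{C^0}\le C$ — gives $\|u-a\|_{C^0}\le C\,W^{1/(2(n+1))}$, whence $\|u-a\|_{C^0}\to 0$.

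Third I would differentiate $W$. A direct computation from the evolution of $u$, the identity $\frac{d}{dt}d\mu=(\square_B u)\,d\mu$, and integration by parts for $\square_B$ against $e^{-u}\,d\mu$ yields an identity of the shape $\dot W=-cZ+\mathcal{R}(t)$ with $c>0$ and $|\mathcal{R}(t)|\le C\big(|a(t)|+\|u-a\|_{C^0}\big)\big(W(t)+Z(t)\big)$. Since $Z\ge\delta W$ and, by the second step, $|a|+\|u-a\|_{C^0}\to 0$, for $t$ large this gives $\dot W\le -bW$ for some $b>0$; with boundedness of $W$ on compact intervals this yields $W(t)\le Ce^{-bt}$ for all $t\ge 0$. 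Hence $\|u-a\|_{C^0}\le C\,W^{1/(2(n+1))}\le Ce^{-bt/(2(n+1))}$ and $|a|=|a-a_\infty|\le\int_t^\infty Z\le\delta^{-1}\int_t^\infty W\le Ce^{-bt}$, so $\|u\|_{C^0}\le Ce^{-bt/(2(n+1))}$ (and $a_\infty=0$). Exponential decay of $\|u\|_{C^0}$ makes $\dot\phi=u+c(t)$ decay exponentially, so $\phi$ converges in $C^0$ and $\sup_t\|\phi\|_{C^0}<\infty$; Proposition~\ref{uniform Yau} then gives uniform $C^k$ bounds on $g^T(t)$ for every $k$, and interpolating these against the exponential decay of $\|u\|_{C^0}$ yields exponential decay of $\|u\|_{C^k}$ for all $k$, after possibly shrinking $b$. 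In particular $\|\nabla u\|_{C^0}$ and $\|R^T-(2n+2)n\|_{C^0}$ — the latter controlled by $\|u\|_{C^2}$ through the equation defining $u$ — decay like $Ce^{-\frac{1}{2(n+1)}bt}$.

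The step I expect to be the main obstacle is the second one: making the dictionary between $\square_{\E}$ on $\E^{1,0}$ and the scalar drift operator $L$ on basic functions precise — in particular pinning down the unit spectral shift and the appearance of the weight $e^{-u}$ — so that conditions (F) and (T) genuinely deliver the $t$-independent gap $Z\ge\delta W$. Granted that, the remaining bookkeeping in the third step is delicate only in that one must not invoke smallness of $\|u-a\|_{C^0}$ or of $|a|$ before it has been deduced from $W\to 0$; otherwise it is the transverse analogue of the computations in \cite{PSSW}.
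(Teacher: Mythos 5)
Your overall architecture coincides with the paper's \S9: conditions (F) and (T) are used to produce a $t$-independent gap $Z(t)\ge\delta W(t)$, a Zhang-style ODE argument then gives $W(t)\le Ce^{-bt}$, and the $C^{0}$ statements follow from the interpolation inequality $\|u-a\|_{C^{0}}\le CW^{1/(2n+2)}$ plus a smoothing step. The genuine gap is exactly the step you flag as ``the main obstacle'': the passage from condition (T) --- a spectral gap for $\square_{\E}$ on $L^{2}(\E^{1,0},d\mu)$ --- to the weighted Poincar\'e gap $Z\ge\delta W$ is the core of the proof, and you leave it unproved. In the paper this is Proposition~\ref{Zhang thm 1.5} together with the final eigenvalue comparison, and it needs several non-routine ingredients: (a) the identification of the eigenvalue-one eigenspace $E_{0}$ of $L$ with potentials of elements of $H^{0}(\E^{1,0})$, and the proof that an eigenfunction $\psi$ of $L$ with eigenvalue $\nu>1$ induces a section $\nabla\psi\perp H^{0}(\E^{1,0})$ in $L^{2}(d\rho)$ --- this rests on a commutation/integration-by-parts computation giving $\langle V,\nabla\psi\rangle_{L^{2}(d\rho)}=\nu^{-1}\langle V,\nabla\psi\rangle_{L^{2}(d\rho)}$ for $V\in H^{0}(\E^{1,0})$; (b) the reconciliation of the two measures: condition (F) gives $\nabla u\perp H^{0}(\E^{1,0})$ only in $L^{2}(d\mu)$, while the spectral decomposition $u-a=u_{0}+u_{1}+\cdots$ lives in $L^{2}(d\rho)$, and the two orthogonal projections do not agree --- the paper controls the $d\rho$-projection $\pi(\nabla u)=\nabla u_{0}$ via $\|\pi(\nabla u)\|_{L^{2}(d\mu)}\le\|V\|_{L^{2}(d\mu)}$ and $e^{\pm osc(u)}$ comparisons, which is also where Perelman's bound $\|u\|_{C^{0}}\le C$ enters to make $\delta$ independent of $t$; (c) the identity $(\nu-1)\|\nabla\psi\|^{2}_{L^{2}(d\rho)}=\|\dbar_{\E}\nabla\psi\|^{2}_{L^{2}(d\rho)}$, which converts (T) into $\nu\ge1+e^{-osc(u)}\lambda$. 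None of this is bookkeeping; in particular your observation that $\pi_{t}V=0$ (true for the $d\mu$-projection) does not by itself let you discard the $u_{0}$-mode in the $d\rho$-eigenfunction expansion.

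Two smaller points. Your route to the second display (uniform $C^{k}$ bounds on $g^{T}(t)$ from Proposition~\ref{uniform Yau} plus interpolation) differs from the paper's, which applies the parabolic smoothing Lemma~\ref{smoothing lemma} to convert $\|u(t_{0})\|_{C^{0}}\le\epsilon$ directly into $\|\nabla u(t_{0}+2)\|_{C^{0}}+\|R^{T}(t_{0}+2)-(2n+2)n\|_{C^{0}}\le K\epsilon$, with no need to first establish uniform equivalence of the metrics. Your version can be made to work, but it requires the decay of the constant $c(t)$ in $\dot{\phi}=u+c(t)$, which depends on the specific initial value $c_{0}$ chosen in \S3 and is asserted rather than checked. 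Finally, $a_{\infty}=0$ need not be deferred: part {\it (i)} of Lemma~\ref{PSSW lemma 3} gives $0\le -a\le\|u-a\|_{C^{0}}\le CW^{1/(2n+2)}$ directly, which also repairs the slightly circular use of $|a|=|a-a_{\infty}|$ in your third step.
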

The proof of Proposition~\ref{PSSW lemma 5} proceeds in several steps.  First, we describe a smoothing lemma which will reduce the proof of the proposition to proving the exponential decay of $W$.  The important idea of a smoothing lemma was first introduced by Bando \cite{Bando}, and appeared in \cite{PSSW}; it was subsequently improved in \cite{Dono}.

\begin{Lemma}\label{smoothing lemma}
There exist positive constants $\delta, K$ depending only on $n$ with the following property; for any $\epsilon \in (0,\delta]$, and any $t_{0} >0$, if $\|u(t_{0})\|_{C^{0}} \leq \epsilon$, then
\begin{equation*}
\|\nabla u(t_{0} +2)\|_{C^{0}} + \|R(t_{0}+2) -(2n+2)n\|_{C^{0}} \leq K\epsilon.
\end{equation*}
\end{Lemma}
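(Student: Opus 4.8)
The plan is to adapt Bando's smoothing estimate \cite{Bando}, in the form used in \cite{PSSW} and refined by Donaldson \cite{Dono}, to the transverse setting. All the spatial analysis is carried out for basic functions, on which $\square_{B}$ is (minus) the transverse K\"ahler Laplacian --- in preferred coordinates the ordinary K\"ahler Laplacian with a trivial dependence on the Reeb coordinate --- so the standard parabolic $L^{p}$ and De Giorgi--Nash--Moser theory applies (or one adds the harmless term $\nabla^{T}_{\xi}\nabla^{T}_{\xi}$, as for $\square_{\E}$ in \S5), working with the basic Sobolev and Lebesgue spaces of \cite{TristC1, ElKac}. Two ambient facts are used throughout $[t_{0},t_{0}+2]$: by Theorem~\ref{Pman thm}, $|R^{T}|$, $|u|_{C^{1}}$ and $diam^{T}(S,g(t))$ are uniformly bounded along the flow; and by the transverse non-collapsing of Proposition~\ref{non-collapse 1} together with the monotonicity of Perelman's functionals along the Sasaki-Ricci flow (\cite{TristC}), there is a uniform transverse Sobolev inequality for basic functions valid along the flow --- exactly what is needed to run Moser iteration for heat equations whose coefficients are the evolving metrics $g^{T}(t)$. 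Recall also from \S3 that $\partial_{t}u=\square_{B}u+(2n+2)u+a(t)$, with $a(t)=\int_{S}u\,d\rho$ fixed by $\int_{S}e^{-u}d\mu=Vol(S)$ and $\partial_{t}a=Z(t)$ satisfying $|Z(t)|\le C(\|\nabla u\|_{L^{2}}^{2}+W(t))$, and that $R^{T}-(2n+2)n=-\square_{B}u$.

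First I would propagate $L^{2}$-smallness. Differentiating $W$ and $\|\nabla u\|_{L^{2}}^{2}$ in time, bounding the curvature terms by the uniform bound on $R^{T}$ and controlling the zeroth-order term in $W$ by the Poincar\'e inequality of Lemma~\ref{poincare}, one gets differential inequalities $\dot W\le CW+C\|\nabla u\|_{L^{2}}^{2}$ and $\frac{d}{dt}\|\nabla u\|_{L^{2}}^{2}\le C\|\nabla u\|_{L^{2}}^{2}-\int_{S}(|\nabla\nabla u|^{2}+|\overline{\nabla}\nabla u|^{2})\,d\mu$. Since $\|u(t_{0})\|_{C^{0}}\le\epsilon$ forces $W(t_{0})\le C\epsilon^{2}$ and $|a(t_{0})|\le\epsilon$, Gronwall's inequality on $[t_{0},t_{0}+2]$ gives $W(t)+\|u(t)\|_{L^{2}}^{2}+|a(t)-a(t_{0})|\le C\epsilon^{2}$ there, and integrating the second inequality yields
\begin{equation*}
\int_{t_{0}}^{t_{0}+2}\!\!\int_{S}\big(|\nabla u|^{2}+|\nabla\nabla u|^{2}+|\overline{\nabla}\nabla u|^{2}\big)\,d\mu\,dt\le C\epsilon^{2}.
\end{equation*}

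Next I would upgrade to $C^{1}$-smallness. Parabolic Moser iteration for the linear equation $\partial_{t}u=\square_{B}u+(2n+2)u+a(t)$ on $S\times[t_{0},t_{0}+2]$ --- using the uniform transverse Sobolev inequality and the uniform volume and diameter bounds, and treating $(2n+2)u+a(t)$ as an inhomogeneity of size $\le C(\|u\|_{L^{2}}+|a|)$ --- gives $\sup_{S\times[t_{0}+\frac12,\,t_{0}+2]}|u|\le C\epsilon$. For the gradient, the transverse Bochner identity along the Sasaki-Ricci flow gives $(\partial_{t}-\square_{B})|\nabla u|^{2}\le C|\nabla u|^{2}$ --- here, as in the Riemannian case, the $Ric^{T}$ contributions cancel between the time derivative and the spatial Laplacian, so no curvature bound is needed --- and a second Moser iteration, fed by the space-time $L^{2}$ bound on $\nabla u$ from Step~1, gives $\sup_{S\times[t_{0}+1,\,t_{0}+2]}|\nabla u|\le C\epsilon$; in particular $\|\nabla u(t_{0}+2)\|_{C^{0}}\le C\epsilon$.

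The remaining point --- and the main obstacle --- is the $C^{0}$ bound on $R^{T}(t_{0}+2)-(2n+2)n=-\square_{B}u(t_{0}+2)=-(\partial_{t}u-(2n+2)u-a)(t_{0}+2)$, that is, a $C^{0}$ bound of order $\epsilon$ on $v:=\partial_{t}u$. Differentiating the evolution equation, $v$ satisfies $\partial_{t}v=\square_{B}v+(2n+2)v+\partial_{t}a-|\nabla\overline{\nabla}u|^{2}$, the reaction term arising from $\big(\partial_{t}(g^{T})^{j\bar k}\big)\nabla_{j}\nabla_{\bar k}u=(R^{T}-(2n+2)g^{T})^{j\bar k}\nabla_{j}\nabla_{\bar k}u$. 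One knows $\|v\|_{C^{0}}\le C$ and $\|v\|_{L^{2}(S\times[t_{0},t_{0}+2])}\le C\epsilon$ (from Step~1, since $v=\square_{B}u+(2n+2)u+a$), but the reaction term involves the \emph{full} transverse Hessian of $u$ --- equivalently $Ric^{T}-(2n+2)g^{T}$ --- which is \emph{not} controlled by the scalar bound $|R^{T}|\le C$: its evolution couples to the full transverse curvature $Rm^{T}$, for which no bound is available under the hypotheses of Theorem~\ref{main theorem 2}. This is precisely the difficulty the smoothing lemma exists to circumvent; following \cite{PSSW} and, more efficiently, \cite{Dono}, it is handled not by a pointwise maximum principle but by a localized energy iteration for $v$ (equivalently for $\square_{B}u$), in which $|\nabla\overline{\nabla}u|^{2}$ is integrated by parts against cutoffs and turned into $v$ paired with lower-order quantities already known to be $L^{2}$-small by Step~1, while the mean-zero identity $\int_{S}(R^{T}-(2n+2)n)\,d\mu=0$ and Perelman's universal $C^{0}$ and $C^{1}$ bounds on $R^{T}$ and $u$ (Theorem~\ref{Pman thm}) are used to close the recursion with constants depending only on $n$. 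The upshot is $\|\partial_{t}u(t_{0}+2)\|_{C^{0}}\le K\epsilon$, hence the claim, and $\delta$ is chosen small enough that all of the absorptions above are licit. Throughout, the essential point is that every estimate must be \emph{linear} in $\epsilon$ --- one may never use $\|u\|_{C^{0}}\le\delta$ to swallow a nonlinear term --- which is possible because the $u$-equation is affine in $u$, the nonlinearity of the flow entering only through the time-dependence of $g^{T}(t)$ and contributing bounded multiplicative constants over a unit-length time interval, with an inhomogeneity $a(t)$ that is itself $O(\epsilon)$ by the volume normalization $\int e^{-u}d\mu=Vol(S)$.
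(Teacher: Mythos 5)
The paper offers no argument of its own here: it states that the proof is identical to the K\"ahler case and cites \cite{PSSW}, so the relevant comparison is with the Bando--Phong--Song--Sturm--Weinkove smoothing argument. Your first two steps are a legitimate (if heavier) route to $\|u\|_{C^{0}}+\|\nabla u\|_{C^{0}}\leq C\epsilon$ on $[t_{0}+1,t_{0}+2]$: the uniform transverse Sobolev inequality you invoke is indeed available (\cite{TristC1}), and the cancellation of the $Ric^{T}$ terms in the evolution of $|\nabla u|^{2}$ is correct and is exactly what makes the lemma curvature-free. (In \cite{PSSW} the same two bounds come more cheaply from the maximum principle applied to $u$ and to $(t-t_{0})|\nabla u|^{2}+Au^{2}$, with no Moser iteration.)

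The gap is in your third step. You correctly isolate the obstacle --- the reaction term $|\overline{\nabla}\nabla u|^{2}$ in the evolution of $-\square_{B}u=R^{T}-(2n+2)n$ --- but you then (i) assert that a pointwise maximum principle cannot handle it and (ii) replace it by a ``localized energy iteration'' described only as integrating $|\overline{\nabla}\nabla u|^{2}$ by parts against cutoffs. As written this does not close: Step 1 only puts $|\overline{\nabla}\nabla u|^{2}$ in $L^{1}$ of space-time with norm $C\epsilon^{2}$, and an $L^{1}_{t,x}$ source of the unfavourable sign cannot be upgraded to an $L^{\infty}$ bound by Moser iteration or by Duhamel with on-diagonal heat-kernel bounds in dimension $\geq 2$. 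Moreover, claim (i) is the opposite of what happens in \cite{PSSW}: the bound on $\|R^{T}-(2n+2)n\|_{C^{0}}$ there is obtained by the parabolic maximum principle applied to $G_{\pm}=(t-t_{0}-1)(\pm\square_{B}u)+A|\nabla u|^{2}$ with the coefficient $A$ chosen of size $\epsilon^{-1}$. The term $-A|\overline{\nabla}\nabla u|^{2}$ coming from the evolution of $A|\nabla u|^{2}$ absorbs pointwise both the reaction term and the cross term $\bigl(1+(t-t_{0}-1)\bigr)(\pm\square_{B}u)\leq 2\sqrt{n}\,|\overline{\nabla}\nabla u|\leq \tfrac{A}{2}|\overline{\nabla}\nabla u|^{2}+\tfrac{2n}{A}$, leaving an error $O(n/A)=O(\epsilon)$, while the initial-data contribution $A\|\nabla u(t_{0}+1)\|_{C^{0}}^{2}=O(\epsilon^{-1}\cdot\epsilon^{2})=O(\epsilon)$. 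This $\epsilon$-dependent weight is the entire content of the smoothing lemma; it needs no curvature bound because all three evolution equations are curvature-free, and it transplants verbatim to the transverse setting in preferred coordinates. Without it, or a genuine substitute, your proof of the $C^{0}$ bound on $R^{T}(t_{0}+2)-(2n+2)n$ is missing.
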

The proof of Lemma~\ref{smoothing lemma} is identical to the K\"ahler case, and can be found in \cite{PSSW}. In order to prove Proposition~\ref{PSSW lemma 5}, we must first establish the exponential decay of $W$.  We now describe a condition under which such decay holds.

\begin{prop}\label{Zhang  thm 1.5}
Suppose there is a uniform constant $\delta >0$ independent of time so that
\begin{equation}\label{Zhang  thm 1.5 ass}
\frac{(1+\delta)}{V}\int_{S}\left(u(t)- a(t)\right)^{2}e^{-u(t)}d\mu_{t} \leq \frac{1}{V}\int_{S}|\nabla u(t)|^{2}e^{-u(t)}d\mu_{t}. 
\end{equation}
Then there are constants $b, C >0$ independent of $t$ so that $W(t) \leq Ce^{-b t}$.
\end{prop}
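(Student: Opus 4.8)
The plan is to read off from~(\ref{Zhang thm 1.5 ass}) the inequality $Z(t)\ge\delta W(t)$, to turn this into the differential inequality $\dot W\le-\frac{\delta}{2}W$ valid for large $t$, and to finish by Gronwall; the real work lies in controlling the error terms in the evolution of $W$. First I would reformulate the hypothesis: since $Z(t)+W(t)=\frac{1}{V}\int_S|\nabla u|^2e^{-u}d\mu$, condition~(\ref{Zhang thm 1.5 ass}) says precisely that $Z(t)\ge\delta W(t)$ for all $t\ge 0$. Consequently $\dot a=Z\ge\delta W\ge 0$, so $a(t)=\int_S u\,d\rho$ is non-decreasing; since $|u|_{C^1}\le C$ along the flow by Theorem~\ref{Pman thm}, the quantities $a(t)$, $W(t)$, $Z(t)$ are uniformly bounded, hence $a(t)\to a_\infty<\infty$ and
\begin{equation*}
\int_0^\infty W(t)\,dt\le\frac{1}{\delta}\int_0^\infty\dot a(t)\,dt=\frac{a_\infty-a(0)}{\delta}<\infty .
\end{equation*}
Combined with the bound $|\dot W|\le C$ coming from the evolution formula below, this forces $W(t)\to 0$ as $t\to\infty$.

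Next I would compute the evolution of $W$. Using the evolution equation for the transverse Ricci potential, $\dot u=\square_B u+u-a(t)$ (the additive constant fixed by $\int_S e^{-u}d\mu=V$, cf.\ \S3), together with $\partial_t d\mu=(\square_B u)d\mu$, hence $\partial_t(e^{-u}d\mu)=-(u-a)e^{-u}d\mu$, and integrating by parts once against the weight $e^{-u}$ (recall $\int_S(u-a)e^{-u}d\mu=0$), one obtains, as in Zhang's argument in the K\"ahler case \cite{Zhang},
\begin{equation*}
\dot W(t)=-2Z(t)+\Psi(t),\qquad\Psi(t)=\frac{2}{V}\int_S e^{-u}(u-a)|\nabla u|^2\,d\mu-\frac{1}{V}\int_S e^{-u}(u-a)^3\,d\mu .
\end{equation*}
The decisive feature of this identity is that $\Psi$ is cubic: pointwise it is $(u-a)$ times a quantity dominated by the integrand of $2Z+3W$, so $|\Psi(t)|\le\|u(t)-a(t)\|_{C^0}\bigl(2Z(t)+3W(t)\bigr)$; in particular $|\dot W|\le C$.

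The hard part will be upgrading the $L^2$-type decay of $W$ to a uniform one, i.e.\ proving $\|u(t)-a(t)\|_{C^0}\to 0$. Set $v:=u-a$; this is a basic function with $|\nabla v|_{g(t)}=|\nabla u|_{g(t)}\le C$, so $v$ is $C$-Lipschitz for the transverse distance $d^T$, uniformly in $t$. If $\|v\|_{C^0}=M$ is attained at $p\in S$, then $|v|\ge M/2$ on the transverse ball $\{y:d^T(p,y)<M/(2C)\}$, whose $d\mu$-measure is at least $c(M/2C)^{2n}$ by the non-collapsing estimate of Proposition~\ref{non-collapse 1}; comparing with $\int_S v^2e^{-u}d\mu=V\,W(t)$ gives $\|u-a\|_{C^0}\le C\,W(t)^{1/(2(n+1))}\to 0$. (Since $\int_S e^{-u}d\mu=V$ is preserved, the same bound yields $|e^{-a(t)}-1|\le C\|u(t)-a(t)\|_{C^0}$, so in fact $a(t)\to 0$ as well, which is the form needed later for Proposition~\ref{PSSW lemma 5}.) Without this step the error $\Psi$ is only $O(Z+W)$ and cannot be absorbed into $-2Z$; this is where the a priori $C^1$ bound of Theorem~\ref{Pman thm} and the non-collapsing of Proposition~\ref{non-collapse 1} are essential, and it is the Sasaki analogue of the role of Perelman's estimates in the K\"ahler setting.

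To conclude I would fix $\epsilon>0$ small enough that $3\epsilon-(2-2\epsilon)\delta\le-\frac{\delta}{2}$, and pick $T$ with $\|u(t)-a(t)\|_{C^0}\le\epsilon$ for $t\ge T$. Then for $t\ge T$, the evolution formula together with $Z\ge\delta W$ gives
\begin{equation*}
\dot W\le-2Z+\epsilon(2Z+3W)=-(2-2\epsilon)Z+3\epsilon W\le\bigl(3\epsilon-(2-2\epsilon)\delta\bigr)W\le-\frac{\delta}{2}W,
\end{equation*}
so $W(t)\le W(T)e^{-\frac{\delta}{2}(t-T)}$ for $t\ge T$; since $W$ is bounded on $[0,T]$ this gives $W(t)\le Ce^{-bt}$ for all $t\ge 0$ with $b=\delta/2$, as claimed.
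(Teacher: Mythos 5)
Your proposal is correct and follows essentially the same route as the paper, which sketches the argument by citing Zhang's Lemma 2.4: you write out the evolution identity $\dot W=-2Z+\Psi$ with cubic error, use $\int_0^\infty Z\,dt=\lim a(t)-a(0)<\infty$ together with $Z\ge\delta W$ to get $W\to0$, upgrade to $\|u-a\|_{C^0}\le CW^{1/(2n+2)}$, and absorb the error. The only cosmetic difference is that you re-derive the $C^0$ bound directly from the gradient bound of Theorem~\ref{Pman thm} and the non-collapsing of Proposition~\ref{non-collapse 1}, whereas the paper quotes Lemma~\ref{PSSW lemma 3}, which is proved the same way.
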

The proof of this proposition requires the following lemma, which is a consequence of the developments in \cite{TristC}.
\begin{Lemma}\label{PSSW lemma 3}
The transverse Ricci potential $u(t)$, and its average $a(t)$ satisfy the following inequalities, where the constants $C_{1}$ and $C_{2}$ depend only on $g_{0}$.
\begin{enumerate}
\item[{\it (i)}] $0 \leq -a \leq \| u-a\|_{C^{0}}$
\item[{\it (ii)}] $\|u-a\|^{n+1}_{C^{0}} \leq C_{1}\| \nabla u \|^{2}_{C^{0}} \|u-a \|_{L^{2}} \leq  C_{2} \|\nabla u\|_{L^{2}} \|\nabla u\|_{C^{0}}^{n}$
\end{enumerate}
\end{Lemma}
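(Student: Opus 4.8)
The plan is to prove the two parts separately, using throughout the uniform bound $\|u\|_{C^{1}}\le C$ of Theorem~\ref{Pman thm}, the normalization $\int_{S}e^{-u}\,d\mu=\mathrm{Vol}(S)$ that fixes $a(t)=\mathrm{Vol}(S)^{-1}\int_{S}ue^{-u}\,d\mu$, the non-collapsing estimate of Proposition~\ref{non-collapse 1}, and the Poincar\'e inequality of Lemma~\ref{poincare}.

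For part (i), I would first record the elementary pointwise inequality $se^{-s}\le 1-e^{-s}$, valid for all $s\in\mathbb{R}$: the function $s\mapsto 1-e^{-s}-se^{-s}$ vanishes at $s=0$ and has derivative $se^{-s}$, so it attains its global minimum at $s=0$. Applying this with $s=u$ and integrating against $d\mu$, the normalization gives $\int_{S}ue^{-u}\,d\mu\le\int_{S}(1-e^{-u})\,d\mu=0$, hence $a\le 0$, i.e. $0\le -a$. For $-a\le\|u-a\|_{C^{0}}$, Jensen's inequality applied to the convex function $s\mapsto e^{-s}$ and the probability measure $\mathrm{Vol}(S)^{-1}d\mu$ gives $1=\mathrm{Vol}(S)^{-1}\int_{S}e^{-u}\,d\mu\ge\exp\bigl(-\mathrm{Vol}(S)^{-1}\int_{S}u\,d\mu\bigr)$, so $\int_{S}u\,d\mu\ge 0$ and in particular $\sup_{S}u\ge 0$; therefore $\|u-a\|_{C^{0}}\ge\sup_{S}(u-a)=\sup_{S}u-a\ge -a$.

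For the first inequality in part (ii) I would run a gradient--volume argument in the transverse geometry. Put $v=u-a$ and $M=\|v\|_{C^{0}}$, and choose $p\in S$ with $v(p)=M$ (the case $v(p)=-M$ is identical). Since $u$ is basic, $v$ is constant along the Reeb orbits and $\nabla v=\nabla u$, so integrating $|\nabla v|$ along a transverse-minimizing geodesic between the Reeb orbits of $x$ and $p$ gives $|v(x)-v(p)|\le\|\nabla u\|_{C^{0}}\,d^{T}(x,p)$. Hence, with $r=\min\{M/(2\|\nabla u\|_{C^{0}}),\rho_{0}\}$ for a fixed scale $\rho_{0}$ at which Proposition~\ref{non-collapse 1} applies, one has $v\ge M/2$ on $B=\{x:d^{T}(x,p)<r\}$, and Proposition~\ref{non-collapse 1} yields $\mathrm{Vol}(B)\ge c\,r^{2n}$, so
\[
\|u-a\|_{L^{2}}^{2}\ \ge\ \int_{B}v^{2}\,d\mu\ \ge\ \tfrac14 c\,M^{2}r^{2n}.
\]
When $r=M/(2\|\nabla u\|_{C^{0}})$ this reads $M^{n+1}\le C\|\nabla u\|_{C^{0}}^{n}\|u-a\|_{L^{2}}$ after taking square roots; in the truncated case ($r=\rho_{0}$, which occurs when $M\ge 2\rho_{0}\|\nabla u\|_{C^{0}}$) one instead gets $M\le C\|u-a\|_{L^{2}}$ and reinstates the powers of $\|\nabla u\|_{C^{0}}$ via the trivial bound $\|u-a\|_{C^{0}}\le\mathrm{diam}(S,g)\,\|\nabla u\|_{C^{0}}$, which holds because $v=u-a$ vanishes somewhere. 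Since $\|\nabla u\|_{C^{0}}$ is uniformly bounded, the precise power of this factor is unimportant, and one arrives at $\|u-a\|_{C^{0}}^{n+1}\le C_{1}\|\nabla u\|_{C^{0}}^{2}\|u-a\|_{L^{2}}$.

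For the second inequality I would apply Lemma~\ref{poincare} with $f=u-a$. The key point is that the zeroth-order term vanishes: $\mathrm{Vol}(S)^{-1}\int_{S}(u-a)e^{-u}\,d\mu=a-a=0$, by the definition of $a$ and the normalization $\int_{S}e^{-u}\,d\mu=\mathrm{Vol}(S)$. Since $\nabla(u-a)=\nabla u$, Lemma~\ref{poincare} gives $\mathrm{Vol}(S)^{-1}\int_{S}(u-a)^{2}e^{-u}\,d\mu\le\mathrm{Vol}(S)^{-1}\int_{S}|\nabla u|^{2}e^{-u}\,d\mu$, and the uniform bound on $\|u\|_{C^{0}}$ makes $e^{-u}$ comparable to the constant $1$, so $\|u-a\|_{L^{2}}\le C\|\nabla u\|_{L^{2}}$. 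Plugging this into the first inequality and absorbing the bounded factor $\|\nabla u\|_{C^{0}}$ into the constant completes the chain. I expect the only genuinely geometric step to be the ball argument: what makes it work in the Sasaki setting is precisely the transverse non-collapsing of Proposition~\ref{non-collapse 1} together with the fact that $u$ is basic, which lets the $C^{1}$ estimate propagate along transverse geodesics; everything else is bookkeeping against the uniform estimates of Theorem~\ref{Pman thm}.
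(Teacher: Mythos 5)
Your proof is correct and follows exactly the route the paper intends: the paper's own proof is a one-line citation of Lemma~\ref{poincare}, Proposition~\ref{non-collapse 1} and the argument of \cite{PSSW}, and your write-up is precisely that argument carried out in the transverse setting (the elementary inequality $se^{-s}\le 1-e^{-s}$ plus the normalization for part (i), the transverse ball/non-collapsing argument for the first inequality in (ii), and the weighted Poincar\'e inequality with vanishing mean for the second). One remark: the ball argument naturally produces the exponent $n$ on $\|\nabla u\|_{C^{0}}$ in the middle term, which is also the exponent that makes the second inequality in (ii) reduce to $\|u-a\|_{L^{2}}\le C\|\nabla u\|_{L^{2}}$; the exponent $2$ in the stated lemma appears to be a typo, and, as you observe, the uniform gradient bound from Theorem~\ref{Pman thm} renders the discrepancy harmless in every application.
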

\begin{proof}
We combine Lemma~\ref{poincare}, Proposition~\ref{non-collapse 1} and follow the proof in \cite{PSSW}.
\end{proof} 

\begin{proof}[Proof of Proposition~\ref{Zhang thm 1.5}]
The proof follows essentially from the arguments in \cite{Zhang}, and so we provide only a sketch.  We first claim that $W(t) \rightarrow 0$ as $t\rightarrow \infty$.  To see this, observe that $Z(t) \geq \delta W(t)$ by assumption.  From Theorem~\ref{Pman thm} we have
\begin{equation*}
\int_{0}^{\infty}Z(t)dt  = \lim_{t\rightarrow \infty}a(t) - a(0) < \infty.
\end{equation*}
Thus, $Z(t) \rightarrow 0$ along a subsequence $t_{k} \rightarrow \infty$.  The convergence of the full sequence is obtained as in \S6, by computing the evolution equation for $W$.  Lemma~\ref{PSSW lemma 3}, combined with the uniform bounds in Theorem~\ref{Pman thm} imply that $\|u-a\|_{C^{0}} \leq A W(t)^{1/(2n+2)}$, and so $u \rightarrow a$ in $C^{0}$ as $t$ goes to infinity.  The result follows from elementary modifications to the proof of Lemma 2.4 in \cite{Zhang}.
\end{proof}

Lemma~\ref{PSSW lemma 3} and the uniform bounds for $u$ in Theorem~\ref{Pman thm} imply that if $W(t)$ decays exponentially, then $\|u \|_{C^{0}}$ decays exponentially.  By Lemma~\ref{smoothing lemma}, we see that the second statement in Proposition~\ref{PSSW lemma 5} follows from the exponential decay of $W(t)$.  Our task is now reduced to showing that when conditions (T) and (F) hold, the assumptions of Proposition~\ref{Zhang thm 1.5} are satisfied.  We begin by showing that if the Futaki invariant vanishes and we have a non-degeneracy condition on the `second' eigenvalue of $L$, then~(\ref{Zhang  thm 1.5 ass}) holds.

\begin{prop}\label{Zhang thm 1.5}
Let $\nu(t)$ be the smallest eigenvalue of $L$ larger than one.  Assume that $\nu(t) \geq 1+\delta$ for some $\delta>0$ uniformly along the flow.  If the Futaki invariant vanishes, then~(\ref{Zhang  thm 1.5 ass}) holds.
\end{prop}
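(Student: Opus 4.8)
The plan is to diagonalize the operator $L$ and to show that conditions (F) and $\nu(t)\geq 1+\delta$ force the inequality~(\ref{Zhang  thm 1.5 ass}) to hold eigenvalue by eigenvalue. Since $L$ is elliptic and self-adjoint on $L^{2}(S,d\rho)$ (see \cite{TristC}), it has a discrete spectrum and a complete $d\rho$-orthonormal system of smooth basic eigenfunctions $\{\psi_{j}\}$: the eigenvalue $0$ is simple with the constants as eigenfunctions, and by Lemma~\ref{poincare} the spectrum on the $d\rho$-orthogonal complement of the constants lies in $\{1\}\cup[\nu(t),\infty)$ with $\nu(t)\geq 1+\delta$. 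Write $\mathcal{K}:=\ker(L-1)$ and let $\Pi$ be the $d\rho$-orthogonal projection onto $\mathcal{K}$. By the choice~(\ref{a def}) of $a(t)$ we have $a(t)=\int_{S}u\,d\rho$, so $u-a$ is $d\rho$-orthogonal to the constants and decomposes as $u-a=\Pi(u-a)+(u-a)^{\perp}=\sum_{j}c_{j}\psi_{j}$, the summand $(u-a)^{\perp}$ being spanned by eigenfunctions with eigenvalues $\geq\nu(t)\geq 1+\delta$. Integrating by parts — using that $L$ is the $d\rho$-weighted Laplacian, so $\langle Lf,f\rangle_{d\rho}=\frac{1}{V}\int_{S}|\nabla f|^{2}e^{-u}d\mu$, and that $\nabla(u-a)=\nabla u$ — gives
\[
\frac{1}{V}\int_{S}|\nabla u|^{2}e^{-u}d\mu=\langle L(u-a),u-a\rangle_{d\rho}=\|\Pi(u-a)\|_{d\rho}^{2}+\!\!\sum_{\lambda_{j}\geq\nu}\!\!\lambda_{j}|c_{j}|^{2}\;\geq\;\|\Pi(u-a)\|_{d\rho}^{2}+(1+\delta)\|(u-a)^{\perp}\|_{d\rho}^{2},
\]
whereas the left side of~(\ref{Zhang  thm 1.5 ass}) equals $(1+\delta)\|u-a\|_{d\rho}^{2}=(1+\delta)\|\Pi(u-a)\|_{d\rho}^{2}+(1+\delta)\|(u-a)^{\perp}\|_{d\rho}^{2}$. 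Hence~(\ref{Zhang  thm 1.5 ass}) is equivalent to the single assertion $\Pi(u-a)=0$, and the proof reduces to showing that vanishing of the Futaki invariant forces $u-a$ to have no component in $\mathcal{K}$.

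To this end I would first identify $\mathcal{K}$. The $e^{-u}$-weighted analogue of the Bochner--Kodaira identity of Proposition~\ref{BK formula} (obtained by the same integration by parts, now carrying the density $e^{-u}$; cf.~(\ref{norm example})) reads, for every basic $\psi$,
\[
\frac{1}{V}\int_{S}|\overline{\nabla}\,\overline{\nabla}\psi|^{2}e^{-u}d\mu=\langle (L-1)\psi,\,L\psi\rangle_{d\rho}.
\]
Applied to an eigenfunction this recovers the spectral gap of Lemma~\ref{poincare} and shows $\psi\in\mathcal{K}\iff\overline{\nabla}\,\overline{\nabla}\psi=0$, i.e.\ the section $V_{\psi}:=(g^{T})^{j\bar k}\nabla^{T}_{\bar k}\psi$ of $\E^{1,0}$ lies in $H^{0}(\E^{1,0})$; by Proposition~\ref{kernel prop} the map $\psi\mapsto V_{\psi}$ is then a linear isomorphism of $\mathcal{K}$ onto the Lie algebra of Hamiltonian holomorphic vector fields, so the Futaki character $fut_{S}$ of Corollary~\ref{fut cor} is defined on $V_{\psi}$. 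The remaining step is the identity relating, for $\psi\in\mathcal{K}$, the coefficient $\langle u-a,\psi\rangle_{d\rho}$ of $\psi$ in $u-a$ to $fut_{S}(V_{\psi})$: I would extract it by integrating by parts against $\psi$, using the eigenvalue equation $L\psi=\psi$ (equivalently $\square_{B}\psi=V_{\psi}(u)-\psi$), the identity $\square_{B}(e^{-u})=e^{-u}\,Lu$, and the transverse Ricci-potential equation $g^{T}_{\bar k j}-Ric^{T}_{\bar k j}=\partial_{j}\partial_{\bar k}u$, exactly as in the K\"ahler arguments of \cite{Zhang} (and \cite{PSSW}), the only new feature being that all operators act on sections of the sheaf $\E$ rather than of the tangent bundle. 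The conclusion is that $\langle u-a,\psi\rangle_{d\rho}$ vanishes for all $\psi\in\mathcal{K}$ precisely when $fut_{S}\equiv 0$, hence $\Pi(u-a)=0$ under condition (F).

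I expect the main obstacle to be precisely this last identity. The spectral theory of $L$ takes place in $L^{2}(S,d\rho)$, whose inner product carries the density $e^{-u}$, while the Futaki character $fut_{S}(X)=\int_{S}Xu\,d\mu$ is an \emph{unweighted} integral; reconciling the two requires careful bookkeeping of the $e^{-u}$ factors generated by the integrations by parts, and it is here that the uniform estimates of Theorem~\ref{Pman thm} (boundedness of $u$, hence of $e^{\pm u}$) enter to guarantee that the spectral decomposition and all the manipulations are legitimate. Everything else — the spectral decomposition, the weighted Bochner--Kodaira formula, and the passage through the sheaf $\E$ — is a routine transcription of the K\"ahler case using the transverse integration-by-parts formulae of \S2 and \S5, and this establishes~(\ref{Zhang  thm 1.5 ass}), which is the hypothesis under which $W(t)$ was shown to decay exponentially.
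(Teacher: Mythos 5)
Your reduction of~(\ref{Zhang  thm 1.5 ass}) to the single assertion $\Pi(u-a)=0$ is where the argument breaks down, and the final step you defer --- that condition (F) forces $\langle u-a,\psi\rangle_{d\rho}=0$ for all $\psi\in\mathcal{K}$ --- is not true. For $\psi\in\mathcal{K}$ one has $\langle u-a,\psi\rangle_{d\rho}=\langle u-a,L\psi\rangle_{d\rho}=\frac{1}{V}\int_{S}(g^{T})^{j\bar k}\nabla_{j}u\,\nabla_{\bar k}\bar\psi\,e^{-u}d\mu$, a \emph{weighted} pairing of $\nabla u$ with the holomorphic section $\nabla\psi$, whereas $fut_{S}$ applied to the corresponding Hamiltonian holomorphic field is the \emph{unweighted} pairing $\int_{S}(g^{T})^{j\bar k}\nabla_{\bar k}\psi\,\nabla_{j}u\,d\mu$. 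No bookkeeping of $e^{-u}$ factors identifies these two functionals: they are genuinely different linear forms on the finite-dimensional space $H^{0}(\E^{1,0})$ (the weighted one is the analogue of the Tian--Zhu holomorphic invariant, not of the classical Futaki character), and the vanishing of one does not imply the vanishing of the other. Condition (F) therefore gives only $\nabla u\perp H^{0}(\E^{1,0})$ in $L^{2}(d\mu)$, and the $d\rho$-projection $\Pi(u-a)$ has no reason to vanish. Relatedly, your claimed equivalence of~(\ref{Zhang  thm 1.5 ass}) with $\Pi(u-a)=0$ is too strong: the proposition only asks for~(\ref{Zhang  thm 1.5 ass}) with \emph{some} uniform constant, so a nonzero but controlled $\mathcal{K}$-component is perfectly admissible.

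The paper's proof exploits exactly this slack. Decomposing $u-a=u_{0}+u_{1}+\cdots$ into $d\rho$-eigenfunctions with $u_{0}\in E_{0}=\mathcal{K}$ and writing $\nabla u=\pi(\nabla u)+V$ with $\pi(\nabla u)=\nabla u_{0}$, the $d\mu$-orthogonality supplied by (F) gives only $\|\pi(\nabla u)\|^{2}_{L^{2}(d\mu)}\leq\|V\|^{2}_{L^{2}(d\mu)}$; since the two measures are uniformly comparable by the $C^{0}$ bound on $u$ in Theorem~\ref{Pman thm}, this converts into $\int_{S}|\nabla u_{0}|^{2}d\rho\leq e^{osc(u)}\int_{S}|V|^{2}d\rho$. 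Feeding that into the spectral estimate yields
\begin{equation*}
\int_{S}(u-a)^{2}d\rho\leq\Bigl(1-\frac{\delta}{(1+\delta)(1+e^{osc(u)})}\Bigr)\int_{S}|\nabla u|^{2}d\rho,
\end{equation*}
which is~(\ref{Zhang  thm 1.5 ass}) with a smaller but still uniform constant. Your spectral setup, the identification of $\mathcal{K}$ with the basic functions inducing sections of $H^{0}(\E^{1,0})$, and the orthogonality of the higher eigenfunctions to $H^{0}(\E^{1,0})$ all match the paper; only the last step must be replaced by this quantitative absorption argument.
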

\begin{proof}
Fix a time $t$, and from now on suppress the $t$ variable.  Recall that in \S5 it was pointed out the $(g^{T})^{i\bar{j}}\dbop{j}u$ defines a section of $\E^{1,0}$.  For simplicity we denote this section by $\nabla u \in \E^{1,0}$.  Since $fut_{S} \equiv 0$, we necessarily have $\nabla u \perp H^{0}(\E^{1,0})$ in $L^{2}(\E^{1,0}, d\mu)$.  Let $\pi$ denote the orthogonal projection  to $H^{0}(\E^{1,0})$ in the space in $L^{2}(\E^{1,0}, d\rho)$.  We decompose $\nabla u = \pi(\nabla u) + V$, then we have
\begin{equation}\label{Z lemma 3.2}
\langle \pi(\nabla u), \pi(\nabla u) \rangle_{L^{2}(\E^{1,0}, d\mu)} \leq \langle V, V \rangle_{L^{2}(\E^{1,0}, d\mu)}.
\end{equation}
Let $\nu_{0} =1 < \nu_{1} <\nu_{2} < \dots $ be the distinct eigenvalues of $L$ acting on the function space $H^{2}_{B}(d\rho)$, and let $E_{k}$ be the eigenspace of $\nu_{k}$.  $E_{0}$ and may be empty, and corresponds to those basic functions which induce sections of $H^{0}(\E^{1,0})$.  Let $u-a = u_{0} + u_{1} +\dots$ be the unique decomposition of $u-a$ into eigenfunctions $u_{k} \in E_{k}$ so that $u_{i}$ and $u_{j}$ are orthogonal in $L^{2}(d\rho)$ for $i\ne j$.  For $k \geq 1$ we have $\nu _{k} \geq \nu_{1} >1+\delta$  uniformly along the flow, and so
\begin{equation*}
\begin{aligned}
\int_{S}(u-a)^{2}d\rho &= \sum_{k} \int_{S} |u_{k}|^{2}d\rho = \sum_{k} \lambda_{k}^{-1}\int_{S}|\nabla u_{k}|^{2}d\rho \\
&\leq \int_{S} |\nabla u_{0}|^{2} d\rho + \sum_{k=1}^{\infty}(1+\delta)^{-1}\int_{S}|\nabla u_{k}|^{2}\\
&= \int_{S} |\nabla u_{0}|^{2} +(1+\delta)^{-1} \int_{S}|V|^{2}d\rho
\end{aligned}
\end{equation*}
We claim that $\pi(\nabla u) = \nabla u_{0}$ as sections of $\E^{1,0}$.  Assuming this for the moment, we obtain from~(\ref{Z lemma 3.2})
\begin{equation*}
\int_{S}|\nabla u_{0} |^{2}d\rho \leq \frac{e^{-\inf u}}{Vol(S)} \langle V, V \rangle_{L^{2}(\E^{1,0}, d\mu)}\leq e^{osc(u)}\int_{S}|V|^{2}d\rho.
\end{equation*}
It follows that
\begin{equation*}
\int_{S} (u-a)^{2}d\rho \leq \int_{S} |\nabla u|^{2} d\rho - \frac{\delta}{1+\delta} \int_{S}| V |^{2}d\rho \leq \left(1- \frac{\delta}{(1+\delta)(1+e^{osc(u)})}\right) \int_{S}|\nabla u|^{2}d\rho.
\end{equation*}
From this, one easily shows that equation~(\ref{Zhang  thm 1.5 ass}) holds.  It suffices to establish the claim.  In fact, we shall prove something more general.  Suppose that $\psi$ is an eigenfunction of $L$ with eigenvalue $\nu >1$.  By elliptic regularity, $\psi \in C^{\infty}_{B}$.  Denote by $\nabla \psi$ the global section of $\E^{1,0}$ induced by $\dbar_{\E}\psi$; we claim that $\nabla \psi \perp H^{0}(\E^{1,0})$ in $L^{2}_{B}(d\rho)$.  To see this, let $V \in H^{0}(\E^{1,0})$, and compute
\begin{equation*}
\begin{aligned}
\langle V, \nabla \psi \rangle_{L^{2}(d\rho)} &= \int_{S}V^{i}\dop{i}\bar{\psi} e^{-u}d\mu \\
& = \nu^{-1}\int_{S}V^{i}(g^{T})^{l\bar{k}}\left(-\nabla^{T}_{i}\dop{l}\dbop{k}\bar{\psi} +\dop{i}\dbop{k}u\dop{l}\bar{\psi} + \dbop{k}u\nabla^{T}_{i}\dop{l}\bar{\psi} \right) e^{-u} d\mu \\
&=\nu^{-1}\int_{S}V^{i}(g^{T})^{l\bar{k}}\left(-\nabla^{T}_{l}\nabla^{T}_{\bar{k}}\dop{i}\bar{\psi} +(-Ric^{T}_{\bar{k}i}+g^{T}_{\bar{k}i})\dop{l}\bar{\psi} + \dbop{k}u\nabla^{T}_{i}\dop{l}\bar{\psi} \right) e^{-u} d\mu\\
&=\nu^{-1}\int_{S}V^{i}\dop{i}\bar{\psi}d\rho,
\end{aligned}
\end{equation*}
where the last line follows by commuting covariant derivatives and integrating by parts.  Since $\nu >1$, we obtain the claim.
\end{proof}


\begin{proof}[Proof of Proposition~\ref{PSSW lemma 5}]
Let $\nu$ denote the first eigenvalue of $L$ strictly larger than 1.  In light of Proposition~\ref{Zhang thm 1.5}, it suffices to show that when condition (T) holds there is a $\delta >0$ such that $\nu >1+\delta$ along the flow. Let $\lambda, \tilde{\lambda}$ be the smallest positive eigenvalues of the $\dbar_{\E}$ operator acting on $L^{2}(\E^{1,0}, d\mu)$ and $L^{2}(\E^{1,0}, d\rho)$ respectively.  That is, we have
\begin{equation*}
\int_{S} |\overline{\nabla}V|^{2} d\mu \geq \lambda \int_{S} |V|^{2} d\mu, \quad \text{ for all } V \perp H^{0}(\E^{1,0}) \text{ in } L^{2}(d\mu)
\end{equation*} 
\begin{equation*}
\int_{S} |\overline{\nabla}V|^{2} d\rho \geq \tilde{\lambda} \int_{S} |V|^{2} d\rho, \quad \text{ for all } V \perp H^{0}(\E^{1,0}) \text{ in } L^{2}(d\rho)
\end{equation*}
One can easily check that $e^{- osc(u)}\lambda \leq \tilde{\lambda} \leq e^{osc(u)} \lambda$, see for example \cite{PSSW1, Zhang}.  We now show that $\nu > 1+e^{-osc(u)}\lambda$, which suffices to establish Proposition~\ref{PSSW lemma 5}.  Let $\psi \in L^{2}_{B}$ be an eigenfunction of $L$ with eigenvalue $\lambda$, and let $\nabla \psi$ denote the section of $\E^{1,0}$ induced by $\dbar_{\E}\psi$.  From the proof of Proposition~\ref{Zhang thm 1.5}, we know that $\nabla \psi \perp H^{0}(\E^{1,0})$ in $L^{2}_{B}(d\rho)$.  We have (cf. equation (29) in \cite{TristC})
\begin{equation*}
(\nu -1) \int_{S}|\nabla \psi|^{2}d\rho = \int_{S}|\dbar_{\E}\nabla \psi|^{2} d\rho \geq \tilde{\lambda}\int|\nabla \psi |^{2}d\rho.
\end{equation*}
It follows that $\nu > 1+e^{-osc(u)}\lambda$.  By the uniform $C^{0}$ bound for $u$ in Theorem~\ref{Pman thm} we see that if condition (T) holds, then there is $\delta >0$ such that $\nu >1+\delta$ uniformly along the flow.
\end{proof}

The following general lemma gives a condition under which the Sasaki-Ricci flow converges; in light of Proposition~\ref{PSSW lemma 5}, it finishes the proof of Theorem~\ref{main theorem 2}.

\begin{versionb}
\begin{proof}
Note that the second statement follows from the first and Lemma~\ref{smoothing lemma}, and so it suffices to establish the exponential decay of $Y(t)$.  Our starting point is equation~(\ref{Y diff equation 3}).  Since the Mabuchi K-energy is bounded below, Proposition~\ref{vanishing fut prop} implies the Futaki invariant vanishes identically.  Let $\lambda >0$ satisfy $\lambda_{t} \geq \lambda>0$, as guaranteed by condition (T).  It is enough to obtain a bound for the last term on the right hand side of equation~(\ref{Y diff equation 3}).
\begin{equation*}
\left|\int_{S} \nabla^{j}u\nabla^{\bar{k}}u (R^{T}_{\bar{k}j} -g^{T}_{\bar{k}j})d\mu\right|\leq \frac{\lambda}{2} Y.
\end{equation*}
This would follow, for example, from Theorem~\ref{main theorem} part {\it (i)} if we knew that $Rm^{T}$ was uniformly bounded along the flow.  In the absence of this information, we must work harder.  
By the Cauchy-Schwartz, and integration by parts we have
\begin{equation*}
\left|\int_{S} \nabla^{j}u\nabla^{\bar{k}}u (R^{T}_{\bar{k}j} -g^{T}_{\bar{k}j})d\mu\right| \leq Y(t)^{1/2}\left( \int_{S} |R^{T}-(2n+2)n|^{2} d\mu\right)^{1/2}.
\end{equation*}
\begin{versionb}
\begin{equation*}
\left|\int_{S} \nabla^{j}u\nabla^{\bar{k}}u (R^{T}_{\bar{k}j} -g^{T}_{\bar{k}j})d\mu\right| \leq Y(t)^{1/2}\left(\int_{S}\left| R^{T}_{\bar{k}j} -g^{T}_{\bar{k}j}\right|^{2}d\mu\right)^{1/2}
\end{equation*}
By the definition of the Ricci potential and integration by parts we have
\begin{equation*}
\int_{S}\left| R^{T}_{\bar{k}j} -g^{T}_{\bar{k}j}\right|^{2}d\mu =  \int_{S} |R^{T}-(2n+2)n|^{2} d\mu.
\end{equation*}
\begin{equation*}
\begin{aligned}
\int_{S}\left| R^{T}_{\bar{k}j} -g^{T}_{\bar{k}j}\right|^{2}d\mu &= \int_{S}|\dop{j}\dbop{k} u|^{2}d\mu\\
&= \int_{S} |\square_{B} u|^{2}d\mu\\
&= \int_{S} |R^{T}-(2n+2)n|^{2} d\mu.
\end{aligned}
\end{equation*} 
We can now apply Lemma~\ref{smoothing lemma} and Lemma~\ref{PSSW lemma 3} to obtain the desired bound.  By Theorem~\ref{main theorem 3} part {\it(ii)}, there is a $T \in [0, \infty)$ such that , for all $t \geq T$, $|u(t)|_{C^{0}} \leq \delta$, where $\delta$ is as in the statement of Lemma~\ref{smoothing lemma}.  From now on, we assume $t \gg 2T $.  Applying Lemma~\ref{smoothing lemma} and Lemma~\ref{PSSW lemma 3}, we obtain
\begin{equation*}
|R^{T}-(2n+2)n| \leq C \|\nabla u(t-2) \|_{C^{0}}^{\frac{n}{n+1}} \| \cdot \| \nabla u(t-2)\|_{L^{2}}^{\frac{1}{n+1}},
\end{equation*} 
for $C$ depending only on $g_{0}$ and $n$.  We can now apply Lemma~\ref{smoothing lemma} to the term involving $\| \nabla u \|_{C^{0}}$, and iterate this argument.  In particular, suppose that $G(t)$ is a function of the form
\begin{equation*}
G(t) = \displaystyle\prod_{j} A(t-a_{j})^{\delta_{j}} \cdot  \displaystyle\prod_{k} B(t-b_{k})^{\epsilon_{k}},
\end{equation*}
where $A(t) = \| \nabla u|_{L^{2}}(t)$, $B(t) = \| \nabla u \|_{C^{0}}(t)$, $a_{j}, b_{k}, \delta_{j}, \epsilon_{k}$ are positive numbers and $\delta := \sum_{j} \delta_{j}$, and $\epsilon:= \sum_{k} \epsilon_{k}$ satisfy $\delta +\epsilon =1$.  Applying Lemmas~\ref{smoothing lemma} and~\ref{PSSW lemma 3} we obtain that $G(t) \leq C \tilde{G}(t)$ where $\tilde{G}(t) = \prod_{j}A(t-\tilde{a}_{j})^{\tilde{\delta}_{j}} \cdot \prod_{k} B(t-\tilde{b}_{k})^{\tilde{\epsilon}_{k}}$, and $\tilde{\delta} + \tilde{\epsilon} =1$, but now $\tilde{\epsilon} = n\epsilon/(n+1)$.  Using that $\| u \|_{C^{0}} \rightarrow 0$ as $t \rightarrow \infty$ we obtain
\begin{equation*}
\dot{Y}(t) \leq -\lambda Y(t) + \frac{\lambda}{2}Y^{\frac{1}{2}}(t)\cdot \Pi_{j=1}^{N}\left[ Y(t-a_{j})\right]^{\delta_{j}/2}
\end{equation*}
for all $t \geq K_{0}$, where $N$ is an integer and the $\delta_{j}$ are non-negative real numbers with the property that $\sum_{j=1}^{N} \delta_{j} =1$.  The arguments of \cite{PSSW} apply verbatim, and so we omit the details.
\end{proof}

The techniques used in the proof of Lemma~\ref{PSSW lemma 5} give a proof of Theorem~\ref{main theorem} part {\it (i)}.  We delay the argument for now, so that we may discuss the proof of Theorem~\ref{main theorem 2}.
\end{versionb}
\begin{Lemma}\label{PSSW lemma 6}
Assume that the transverse scalar curvature $R^{T}(t)$ along the Sasaki-Ricci flow satisfies
\begin{equation*}
\int_{0}^{\infty} \|R^{T}(t) - (2n+2) n\|_{C^{0}} dt < \infty.
\end{equation*}
Then the metrics $g(t)$ converge exponentially fast to a Sasaki-Einstein metric.
\end{Lemma}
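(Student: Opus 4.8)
The plan is to convert the integral hypothesis, in three stages, into uniform equivalence of the transverse metrics, then into $C^{\infty}$ convergence to a Sasaki--Einstein metric, and finally into an exponential rate. \emph{Stage one: soft decay of the transverse Ricci potential.} Let $u$ be the transverse Ricci potential, normalized by $\int_{S}e^{-u}d\mu=Vol(S)$, so that $\dot{g}^{T}_{\bar{k}j}=-R^{T}_{\bar{k}j}+\kappa g^{T}_{\bar{k}j}=\partial_{j}\partial_{\bar{k}}u$ and $\square_{B}u=\kappa n-R^{T}$. Integration by parts gives $Y(t):=\|\nabla u(t)\|_{L^{2}}^{2}=\int_{S}u\,(R^{T}-\kappa n)\,d\mu_{t}$, whence the uniform bound on $u$ in Theorem~\ref{Pman thm} yields $Y(t)\le C\|R^{T}(t)-\kappa n\|_{C^{0}}$; combined with the hypothesis this gives $\int_{0}^{\infty}Y(t)\,dt<\infty$, and with the differential inequality $\dot{Y}\le C'Y$ read off from~(\ref{Y diff equation 1}) together with the curvature bound of Theorem~\ref{Pman thm}, we conclude $Y(t)\to 0$. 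Lemma~\ref{PSSW lemma 3} then forces $\|u(t)\|_{C^{0}}\to 0$, and the smoothing lemma (Lemma~\ref{smoothing lemma}) upgrades this to $\|\nabla u(t)\|_{C^{0}}+\|R^{T}(t)-\kappa n\|_{C^{0}}\to 0$.

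\emph{Stage two: uniform equivalence and $C^{\infty}$ convergence — the main obstacle.} Since $\dot{\phi}=u+c(t)$ with $c(t)$ spatially constant, the transverse metric satisfies $g^{T}_{\bar{k}j}(t)=g^{T}_{\bar{k}j}(0)+\partial_{j}\partial_{\bar{k}}U(t)$, $U(t)=\int_{0}^{t}u(s)\,ds$ modulo an additive constant. The point is to establish the a priori bound $\int_{0}^{\infty}\sup_{S}|\dot{g}^{T}(t)|_{g^{T}(t)}\,dt<\infty$, equivalently $\sup_{t}\|U(t)\|_{C^{0}}<\infty$. Granting it, Hamilton's Lemma~14.2 \cite{RHam1} gives uniform equivalence of the transverse metrics; Proposition~\ref{uniform Yau}, applied with $U(t)$ as transverse K\"ahler potential relative to $g^{T}_{0}$, then gives $\sup_{t}\|U(t)\|_{C^{k}}<\infty$ for all $k$, hence uniformly bounded geometry; and interpolating the $C^{0}$ decay of $\dot{\phi}=u+c(t)$ (the constant $c(t)$ is controlled and summable in $t$ because of the normalization~(\ref{initial condition})) against these bounds yields $\phi(t)\to\phi_{\infty}$ in $C^{\infty}$. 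The limiting transverse metric is $g^{T}_{0}+d_{B}d_{B}^{c}\phi_{\infty}$, has $R^{T}=\kappa n$ and constant transverse Ricci potential, so $Ric^{T}_{\infty}=\kappa g^{T}_{\infty}$: it is Sasaki--Einstein. (Alternatively, once bounded geometry is known one invokes the Sasaki Gromov compactness of Theorem~\ref{Ham compact}.) I expect this to be the hard part: since no a priori bound on $Rm^{T}$ is assumed, the Bernstein--Bando--Shi estimates of Theorem~\ref{BBS Thm} are unavailable, and integral control of the transverse \emph{scalar} curvature does not by soft means control the full transverse Hessian of $u$, i.e.\ $\dot{g}^{T}$; the passage from $\int_{0}^{\infty}\|R^{T}-\kappa n\|_{C^{0}}\,dt<\infty$ to $\sup_{t}\|U(t)\|_{C^{0}}<\infty$ will have to exploit the smoothing lemma together with the specific choice of initial potential~(\ref{initial condition}).

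\emph{Stage three: exponential rate.} With $C^{\infty}$ convergence to a Sasaki--Einstein metric in hand, the Futaki invariant vanishes (it depends only on the fixed basic K\"ahler class and is zero on a Sasaki--Einstein representative, by Corollary~\ref{fut cor}), and by eigenvalue perturbation along the now precompact family, as in~(\ref{limit eigenvalues}), $\lambda_{t}\to\lambda_{\infty}>0$, so $\inf_{t\ge T_{0}}\lambda_{t}>0$ for some $T_{0}$. Inserting $fut_{S}\equiv 0$ and this spectral gap into the general inequality~(\ref{Y diff equation 3}), and absorbing the two remaining terms using $R^{T}-\kappa n\to 0$ and $R^{T}_{\bar{k}j}-\kappa g^{T}_{\bar{k}j}=-\partial_{j}\partial_{\bar{k}}u\to 0$ in $C^{0}$, gives $\dot{Y}(t)\le -cY(t)$ for $t$ large, hence $Y(t)\le Ce^{-ct}$. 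The bootstrap of \S9 — using the uniform Sobolev constant furnished by Proposition~\ref{non-collapse 1}, Lemma~\ref{diameter bound lemma} and volume preservation, together with Lemma~\ref{PSSW lemma 3} — then upgrades this to exponential decay of $\|u(t)\|_{C^{k}}$ for every $k$, hence of $\|\dot{\phi}(t)\|_{C^{k}}$, and therefore to exponential $C^{\infty}$ convergence of $g(t)$ to the Sasaki--Einstein metric. Equivalently, at this stage conditions (F) and (T) both hold for the flow restarted at $T_{0}$, and Proposition~\ref{PSSW lemma 5} then applies to it verbatim.
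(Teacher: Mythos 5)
Your overall architecture matches the paper's, and stages one and three are essentially sound (stage one is in fact not needed, and stage three is the paper's own argument: establish (F) and (T) --- the latter by a compactness/contradiction argument for the lowest positive eigenvalue, using that the Reeb field and transverse holomorphic structure are fixed so $\dim H^{0}(\E^{1,0})$ cannot jump --- and then run~(\ref{Y diff equation 3}), Proposition~\ref{PSSW lemma 5} and the bootstrap of \S6). But there is a genuine gap exactly where you flag one: you never establish the uniform $C^{0}$ bound on the potential; you write ``granting it'' and guess that the smoothing lemma together with the normalization~(\ref{initial condition}) will produce it. It will not, and no soft argument starting from $\|u\|_{C^{0}}\to 0$ or from the time-integrability of $\|\square_{B}u\|_{C^{0}}=\|R^{T}-(2n+2)n\|_{C^{0}}$ can: converting integrability of the trace of $\dot{g}^{T}$ into a bound on $\sup_{t}\|U(t)\|_{C^{0}}$ or on $\int_{0}^{\infty}\sup_{S}|\dot{g}^{T}|\,dt$ requires elliptic constants that are uniform in $t$, which presupposes the very geometric control you are trying to prove.

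The mechanism the paper uses is the one essential idea of the lemma, and it is not the smoothing lemma but the Monge--Amp\`ere structure of the flow. Since
\begin{equation*}
\pl{t}\,\log \frac{(\frac{1}{2}d\eta)^{n}\wedge \eta_{0}}{(\frac{1}{2}d\eta_{0})^{n}\wedge \eta_{0}} \;=\; -\bigl(R^{T}-(2n+2)n\bigr),
\end{equation*}
the hypothesis integrates to a \emph{pointwise, uniform-in-time} bound on the volume-form ratio, i.e.\ on $\log\det g^{T}(t)-\log\det g^{T}(0)$. Rearranging~(\ref{PCMA}) as $(2n+2)\phi = \dot{\phi}-\log(\cdot)+u(0)$ and using $\|\dot{\phi}\|_{C^{0}}\leq C$ (Theorem~\ref{Pman thm} with the normalization~(\ref{initial condition})) then gives $\sup_{t}\|\phi\|_{C^{0}}<\infty$ directly; Proposition~\ref{uniform Yau} upgrades this to $\sup_{t}\|\phi\|_{C^{k}}<\infty$ for every $k$, whence uniform equivalence of the $g^{T}(t)$ and uniform $C^{\infty}$ bounds on $Rm^{T}$, with no appeal to Hamilton's Lemma 14.2 at this stage. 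This is the missing step. Once it is in place, only \emph{subsequential} $C^{\infty}$ convergence is needed (and is all the paper claims at that point) to identify a transversely K\"ahler--Einstein limit via $|\square_{B,g(0)}u(t_{m})|\leq C\|R^{T}(t_{m})-(2n+2)n\|_{C^{0}}\to 0$ along a subsequence, and the rest of your stage three goes through.
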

\begin{proof}
The Sasaki potential $\phi(t)$, satisfies equation~(\ref{PCMA}), with $F = u(0)$, and $\phi(0)=c_{0}$,
\begin{versionb}
\begin{equation}
\dot{\phi} = \log \frac{(\frac{1}{2}d\eta)^{n}\wedge \eta_{0}}{ (\frac{1}{2}d\eta_{0})^{n}\wedge \eta_{0}} +(2n+2)\phi - u(0), \quad \phi(0)=c_{0},
\end{equation}
\end{versionb}
where $c_{0}$ is defined by~(\ref{initial condition}).  For this particular choice of initial condition, Theorem~\ref{Pman thm} implies that $\| \dot{\phi}\|_{C^{0}} \leq C$ uniformly along the flow.  Now,
\begin{equation*}
\pl{t}  \log \frac{(\frac{1}{2}d\eta)^{n}\wedge \eta_{0}}{ (\frac{1}{2}d\eta_{0})^{n}\wedge \eta_{0}} = -(R^{T}-(2n+2) n),
\end{equation*}
and so our assumption implies
\begin{equation*}
\left| \log \frac{(\frac{1}{2}d\eta)^{n}\wedge \eta_{0}}{ (\frac{1}{2}d\eta_{0})^{n}\wedge \eta_{0}}\right| = \left| \int_{0}^{t} (R^{T}-(2n+2) n) dt\right| \leq \int_{0}^{\infty} \|R^{T}-(2n+2) n\|_{C^{0}}dt < \infty.
\end{equation*}
Rearranging equation~(\ref{PCMA}) as an equation for $\phi$, and using the uniform bound for $\dot{\phi}$ we obtain that $\phi$ is uniformly bounded.
\begin{versionb}
\begin{equation*}
(2n+2) \phi = \dot{\phi} -  \log \frac{(\frac{1}{2}d\eta)^{n}\wedge \eta_{0}}{ (\frac{1}{2}d\eta_{0})^{n}\wedge \eta_{0}}  + u(0).
\end{equation*}
The right hand side is uniformly bounded, and so we obtain a uniform bound for $\phi$. 
\end{versionb}
By Proposition~\ref{uniform Yau}, $\| \phi \|_{C^{k}}$ is uniformly bounded for each $k \in \mathbb{N}$, where the $C^{k}$ norm is with respect to the initial metric $g^{T}(0)$.  The uniform bounds on $\phi$ imply that the metrics $g^{T}(t)$ are uniformly equivalent and uniformly  bounded in $C^{\infty}$; in particular, $Rm^{T}$ is uniformly bounded.  It follows that there exists a subsequence of times $t_{m} \rightarrow \infty$ with $\phi(t_{m})$ converging in $C^{\infty}$ to smooth basic function $\phi(\infty)$.  By uniform equivalence we have
\begin{equation*}
\left| \square_{B, g(0)} u(t_{m}) \right| \leq C \left| \square_{B, g(t)} u(t_{m})\right| \leq C |R^{T}(t_{m})-(2n+2) n|_{C^{0}} \longrightarrow 0.
\end{equation*}
Thus, $\phi(\infty)$ is a potential for a transversely K\"ahler-Einstein metric.  Let $\lambda_{t}$ be the smallest positive eigenvalue of $\square_{\E}$ acting on smooth global sections of $\E^{1,0}$.  We claim that $\lambda_{t} \geq \lambda >0$.  If this were not the case, then there is a further subsequence (not relabeled) such that $g(t_{m})$ converges in $C^{\infty}$ to a Sasaki metric $\tilde{g}$, and $\lambda_{t_{m}}\rightarrow 0$.  We can now apply the arguments in the proof of Theorem~\ref{stability compactness} in the special case that the Reeb field $\xi$ and the transverse complex structure $\Phi$ are fixed, and $\eta(t) = \eta_{0} +2d_{B}^{c}\phi(t)$.  In particular, by Proposition~\ref{kernel prop} the dimension of the space of global holomorphic sections of $\E^{1,0}$ is constant.  We then obtain $0 = \lim_{m\rightarrow \infty} \lambda_{t_{m}} = \lambda (\tilde{g})>0$, which is a contradiction.  Proposition~\ref{Mab prop} implies that the Mabuchi K-energy is bounded below and so by Lemma~\ref{PSSW lemma 5} we obtain the exponential decay to zero of $Y(t) = \| \nabla u \|_{L^{2}}^{2}$.  We claim that this implies the exponential decay to zero of $\|\nabla u\|_{(s)}$ for any Sobolev norm $\| \cdot \|_{(s)}$.  This follows essentially from the computations in \S6.  For example, rearranging equation~(\ref{Y diff equation 1}), we get
\begin{equation}\label{sobolev norm decay}
 (n+1)Y(t) - \int_{S}|\dop{B}u|^{2}R^{T}d\mu-\dot{Y(t)}=  \int_{S}|\overline{\nabla}\nabla u|^{2}d\mu + \int_{S}|\nabla \nabla u|^{2}d\mu.
 \end{equation}
Thus, the uniform bound for $R^{T}$ and the exponential decay of $Y$ yields the exponential decay of the right hand side of equation~(\ref{sobolev norm decay}).  We then proceed inductively, using the functions $Y_{r,s}(t)$ as defined in \S6 and and employing the aforementioned uniform curvature bounds (cf. equation~(\ref{inductive inequality})).   Since the transverse Ricci potential $u$ is basic, and the metrics $g^{T}(t)$ are uniformly equivalent,  the Sobolev imbedding theorem yields the exponential decay to zero of $\| u\|_{C^{k}}$ for any $k$, and hence $\| \dot{g}^{T}_{\bar{k}j} \|_{C^{k}} = \|R^{T}_{\bar{k}j}-(2n+2)g^{T}_{\bar{k}j}\|_{C^{k}}$ decays exponentially to zero for any $k$.
\end{proof}

\begin{proof}[Proof of Theorem~\ref{main theorem 2}]
Part {\it (i)} follows from Proposition~\ref{PSSW lemma 5}, and Lemma~\ref{PSSW lemma 6}.  Part {\it(ii)} follows from the argument in the proof of Lemma~\ref{PSSW lemma 6}.  Part {\it(iii)} follows from part {\it (ii)} and part {\it (i)}.
\end{proof}

\begin{versionb}
\begin{proof}[Theorem~\ref{main theorem 3} part {\it (i)}]
Fix $p>2$.  As pointed out in the proof of Lemma~\ref{PSSW lemma 5}, we can find $T$ sufficiently large so that
\begin{equation*}
|R^{T}-(2n+2)n | \leq C_{1}\displaystyle\prod_{j=1}^{N} \| \nabla u (t-a_{j}) \|_{L^{2}}^{\delta_{j}} \cdot \displaystyle\prod_{k=1}^{N}\| \nabla u (t-b_{k}) \|_{C^{0}}^{\epsilon_{k}}
\end{equation*}
where $\delta := \sum_{j} \delta_{j}$ and $\epsilon := \sum_{k} \epsilon_{k}$ satisfy $\delta + \epsilon =1$, and $1>\delta \geq 2/p$.  By the uniform bound from Theorem~\ref{Pman thm}, we obtain that there exists a $T$ such that for all $t\geq T$, we have
\begin{equation*}
|R^{T}-(2n+2)n | \leq C_{2}\displaystyle\prod_{j=1}^{N} \| \nabla u (t-a_{j}) \|_{L^{2}}^{\tilde{\delta}_{j}}
\end{equation*}
where $\sum \tilde{\delta}_{j} = 2/p$.  Since $\int_{0}^{\infty} Y(t) dt < \infty$, H\"older's inequality yields
\begin{equation*}
\begin{aligned}
\int_{T}^{\infty} \|R^{T}-(2n+2)n\|^{p}_{C^{0}} dt &\leq C_{2} \int_{T}^{\infty}  \displaystyle\prod_{j=1}^{N}Y(t-a_{j})^{p\tilde{\delta}_{j}/2} dt \\
&\leq C_{2}  \displaystyle\prod_{j=1}^{N} \left( \int_{T}^{\infty} Y(t-a_{j}) dt\right)^{p\tilde{\delta}_{j}/2} < \infty,
\end{aligned}
\end{equation*}
proving the theorem.
\end{proof}
\end{versionb}
\begin{versionb}
\section{Concluding Remarks}

In the K\"ahler-Ricci flow, the results of Phong-Sturm \cite{PS}, and Phong-Song-Sturm-Weinkove \cite{PSSW} have had a great deal of success when the bisectional curvature is positive;  see for example \cite{PSSW1}. The Sasaki-Ricci flow in the presence of positive transverse holomorphic bisectional curvature was studied in \cite{WHe}, where it was pointed out that positivity is preserved along the flow.  As a result, under the assumption of positive holomorphic sectional curvature one can easily obtain the uniform boundedness in $C^{\infty}$ of the Riemann tensor by applying Theorem~\ref{Pman thm} and Theorem~\ref{BBS Thm}.  By Theorem~\ref{main theorem}, establishing convergence of the Sasaki-Ricci flow reduces to proving that conditions (A) and (C) hold.  In fact, in the presence of positive bisectional curvature, it was show in \cite{PSSW1} that the boundedness below of the Mabuchi K-energy is sufficient to guarantee the convergence of the K\"ahler-Ricci flow.  The key step in the proof is to use positive bisectional curvature to prove that condition (S) holds (cf. Theorem 1.2).  The results of the current paper suggest that similar conclusions hold in the Sasaki setting.  It was also shown in \cite{PSSW1} that condition (B) combined with the vanishing of the Futaki invariant is sufficient to obtain convergence of the K\"ahler-Ricci flow.  Moreover, it was shown by Cao and Zhu that condition (S) holds when the bisectional curvature is positive \cite{CaoZhu}; it would be interesting to know whether these results carry over to the Sasaki setting.  That is, can one adapt the techniques of \cite{CaoZhu} to show that positive transverse holomorphic bisectional curvature implies condition (T) holds?  Another potential application of our results is to study the Sasaki threefolds with transverse Riemann curvature which is `2-nonnegative' (cf. \cite{PS, PS1}). The obstructions to generalizing the results of \cite{PS} on K\"ahler manifolds with 2-nonnegative curvature tensor is to find an appropriate extension of 2-nonnegativity to the Sasaki setting, and to show it is preserved along the Sasaki-Ricci flow.
\end{versionb}

\begin{versiona}
\section{Appendix}

The main objective of the material in the appendix is to prove Proposition~\ref{Mab prop}.  Most of the work in this direction was done by Nitta and Sekiya in \cite{NS} where they generalized the bulk of the paper \cite{BM} to the Sasaki setting.  We begin by fixing a Sasaki structure $(S, \xi, \eta, \Phi, g)$, with $(2n+2)[\frac{1}{2}d\eta] \in c_{1}^{B}(S)$.  We will denote by $\mathcal{G}$ the identity component of the automorphism group of the Sasaki structure $(S,\xi, \Phi)$ ??? As before, we let $\mathcal{H} :=\mathcal{H}_{\eta}$ be the space of Sasaki metrics, identified by their Sasaki potentials, which are compatible with the structure $(\xi, \eta, \Phi, g)$, in the sense of Proposition~\ref{perturb prop}.  We will denote by $\mathfrak{E}$ the set of all Sasaki-Einstein metrics in $\mathcal{H}_{\eta}$.  Throughout this section, we will assume that $\mathfrak{E} \ne \emptyset$.  We will also denote by $Vol(S) := \int_{S} (\frac{1}{2}d\eta)^{2} \wedge \eta$ the volume of $S$.  We now define functionals $L_{\eta}, M_{\eta}, I_{\eta}$ and $J_{\eta}$ on $\mathcal{H}$ by
\begin{equation*}
\begin{aligned}
& L_{\eta}(\phi):=\frac{1}{V} \int_{a}^{b} dt \int_{S}\dot{\phi}_{t}(\frac{1}{2}d\eta_{\phi_{t}})^{n}\wedge \eta_{\phi_{t}},\\
&M_{\eta}(\phi) := -\frac{1}{V}\int_{a}^{b}dt \int_{S} \dot{\phi}_{t}(R^{T}(\phi_{t})-n(2n+2))d\eta_{\phi_{t}})^{n}\wedge \eta_{\phi_{t}},\\
&I_{\eta}(\phi) := \frac{1}{V} \int_{S}\phi\left((\frac{1}{2}d\eta)^{n}\wedge \eta - (\frac{1}{2}d\eta_{\phi})^{n} \wedge \eta_{\phi}\right),\\
&J_{\eta}(\phi):= \frac{1}{V} \int_{a}^{b} dt \int_{S}\int_{S}\dot{\phi}_{t}\left((\frac{1}{2}d\eta)^{n}\wedge \eta-(\frac{1}{2}d\eta_{\phi_{t}})^{n}\wedge \eta_{\phi_{t}}\right).
\end{aligned}
\end{equation*}
where $\phi_{t}:[a,b] \rightarrow \mathcal{H}$ is a smooth path with $\phi_{a} =0$, and $\phi_{b}=\phi$.  These functionals are the Sasaki version of the familiar functionals from K\"ahler geometry.  See NITTA-SEKIYA ref[2], PHONG-STURM overview, for more on these functionals and their roles in K\"ahler geometry, and NITTA-SEKIYA for their basic properties in the Sasaki setting.  

Let $h \in C^{\infty}_{B}$ denote the transverse Ricci potential of the metric $g$, normalized so that $\int_{S}(e^{h}-1)(\frac{1}{2}d\eta)^{n}\wedge \eta)=0$.  The following one-parameter families of Monge-Amp\`ere equations were the primary object of study in NITTA-SEKIYA;
\begin{equation}\label{NS 21}
\det (g^{T}_{\bar{j}i} + \dop{i}\dbop{j}\psi_{t}) = \det (g^{T}_{\bar{j}i}) \exp(-t(2n+2)\psi_{t} +h); \quad t\in[0,1]
\end{equation}
\begin{equation}\label{NS 22}
\det (g^{T}_{\bar{j}i} + \dop{i}\dbop{j}\phi_{t}) = \det (g^{T}_{\bar{j}i}) \exp(-t(2n+2)\phi_{t} -L_{\eta}(\phi_{t}) +h); \quad t\in[0,1]
\end{equation}
where $\psi_{t}, \phi_{t}$ are required to belong to $\mathcal{H}$.  When $t=1$, both of these equations are the transverse K\"ahler-Einstein equation.  As a consequence of El Kacimi-Alaoui's estimates, we have;
\begin{Theorem}[NITTA-S Theorem 4.2, Corollary 4.3]
If $t=0$, then equations~(\ref{NS 21}) and~(\ref{NS 22}) have solutions $\psi_{0}$ and $\phi_{0}$ respectively.  Moreover,  $\psi_{0}$ is unique up to an additive constant, and $\phi_{0}$ is unique and satisfies $L_{\eta}(\phi_{0}) =0$.
\end{Theorem}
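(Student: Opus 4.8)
The plan is to deduce both assertions from the transverse Calabi--Yau theorem of El Kacimi-Alaoui \cite{ElKac}, together with the behaviour of the functional $L_{\eta}$ under additive constants.

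First I would treat equation~(\ref{NS 21}) at $t=0$, which reads $\det(g^{T}_{\bar{j}i}+\partial_{i}\partial_{\bar{j}}\psi_{0})=\det(g^{T}_{\bar{j}i})\,e^{h}$; multiplying by the coordinate volume element this is the transverse prescribed-volume-form equation $(\frac{1}{2}d\eta_{\psi_{0}})^{n}\wedge\eta_{\psi_{0}}=e^{h}\,(\frac{1}{2}d\eta)^{n}\wedge\eta$. The normalization $\int_{S}(e^{h}-1)(\frac{1}{2}d\eta)^{n}\wedge\eta=0$ imposed on $h$ is exactly the integrability condition for this equation, so El Kacimi-Alaoui's generalization of Yau's theorem (the elliptic counterpart of the a priori estimates behind Proposition~\ref{uniform Yau}) produces a solution $\psi_{0}$, automatically satisfying $g^{T}_{\bar{j}i}+\partial_{i}\partial_{\bar{j}}\psi_{0}>0$, so $\psi_{0}\in\mathcal{H}$. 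For uniqueness up to an additive constant I would run the standard maximum-principle argument: if $\psi_{0}$ and $\psi_{0}'$ both solve it, writing the difference of the two determinants as $a^{i\bar{j}}\partial_{i}\partial_{\bar{j}}(\psi_{0}-\psi_{0}')$ with $a^{i\bar{j}}$ the (positive-definite, Hermitian) averaged cofactor matrix shows that $\psi_{0}-\psi_{0}'$ satisfies a linear second-order elliptic equation with \emph{no} zeroth-order term, hence is constant. It is precisely because the factor $-t(2n+2)\psi$ has dropped out at $t=0$ that one cannot do better than uniqueness modulo constants.

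Next I would record the identity $L_{\eta}(\phi+c)=L_{\eta}(\phi)+c$ for every constant $c$, the transverse analogue of the fact that the Monge--Amp\`ere energy shifts by $c$ under $\phi\mapsto\phi+c$; it follows from path-independence of $L_{\eta}$ (part of the Nitta--Sekiya formalism \cite{NS}) by evaluating $L_{\eta}$ along the concatenation of a path from $0$ to $\phi$ with the segment $s\mapsto\phi+sc$, along which $\dot\phi_{s}\equiv c$ and $(\frac{1}{2}d\eta_{\phi+sc})^{n}\wedge\eta_{\phi+sc}=(\frac{1}{2}d\eta_{\phi})^{n}\wedge\eta_{\phi}$ since $d_{B}^{c}c=0$, giving the extra contribution $\frac{1}{V}\int_{0}^{1}c\,V\,ds=c$. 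With this in hand, set $c:=-L_{\eta}(\psi_{0})$ and $\phi_{0}:=\psi_{0}+c$; then $\det(g^{T}_{\bar{j}i}+\partial_{i}\partial_{\bar{j}}\phi_{0})=\det(g^{T}_{\bar{j}i})\,e^{h}$ and $L_{\eta}(\phi_{0})=0$, so $e^{h}=\exp(-L_{\eta}(\phi_{0})+h)$ and $\phi_{0}$ solves~(\ref{NS 22}) at $t=0$. Conversely, integrating $\det(g^{T}_{\bar{j}i}+\partial_{i}\partial_{\bar{j}}\phi_{0})=\det(g^{T}_{\bar{j}i})\exp(-L_{\eta}(\phi_{0})+h)$ over $S$ and using both the invariance of the total transverse volume under type II deformations and the normalization of $h$ forces $V=e^{-L_{\eta}(\phi_{0})}V$, hence $L_{\eta}(\phi_{0})=0$ for \emph{any} solution; given that, equation~(\ref{NS 22}) at $t=0$ collapses to equation~(\ref{NS 21}), so any two solutions differ by a constant by the previous step, and that constant equals $L_{\eta}(\phi_{0}')-L_{\eta}(\phi_{0})=0$. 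This yields existence, the normalization $L_{\eta}(\phi_{0})=0$, and genuine (not merely modulo-constants) uniqueness of $\phi_{0}$ simultaneously.

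The only substantial analytic input is the existence half of the first step, i.e.\ the transverse Calabi--Yau theorem, which rests on the full chain of Yau-type $C^{0}$, $C^{2}$ and $C^{2,\alpha}$ a priori estimates transplanted to the transverse K\"ahler setting. Since these are precisely El Kacimi-Alaoui's estimates --- already invoked for Proposition~\ref{uniform Yau} --- I would cite them as a black box; the remainder is elementary maximum-principle and integration-by-parts bookkeeping, the one mild subtlety being the path-independence of $L_{\eta}$ used in the second step.
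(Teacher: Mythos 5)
Your argument is correct and follows exactly the route the paper indicates: the paper does not prove this statement but quotes it from Nitta--Sekiya, prefaced by ``as a consequence of El Kacimi-Alaoui's estimates,'' and your reduction of both equations at $t=0$ to the transverse Calabi--Yau theorem of \cite{ElKac} --- with uniqueness modulo constants from the averaged-cofactor maximum principle, and the constant in~(\ref{NS 22}) pinned down via $L_{\eta}(\phi+c)=L_{\eta}(\phi)+c$ and the volume identity forcing $L_{\eta}(\phi_{0})=0$ --- is precisely that argument.
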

NITTA AND SEKIYA obtained openness for the problem~(\ref{NS 22}) at times $\tau \in [0,1)$.

\begin{prop}[NITTA-S Proposition 4.4, Theorem 4.10]\label{NS prop 4.4}
Let $0 \leq \tau <1$.  Suppose that equation~(\ref{NS 22}) has a solution $\phi_{\tau}$ at $t=\tau$.  Then for some $\epsilon >0$, $\phi_{\tau}$ extends uniquely to a smooth one parameter family $\{ \phi_{t} : t\in[0,1]\cap [\tau-\epsilon, \tau +\epsilon]\}$ of solutions to~(\ref{NS 22}).
\end{prop}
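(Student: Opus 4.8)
The plan is to apply the implicit function theorem in H\"older spaces of basic functions.  Write $\omega_{0}^{T}=\frac{1}{2}d\eta_{0}$, and for $\phi$ with $g^{T}_{\bar{j}i}+\dop{i}\dbop{j}\phi>0$ let $g^{T}_{\phi}$ denote the transverse metric with components $g^{T}_{\bar{j}i}+\dop{i}\dbop{j}\phi$ and $\omega^{T}_{\phi}$ its transverse K\"ahler form.  Fix $k\geq 0$ and $\alpha\in(0,1)$, and set
\[
\mathcal{F}(\phi,t)=\log\det g^{T}_{\phi}-\log\det g^{T}+t(2n+2)\phi+L_{\eta}(\phi)-h,
\]
regarded as a smooth map $\mathcal{U}\times[0,1]\to C^{k,\alpha}_{B}(S)$, where $\mathcal{U}\subset C^{k+2,\alpha}_{B}(S)$ is the (convex, hence connected) open set on which $g^{T}_{\phi}>0$; note $L_{\eta}$ is well defined on $\mathcal{U}$ since it is path independent.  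Solving~(\ref{NS 22}) at time $t$ is exactly solving $\mathcal{F}(\,\cdot\,,t)=0$ in $\mathcal{U}$, and by hypothesis $\mathcal{F}(\phi_{\tau},\tau)=0$.

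First I would compute the linearization in $\phi$ at $(\phi_{\tau},\tau)$,
\[
D_{\phi}\mathcal{F}\big|_{(\phi_{\tau},\tau)}(\psi)=\square_{B,\phi_{\tau}}\psi+\tau(2n+2)\psi+\frac{1}{Vol(S)}\int_{S}\psi\,(\tfrac{1}{2}d\eta_{\phi_{\tau}})^{n}\wedge\eta_{\phi_{\tau}},
\]
where $\square_{B,\phi_{\tau}}=(g^{T}_{\phi_{\tau}})^{j\bar{k}}\nabla^{T}_{j}\nabla^{T}_{\bar{k}}$ is the transverse $\dbar_{B}$-Laplacian of $g^{T}_{\phi_{\tau}}$, which is nonpositive with eigenvalues $0>-\mu_{1}\geq-\mu_{2}\geq\dots$.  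Since the last term is the $L^{2}(g^{T}_{\phi_{\tau}})$-orthogonal projection onto the constants, this operator preserves the splitting into constants and their orthogonal complement: on constants it is multiplication by $\tau(2n+2)+1>0$, and on the $(-\mu_{k})$-eigenspace ($k\geq 1$) it acts by $\tau(2n+2)-\mu_{k}$.  Being elliptic with a finite-rank perturbation, it is Fredholm of index zero, so it is an isomorphism $C^{k+2,\alpha}_{B}\to C^{k,\alpha}_{B}$ as soon as $\mu_{1}>\tau(2n+2)$.

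To obtain this eigenvalue bound, apply $\dop{B}\dbar_{B}$ to~(\ref{NS 22}): using $\rho^{T}=-\dop{B}\dbar_{B}\log\det g^{T}$ and the fact that $Ric^{T}(g^{T}_{0})$ differs from $(2n+2)\omega_{0}^{T}$ by $\dop{B}\dbar_{B}h$, one finds, with the usual normalizations,
\[
Ric^{T}(g^{T}_{\phi_{\tau}})=\tau(2n+2)\,\omega^{T}_{\phi_{\tau}}+(1-\tau)(2n+2)\,\omega_{0}^{T}.
\]
Since $\tau<1$ and $\omega_{0}^{T}>0$, compactness of $S$ yields a constant $\kappa>\tau(2n+2)$ with $Ric^{T}(g^{T}_{\phi_{\tau}})\geq\kappa\,\omega^{T}_{\phi_{\tau}}$.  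The transverse Bochner formula for basic functions --- the Sasaki analogue of the Lichnerowicz--Matsushima estimate, obtained from the integration by parts machinery for $Q$ developed in \S2 (cf. Proposition~\ref{BK formula}) --- then gives $\mu_{1}\geq\kappa>\tau(2n+2)$, as needed.

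The implicit function theorem now produces $\epsilon>0$ and a $C^{1}$ family $\{\phi_{t}:|t-\tau|<\epsilon\}\subset\mathcal{U}$ with $\mathcal{F}(\phi_{t},t)=0$ and the prescribed $\phi_{\tau}$; shrinking $\epsilon$ preserves $g^{T}_{\phi_{t}}>0$, so $\phi_{t}\in\mathcal{H}$.  Local uniqueness is built into the implicit function theorem, and uniqueness among all solutions of~(\ref{NS 22}) at a fixed $t<1$ follows from the maximum principle applied to the difference of two solutions, the coefficient $t(2n+2)$ having the favorable sign.  Finally, elliptic regularity for~(\ref{NS 22}) at each fixed $t$ promotes $\phi_{t}$ to $C^{\infty}_{B}$, and differentiating the equation in $t$ and bootstrapping shows $(t,x)\mapsto\phi_{t}(x)$ is smooth; intersecting with $[0,1]$ gives the asserted family.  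The main obstacle is the bound $\mu_{1}>\tau(2n+2)$: one must check that the transverse Bochner/Lichnerowicz argument really does carry over verbatim to the bundle $Q$, and in particular that the \emph{strict} transverse Ricci lower bound available for $\tau<1$ excludes the borderline case $\mu_{1}=\tau(2n+2)$ (a nontrivial transverse holomorphic vector field), which is precisely what obstructs openness at $\tau=1$.
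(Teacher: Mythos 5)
Your openness argument is the right one and, as far as I can tell, is essentially the proof in Nitta--Sekiya (the paper itself gives no proof of this statement; it simply cites their Proposition 4.4 and Theorem 4.10): linearize at $(\phi_{\tau},\tau)$, observe that the linearized operator preserves the splitting of $C^{\infty}_{B}$ into constants and their $L^{2}(d\mu_{\phi_{\tau}})$-orthogonal complement, and invert it on the complement using the identity $Ric^{T}(g^{T}_{\phi_{\tau}})=\tau(2n+2)\,\omega^{T}_{\phi_{\tau}}+(1-\tau)(2n+2)\,\omega^{T}_{0}\geq \kappa\,\omega^{T}_{\phi_{\tau}}$ with $\kappa>\tau(2n+2)$, together with the transverse Lichnerowicz bound $\mu_{1}\geq\kappa$; the latter does carry over to the bundle $Q$ via the integration by parts of \S 2 and the Bochner--Kodaira identity of Proposition~\ref{BK formula}. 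The implicit function theorem then produces the family, its local uniqueness, and (after bootstrapping) its smoothness, which is all the statement requires.

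The one step that is genuinely wrong is the claim that uniqueness of solutions of~(\ref{NS 22}) at a fixed $t\in(0,1)$ follows from the maximum principle because ``the coefficient $t(2n+2)$ has the favorable sign.'' It has the unfavorable sign. Writing $\psi=\phi_{1}-\phi_{2}$ for two solutions at the same $t$, at a maximum of $\psi$ one gets $\log\bigl(\det g^{T}_{\phi_{1}}/\det g^{T}_{\phi_{2}}\bigr)\leq 0$ and hence $-t(2n+2)\max\psi\leq L_{\eta}(\phi_{1})-L_{\eta}(\phi_{2})$, an inequality pointing the wrong way; this is the Fano-type sign, exactly as in the transverse K\"ahler--Einstein equation at $t=1$, where uniqueness genuinely fails without quotienting by automorphisms. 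This is precisely why Nitta--Sekiya, following Bando--Mabuchi, prove uniqueness of the continuity family not by the maximum principle but via the monotonicity of the K-energy along the path (the lemma quoted immediately after this proposition in the source). For the proposition as literally stated --- unique extension to a smooth family for \emph{some} $\epsilon>0$ --- the local uniqueness built into the implicit function theorem already suffices, so your proof stands once that sentence is deleted; but the maximum-principle argument itself should not be believed.
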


In order to prove that the extension is unique, one makes the key observation, following Bando and Mabuchi,  that the Mabuchi K-energy is decreasing along the method of continuity.  This observation is crucial for the proof of Proposition~\ref{Mab prop}.

\begin{Lemma}[NITTA-S Lemma 4.9]\label{NS Lemma 4.9}
Let $\{ \phi_{t} : t\in [0,1]\}$ be an arbitrary smooth family of solutions of~(\ref{NS 22}).  Then,
\begin{equation*}
\frac{d}{dt}M_{\eta}(\phi_{t}) = -(1-t)(2n+2) \frac{d}{dt} \left(I_{\eta}(\phi_{t}) - J_{\eta}(\phi_{t})\right) \leq 0
\end{equation*}
\end{Lemma}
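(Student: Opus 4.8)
The plan is to differentiate the continuity equation~(\ref{NS 22}) in two ways — by applying $-\dop{B}\dbar_{B}$ and by applying $\partial/\partial t$ — and to feed the outputs into the first-variation formulas for $M_{\eta}, I_{\eta}, J_{\eta}, L_{\eta}$. Throughout I write $\omega^{T}_{\psi} := \tfrac12 d\eta_{\psi}$, $d\mu_{\psi} := (\omega^{T}_{\psi})^{n}\wedge\eta_{\psi}$, and $\square_{\psi} := (g^{T}_{\psi})^{i\bar{j}}\dop{i}\dbop{j}$ for the transverse Laplacian of $g^{T}_{\psi}$ (so that $\square_{\psi}\psi = \operatorname{tr}_{g^{T}_{\psi}}(\dop{B}\dbar_{B}\psi)$), and I normalize the transverse Ricci potential $h$ so that $\rho^{T}(g_{0}) = (2n+2)\,\omega^{T}_{0} + \dop{B}\dbar_{B}h$. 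Applying $-\dop{B}\dbar_{B}$ to the logarithm of~(\ref{NS 22}) and using $\rho^{T}(g^{T}) = -\dop{B}\dbar_{B}\log\det g^{T}$, the $\dop{B}\dbar_{B}h$ terms cancel and one obtains
\begin{equation*}
\rho^{T}(g_{\phi_{t}}) = (2n+2)\big(t\,\omega^{T}_{\phi_{t}} + (1-t)\,\omega^{T}_{0}\big), \qquad t\in[0,1].
\end{equation*}
Tracing this against $g^{T}_{\phi_{t}}$ and using $\omega^{T}_{0} - \omega^{T}_{\phi_{t}} = -\dop{B}\dbar_{B}\phi_{t}$ gives $R^{T}(g_{\phi_{t}}) - n(2n+2) = -(1-t)(2n+2)\,\square_{\phi_{t}}\phi_{t}$.

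Since $M_{\eta}$ is defined by a path integral and $\{\phi_{t}\}$ is itself a path, $\frac{d}{dt}M_{\eta}(\phi_{t}) = -\frac1V\int_{S}\dot\phi_{t}\big(R^{T}(g_{\phi_{t}}) - n(2n+2)\big)\,d\mu_{\phi_{t}}$, so the previous step yields
\begin{equation*}
\frac{d}{dt}M_{\eta}(\phi_{t}) = \frac{(2n+2)(1-t)}{V}\int_{S}\dot\phi_{t}\,\square_{\phi_{t}}\phi_{t}\,d\mu_{\phi_{t}}.
\end{equation*}
On the other hand, using $\frac{d}{dt}d\mu_{\phi_{t}} = (\square_{\phi_{t}}\dot\phi_{t})\,d\mu_{\phi_{t}}$, the definitions of $I_{\eta}$ and $J_{\eta}$, and the self-adjointness of $\square_{\phi_{t}}$ on basic functions with respect to $d\mu_{\phi_{t}}$, one obtains the (path-independent) identity $\frac{d}{dt}(I_{\eta} - J_{\eta})(\phi_{t}) = -\frac1V\int_{S}\dot\phi_{t}\,\square_{\phi_{t}}\phi_{t}\,d\mu_{\phi_{t}}$. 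Substituting this into the previous display produces the asserted equality $\frac{d}{dt}M_{\eta}(\phi_{t}) = -(1-t)(2n+2)\frac{d}{dt}\big(I_{\eta}(\phi_{t}) - J_{\eta}(\phi_{t})\big)$.

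For the inequality I would differentiate the logarithm of~(\ref{NS 22}) in $t$; using $\frac{d}{dt}L_{\eta}(\phi_{t}) = \frac1V\int_{S}\dot\phi_{t}\,d\mu_{\phi_{t}}$ this gives $(2n+2)\phi_{t} = -\square_{\phi_{t}}\dot\phi_{t} - t(2n+2)\dot\phi_{t} - \frac1V\int_{S}\dot\phi_{t}\,d\mu_{\phi_{t}}$. Substituting for $\phi_{t}$ and using that the outer $\square_{\phi_{t}}$ annihilates the constant term,
\begin{equation*}
\int_{S}\dot\phi_{t}\,\square_{\phi_{t}}\phi_{t}\,d\mu_{\phi_{t}} = -\frac{1}{2n+2}\int_{S}\dot\phi_{t}\,\square_{\phi_{t}}\big(\square_{\phi_{t}} + t(2n+2)\big)\dot\phi_{t}\,d\mu_{\phi_{t}}.
\end{equation*}
Decomposing $\dot\phi_{t}$ into eigenfunctions of $-\square_{\phi_{t}}$ on basic functions, with positive eigenvalues $\mu_{k}$, the right-hand integrand carries spectral weight $\mu_{k}\big(\mu_{k} - t(2n+2)\big)$, so the right-hand side is $\leq 0$ provided the smallest positive eigenvalue $\mu_{1}$ satisfies $\mu_{1}\geq t(2n+2)$. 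But the displayed formula for $\rho^{T}(g_{\phi_{t}})$ gives $\rho^{T}(g_{\phi_{t}})\geq t(2n+2)\,\omega^{T}_{\phi_{t}}$ on $[0,1]$ (since $(1-t)\omega^{T}_{0}\geq 0$), and the transverse Lichnerowicz eigenvalue estimate then delivers $\mu_{1}\geq t(2n+2)$. Hence $\int_{S}\dot\phi_{t}\,\square_{\phi_{t}}\phi_{t}\,d\mu_{\phi_{t}}\leq 0$, and since $(1-t)(2n+2)\geq 0$ on $[0,1]$ we conclude $\frac{d}{dt}M_{\eta}(\phi_{t})\leq 0$ (trivially so at $t=1$).

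The main obstacle is this last eigenvalue bound: one needs the transverse Lichnerowicz estimate in exactly the shape used here — a lower bound $\mu_{1}\geq K$ for the smallest positive eigenvalue of $-\square_{B}$ acting on \emph{basic} functions whenever $\rho^{T}\geq K\omega^{T}$ with $K>0$, normalized to match the operator $\square_{B}$ of the paper. This is the one place where the Sasaki setting could in principle behave differently from the Kähler one, but it is a standard consequence of the transverse Bochner formula for basic functions and is the same estimate underlying the transverse Lichnerowicz/Matsushima results already used in the Sasaki literature and in~\cite{NS}. Everything else is routine bookkeeping of first variations, which becomes mechanical once the identity $\rho^{T}(g_{\phi_{t}}) = (2n+2)\big(t\,\omega^{T}_{\phi_{t}} + (1-t)\,\omega^{T}_{0}\big)$ is in hand.
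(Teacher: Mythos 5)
The paper does not actually prove this lemma: it is stated as a quotation from Nitta--Sekiya \cite{NS} (inside an appendix that is excluded from the compiled document), so there is no internal argument to compare yours against. That said, your proof is the standard Bando--Mabuchi monotonicity computation correctly transposed to the transverse K\"ahler setting, and it checks out. The curvature identity $\rho^{T}(g_{\phi_{t}})=(2n+2)\bigl(t\,\omega^{T}_{\phi_{t}}+(1-t)\,\omega^{T}_{0}\bigr)$, the first variations of $M_{\eta}$ and of $I_{\eta}-J_{\eta}$, and the substitution coming from the $t$-derivative of the continuity equation all go through exactly as in the K\"ahler case; the only Sasaki-specific checks are that $(\omega^{T}_{\phi})^{n}\wedge\dot{\eta}_{\phi}=0$ (a wedge of $2n+1$ horizontal forms vanishes, so $d\mu_{\phi_{t}}$ varies by $\square_{\phi_{t}}\dot{\phi}_{t}\,d\mu_{\phi_{t}}$) and that integration by parts and the spectral decomposition hold for basic functions, which the paper sets up in \S 2 via El Kacimi-Alaoui's transverse elliptic theory. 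The one external input you rightly isolate --- the transverse Lichnerowicz bound $\mu_{1}\geq K$ on basic functions whenever $Ric^{T}\geq Kg^{T}$ with $K>0$ --- does hold, and in fact follows from the paper's own Proposition~\ref{BK formula}: for a basic eigenfunction with $-\square_{B}u=\mu u$ and $V^{j}=(g^{T})^{j\bar{k}}\dbop{k}u$, commuting derivatives gives $\|\dop{\E}V\|_{L^{2}}^{2}=\mu^{2}\|u\|_{L^{2}}^{2}$ while $\int_{S}R^{T}_{\bar{k}j}V^{j}\overline{V^{k}}\,d\mu\geq K\mu\|u\|_{L^{2}}^{2}$ and $\|\dbar_{\E}V\|^{2}_{L^{2}}\geq 0$, whence $\mu\geq K$. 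So the proof is complete as written, with that estimate made explicit.
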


The main difficulty which arises in BANDO-MABUCHI, NITTA-SEKIYA is obtaining closedness at $\tau =1$.  Indeed, it is not  true in general that closedness holds at $\tau =1$, unless the IDENTITY COMPONENT O THE AUTOMORPHISM GROUP?? is trivial.  It is possible, however, to obtain backwards openness at $\tau =1$, by employing the bifurcation technique of Bando and Mabuchi REF.  Let $\mathcal{O}$ be an arbitrary $\mathcal{G}$ orbit in $\mathfrak{E}$.  Consider the positive function $\iota := (I_{\eta} -J_{\eta}) \big|_{\mathcal{O}}: \mathcal{O} \rightarrow \mathbb{R}$.  The functional $\iota$ is related to backwards openness at $\tau =1$.

\begin{Lemma}[NITTA-SEKIYA Lemma 4.3]\label{min lemma}
The functional $\iota$ is proper.  In particular, its minimum is always attained at some point in the orbit $\mathcal{O}$.
\end{Lemma}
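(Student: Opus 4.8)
The plan is to follow the Bando--Mabuchi bifurcation argument in the form adapted to the Sasaki setting by Nitta and Sekiya \cite{NS}, reducing the properness of $\iota$ to a coercivity estimate along one-parameter subgroups of $\mathcal{G}$.

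First I would fix the structure of the orbit. Choose a Sasaki--Einstein metric $g_{SE}\in\mathcal{O}$ and let $\phi_{*}\in\mathcal{H}_{\eta}$ be its transverse K\"ahler potential, so that $\mathcal{O}$, identified with its set of transverse K\"ahler potentials, is the image of the map $\sigma\mapsto\phi_{\sigma}$, where $\phi_{\sigma}$ denotes the potential of $\sigma^{*}g_{SE}$. Since adding a constant to a potential changes neither the transverse metric nor $I_{\eta}-J_{\eta}$, this map is well defined, and since $\sigma^{*}g_{SE}$ is again Sasaki--Einstein for every $\sigma\in\mathcal{G}$ we indeed have $\mathcal{O}\subset\mathfrak{E}$. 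Let $K\subset\mathcal{G}$ be the stabilizer of $\phi_{*}$; this is the group of automorphisms of the Sasaki structure that are also isometries of $g_{SE}$, hence compact. By Proposition~\ref{kernel prop} the Lie algebra of $\mathcal{G}$ is $H^{0}(\E^{1,0})$, so by the Sasaki analogue of Matsushima's theorem --- the reductivity of $H^{0}(\E^{1,0})$ from \cite{NishTond} together with the identification in \cite{NS} of the tangent space to $\mathcal{O}$ at $g_{SE}$ with $H^{0}(\E^{1,0})$ --- the group $\mathcal{G}$ is reductive and $K$ is a maximal compact subgroup. Consequently $\mathcal{O}\cong\mathcal{G}/K$ is diffeomorphic, via $v\mapsto\exp(v)\cdot\phi_{*}$, to the vector space $\mathfrak{p}$ in the Cartan decomposition $\mathrm{Lie}(\mathcal{G})=\mathfrak{k}\oplus\mathfrak{p}$, and this diffeomorphism is proper. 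Because $K$ acts by isometries of $g_{SE}$ it preserves $I_{\eta}-J_{\eta}$, so $\iota$ descends to a continuous (indeed smooth) function on $\mathfrak{p}$.

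Next I would prove coercivity of $\iota$ along the rays $t\mapsto\exp(tv)\cdot\phi_{*}$, $v\in\mathfrak{p}\setminus\{0\}$. Writing $\sigma_{t}=\exp(tv)$ and $g_{t}=\sigma_{t}^{*}g_{SE}$, the derivative $\dot{\phi}_{t}$ of $\phi_{t}:=\phi_{\sigma_{t}}$ is, up to an additive constant, the real Hamiltonian of the holomorphic vector field generating $\sigma_{t}$ with respect to $g_{t}$, and one has the identity
\begin{equation*}
\frac{d}{dt}(I_{\eta}-J_{\eta})(\phi_{t}) = \frac{1}{Vol(S)}\int_{S}\langle \partial_{B}\dot{\phi}_{t}, \partial_{B}\phi_{t}\rangle_{g^{T}_{t}}\, d\mu_{t}.
\end{equation*}
The essential input, which is the transverse translation of the convexity and growth estimates of Bando and Mabuchi (cf. the proof of Lemma~\ref{NS Lemma 4.9} and \cite{BM, Bando}), is that $t\mapsto(I_{\eta}-J_{\eta})(\phi_{t})$ is convex and, when $v\neq 0$, coercive; more precisely, there is a continuous positive function $\varepsilon$ on the unit sphere of $\mathfrak{p}$ with
\begin{equation*}
(I_{\eta}-J_{\eta})(\phi_{\exp(tv)}) \geq \varepsilon(v/|v|)\,|t| - C \qquad (v\neq 0,\ t\in\mathbb{R}).
\end{equation*}
Since the unit sphere of the finite-dimensional space $\mathfrak{p}$ is compact, $\varepsilon$ is bounded below by a positive constant, whence $\iota(\exp(v)\cdot\phi_{*})\to+\infty$ as $|v|\to\infty$.

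Finally, properness follows at once: for any $C$ the sublevel set $\{\iota\leq C\}$ is carried by the proper diffeomorphism $\mathfrak{p}\cong\mathcal{O}$ onto a bounded subset of $\mathfrak{p}$, which is precompact, and it is closed by continuity of $\iota$, hence compact; in particular $\iota$ attains its infimum on $\mathcal{O}$, proving the lemma. \textbf{The main obstacle} is the coercivity estimate of the previous paragraph. It is precisely here that one needs the reductivity of the automorphism group --- equivalently, that $\mathrm{Isom}_{0}(S,g_{SE})$ is a maximal compact subgroup of $\mathcal{G}$ --- together with a careful transverse version of the Bando--Mabuchi computation showing that $\frac{d}{dt}(I_{\eta}-J_{\eta})$ along a one-parameter subgroup cannot remain bounded unless the subgroup consists of isometries; the analytic ingredients for this are all provided by \cite{NS}.
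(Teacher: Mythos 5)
First, a point of comparison is impossible here: the paper does not prove this lemma at all --- it is stated as a quotation of a result of Nitta and Sekiya \cite{NS} and used as a black box in the appendix --- so your proposal can only be measured against the argument in that reference, which (following Bando--Mabuchi \cite{BM}) is a compactness argument rather than the convexity argument you outline.

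The genuine gap in your proposal is that its only nontrivial step, the coercivity estimate $(I_{\eta}-J_{\eta})(\phi_{\exp(tv)})\geq \varepsilon(v/|v|)\,|t|-C$, is asserted rather than proved, and it is essentially equivalent to the properness being claimed. You would need to establish (a) the convexity of $t\mapsto (I_{\eta}-J_{\eta})(\phi_{\exp(tv)})$, which is not an off-the-shelf statement in \cite{NS} or \cite{BM}; (b) that the asymptotic slope is \emph{strictly} positive for every $v\in\mathfrak{p}\setminus\{0\}$ --- convexity and nonnegativity alone permit $I_{\eta}-J_{\eta}$ to remain bounded along a ray, and ruling this out for non-isometric one-parameter subgroups is exactly the content of the lemma restricted to rays; and (c) the continuity of $\varepsilon$ and uniformity of $C$ in the direction, without which ray-wise coercivity does not imply properness on $\mathfrak{p}$. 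None of these is addressed. The route actually taken in \cite{NS, BM} sidesteps all three: if $\sigma_{j}\in\mathcal{G}$ is a sequence along which $I_{\eta}-J_{\eta}$ stays bounded, then (since $I_{\eta}-J_{\eta}$ is comparable to $I_{\eta}$) the Green's functions of the fixed background metric and of the Sasaki--Einstein metrics $\sigma_{j}^{*}g_{SE}$ --- the latter uniformly controlled because their transverse Ricci curvature is positive and their diameters are bounded by the main theorem of \cite{NS} --- convert this into a uniform $C^{0}$ bound on the normalized potentials $\phi_{\sigma_{j}}$; since each $\phi_{\sigma_{j}}$ solves the transverse K\"ahler--Einstein Monge--Amp\`ere equation, the estimates of \cite{ElKac} give $C^{\infty}$ subconvergence, forcing $\sigma_{j}$ to subconverge in $\mathcal{G}/K$ and hence the sublevel sets of $\iota$ to be compact. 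If you wish to keep the one-parameter-subgroup strategy, you must supply the convexity computation and the strict-slope dichotomy yourself; as written, the proposal defers its entire content back to the references.
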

 
\begin{prop}[NITTA-SEKIYA Proposition 4.15]\label{openness lemma}
For every critical point $g_{SE} \in \mathcal{O}$ of $\iota$ with non-degenerate Hessian, $\phi_{1} := \phi(g_{SE})$, the transverse K\"ahler potential of $g_{SE}$, can be extended to a smooth family $\{ \phi_{t} : t \in [1-\epsilon, 1]\}$ of solutions to~(\ref{NS 21}) for some $\epsilon >0$.
\end{prop}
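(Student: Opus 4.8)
The plan is to run the bifurcation argument of Bando--Mabuchi \cite{BM, Bando}, in the transverse form developed by Nitta and Sekiya \cite{NS}. The point is that one cannot apply the implicit function theorem to~(\ref{NS 21}) directly at $t=1$: the linearization of the Monge--Amp\`ere operator $\phi\mapsto \log\det(g^{T}_{\bar j i}+\partial_{i}\partial_{\bar j}\phi)-\log\det g^{T}_{\bar j i}+(2n+2)\phi$ at the transverse K\"ahler--Einstein metric $g_{SE}$ is $\square_{B}+(2n+2)$, whose kernel $K\subset C^{\infty}_{B}$ consists of the basic functions with $\square_{B}\psi=-(2n+2)\psi$. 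By a transverse Matsushima-type argument (the Bochner--Kodaira identity of Proposition~\ref{BK formula} applied to the section $(g^{T})^{j\bar k}\partial_{\bar k}\psi$ of $\E^{1,0}$, see also \cite{NS}) these are exactly the Hamiltonian holomorphic potentials, so $K$ is isomorphic via Proposition~\ref{kernel prop} to $H^{0}(\E^{1,0})$ and to $\mathrm{Lie}(\mathcal{G})$. The nontrivial cokernel will be removed by a Lyapunov--Schmidt reduction whose finite-dimensional part is governed precisely by the Hessian of $\iota=(I_{\eta}-J_{\eta})\big|_{\mathcal{O}}$.

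Concretely, set $s=1-t$ and $F(\phi,s):=\log\det(g^{T}_{\bar j i}+\partial_{i}\partial_{\bar j}\phi)-\log\det g^{T}_{\bar j i}+(1-s)(2n+2)\phi-h$, so that~(\ref{NS 21}) reads $F(\phi_{s},s)=0$ and $\phi_{1}:=\phi(g_{SE})$ satisfies $F(\phi_{1},0)=0$. Working in the basic H\"older (or Sobolev) spaces furnished by El Kacimi-Alaoui's transverse Hodge theory \cite{ElKac}, let $\Pi$, $\Pi^{\perp}$ be the $L^{2}(d\mu_{g_{SE}})$-projections onto $K$ and its complement $K^{\perp}$, and decompose $\phi=\phi_{1}+v+w$ with $v\in K$, $w\in K^{\perp}$. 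Since the linearization of $F$ at $(\phi_{1},0)$ restricts to an isomorphism of $K^{\perp}$ (it is self-adjoint with kernel $K$), the equation $\Pi^{\perp}F(\phi_{1}+v+w,s)=0$ has a unique small solution $w=w(v,s)$, smooth in $(v,s)$, with $w(0,0)=0$. The remaining finite-dimensional bifurcation equation is $\Phi(v,s):=\Pi\,F(\phi_{1}+v+w(v,s),s)=0$ on $K$.

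The crucial observation is that $\Phi(\cdot,0)\equiv 0$ near $0$: at $s=0$ equation~(\ref{NS 21}) is the transverse K\"ahler--Einstein equation, $\mathcal{G}$ acts on its solution set, and the map $\sigma\mapsto$ (potential of $\sigma^{*}g_{SE}$) parametrizes, near $\phi_{1}$, a submanifold of $\mathcal{H}_{\eta}$ with tangent space $K$ at $\phi_{1}$; by uniqueness of $w(\cdot,0)$ this submanifold is exactly $\{\phi_{1}+v+w(v,0)\}$, all of whose points are solutions of $F(\cdot,0)=0$. Hence $\Phi(v,s)=s\,\widetilde\Phi(v,s)$ with $\widetilde\Phi$ smooth, and it suffices to solve $\widetilde\Phi(v,s)=0$ for small $s>0$. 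The heart of the matter is to identify $\widetilde\Phi(v,0)=\partial_{s}\Phi(v,0)$: differentiating the Monge--Amp\`ere equation in $s$, pairing against the Killing potentials spanning $K$, and invoking the variational identity underlying Lemma~\ref{NS Lemma 4.9} ($\tfrac{d}{dt}M_{\eta}=-(1-t)(2n+2)\tfrac{d}{dt}(I_{\eta}-J_{\eta})$), one finds that $\widetilde\Phi(v,0)$ is, up to a positive multiplicative constant, the gradient of $\iota$ at the point of $\mathcal{O}$ parametrized by $v$. Because $g_{SE}$ is a critical point of $\iota$, $\widetilde\Phi(0,0)=0$; because its Hessian is non-degenerate, $D_{v}\widetilde\Phi(0,0)$ is invertible; the implicit function theorem then yields $v(s)$ with $\widetilde\Phi(v(s),s)=0$ for $s\in[0,\epsilon)$, whence $\phi_{t}:=\phi_{1}+v(1-t)+w(v(1-t),1-t)$ is a smooth family of solutions of~(\ref{NS 21}) for $t\in[1-\epsilon,1]$, basic and smooth in $t$ by transverse elliptic regularity.

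The main obstacle is precisely the identification of the reduced bifurcation function $\widetilde\Phi(\cdot,0)$ with (a positive multiple of) $\nabla\iota$ on $\mathcal{O}$ — i.e.\ showing the finite-dimensional obstruction to going backwards from the transverse K\"ahler--Einstein metric is controlled exactly by $I_{\eta}-J_{\eta}$. This is the transverse counterpart of the central computation of \cite{BM}; it needs the first-variation formulae for $L_{\eta},M_{\eta},I_{\eta},J_{\eta}$ on $\mathcal{H}_{\eta}$ from \cite{NS} together with the description of $K$ as the space of Hamiltonian holomorphic potentials at $g_{SE}$ (Propositions~\ref{BK formula} and~\ref{kernel prop}). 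The remaining points — that the projections $\Pi,\Pi^{\perp}$ and all function spaces stay within the basic category, and that the slice $\{\phi_{1}+v+w(v,0)\}$ coincides with the $\mathcal{G}$-orbit near $\phi_{1}$ — are routine given El Kacimi-Alaoui's theory and a dimension count. Combined with Lemma~\ref{min lemma}, this proposition provides the backward openness at $t=1$ needed to close the continuity method and thereby establish Proposition~\ref{Mab prop}.
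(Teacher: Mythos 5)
This proposition is not proved in the paper at all: it is quoted verbatim from Nitta--Sekiya \cite{NS} (their Proposition 4.15), so there is no in-paper argument to compare against. Your sketch follows exactly the route by which it is actually established there, namely the Bando--Mabuchi bifurcation argument \cite{BM} transplanted to the transverse setting: Lyapunov--Schmidt reduction over the kernel of $\square_{B}+(2n+2)$ at the transverse K\"ahler--Einstein metric (identified with $H^{0}(\E^{1,0})$ via Propositions~\ref{BK formula} and~\ref{kernel prop}), vanishing of the bifurcation map at $s=0$ because the $\mathcal{G}$-orbit locally exhausts the reduced solution set, and identification of the first-order term in $s$ with the gradient of $\iota=(I_{\eta}-J_{\eta})\big|_{\mathcal{O}}$, so that a non-degenerate critical point allows the implicit function theorem. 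The one step you describe rather than execute --- that $\widetilde\Phi(\cdot,0)$ is a positive multiple of $\nabla\iota$ on the orbit --- is where essentially all the work of \cite{BM, NS} resides, but your reduction of the proposition to that single computation is correct.
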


Bando and Mabuchi observed that it was possible to ensure that at the minimum point guaranteed by Lemma~\ref{min lemma}, the functional $\iota$ could be made to have non-degenerate Hessian, by perturbing the initial condition slightly.  Let $g_{SE}$ be the minimum of $\iota$ on the orbit $\mathcal{O}$, and let $\phi_{1} = \phi(SE)$ be the transverse K\"ahler potential of $g_{SE}$. Fix $\delta \in (0,1)$, and define $g^{\delta}$ to be the Sasaki metric defined by $\eta + \delta d_{B}^{c}\phi_{1}$.  In particular, the transverse metric is given by
\begin{equation*}
(g^{\delta})^{T} := g^{T} + \delta \dop{B}\dbar_{B} \phi_{1} = (1-\delta)g^{T} + \delta g^{T}_{SE}.
\end{equation*}
Let $\iota^{\delta}$ denote the function $\iota$ computed with respect to the metric $g^{\delta}$.  For every $\delta >0$, one checks that $\iota^{\delta}$ has positive definite hessian at $g_{SE}$ (cf. Remark 4.17 NITTA-SEKIYA).  Thus, by Proposition~\ref{openness lemma} we have backwards openness, and by Proposition~\ref{NS prop 4.4}, this extends to a one parameter family $\phi^{\delta}_{t}$ for $t \in [0,1]$.  It follows from Lemma~\ref{NS Lemma 4.9} that 
\begin{equation*}
M\left(g^{T}_{SE}, (g^{\delta})^{T} + \dop{B}\dbar_{B}\phi_{0}^{\delta}\right) \geq 0.
\end{equation*}
By definition $\phi_{0}^{\delta}$, solves
\begin{equation*}
\det ((g^{\delta})^{T}_{\bar{j}i} + \dop{i}\dbop{j}\phi^{\delta}_{0}) = \det ((g^{\delta})^{T}_{\bar{j}i})  e^{h^{\delta}}
\end{equation*}
where $h^{\delta}$ is the transverse Ricci potential of $g^{\delta}$.  As $\delta \rightarrow 0$, $g^{\delta} \rightarrow g$ in $C^{0,\alpha}$, and so the estimates of ElKac imply that Ric.

\end{versiona}

\end{document}